\documentclass[a4paper,12pt,reqno]{amsart}
\usepackage{amscd,amssymb,graphicx,color,epigraph,amsmath}
\usepackage{tree-dvips}
\usepackage{qtree}
\usepackage[shortalphabetic,initials]{amsrefs}
\usepackage{hyperref}

\usepackage[dvipsnames]{xcolor}
\usepackage[T1]{fontenc}
\usepackage{hyperref}
\hypersetup{
colorlinks = True,
linkcolor=Bittersweet,
citecolor=red,
}
\usepackage{enumitem}
\setlist[itemize]{leftmargin=18pt}
\setlist[enumerate]{leftmargin=18pt}
\usepackage{comment}

\usepackage{eufrak}
\usepackage{mathrsfs}
\usepackage{caption}
\usepackage{xparse}
\usepackage{amscd}
\usepackage[all,cmtip]{xy}
\usepackage[utf8]{inputenc}
\usepackage[T1]{fontenc}
\usepackage{graphicx}
\usepackage{xcolor}
\usepackage{array}
\usepackage{tikz}
\usetikzlibrary{decorations.pathreplacing, calligraphy}
\usepackage{multirow}
\usepackage{makecell}
\usepackage{float} % Para forzar la posición de las imágenes con [H]

\definecolor{customred}{rgb}{0.82,0.01,0.11}

\usepackage[margin=2.5cm]{geometry}

\usepackage{placeins}
\usepackage{afterpage}
\usepackage{amscd}
\usepackage{float}
\usepackage{graphics}
\usepackage{tikz}
\usetikzlibrary{graphs}
\usetikzlibrary{decorations.pathreplacing,angles,quotes}
\usetikzlibrary{shapes.geometric}

\usepackage{tikz-cd}
\usepackage{comment}
\usepackage{xspace}
\usepackage{mathtools}
\usepackage{pifont} %
\usepackage{amssymb}
\usepackage{amsthm}
\usepackage{graphicx}
\usepackage{subfig}
\usepackage{enumerate}
\usepackage{hyperref}

\usetikzlibrary{patterns,decorations.pathreplacing}
\usepackage{marginnote}

\usepackage{cancel}

\theoremstyle{plain}

\newtheorem{theorem}{Theorem}[section]
\newtheorem{proposition}[theorem]{Proposition}
\newtheorem{lemma}[theorem]{Lemma}
\newtheorem{corollary}[theorem]{Corollary}
\newtheorem{conjecture}[theorem]{Conjecture}

\theoremstyle{definition}

\newcommand{\appsection}[1]{\let\oldthesection\thesection
\renewcommand{\thesection}{Appendix \oldthesection}
\section{#1}\let\thesection\oldthesection}

\newtheorem{definition}[theorem]{Definition}
\newtheorem{notation}[theorem]{Notation}

\theoremstyle{remark}

\newtheorem{remark}[theorem]{Remark}
\newtheorem{example}[theorem]{Example}

\DeclareMathOperator{\spec}{Spec}

\def\D{{\mathbb{D}}}
\def\R{{\mathbb{R}}}
\def\Z{{\mathbb{Z}}}
\def\F{{\mathbb{F}}}
\def\Q{{\mathbb{Q}}}
\def\C{{\mathbb{C}}}
\def\P{{\mathbb{P}}}

\def\B{{\mathcal{B}}}
\def\M{\mathcal{M}}

\def\O{{\mathcal{O}}}

\def\L{{\mathcal{L}}}

\def\Y{{\mathcal{Y}}}
\def\T{{\mathcal{T}}}
\def\W{{\mathcal{W}}}

\usepackage{xcolor}

\newcommand{\WHS}{\text{WHS}\xspace}
\newcommand{\QHD}{$\mathbb{Q}$\text{HD}\xspace}

\newcommand{\QHDS}{$\mathbb{Q}$HD smoothing\xspace}

\title{Rational homology disk degenerations of elliptic surfaces}

\author{Marcos Canedo}
\address{Facultad de Matem\'aticas,
Pontificia Universidad Cat\'olica de Chile, Santiago, Chile.}
\email{mgcanedo@uc.cl}

\author{Giancarlo Urz\'ua}
\address{Facultad de Matem\'aticas,
Pontificia \allowbreak Universidad \allowbreak{Cat\'olica} de Chile, Santiago, Chile.}
\email{gianurzua@gmail.com}
\begin{document}

\maketitle

\begin{abstract}
In this paper, a \QHD singularity is a weighted homogeneous normal surface singularity admitting a rational homology disk (\QHD) smoothing. These singularities are rational but often not log canonical. We classify all \QHD degenerations of nonsingular projective elliptic surfaces, extending Kawamata's classification of the case with only Wahl singularities (i.e., log terminal \QHD singularities). We also realize all \QHD degenerations of Dolgachev surfaces $D_{a,b}$ with one \QHD singularity, for every pair of integers $a,b$. For each such degeneration, we construct a minimal slc birational model via a Seifert partial resolution in the sense of Wahl followed by semistable flips. Finally, we prove that these minimal slc models are unobstructed and deform to the recent degenerations of Dolgachev surfaces constructed by D. Lee and Y. Lee.
\end{abstract}

\tableofcontents

%%%%%%%%%%%%%%%%%%%%%%%%%%%%%%%%%%%
\section{Introduction} \label{s0}
%%%%%%%%%%%%%%%%%%%%%%%%%%%%%%%%%%%%%%%%%%%%%%%%%%%%%%%%%%%%%%%%%%%%%%%%%%%%%%

%%%%%%%%%%%%%%%%%%%%%%%%%%%%%%%%%%%%%%%%%%%%%%%%%%%%%%%%%%%%%%%%%%%%%%%%%%%%%%

Broadly speaking, a \QHD singularity is a normal complex surface singularity admitting a smoothing whose Milnor fiber is a rational homology disk (i.e., with Milnor number zero). (See Section \ref{s1} for some basic facts, and \href{https://colab.research.google.com/drive/1BgJJCC0qD8eTAmpa3kdrB7CG2GsvNc6d?usp=sharing#scrollTo=YHzJ42TQLYWI}{QHD discrepancies} \cite{programa} for some computations.) They were originally studied by J. Wahl in the 1980s \cite{Wahl80,Wahl_1981}. By \cite{SSW_2008} and \cite{BS_2011}, there is an explicit classification of such singularities when, in addition, they are weighted homogeneous. Their minimal resolution graphs are represented in Figures \ref{fig QHD V3} and \ref{fig QHD V4}. Wahl conjectured \cite{Wahl_2011,Wahl_21} that \QHD singularities are weighted homogeneous (see \cite{PSS14,Be25,BPS26}). In this paper, the term \QHD singularity will always refer to a weighted homogeneous \QHD singularity. These singularities are rational, but typically \textbf{not log canonical}. In fact, the log canonical \QHD singularities are precisely Wahl singularities $\frac{1}{n^2}(1,na-1)$ with gcd$(n,a)=1$, and $3$ particular quotients of elliptic singularities. 

A projective surface with only \QHD singularities will be called a \QHD surface. A \QHD smoothing of a \QHD surface $W$ is a proper deformation $(W \subset \W) \to (0 \in \D)$ over a disk $\D$ such that its fiber over $0$ is $W$, the fibers $W_t$ with $t \neq 0$ are nonsingular projective surfaces, and the deformation induces a \QHD smoothing for each singularity in $W$. Wahl \cite{Wahl_2013} proves that $\W$ is log terminal and $\Q$-Gorenstein, which means that it is locally the quotient of an equivariant deformation of the canonical cover of the singularities in $W$. \QHD smoothings can be seen as the mildest degenerations of surfaces. For example, $K_{W_t}^2$, $\chi_{top}(W_t)$, $q(W_t)=h^1(\O_{W_t})$, $p_g(W_t)=h^2(\O_{W_t})$ are constant for every $t$. Although \QHD degenerations are usually not log canonical, their semi log canonical replacements produce nontrivial degenerations with a reducible central fiber and orbifold normal crossing singularities. It is known that minimal models of arbitrary degenerations of surfaces have Wahl and orbifold normal crossing singularities \cite{Kawa88} (see \cite[Thm. 3.4.2]{Kol91}). \QHD degenerations reveal a significant part of that picture \cite{programaReyes,CU26b}. In this paper, we begin a systematic study of \QHD surfaces and their \QHD smoothings by extending the following result of Kawamata \cite{Kawa92} on degenerations of elliptic surfaces.

\begin{theorem} \label{introthm1}
Let $W$ be a \QHD surface with only Wahl singularities and $K_W$ nef. Assume that $(W\subset\mathcal{W})\to(0\in\mathbb{D})$ is a \QHD smoothing with Kodaira dimension $\kappa(W_t)=1$ for $t\neq 0$. Then we have an elliptic fibration $W \to B$ over some curve $B$, together with a classification of the fibers containing singularities of $W$.
\end{theorem}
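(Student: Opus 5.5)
This is Kawamata's theorem, so the plan follows his strategy. The idea is to produce the elliptic fibration from $K_W$ by abundance on the central fiber, and then to analyze the singular fibers locally.

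\textbf{Step 1: numerical invariants and abundance.} Since the smoothing is $\Q$-Gorenstein, the invariants $K_{W_t}^2$, $\chi(\O_{W_t})$, $q$ and $p_g$ are constant in $t$ (Wahl). For $t\neq 0$, $W_t$ is a smooth elliptic surface with $\kappa=1$; if it is not minimal, the $(-1)$-curves can be contracted in the family after a base change. I would first show that $K_W$ nef forces $K_{W_t}$ nef, so that $K_{W_t}^2=0$ and therefore $K_W^2=0$. Next I would show $\kappa(W)=1$. Plurigenera are upper semicontinuous, and for $\Q$-Gorenstein smoothings with log terminal total space the invariance of plurigenera (Kawamata, via Kawamata--Viehweg vanishing on $\mathcal W$) gives $h^0(mK_W)=h^0(mK_{W_t})$ for $m$ divisible by the index. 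Since $K_W$ is nef with $K_W^2=0$ and $\kappa(W)=1$, abundance for log terminal surfaces makes $|mK_W|$ basepoint free. This yields a morphism $W\to B$ whose general fiber $F$ satisfies $K_W\cdot F=0$ and $F^2=0$. By adjunction and normality, $F$ is a smooth elliptic curve away from the singular points. The fibration also extends over $\mathcal W$ after taking relative $\mathrm{Proj}$ of $\bigoplus f_*\O(mK_{\mathcal W})$.

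\textbf{Step 2: local analysis of the fibers through singularities.} Let $P=\frac{1}{n^2}(1,na-1)$ be a singular point on a fiber $F_0=\sum m_iC_i$. Pass to the minimal resolution $\widetilde W\to W$, which has the chain $[b_1,\dots,b_r]$ of a Wahl singularity. There we have $K_{\widetilde W}=\pi^*K_W-\sum a_jE_j$ with discrepancies $-1<a_j<0$. Because $K_W$ is numerically a rational multiple of the fibers, $\widetilde W$ maps to $B$ with a non-minimal, possibly non-relatively-minimal, fiber containing the chain. Contracting $(-1)$-curves down to a relatively minimal elliptic fibration reduces the question to Kodaira's list: the configuration consists of the Wahl chains together with the strict transforms of the $C_i$, and it must blow down to a Kodaira fiber. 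The condition $K_W\cdot C_i=0$ becomes $-2+C_i^2+\sum(\text{discrepancy contributions})=0$ on each component, which forces the $C_i$ to be $(-1)$-curves meeting the chains in specific positions.

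\textbf{Step 3: classification and the main obstacle.} The hardest part is the combinatorial classification. I need to determine which Wahl chains, together with $(-1)$-curves, blow down to Kodaira fibers (or to multiple fibers $mI_0$ or $mI_k$). I also need to use that the Milnor fiber is a $\Q$HD, so that $\chi_{top}$ and the Picard numbers match. Constancy of $\chi_{top}$ gives $\chi_{top}(W_t)=\chi_{top}(W)$, and this bounds the number of components. My first attempt would be to exploit that Wahl chains come from $[4]$ by the standard two-sided algorithm, and to run the blow-down induction along that algorithm. I expect the outcome to be that singular fibers are either of type $I_k$ or $nI_k$, with chains intersecting cycles of rational curves, or multiple fibers in which the Wahl singularity accounts for the multiplicity. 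The last part of the argument is matching the multiplicity $n$ with the index and checking that nothing else occurs, and this is where most of the casework lies.
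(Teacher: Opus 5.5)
Your Step 1 is sound and differs from the paper only in packaging. The paper gets $W\to B$ by applying Nakayama's theorem \cite[Thm.~6.3]{Naka85} to the $\mathbb{Q}$-Gorenstein log terminal family $\mathcal W$ and restricting $\mathcal W\to\mathcal B$. You instead use $K_W^2=K_{W_t}^2=0$, $K_W\not\equiv 0$ and abundance for log terminal surfaces on $W$ itself, which is enough for existence.

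The gap is in Steps 2--3, which carry the actual content. The paper does not reprove them; it quotes \cite[Thm.~4.2]{Kawa92}. Your sketch contains no argument that makes the casework finite, and the outcome you anticipate is wrong. Besides the configurations $yI_d(n,a)$, the list contains configurations lying over fibers of $S$ of additive type II, III and IV: namely II(2)--II(5), III(2), III(3) and IV(2) in Figure~\ref{fig:KAWA_WAHL}. For example, blowing up the cusp of a type II fiber gives a $(-4)$-curve tangent to a $(-1)$-curve. Contracting the $(-4)$-curve gives a $\frac14(1,1)$ point on a double fiber of $W$, and $K_W$ is trivial on that fiber since $-1-(-\tfrac12)\cdot 2=0$. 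An induction along the Wahl-chain algorithm aimed at $I_k$-type fibers would exclude these. Also fix the signs in Step 2: $K_{\widetilde W}=\pi^*K_W+\sum d_jE_j$ with $-1<d_j<0$, and $K_{\widetilde W}\cdot\widetilde C_i=-2-\widetilde C_i^2$. Fiber components that miss the singular points are $(-2)$-curves, as in III(2).

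The missing idea is to use the Milnor-number-zero condition as an exact equality, fiber by fiber, rather than as a bound. Let $\pi\colon X\to S$ contract the minimal resolution to a relatively minimal elliptic fibration, write $K_X=\pi^*K_S+E$, and let $C$ be the union of the Wahl chains. The relations $K_X^2-K_W^2=-|C|$, $K_X\cdot C_i=|C_i|+1$ and $K_W^2=K_S^2=0$ give $q(E+C)=0$ (Proposition~\ref{q}, Definition~\ref{defq}). By Remark~\ref{q(E+C)} this splits over each fiber $F$ as
\[
0=q(\pi(C))+\sum_i\binom{E_i\cdot C-1}{2}.
\]
Every summand here is $\ge 0$, because a reduced subcurve of a fiber has $p_a\le 1$. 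Hence $\chi(\mathcal{O}_{\pi(C)})=0$, so $\pi(C)=F_{\mathrm{red}}$ with $F_{\mathrm{red}}^2=0$. Thus $F$ is of type $I_n$ ($n\ge 1$, possibly multiple), II, III or IV, and star fibers are excluded. The binomials also vanish. Since all discrepancies are $>-1$, nefness of $K_W$ forces each $E_i$ to meet the curves of $C$ outside it at least twice. Combined with the vanishing of the binomials, this intersection number is exactly $2$ or $3$, and $|C|=m$ fiberwise. So the number of curves over $F$ not contracted to points of $W$ equals the number of components of $F$. These constraints make the enumeration over $I_n$, II, III, IV finite and yield exactly Figure~\ref{fig:KAWA_WAHL}. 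Your $\chi_{top}$ remark points this way, but without the fiberwise equality you cannot exclude star fibers or locate the blow-up centres.
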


\begin{figure}[ht]
    \centering
    % --- DEFINICIÓN GLOBAL DE ESTILOS (Para no repetir código) ---
    \tikzset{
        bvertex/.style={circle, fill=black, draw=black, minimum size=5pt, inner sep=0pt, outer sep=0pt},
        wvertex/.style={circle, draw=black, fill=white, thick, minimum size=6pt, inner sep=0pt, outer sep=0pt},
        edge/.style={thick, draw=black},
        % Ajuste para etiquetas matemáticas pequeñas
        every label/.append style={font=\scriptsize}
    }
   
    % --- TYPE I ---
    \begin{minipage}[t]{0.22\textwidth}
        \centering
        \textbf{Type $I_d$}\par\vspace{5pt}
        \scalebox{0.6}{
    \begin{tikzpicture}
        % Definición de estilos si no están en tu preámbulo
        \tikzset{
            wvertex/.style={draw, circle, inner sep=2pt, fill=white, thick},
            bvertex/.style={draw, circle, inner sep=1.5pt, fill=black, thick},
            edge/.style={thick}
        }
    
        \def\R{2.5} 
        
        % Vértices blancos
        \node[wvertex] (w1) at (60:\R) {};
        \node[wvertex] (w6_top) at (120:\R) {};
        \node[wvertex] (w5_left) at (180:\R) {};
        \node[wvertex] (w5_bot) at (240:\R) {};
        \node[wvertex] (w6_bot) at (300:\R) {};
        \node[wvertex] (w1_right) at (0:\R) {};

        % Vértices negros con etiquetas
        % Top arc 
        \node[bvertex, label=105:$-e_1$] (bt1) at (105:\R) {};
        \node[bvertex, label=75:$-e_n$] (bt2) at (75:\R) {};
        
        % Top-left arc 
        \node[bvertex, label=165:$-e_1$] (btl1) at (165:\R) {};
        \node[bvertex, label=135:$-e_n$] (btl2) at (135:\R) {};
        
        % Top-right arc 
        \node[bvertex, label=45:$-e_1$] (btr1) at (45:\R) {};
        \node[bvertex, label=15:$-e_n$] (btr2) at (15:\R) {};
        
        % Bottom-left arc 
        \node[bvertex, label=225:$-e_1$] (bbl1) at (225:\R) {};
        \node[bvertex, label=195:$-e_n$] (bbl2) at (195:\R) {};
        
        % Bottom-right arc 
        \node[bvertex, label=315:$-e_1$] (bbr1) at (315:\R) {};
        \node[bvertex, label=345:$-e_n$] (bbr2) at (345:\R) {};

        % Aristas externas
        \draw[edge] (w6_top) -- (bt1);
        \draw[dashed, thick] (bt1) -- (bt2);
        \draw[edge] (bt2) -- (w1);

        \draw[edge] (w5_left) -- (btl1);
        \draw[dashed, thick] (btl1) -- (btl2);
        \draw[edge] (btl2) -- (w6_top);

        \draw[edge] (w1) -- (btr1);
        \draw[dashed, thick] (btr1) -- (btr2);
        \draw[edge] (btr2) -- (w1_right);

        \draw[edge] (w5_bot) -- (bbl1);
        \draw[dashed, thick] (bbl1) -- (bbl2);
        \draw[edge] (bbl2) -- (w5_left);

        \draw[edge] (w6_bot) -- (bbr1);
        \draw[dashed, thick] (bbr1) -- (bbr2);
        \draw[edge] (bbr2) -- (w1_right);

        % Línea punteada en la parte inferior (entre w5_bot y w6_bot)
        \draw[thick, dotted] (w5_bot) -- (w6_bot);
        
    \end{tikzpicture}
}
        \par\vspace{5pt}
        \tiny{ \ \ \ \ \ \ \ $yI_d(n,a)$ \\ We have $d$ Wahl chains $[e_1,\dots,e_r]$ over an $I_d$ fiber of multiplicity $y$.}
    \end{minipage}
    \hfill
    % --- TYPE II (Múltiples gráficos apilados) ---
    \begin{minipage}[t]{0.22\textwidth}
        \centering
        \textbf{Type $II$}\par\vspace{5pt}
        % 1. Bigon
        \scalebox{0.6}{ $II(2)$
            \begin{tikzpicture}
                \node[bvertex, label=90:$-4$] (v1) at (0,0) {};
                \node[wvertex] (v2) at (1.5,0) {};
                \draw[edge] (v1) to[bend left=45] (v2);
                \draw[edge] (v1) to[bend right=45] (v2);
            \end{tikzpicture}
        }
        \par\vspace{5pt}
        % 2. Triángulo
        \scalebox{0.6}{ $II(3)$
            \begin{tikzpicture}
                \def\R{0.8}
                \node[wvertex] (top) at (90:\R) {};
                \node[bvertex, label=below:$-2$] (left) at (210:\R) {};
                \node[bvertex, label=below:$-5$] (right) at (330:\R) {};
                \draw[edge] (top) -- (left) -- (right) -- (top);
            \end{tikzpicture}
        }
        \par\vspace{5pt}
        % 3. Star Graph 1
        \scalebox{0.6}{ $II(4)$
            \begin{tikzpicture}
                \node[bvertex, label=below right:$-2$] (c) at (0,0) {};
                \node[bvertex, label=above:$-6$] (l1) at (180:1.2) {};
                \node[bvertex, label=right:$-2$] (l2) at (90:1.2) {};
                \node[wvertex] (l3a) at (0:1.2) {};
                \node[bvertex, label=above:$-4$] (l3b) at (0:2.4) {};
                \draw[edge] (l1) -- (c) -- (l2);
                \draw[edge] (c) -- (l3a) -- (l3b);
            \end{tikzpicture}
        }
        \par\vspace{5pt}
        % 4. Star Graph 2
        \scalebox{0.6}{ $II(5)$
            \begin{tikzpicture}
                \def\L{0.8}
                \node[bvertex, label=below:$-5$] (c) at (0,0) {};
                \node[bvertex, label=above:$-3$] (l1) at (180:\L) {};
                \node[bvertex, label=right:$-2$] (l2) at (90:\L) {};
                \node[wvertex] (l3a) at (0:\L) {};
                \node[bvertex, label=above:$-2$] (l3b) at (0:2*\L) {};
                \node[bvertex, label=above:$-2$] (l3c) at (0:3*\L) {};
                \node[bvertex, label=above:$-2$] (l3d) at (0:4*\L) {};
                \node[bvertex, label=above:$-7$] (l3e) at (0:5*\L) {};
                \draw[edge] (l1) -- (c) -- (l2);
                \draw[edge] (c) -- (l3a) -- (l3b) -- (l3c) -- (l3d) -- (l3e);
            \end{tikzpicture}
        }
    \end{minipage}
    \hfill
    % --- TYPE III ---
    \begin{minipage}[t]{0.22\textwidth}
        \centering
        \textbf{Type $III$}\par\vspace{5pt}
        % Graph A
        \scalebox{0.6}{ $III(2)$
            \begin{tikzpicture}
                \def\L{1.2}
                \node[wvertex] (c) at (0,0) {};
                \node[bvertex, label=left:$-4$] (l1) at (180:\L) {};
                \node[wvertex, label=above:$-2$] (l2) at (90:\L) {};
                \node[bvertex, label=right:$-4$] (l3) at (0:\L) {};
                \draw[edge] (l1) -- (c) -- (l2); \draw[edge] (c) -- (l3);
            \end{tikzpicture}
        }
        \par\vspace{10pt}
        % Graph B
        \scalebox{0.6}{ $III(3)$
            \begin{tikzpicture}
                \def\L{1}
                \node[bvertex, label=below:$-5$] (c) at (0,0) {};
                % Pata 1
                \node[wvertex] (l1a) at (180:\L) {};
                \node[bvertex, label=above:$-2$] (l1b) at (180:2*\L) {};
                \node[bvertex, label=above:$-5$] (l1c) at (180:3*\L) {};
                % Pata 2
                \node[bvertex, label=right:$-2$] (l2) at (90:\L) {};
                % Pata 3
                \node[wvertex] (l3a) at (0:\L) {};
                \node[bvertex, label=above:$-2$] (l3b) at (0:2*\L) {};
                \node[bvertex, label=above:$-5$] (l3c) at (0:3*\L) {};
               
                \draw[edge] (l1c) -- (l1b) -- (l1a) -- (c) -- (l2);
                \draw[edge] (c) -- (l3a) -- (l3b) -- (l3c);
            \end{tikzpicture}
        }
    \end{minipage}
    \hfill
    % --- TYPE IV ---
    \begin{minipage}[t]{0.22\textwidth}
        \centering
        \textbf{Type $IV$}\par\vspace{5pt}
        \scalebox{0.6}{ $IV(2)$
            \begin{tikzpicture}
                \def\L{1}
                \node[bvertex, label=below right:$-4$] (c) at (0,0) {};
                % Pata 1
                \node[wvertex] (l1a) at (180:\L) {};
                \node[bvertex, label=above:$-4$] (l1b) at (180:2*\L) {};
                % Pata 2
                \node[wvertex] (l2a) at (90:\L) {};
                \node[bvertex, label=right:$-4$] (l2b) at (90:2*\L) {};
                % Pata 3
                \node[wvertex] (l3a) at (0:\L) {};
                \node[bvertex, label=above:$-4$] (l3b) at (0:2*\L) {};
               
                \draw[edge] (l1b) -- (l1a) -- (c) -- (l2a) -- (l2b);
                \draw[edge] (c) -- (l3a) -- (l3b);
            \end{tikzpicture}
        }
    \end{minipage}
    \caption{\small{M-modifications of elliptic fibers (Section \ref{s3}) with only Wahl chains.}}
\label{fig:KAWA_WAHL}
\end{figure}

Figure \ref{fig:KAWA_WAHL} shows the dual graphs of the minimal resolutions of fibers that contain at least one singularity \cite[Thm. 4.2]{Kawa92}. Wahl chains represent the exceptional divisors of Wahl singularities. For \QHD surfaces, our result is as follows.

\begin{theorem} [Theorem \ref{TheoremClassM-resEllipticFiber}] \label{introthm2}
Let $W$ be a \QHD surface with $K_W$ nef. Assume that $(W\subset\mathcal{W})\to(0\in\mathbb{D})$ is a \QHD smoothing with $\kappa(W_t)=1$. Then we have an elliptic fibration $W \to B$ over some curve $B$, together with a classification of the fibers containing singularities of $W$. The dual graphs of the minimal resolutions of these fibers are shown in Figure \ref{fig:KAWA_WAHL} and in Figure \ref{fig:KAWA_QHD} plus their slidings (Definition \ref{sliding}).
\end{theorem}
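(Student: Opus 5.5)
The plan is to follow Kawamata's strategy in \cite{Kawa92}, replacing Wahl singularities by general \QHD singularities. The first step is to produce the elliptic fibration on $W$. Since $\mathcal{W}$ is $\Q$-Gorenstein and log terminal \cite{Wahl_2013}, $K_{\mathcal{W}}$ is $\Q$-Cartier and restricts to $K_W$ and $K_{W_t}$. The invariants $K^2$, $\chi(\O)$, $q$, $p_g$ are constant in the family, so $K_W^2=K_{W_t}^2=0$. Moreover, $\kappa(W_t)=1$ forces $K_{W_t}$ to be semiample of Iitaka dimension one on the minimal model.

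Using upper semicontinuity of $h^0(mK)$ together with the vanishing $H^1(W,mK_W)=0$ (Kawamata--Viehweg, since $mK_W - K_W$ is nef), I would show that plurigenera are constant in the family. Then, applying the abundance-type result for log terminal threefold pairs (or directly the relative basepoint-free theorem on $\mathcal{W}\to\D$, since $K_{\mathcal{W}}$ is relatively nef and $\mathcal{W}$ is klt), $K_W$ becomes semiample with $K_W^2=0$ and $\kappa=1$. The Iitaka map then gives $W\to B$ whose general fiber is a smooth elliptic curve, because the general fiber lies in the smooth locus and has $K\cdot F=0$.

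The second step is local. Fix a fiber $F_0$ containing singular points, and let $\pi\colon \tilde W\to W$ be the minimal resolution. The composition $\tilde W\to B$ is then a (possibly non-minimal) elliptic fibration. Its fiber over the point is $\pi^*F_0$, which contains the exceptional \QHD graphs (Figures \ref{fig QHD V3}, \ref{fig QHD V4}) together with the strict transforms $C_i$ of the components of $F_0$. I would write $\pi^*K_W=K_{\tilde W}+\sum a_jE_j$ with the discrepancies $-a_j\in(-1,0)$ from the \QHD discrepancy computations \cite{programa}. Since $K_W\equiv \lambda F$ numerically near the fiber, we get $K_W\cdot C_i=0$, hence $K_{\tilde W}\cdot C_i=-\sum a_j E_j\cdot C_i<0$ whenever $C_i$ meets the exceptional locus. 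This makes each such $C_i$ a $(-1)$-curve with $\sum a_jE_j\cdot C_i=1$.

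After contracting these and successively created $(-1)$-curves, we reach a Kodaira fiber (times a multiplicity $y$). This gives the notion of M-modification of Section \ref{s3}: the fiber is obtained from a Kodaira fiber by blow-ups such that the resulting configuration minus the $(-1)$-curves is a disjoint union of \QHD graphs.

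The third step is combinatorial. I would run through the Kodaira types $I_d$, $I_d^*$, $II$, $III$, $IV$, $II^*$, $III^*$, $IV^*$. For each, I would determine which blow-up sequences produce \QHD graphs, using three constraints:
\begin{itemize}
\item the equality $\sum a_jE_j\cdot C=1$ for each $(-1)$-curve $C$;
\item the requirement that the graph be one of the weighted homogeneous families of \cite{SSW_2008,BS_2011}, so that it has at most one node, of valence $3$ or $4$, with specific arms;
\item a counting identity: since the Milnor number is zero, $\chi_{top}$ of the fiber of $W$ must match, forcing $\operatorname{rank}$ of the exceptional lattice to equal the number of blow-ups plus the contribution of the contracted components.
\end{itemize}
Cycles of $I_d$ should yield the Wahl chains of Figure \ref{fig:KAWA_WAHL} (a \QHD graph with a node cannot sit inside a cycle except in finitely many ways). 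Star-shaped Kodaira fibers give finitely many sporadic configurations plus infinite families obtained by sliding (Definition \ref{sliding}). These produce Figure \ref{fig:KAWA_QHD}.

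The main obstacle is this third step: bounding and enumerating the ways a star-shaped \QHD graph with long arms can embed in a blown-up Kodaira fiber. My plan there is to use the discrepancy equation at the $(-1)$-curves to show that each $(-1)$-curve meets the \QHD graph at arm ends or at the central curve with prescribed coefficients. The determinant/self-intersection data of the families $\mathcal{W}(p,q,r)$, $\mathcal{N}(p,q,r)$, $\mathcal{M}(p,q,r)$ and the $\mathcal{B}$-types then reduce the problem to a finite list of Diophantine conditions. Any remaining infinite freedom should be exactly absorbed by slidings.
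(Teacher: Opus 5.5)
Your first step is fine in substance, and it is actually a more elementary route than the paper's appeal to the proof of \cite[Thm.~4.2]{Kawa92} and \cite[Thm.~6.3]{Naka85}. Upper semicontinuity for the Cartier divisors $mK_{\mathcal{W}}$ gives $\kappa(K_W)\ge 1$. A nef $K_W$ with $K_W^2=0$ and Iitaka dimension $1$ on a surface with rational singularities is numerically a positive multiple of the fibre of the pencil it defines. Drop the Kawamata--Viehweg vanishing and the basepoint-free theorem: $(m-1)K_W$ is not big, so neither applies, and you do not need them.

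The genuine gap is in steps two and three. Your premise that all discrepancies lie in $(-1,0)$ is false. That range is the log terminal case, i.e.\ Wahl singularities, which is exactly Kawamata's setting. For a non-log canonical \QHD singularity, $d(C_0)=-1-\chi/e\in(-2,-1)$ (Proposition~\ref{prop. Sol WHS}, Remark~\ref{factsQHD}(7)), and typically some interior leg curve has discrepancy exactly $-1$. For instance, in family (f) of the Appendix, $1+d_{3,k}=(k-q)/(q+6)$, so $d_{3,q}=-1$. Hence your fibre condition $\sum a_jE_j\cdot\Gamma=1$ can be met by a $(-1)$-curve meeting a \emph{single} exceptional curve once, at an interior leg curve. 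This contradicts your expectation that $(-1)$-curves meet only arm ends or the centre.

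This is precisely the new phenomenon the paper has to handle. The configurations of Figure~\ref{fig:KAWA_QHD} are the unslid models, whose $(-1)$-curves hang off one curve of the star graph: elements of $T_1$, and of $S_1\setminus T_1$ for valency~$4$ (Definition~\ref{def ShTh}). Most of Section~\ref{s2} is needed to control them (Proposition~\ref{DiscrS1}, Lemma~\ref{lema S1T1}, Theorems~\ref{ex S1-T1} and~\ref{prop. t1}). Elements of $T_1$ are chains $[1,2,\dots,2]$ attached to a curve with $d=-1$. If $S_1\setminus T_1\neq\emptyset$, that curve must be the central curve of a valency~$4$ graph, with an explicit list. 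With your constraints these configurations are either wrongly excluded or left uncontrolled.

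Beyond that, step three is a plan rather than an argument, and it lacks the global identities that make the enumeration finite. From $K_W^2=K_S^2=0$ and Propositions~\ref{q}, \ref{pi(C)-T}, \ref{keyEqual}, the paper extracts $\pi(C)=F_{red}$, $v_4=\chi(\mathcal{O}_{\pi(C)})$ and $t_1+t_2=|\pi(C)|$ (Remark~\ref{facts QHD elliptic surface}). These identities do three things:
\begin{itemize}
\item they exclude $I_n^*,II^*,III^*,IV^*$ entirely (Proposition~\ref{Nostar});
\item they exclude every non-Wahl singularity over $I_n$, not merely all but ``finitely many ways'' (Proposition~\ref{In});
\item they bound the number of blow-ups.
\end{itemize}
Next, since a slide of an M-modification is again one (Proposition~\ref{M-modfiber}), one can unAslide and slide to reach $T_1=\emptyset$ (Corollary~\ref{slided surface}). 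Then all blow-ups are at the three nodes of the minimal SNC model $F'$ of a fibre $II$, $III$, $IV$, and its central $(-1)$-curve must become the node of the star graph. Only then does the finite case analysis of Lemma~\ref{Mres II-III-IV} go through: fixing curves, matching valency~$3$ families, and finding M-resolutions of the leftover cyclic quotient singularity. Finally, your guess that remaining infinite freedom is ``absorbed by slidings'' has the roles reversed. Figure~\ref{fig:KAWA_QHD} itself contains infinite families in $p,q$, and the slidings are an additional set that must separately be shown to be M-modifications.
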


\begin{figure}[ht]
    \centering
    \includegraphics[scale=0.55]{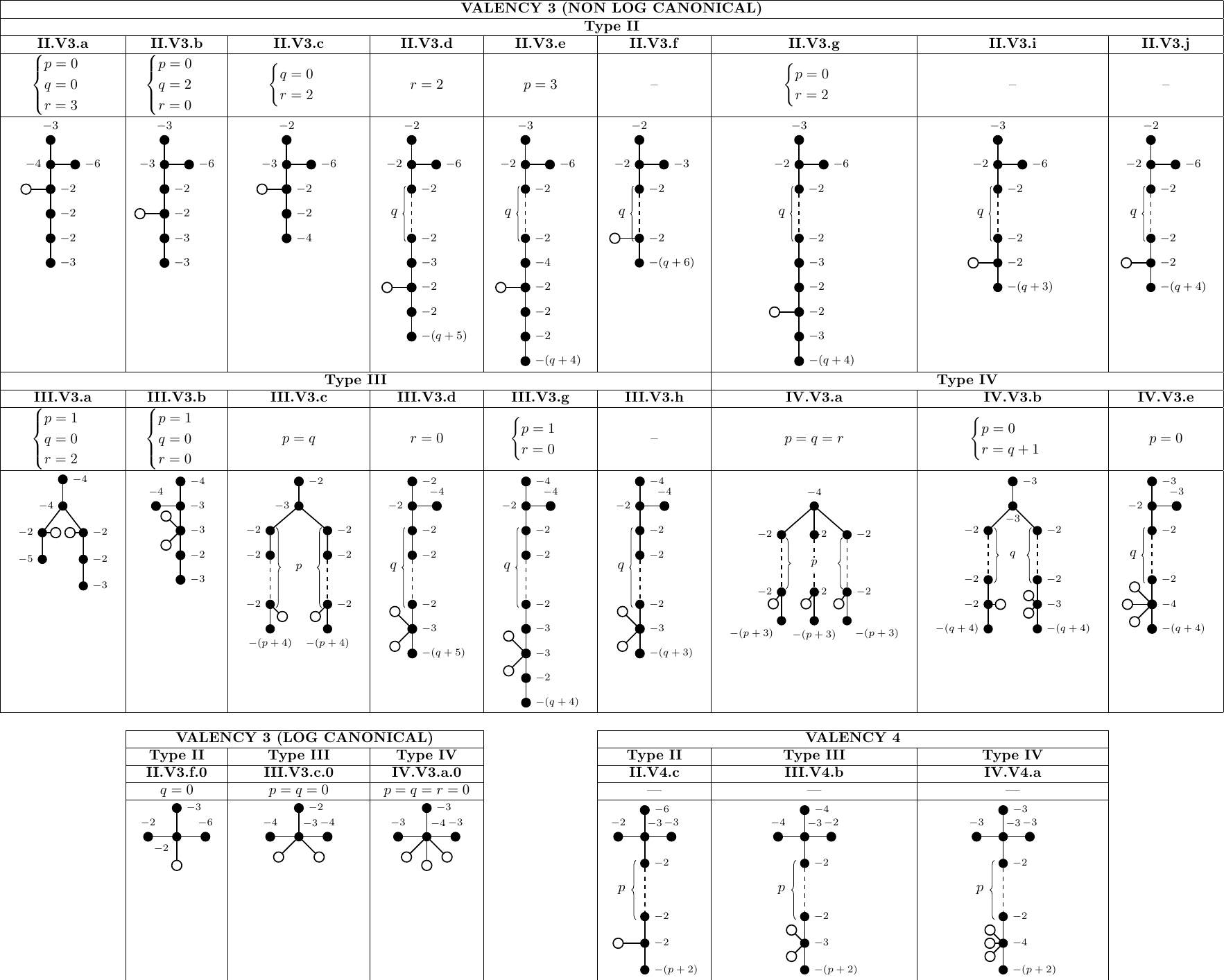}
    \caption{\small{M-modifications of elliptic fibers with a non-Wahl \QHD singularity.}}
    \label{fig:KAWA_QHD}
\end{figure}

The proof of Theorem \ref{introthm2} involves various ingredients. In Section \ref{s2}, we consider an arbitrary \QHD surface $W$ with $K_W$ nef, and compute certain intersection numbers in terms of the minimal model $S$ of the minimal resolution $X$ of $W$. This is similar in spirit to the work in \cite{FRU_23} and \cite{MNU_24}, but the analysis is more involved when the singularities are not log canonical. For instance, the exceptional divisors of $X \to S$ whose intersection with the exceptional divisor of $X \to W$ is $1$ are trivial in the log canonical case, whereas in our setting they form intricate families, which we classify in Theorems \ref{ex S1-T1} and \ref{prop. t1}. In addition, we work with M-modifications of elliptic fibers having only \QHD exceptional divisors, which serve as analogues of M-resolutions \cite{BC94} and M-modifications \cite[Sect.~6]{Kol91}, \cite{Ko24}. This is done in Section \ref{s3}, and relies on specific boundedness properties of \QHD singularities established in Section \ref{s1}. As a consequence, M-modifications involving a non-Wahl singularity can occur only over fibers of types II, III, or IV. Thus, $I_d$ fibers arise exclusively from Wahl singularities, and no other types of singular fibers appear. 

\begin{definition}
Let $a,b \in \Z_{>0}$. A Dolgachev surface $Y$ is a relatively minimal elliptic fibration $Y \to\P^1$ with $q(Y)=p_g(Y)=0$ and exactly two multiple fibers of multiplicities $a$ and $b$. The set of all of them is denoted by $D_{a,b}$.
\label{dolgachev}
\end{definition}

In this way, $D_{1,1}$ consists of rational elliptic fibrations with sections, $D_{1,b}$ consists of rational elliptic Halphen surfaces of index $b$ \cite[4.9]{CDL25}, $D_{2,2}$ consists of Enriques surfaces \cite{CDL25}, and the surfaces in $D_{a,b}$ for all other $a,b$ have Kodaira dimension $1$ \cite{D77}. In Section \ref{s5}, we study \QHD degenerations of Dolgachev surfaces and we prove the next result.

\begin{theorem} [Theorem \ref{DolgachevSmoothings}, Corollary \ref{allDegenerationsDolgachev}] \label{introthm3}
Let $a,b \geq 2$ be integers with $ab>4$. Then for each \QHD singularity from Figures \ref{fig:KAWA_WAHL} and \ref{fig:KAWA_QHD} whose $\lambda$ (in Table \ref{lambda}) has denominator $a$, we have a \QHD degeneration of surfaces in $D_{a,b}$ into a $W$ with only that \QHD singularity. Its associated surface $S$ belongs to $D_{a',b}$ for some $a'$ that divides $a$. Every \QHD degeneration of surfaces in $D_{a,b}$ with only one \QHD singularity appears that way.
\end{theorem}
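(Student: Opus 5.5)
We split the proof into an existence part and an exhaustiveness part. Both rest on Theorem \ref{introthm2}: a \QHD degeneration $W$ of surfaces in $D_{a,b}$ with $ab>4$ has $\kappa(W_t)=1$ and, after replacing $W$ by its canonical model if necessary, $K_W$ nef. So $W$ carries an elliptic fibration $W\to\P^1$, and the fiber through the singular point is an M-modification from Figures \ref{fig:KAWA_WAHL} or \ref{fig:KAWA_QHD}, or a sliding of one.

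\textbf{Exhaustiveness.} Start from a degeneration $W$ with a single \QHD singularity. Let $X\to W$ be the minimal resolution and $S$ the minimal model of $X$. We use the intersection computations of Section \ref{s2}. The invariants $q=p_g=0$, $K^2=0$ and $\chi_{top}=12$ are constant in a \QHD smoothing. Hence $S$ is a relatively minimal elliptic surface over $\P^1$ with $q=p_g=0$. On $S$ the singular fiber becomes a multiple fiber of some multiplicity $y$, the ``$y$'' in $yI_d(n,a)$ or the analogous multiplicity for types II, III and IV. The other multiple fiber, of multiplicity $b$, is disjoint from the singularity and survives unchanged.

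Next compare canonical classes. Write $K_W\equiv(\text{const})F+\sum(1-1/m_i)F_i$ on $W$, and $K_{W_t}\equiv -F+(1-1/a)F_a+(1-1/b)F_b$ on the general fiber. The coefficient $\lambda$ recorded in Table \ref{lambda} is exactly the contribution of the singular fiber to $K_W$ in terms of $F$. Matching these classes in $H^2(W_t,\Q)$, via the specialization isomorphism available for \QHD smoothings, forces $\lambda\equiv 1-1/a$ modulo integers. So the denominator of $\lambda$ is $a$. Similarly, the multiplicity $a'$ of the fiber on $S$ divides $a$: the fiber class of $S$ pulled back to $W$ is $a/a'$ times a reduced fiber. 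This shows that every one-singularity degeneration has the form claimed.

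\textbf{Existence.} Take a fiber configuration from the figures whose $\lambda$ has denominator $a$, and let $a'$ be the associated multiplicity. We must realize it inside some $S\in D_{a',b}$. Start from a rational elliptic surface with a section and a singular fiber of the required Kodaira type ($I_d$, II, III or IV), which exists by Persson's list. Apply logarithmic transforms to get multiplicities $a'$ on that fiber and $b$ on a smooth fiber; this gives $S\in D_{a',b}$. Then blow up $S$ at the points prescribed by the M-modification (the white vertices) to get $X$, and contract the \QHD chain or star to get $W$.

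It remains to show that $W$ admits a \QHD smoothing. The plan is to prove $H^2(T_X(-\log E))=0$, which makes local-to-global deformations unobstructed; the local \QHD smoothing then globalizes. For the vanishing, use Serre duality to reduce to $H^0(\Omega_X(\log E)\otimes K_X)=0$. Since $K_X$ is supported on fibers and exceptional curves, with $p_g=0$, the standard argument of Lee--Park type applies. The general fiber $W_t$ has the invariants of $D_{a,b}$, and by the first part its multiplicities are $a$ and $b$, so $W_t\in D_{a,b}$.

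\textbf{Main obstacle.} The hardest step is the unobstructedness for non-Wahl \QHD stars, because the log tangent argument is not automatic there. If the vanishing fails, fall back on the Seifert partial resolution followed by semistable flips to reach an slc model. Alternatively, verify $H^2=0$ case by case using the explicit negativity of $K_X$ on the branches.
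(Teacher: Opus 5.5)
Your proposal has a genuine gap in each half. The existence route you choose, starting directly from $S\in D_{a',b}$, is different from the paper's, but it leaves the decisive step unproved.

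\textbf{Exhaustiveness.} Comparing $K_{W_t}$ with $K_W$ in $H^2(W_t,\Q)$ cannot identify $\lambda$. Every fiber, multiple or not, is numerically a rational multiple of $F$, so specialization gives only the scalar identity $\bar\lambda+\lambda=1-\frac1a-\frac1b$. Here $\bar\lambda=-1+\sum_j\frac{c_j-1}{c_j}$ records the still unknown multiple fibers of $S$. This identity does not force $\lambda=\frac{a-1}{a}$. For example, $\lambda=\frac67$ (II.v3.a) on an $S$ with one further fiber of multiplicity $7$ gives $K_W\equiv\frac57F$, which is numerically the canonical class of a surface in $D_{4,28}$. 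Your sentence that ``the other multiple fiber, of multiplicity $b$, survives unchanged'' is exactly the conclusion $S\in D_{a',b}$, so it is assumed rather than proved.

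What is missing is the local argument behind Theorem \ref{DolgachevSmoothings}:
\begin{itemize}
\item Multiplicities of multiple fibers are constant in smooth families \cite[Prop. 7.1]{FM94}. So each multiple fiber of $W$ away from the singular point is the limit of a multiple fiber of $W_t$ of the same multiplicity.
\item All remaining multiple fibers of $W_t$ must degenerate into the fiber through the singularity.
\item That fiber contributes $\lambda+\frac{y-1}{y}\in(0,1)$, while each multiple fiber contributes at least $\frac12$. Hence exactly one degenerates there, giving $\lambda+\frac{y-1}{y}=\frac{a-1}{a}$ up to swapping $a,b$.
\end{itemize}
Only after this does $S\in D_{a',b}$ follow. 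Separately, you may not ``replace $W$ by its canonical model'': that changes the central fiber and its singularities. $K_W$ nef must be part of the hypotheses in order to invoke Theorem \ref{TheoremClassM-resEllipticFiber}. Your existence step ``by the first part its multiplicities are $a$ and $b$'' inherits this gap.

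\textbf{Existence.} The step you call the main obstacle is the heart of the proof and is left undone, and your starting surface removes the tool that makes it work. On $S\in D_{a',b}$ built by logarithmic transforms, the $b$-fiber enters $K_{S'}+T_1$ as the effective term $(b-1)F_b$. For instance, $K_{S'}+A+B+C=(b-1)F_b-2E_1$ when $a'=1$ and $F_1$ is of type II, with $E_1$ the $(-1)$-curve over $F_1$. In the paper's rational set-up the same computation yields a difference of $(-1)$-curves, and this is what Lemma \ref{ObstrMainConstr} feeds into the argument of \cite{PSU13}. The observation that ``$K_X$ is supported on fibers and $p_g=0$'' does not give $H^0(\Omega^1_X(\log C)(K_X))=0$.

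Moreover, $H^2(T_X(-\log C))=0$ alone is not enough for valency $4$ stars, which are not taut. Theorem \ref{ObstrThm} also needs $\alpha$ to be onto, which the paper obtains by choosing the fourth point on the central curve freely. Your fallback is not an alternative: the Seifert partial resolution and the flips presuppose a \QHD smoothing of $W$.

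The paper sidesteps all of this by never performing a logarithmic transform on $S$:
\begin{itemize}
\item It starts from a rational elliptic surface with $K_S=-F$ (or with a single $yI_1$ fiber in the Wahl case).
\item It encodes the multiplicity $b$ as a second Wahl chain $[b+2,2,\ldots,2]$ over an $I_1$ fiber, and proves unobstructedness there.
\item It identifies $W_t\in D_{a,b}$ via Corollary \ref{Knef}, \cite{FS97} and \cite[Prop. 7.1]{FM94}.
\item It obtains the degeneration with only the chosen singularity by smoothing just $\frac{1}{b^2}(1,b-1)$; the associated $S$ then lies in $D_{a',b}$ (Theorem \ref{DolgachevSmoothings} and Corollary \ref{allDegenerationsDolgachev}).
\end{itemize}
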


For example, for $p\geq 0$, Figure \ref{exDolga} shows surfaces in $D_{p+3,b}$ degenerating to a surface $W$ with a \QHD singularity whose configuration is IV.v3.a. The associated $S$ belongs to $D_{1,b}$. The proof of Theorem \ref{introthm3} uses Theorem \ref{introthm2}, but also requires a realization of \QHD degenerations. To this end, in Section \ref{s4} we compute the relevant cohomological obstructions, which are then applied in Section \ref{s5} to a specific construction of $W$. We show that $W$ has no local-to-global obstructions, and hence admits a \QHD smoothing. Moreover, we prove that the general fiber is the desired Dolgachev surface. The argument is based on a canonical class formula for $W$ (Section \ref{s5}), and on the interpretation of the logarithmic transformation as a rational blow-down \cite{FS97} of a Wahl chain.

\begin{figure}[htbp]
\includegraphics[width=14cm]{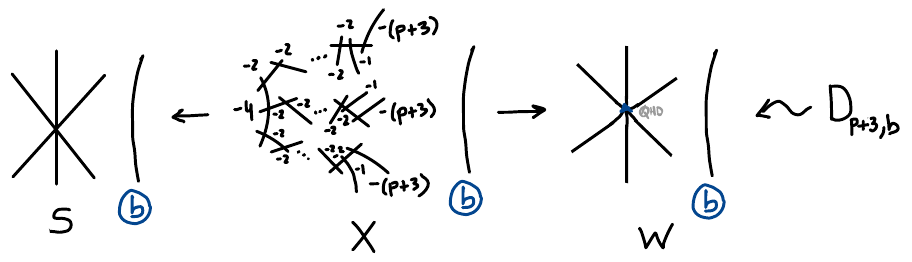}
\caption{\small{A \QHD degeneration $W$ of $D_{p+3,b}$, the minimal resolution $X \to W$ and the minimal model $X \to S$. Each admits an elliptic fibration over $\P^1$.}} \label{exDolga}
\end{figure}

The \QHD degenerations of Dolgachev surfaces in Theorem \ref{introthm3} are usually non-log canonical (e.g. Figure \ref{exDolga} when $p\neq 0$). In Section \ref{s6}, we compute their minimal semi log canonical (slc) models. We begin with the construction in Section \ref{s5}, starting from a rational elliptic surface with sections. We then consider the sliding of one of the non-log canonical \QHD singularities appearing in Figure \ref{fig:KAWA_QHD}. This produces our initial \QHD smoothing $(W \subset \W) \to (0 \in \D)$. Similarly as in \cite{Wahl_2013}, we consider its Seifert partial resolution $\W' \to \D$. We obtain an slc degeneration consisting of two surfaces $W'$ (proper transform of $W$) and $Z$ (compactified Milnor fiber) glued along a $\P^1$ with orbifold normal crossing singularities. It turns out that $\W' \to \D$ is not minimal, i.e., the canonical class $K_{\W'}$ is not nef. In Section \ref{s7}, we prove the existence of a semistable flip $\W^+ \to \D$ \cite{Kawa88,Mori02}, where the singular fiber again has two irreducible components $W^+$, $Z^+$ glued along a $\P^1$ with orbifold normal crossing singularities. In addition, there are two extra Wahl singularities, one on each component; see Figure \ref{flipfigure}.

\begin{figure}[htbp]
\includegraphics[width=11cm]{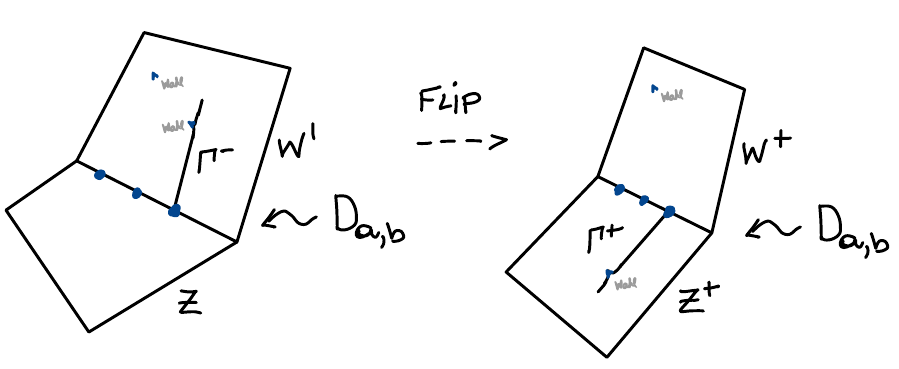}
\caption{\small{The flip of the Seifert partial resolution of $\W \to \D$.}}  
\label{flipfigure}
\end{figure}

\begin{theorem}[Theorem \ref{NefSLCLimit}] \label{introthm4}
The smoothing $\W^+ \to \D$ is a minimal slc model of the \QHD degeneration $\W \to \D$ of Dolgachev surfaces.
\end{theorem}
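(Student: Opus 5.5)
To prove that $\W^+ \to \D$ is a minimal slc model, I will verify three things. The total space $\W^+$ must be $\Q$-Gorenstein with slc (indeed semistable-type) central fiber. The canonical class $K_{\W^+}$ must be nef over $\D$. And $\W^+$ must be birational to $\W$ over $\D$, an isomorphism away from the central fiber. The first and last points come from the construction in Sections \ref{s6} and \ref{s7}. The Seifert partial resolution $\W'\to\W$ is an isomorphism over $\D\setminus\{0\}$, and its central fiber $W'\cup Z$ has only orbifold normal crossing singularities along the gluing $\P^1$, so $(\W', W'+Z)$ is dlt/slc. The semistable flip $\W'\dashrightarrow \W^+$ of \cite{Kawa88,Mori02} preserves this property. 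It only introduces Wahl singularities (one on $W^+$, one on $Z^+$) and keeps an orbifold normal crossing gluing along a $\P^1$. So the substance of the proof is nefness, together with the observation that the general fiber $W_t$ is a Dolgachev surface with $\kappa=1$, so $K$ is nef there already.

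By the relative cone theorem, $K_{\W^+}$ is nef over $\D$ if and only if $K_{\W^+}\cdot C\ge 0$ for every curve $C$ in the central fiber $W^+\cup Z^+$. I will restrict to each component using adjunction for the reduced fiber:
$$K_{\W^+}|_{W^+}=K_{W^+}+\Gamma,\qquad K_{\W^+}|_{Z^+}=K_{Z^+}+\Gamma,$$
where $\Gamma\cong\P^1$ is the double curve, with the appropriate orbifold (different) correction at the two singular points of $\Gamma$. It then suffices to show that the log canonical pairs $(W^+,\Gamma+\mathrm{Diff})$ and $(Z^+,\Gamma+\mathrm{Diff})$ have nef log canonical divisor.

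For $W^+$ I will pull back to the minimal resolution, which is a blow-up of the elliptic surface $S\in D_{a',b}$ from Section \ref{s5}. Using the canonical class formula of Section \ref{s5}, I will write $K_{W^+}+\Gamma$ as a positive rational combination of the elliptic fiber class and curves in the degenerate fiber containing the former \QHD configuration. Every curve meeting that configuration negatively should be contracted or flipped. This reduces nefness to a finite check of intersection numbers with the curves of that fiber, namely the discrepancies computed in Section \ref{s1} and Table \ref{lambda}. Curves outside that fiber are vertical or horizontal for the elliptic fibration and have nonnegative intersection with the fiber class.

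For $Z^+$, the flipped compactified Milnor fiber, I will use its explicit description. $Z$ is a rational surface carrying the Seifert (weighted homogeneous) structure, and $Z^+$ is obtained by flipping away the single $K$-negative curve of $\W'$, which is the flipping curve lying on $Z$ by the analysis in Section \ref{s7}. After the flip, $Z^+$ has one Wahl singularity and a Picard number small enough that $K_{Z^+}+\Gamma$ can be checked against its few extremal curves. Its numerical class will be a positive multiple of a curve class, computed from the flip numerics (the Mori/HTU data of the extremal neighborhood).

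I expect the main obstacle to be the $Z^+$ component, and in particular doing the check uniformly over all sliding families of \QHD singularities in Figure \ref{fig:KAWA_QHD}. My first attempt will be to show that $K_{Z^+}+\Gamma\equiv 0$ or is proportional to $\Gamma$ with positive coefficient, using $(K_{\W^+}+W^++Z^+)|_{Z^+}=K_{\W^+}|_{Z^+}$ and $W^++Z^+\sim 0$ over $\D$. This gives
$$K_{\W^+}\cdot C = K_{\W^+}|_{W^+}\cdot C'$$
for curves $C$ numerically related to curves $C'$ in $W^+$ through $\Gamma$, which transfers positivity from the already-established $W^+$ side.
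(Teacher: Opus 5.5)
\textbf{The gap is the $Z^+$ component.} Your reduction of nefness of $K_{\W^+}$ to nefness of $K_{W^+}+\Delta$ and $K_{Z^+}+\Delta$ matches the paper. Your $W^+$ plan, computing the log canonical class through the elliptic fibration on the minimal resolution, is in the right spirit; it gives exactly $\phi^*(K_{W^+}+\Delta)\equiv\frac{5n-6}{6n}F$. You correctly flag $Z^+$ as the main obstacle, but you do not handle it, for three reasons.

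\emph{The flip picture is inverted.} The flipping curve $\Gamma^-$ is the strict transform of $\bar F_1$ on $W'$, not a curve on $Z$. The flip contracts $\Gamma^-$ on the $W'$ side and \emph{extracts} the flipped curve $\Gamma^+$ on the $Z$ side. So $Z^+\to Z$ contracts $\Gamma^+$, $\rho(Z^+)=2$, and the new Wahl singularity of $Z^+$ lies on $\Gamma^+$.

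\emph{The extremal-ray check is empty.} ``Checking against its few extremal curves'' never says what the second extremal ray of $\overline{NE}(Z^+)$ is.

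\emph{Transferring positivity from $W^+$ cannot supply the sign.} The only curve the two components share is $\Delta$, and the $W^+$ side only gives $K_{\W^+}\cdot\Delta=(K_{W^+}+\Delta)\cdot\Delta=-2+\frac12+\frac23+\frac56=0$. Positivity on $Z^+$ is a separate fact, and it is exactly where non-log-canonicity enters. Your option $K_{Z^+}+\Delta\equiv 0$ is impossible, since $K_{\W^+}\cdot\Gamma^+>0$ for a flipped curve.

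\textbf{What the paper uses instead} is an explicit model of $Z^+$. Its singularities along $\Delta$ are $\frac12(1,1)$, $\frac13(1,2)$, $\frac16(1,5)$, the duals of those on $W^+$. So in the minimal resolution, $\Delta$ becomes the central $(-2)$-curve of a $II^*$ fiber of an elliptic fibration, and the new Wahl chain lies in a fiber of type $yI_1(n,a)$. Using $d(G_i)=-1+\alpha_i/n$ on the Wahl chain and $\phi^*\Delta\equiv\frac16F$, one gets $\phi^*(K_{Z^+}+\Delta)\equiv\frac{yn-6}{6yn}F$. Here $yn$ is the denominator of $\lambda$, which exceeds $6$ for every non-log-canonical type II entry of Table \ref{lambda}.

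\textbf{How to rescue your own guess.} Your claim that $K_{Z^+}+\Delta$ is a positive multiple of $\Delta$ is true and can be proved without the fibration. We have $\rho(Z^+)=2$ (since $\rho(Z)=1$), $\Delta^2_{Z^+}=-2+\frac12+\frac23+\frac56=0$, and $K_{\W^+}\cdot\Delta=0$. Hodge index then forces $K_{\W^+}|_{Z^+}\equiv\alpha\Delta$. Moreover $\alpha>0$ because $K_{\W^+}\cdot\Gamma^+>0$ and $\Gamma^+\cdot\Delta>0$, and $\Delta$ is nef. Either way, the sign comes from the flip (equivalently from $yn>6$), not from $W^+$.
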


In the recent preprint \cite{LL25}, D. Lee and Y. Lee show that Dolgachev surfaces $D_{a,b}$ with $\gcd(a,b)=1$ degenerate into surfaces of types (1) $I_0^*/I_0^*$, (2) $II/II^*$, (3) $III/III^*$, and (4) $IV/IV^*$. These surfaces consist of two irreducible components glued along a $\P^1$ with only orbifold normal crossing singularities. One irreducible component is constructed from a surface in $D_{a,1}$ together with a fiber of type $I_0^*,II,III,IV$, while the other is constructed from a surface in $D_{1,b}$ together with a fiber of type $I_0^*,II^*,III^*,IV^*$; see details in \cite[Thm. 5.2, Cor. 5.3]{LL25}. The following result is proved in Section \ref{s7}. 

\begin{theorem}[Corollary \ref{LeeLee}] \label{introthm5}
The singular fiber of $\W^+ \to \D$ has unobstructed $\Q$-Gorenstein deformations. The $\Q$-Gorenstein smoothing of its Wahl singularities produces the Lee–Lee degenerations of types (2), (3), and (4). 
\end{theorem}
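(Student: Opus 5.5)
The proof has two parts. First, the singular fiber $W^+\cup Z^+$ of $\W^+\to\D$ has unobstructed $\Q$-Gorenstein deformations. Second, smoothing its two Wahl singularities (one on each component) while keeping the double curve produces the Lee--Lee degenerations of types (2), (3), (4).

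For unobstructedness, we use the local-to-global sequence for $\Q$-Gorenstein deformations of an slc surface $X_0=W^+\cup_C Z^+$ with $C\simeq\P^1$. Obstructions lie in $\mathbb{T}^2_{QG}(X_0)$. Since the singularities are Wahl points and orbifold normal crossing points, the local $\mathcal{T}^2_{QG}$ sheaves vanish. It therefore suffices to show $H^2(X_0,T^0_{X_0})=0$, which we reduce, via the normalization and the standard exact sequence for the double curve, to the vanishing of
\[
H^2\bigl(T_{\widetilde W^+}(-\log E_1)\bigr)\quad\text{and}\quad H^2\bigl(T_{\widetilde Z^+}(-\log E_2)\bigr),
\]
where $\widetilde{\ }$ denotes minimal resolution and $E_i$ is the reduced divisor consisting of the exceptional chains together with the preimage of $C$. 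This is the criterion of Section \ref{s4}, in the form of the Lee--Park style vanishing. On each side we dualize to $H^0(\Omega^1(\log E)(K))$. We then argue as in Section \ref{s5}:
\begin{itemize}
\item \textbf{$Z^+$ side.} The surface $Z^+$ is a compactified Milnor fiber after a flip, hence rational, with $-K$ effective relative to $E$. The vanishing follows from the rationality of $\widetilde Z^+$ together with an anticanonical curve meeting $E$ in a controlled way.
\item \textbf{$W^+$ side.} The surface $\widetilde W^+$ is a blow-up of an elliptic surface in $D_{a',1}$ or $D_{1,b}$. Here we use the canonical class formula of Section \ref{s5} to write $K$ as a negative rational multiple of a fiber plus exceptional terms, which kills log $1$-forms twisted by $K$.
\end{itemize}
I expect this to be the main obstacle: the twisted $H^0$ needs an explicit divisor $K+E$ on each component, and the configurations coming from the flip differ across types II, III and IV.

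Given unobstructedness, the $\Q$-Gorenstein deformations of $X_0$ surject onto the product of local deformation spaces. We can therefore choose a one-parameter deformation that smooths both Wahl points while keeping the orbifold normal crossing locus along $C$ (a locally trivial deformation near $C$). The general fiber is again two components glued along $C$ with the same orbifold normal crossing points.

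It remains to identify these components with the Lee--Lee ones. A $\Q$-Gorenstein smoothing of a Wahl point is a rational blow-down. As in Section \ref{s5}, the Wahl chain on $W^+$ corresponds to a logarithmic transformation of the relevant multiple fiber. We then compare invariants: $K^2$, $\chi$, the multiplicities of the multiple fibers computed from the canonical class formula, and the configuration of curves meeting $C$, which after smoothing becomes a fiber of type II, III or IV on one side and II$^*$, III$^*$ or IV$^*$ on the other. With these we identify the smoothed $W^+$ with a surface from $D_{a,1}$ carrying a fiber of type II, III or IV, and the smoothed $Z^+$ with one from $D_{1,b}$ carrying the dual starred fiber, glued as in \cite[Thm. 5.2]{LL25}. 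The type is dictated by which of the configurations of type II, III or IV in Figure \ref{fig:KAWA_QHD} we started from. This explains why only types (2)--(4) arise and not $I_0^*/I_0^*$.
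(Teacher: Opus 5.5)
There is a genuine gap at the first step of your unobstructedness argument. For the non-normal fiber $X_0=W^+\cup_\Delta Z^+$, the local-to-global spectral sequence gives $\mathbb{T}^2_{QG}(X_0)$ three graded pieces: $H^0(\mathcal{T}^2_{QG})$, $H^1(\mathcal{T}^1_{QG})$ and $H^2(\mathcal{T}^0_{QG})$. You correctly kill the first, since the local index-one covers at the Wahl points and along $\Delta$ are hypersurfaces. But you then conclude that it suffices to show $H^2(T^0_{X_0})=0$, which silently drops $H^1(\mathcal{T}^1_{QG})$.

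That term vanishes automatically only when $\mathcal{T}^1_{QG}$ has zero-dimensional support, as for normal surfaces. Here its restriction to the double curve $\Delta\simeq\P^1$ is the line bundle $\O_\Delta(\Delta_1^2+\Delta_2^2)$, where $\Delta_i$ is the double curve regarded on each component. So you must check $\Delta_1^2+\Delta_2^2\geq -1$. This is hypothesis (2) of Theorem \ref{LPnoObstructions}, and it is a real computation. In the minimal resolutions, $\Delta$ is a $(-1)$-curve on the $W^+$ side (central curve of the SNC model of the $II$, $III$, $IV$ fiber) and a $(-2)$-curve on the $Z^+$ side (central curve of the starred fiber). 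The cyclic quotient singularities on the two sides are dual, so each of the $\#$ singular points on $\Delta$ contributes $\frac{q}{m}+\frac{m-q}{m}=1$. Hence $\Delta_1^2+\Delta_2^2=-3+\#$, which is $0$ or $1$. Without this, your argument does not yield $\mathbb{T}^2_{QG}=0$.

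Your sketch of the component vanishings also fails as stated on the $W^+$ side.
\begin{itemize}
\item There $-K_{\widetilde W^+}$ is not effective. After blowing up over the $II$, $III$, $IV$ fiber, the only anticanonical curve is supported on that fiber, so it misses the node of the $I_1$ fiber where $[b+2,2,\ldots,2]$ is built.
\item The exceptional terms in your expression for $K$ have positive coefficients, so twisting by $K$ does not by itself kill log $1$-forms.
\item The paper instead twists by $-\Delta_i$, i.e.\ works with $K+\Delta$, using that $-(K_{Y_0}+\Delta)\equiv(\frac{1}{y}-m)F$ is effective (Lemma \ref{calculito}, Corollary \ref{effective}). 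This requires $y=1$ on the unstarred side and $y\leq 6,4,3$ on the starred side, which holds because $y$ is the multiplicity of a component of $II^*$, $III^*$, $IV^*$.
\item It then absorbs the $(-1)$-curve $G_1$ and runs the residue and Chern class argument (Lemma \ref{computeObstr0}). It builds the Wahl chain by the add/delete principle (Proposition \ref{erase/add}) and descends to the singular components via Lee--Park (Proposition \ref{LP07}).
\end{itemize}

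Finally, your identification swaps the indices. $W^+$ carries $\frac{1}{b^2}(1,b-1)$ and becomes the index-$b$ side with the $II$, $III$, $IV$ fiber. $Z^+$ carries the Wahl singularity created by the flip, sitting in a $yI_1(n,\cdot)$ configuration with $yn=a$ the denominator of $\lambda$, and becomes the index-$a$ side with the starred fiber. Up to relabeling this is harmless, but the equality $yn=a$ coming from the flip is what should replace your comparison of invariants. The remaining step, smoothing the two Wahl points while keeping $\Delta$, is as in the paper.
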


For the Lee--Lee degeneration of type (1), see Remark \ref{case (1) LeeLee}.

\subsubsection*{Acknowledgments} We are grateful to Paul Hacking, Yujiro Kawamata, Donggun Lee, Yongnam Lee, Jenia Tevelev, and Jonathan Wahl for helpful discussions. Part of the results were obtained during the authors’ visit to the Center for Complex Geometry at IBS in Daejeon during the first two weeks of November 2025; we thank them for their hospitality. The first-named author was funded by the ANID doctoral scholarship 21220497, and the second-named author by FONDECYT regular grant 1230065.

%\subsubsection*{Notation}

%\begin{itemize}
 %   \item A $(-m)$-curve $\Gamma$ in a nonsingular surface $X$ is a $\Gamma=\P^1$ such that $\Gamma^2=-m$.
    
  % \item A Du Val configuration on a nonsingular surface is the minimal (reduced) exceptional divisor of a Du Val singularity.  
    
   %\item A multisection of a fibration is an irreducible curve whose intersection with a fiber is positive. 
    
   % \item[]   
%\end{itemize}

%%%%%%%%%%%%%%%%%%%%%%%%%%%%%%%%%%%%%%%%%%%%%%%%%%%%%%%%%%%%%%%%%%%%%%%%%%%%%%%%%%%%%%%%%%%%%%%%%%%%%%%%%%%%%%%%%%%%%%%%%%%%%%%%%%%%%%%%%%%%%%%%%%%%%%%%%%
\section{Basics on \QHD singularities} \label{s1}

For us, a \textit{surface singularity} $(p\in W)$ is an analytic germ of a normal complex surface $W$ at a singular point $p$. A \textit{resolution} of $(p\in W)$ is a proper birational morphism $\phi \colon X \to W$ such that $X$ is a nonsingular surface. The pre-image of the singular point $p$, seen as a reduced divisor, is called \textit{exceptional divisor}. A resolution is \textit{minimal} if there are no $(-1)$-curves in its exceptional divisor. Every surface singularity has a minimal resolution. On the other hand, if a nonsingular projective surface $X$ contains disjoint exceptional divisors of rational singularities $p_i$, then there is a birational morphism $\phi \colon X\to W$ that precisely contracts them, and so $W$ has the singularities $p_i$ and it is projective. This is the Artin Contraction Theorem \cite[Thm. 2.3]{Art62}.

Given a resolution $\phi \colon X \to W$ of $(p\in W)$ with exceptional divisor $\sum_{i=1}^{r}{C_i}$, where $C_i$s are irreducible curves, the \textit{discrepancy} of $C_i$ is the rational number $d(C_i)$ that solves (uniquely) the system of linear equations $$K_X \cdot C_j = \sum_{i=1}^r d(C_i) \, C_i \cdot C_j.$$ This makes sense because the $r \times r$ matrix $(C_i \cdot C_j)$ is negative definite \cite[III]{BWHKPCV_2004}. We say that $(p \in W)$ is \textit{log canonical} if $d(C_i) \geq -1$ for all $i$ and every $\phi$;  we say that $(p \in W)$ is \textit{log terminal} if $d(C_i) > -1$ for all $i$ and every $\phi$. If the exceptional divisor $\sum_{i=1}^{r}{C_i}$ of $\phi$ has simple normal crossings, then it is enough to check the $d(C_i)$ of that resolution \cite[Cor. 2.32]{KM_1998}.

When $K_W$ is $\Q$-Cartier, we have the numerical equivalence $$K_X \equiv \phi^{*}(K_W) + \sum_{i}d(C_i) \, C_i.$$ If $(p \in W)$ is a rational singularity, then $K_W$ is $\Q$-Cartier. Its \textit{index} is the minimal positive integer $s$ such that $s K_W$ is Cartier. When $s=1$ we have that $(p \in W)$ is a Du Val singularity. For a rational singularity every exceptional $C_i$ must be a $\P^1$. 

Let $C=\sum_{i=1}^{r}{C_i}$ be the exceptional divisor of a rational singularity, where $C$ is a simple normal crossings divisor. We define its \textit{resolution graph} as follows:
\begin{itemize}
\item The vertices are the curves $C_i$ labeled with their self-intersection number $-e_i=C_i^2$.
\item The number of edges between two distinct curves $C_i$ and $C_j$ is given by their intersection number $C_i\cdot C_j$.
\end{itemize}

\begin{definition}
Let $0<q<m$ be coprime integers. A \textit{cyclic quotient singularity} (c.q.s.) $\frac{1}{m}(1,q)$ is the surface germ at $(0,0)$ of the quotient of $\C^2$ by $(x,y) \mapsto (\zeta x, \zeta^{q} y)$, where $\zeta$ is a $m$-th primitive root of $1$.
\label{def. cyclic quotient singularities} 
\end{definition}

Given coprime integers $0<q<m$, its \textit{Hirzebruch-Jung (HJ) continued fraction} is $$ \frac{m}{q}=  e_1- \frac{1}{e_2 - \frac{1}{\ddots - \frac{1}{e_r}}} =: [e_1, \ldots ,e_r],$$ where $e_i \geq 2$ for all $i$. The minimal resolution graph of the c.q.s. $\frac{1}{m}(1,q)$ is shown in Figure \ref{fig. res graph cqs.}. The index of $\frac{1}{m}(1,q)$ is $\frac{m}{\text{gcd}(m,q+1)}$. 

\begin{figure}[htbp]
\centering
\begin{tikzpicture}
    % Dibuja los nodos como puntos (sin estilo de nodo global)
    \node[fill=black, circle, inner sep=2pt] (A) at (0, 0) {};
    \node[fill=black, circle, inner sep=2pt] (B) at (1.5, 0) {};
    \node[fill=black, circle, inner sep=2pt] (C) at (3, 0) {};
    \node[fill=black, circle, inner sep=2pt] (D) at (4.5, 0) {};

    % Coloca los números debajo de cada nodo
    \node at (0, -0.5) {$-e_1$}; % Número debajo del nodo A
    \node at (1.5, -0.5) {$-e_2$}; % Número debajo del nodo B
    \node at (3.2, -0.5) {$-e_{r-1}$}; % Número debajo del nodo C
    \node at (4.7, -0.5) {$-e_r$}; % Número debajo del nodo D

    % Conexiones entre nodos con líneas normales
    \draw (A) -- (B);
    \draw[dashed] (B) -- (C);
    \draw (C) -- (D);
\end{tikzpicture}
\caption{Resolution graph of a c.q.s.}
\label{fig. res graph cqs.}
\end{figure}

\begin{definition}
The HJ continued fraction $\frac{m}{q}=[e_1,\ldots,e_r]$ defines the sequence of integers $ 0=\beta_{r+1} < 1=\beta_r < \ldots < q=\beta_1 < m= \beta_0 $ where $\beta_{i+1}= e_{i}\beta_i - \beta_{i-1}$. In this way, $\frac{\beta_{i-1}}{\beta_{i}}=[e_i,\ldots,e_r]$. The partial HJ continued fractions $\frac{\alpha_i}{\gamma_i} =[e_1,\ldots,e_{i-1}]$ are computed through the sequences $ 0=\alpha_0 < 1=\alpha_1 < \ldots < q^{-1}=\alpha_r < m= \alpha_{r+1},$ where $\alpha_{i+1}=e_i\alpha_{i} - \alpha_{i-1}$, and $\gamma_0=-1$, $\gamma_1=0$, $\gamma_{i+1}=e_i \gamma_i - \gamma_{i-1}$. We have $\alpha_{i+1}\gamma_i - \alpha_i \gamma_{i+1}=-1$, and $\beta_i = q \alpha_i - m \gamma_i$. 
\label{numbers}
\end{definition}

The minimal resolution of the c.q.s. $\frac{1}{m}(1,q)$ satisfies $$K_X
\equiv \phi^*(K_{W}) + \sum_{i=1}^r \big(-1 +\frac{\beta_i+\alpha_i}{m} \big) C_i,$$ and so $d(C_i)=-1 +\frac{\beta_i+\alpha_i}{m} \in ]-1,0]$. Thus, c.q.s. are log terminal.

\begin{definition}
A surface singularity $(p \in W)$ is a \emph{weighted homogeneous singularity} (\WHS) if it is an analytic neighborhood of $0$ of $\spec A$, where $A=\oplus_{i\geq 0} A_i$ is a graded Noetherian normal domain with $A_0=\C$.
\label{whs}
\end{definition}

Quotient singularities are \WHS. By \cite[p.47]{OW_1971_C-Action}, this is equivalent to an action of the multiplicative group $\C^*$ on $(p\in W)$ such that $p$ is the only fixed point. They have particular star-shaped resolutions \cite{OW_1971_C-Action}. 

\begin{definition}
The \textit{star graph} of a \WHS singularity is the graph of the exceptional divisor of the star-shaped resolution. It has a \textit{central curve} $C_0$ of genus $g$, followed by $t$ chains of nonsingular rational curves. See Figure \ref{fig. res graph whs}. For each $i \in \{1,\ldots,t\}$, we denote by $\sum_{j=1}^{r_i}{C_{i,j}}$ the corresponding chain. We call it \textit{leg}. Its HJ continued fraction is $m_i/q_i=[e_{i,1},\dots,e_{i,r_i}]$ where $e_{i,j} \geq 2$. We assume $C_{i,1} \cdot C_0=1$ for every $i$. We will only consider $g=0$, and in that case we will not draw $g$ in the graph.
\label{star graph}
\end{definition}

\begin{figure}[h]
\centering
    \begin{tikzpicture}
        %nodo central
        \node[fill=black, circle, inner sep=2pt] (C) at (0, 0) {};

        %nodos patas
        %pata 1
        \node[fill=black, circle, inner sep=2pt] (A11) at (-1.5, 0) {};
        \node[fill=black, circle, inner sep=2pt] (A12) at (-4, 0) {};

        %pata 2
        \node[fill=black, circle, inner sep=2pt] (A21) at (-1.5, 0.5) {};
        \node[fill=black, circle, inner sep=2pt] (A22) at (-4, 1.5) {};

        %pata 3
        \node[fill=black, circle, inner sep=2pt] (A31) at (1.5, 0.5) {};
        \node[fill=black, circle, inner sep=2pt] (A32) at (4, 1.5) {};

        %pata 4
        \node[fill=black, circle, inner sep=2pt] (A41) at (1.5, 0) {};
        \node[fill=black, circle, inner sep=2pt] (A42) at (4, 0) {};

        %LABELS
        \node at (0.1,0.5) {$-e_0$};
        \node at (0.1, -0.5) {$[g]$}; % Número debajo del nodo central
        \node at (-1.3, -0.5) {$-e_{1,1}$}; % Nodo 1 de la pata 1
        \node at (-3.8, -0.5) {$-e_{1,r_1}$};
        \node at (-1.3, 1) {$-e_{2,1}$};
        \node at (-3.8, 2) {$-e_{2,r_2}$};
        \node at (1.7, 1) {$-e_{t-1,1}$};
        \node at (4.2, 2) {$-e_{t-1,r_{t-1}}$};
        \node at (1.7, -0.5) {$-e_{t,1}$}; % Nodo 1 de la pata 1
        \node at (4.2, -0.5) {$-e_{t,r_t}$};
        \node at (-1.3, 1.5) {$\cdot$};
        \node at (-1.15, 1.65) {$\cdot$};
        \node at (-0.9, 1.8) {$\cdot$};
        \node at (1.3, 1.5) {$\cdot$};
        \node at (1.15, 1.65) {$\cdot$};
        \node at (0.9, 1.8) {$\cdot$};
        \node at (0,2) {$\dots$};

        %LABELS FRACCIONES
        \node at (-4.6,0.2) {$\frac{m_1}{q_1}$};
        \node at (-4.6,1.3) {$\frac{m_2}{q_2}$};
        \node at (4.6,0.2) {$\frac{m_{t}}{q_{t}}$};
        \node at (4.6,1.3) {$\frac{m_{t-1}}{q_{t-1}}$};
        
        %CONEXIONES
        \draw (C) -- (A11);
        \draw (C) -- (A21);
        \draw (C) -- (A31);
        \draw (C) -- (A41);

        \draw[dashed] (A11) -- (A12);
        \draw[dashed] (A21) -- (A22);
        \draw[dashed] (A31) -- (A32);
        \draw[dashed] (A41) -- (A42);
    \end{tikzpicture}
    \caption{\small{Resolution graph of a \WHS with $t$ legs.}}
    \label{fig. res graph whs}
\end{figure}

Let $\phi \colon X \to W$ be the star resolution of the \WHS $(p \in W)$. Let $\sigma \colon X \to W'$ be the contraction of the $t$ legs, and let $\phi' \colon W' \to W$ be the contraction of $C'_0=\sigma(C_0)$. The discrepancy $d(C_0)$ of $C_0$ can be easily calculated from $\sigma$ as $d(C_0)= -1 - \frac{\chi}{e}$, where $$e=e_0-\sum_{i=1}^{t} \frac{q_i}{m_i}>0 \ \ \ \text{and} \ \ \ \chi=2g-2+t-\sum_{i=1}^{t} \frac{1}{m_i}.$$ We have ${C'}_0^2=-e$ and ${C'}_0 \cdot K_{W'}=\chi+e$. Through the contraction $\sigma$ and using the fact that we already know the discrepancies for c.q.s., one can prove the following. 

\begin{proposition}
Let $(p\in W)$ be a \WHS, and consider its star resolution. Then $$d(C_0)=-1 - \frac{\chi}{e}, \ \ \ \text{and} \ \ \  d(C_{i,j})=-1 +\frac{\alpha_{i,j}}{m_i}-\frac{\chi}{e} \frac{\beta_{i,j}}{m_i}$$ for all $i,j$. Hence, $(p\in W)$ is: log terminal when $\chi<0$, log canonical when $\chi \leq 0$, and non-log canonical when $\chi>0$.
\label{prop. Sol WHS}    
\end{proposition}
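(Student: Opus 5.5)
We compute the discrepancies in two stages, following the factorization $\phi=\phi'\circ\sigma$ described just before the statement. Let $\sigma\colon X\to W'$ contract the $t$ legs, so $W'$ has cyclic quotient singularities $\frac{1}{m_i}(1,q_i)$ at the points $p_i=\sigma(C_{i,1}\cup\dots\cup C_{i,r_i})$, all lying on $C_0'=\sigma(C_0)$. Let $\phi'\colon W'\to W$ contract $C_0'$.

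\textbf{Step 1: the invariants of $C_0'$.} Write $\sigma^*C_0'=C_0+\sum_{i,j}\frac{\beta_{i,j}}{m_i}C_{i,j}$. The coefficients are determined by $\sigma^*C_0'\cdot C_{i,j}=0$, and the solution is $\beta_{i,j}/m_i$ by the recursion $\beta_{i,j+1}=e_{i,j}\beta_{i,j}-\beta_{i,j-1}$ with $\beta_{i,0}=m_i$. Then ${C_0'}^2=\sigma^*C_0'\cdot C_0=-e_0+\sum_i \beta_{i,1}/m_i=-e_0+\sum_i q_i/m_i=-e$. For the canonical class, the cyclic quotient formula gives
\[
K_X\equiv\sigma^*K_{W'}+\sum_{i,j}\Bigl(-1+\tfrac{\alpha_{i,j}+\beta_{i,j}}{m_i}\Bigr)C_{i,j}.
\]
Intersecting with $C_0$ and using adjunction $K_X\cdot C_0=e_0-2$ (since $g=0$), only the terms with $j=1$ contribute, where $\alpha_{i,1}=1$ and $\beta_{i,1}=q_i$. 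This yields $K_{W'}\cdot C_0'=e_0-2+\sum_i(1-\tfrac{1+q_i}{m_i})=e+\chi$, with general $g$ giving the $2g-2$ term.

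\textbf{Step 2: the discrepancy of $C_0$.} Since $W$ has a single exceptional curve over $W'$, we have $K_{W'}\equiv\phi'^*K_W+d\,C_0'$ with $d\,{C_0'}^2=K_{W'}\cdot C_0'$. Hence $d=-(e+\chi)/e=-1-\chi/e$. Here $e>0$ is needed; it follows from negative definiteness, because ${C_0'}^2=-e$ must be negative for a contractible curve.

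\textbf{Step 3: compose.} Pull back to get
\[
K_X\equiv\phi^*K_W+d\,\sigma^*C_0'+\sum_{i,j}\Bigl(-1+\tfrac{\alpha_{i,j}+\beta_{i,j}}{m_i}\Bigr)C_{i,j}.
\]
Substituting $\sigma^*C_0'$ from Step 1, the coefficient of $C_0$ is $d$. The coefficient of $C_{i,j}$ is $-1+\frac{\alpha_{i,j}+\beta_{i,j}}{m_i}+(-1-\frac{\chi}{e})\frac{\beta_{i,j}}{m_i}=-1+\frac{\alpha_{i,j}}{m_i}-\frac{\chi}{e}\frac{\beta_{i,j}}{m_i}$, as claimed. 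This computation is exact in $K_X\cdot C_j$ for all exceptional $C_j$, so it is the discrepancy by uniqueness.

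\textbf{Step 4: classification.} The star resolution has simple normal crossing exceptional divisor, since all curves are smooth rational, meet transversally, and no three meet at a point. By the criterion quoted from \cite[Cor. 2.32]{KM_1998}, it therefore suffices to inspect these discrepancies.
\begin{itemize}
\item If $\chi<0$, then $d(C_0)>-1$. Moreover $d(C_{i,j})>-1$ since $\alpha_{i,j}>0$ and $-\chi\beta_{i,j}/e\ge0$, so the singularity is log terminal.
\item If $\chi=0$, then all $d(C_{i,j})>-1$ and $d(C_0)=-1$, so it is log canonical.
\item If $\chi>0$, then $d(C_0)<-1$, so it is not log canonical.
\end{itemize}
The case distinction is only this sign bookkeeping. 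The main point requiring care is Step 1, namely verifying that the pullback coefficients are $\beta_{i,j}/m_i$ and that the cyclic-quotient discrepancy contributions along the first curves of the legs sum to give exactly $\chi+e$.
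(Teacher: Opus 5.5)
Your proof is correct and is essentially the argument the paper indicates (it gives only a one-line sketch). You factor $\phi$ through the contraction $\sigma$ of the legs, use the known cyclic-quotient discrepancies together with ${C_0'}^2=-e$ and $K_{W'}\cdot C_0'=\chi+e$ to get $d(C_0)=-1-\chi/e$, and then pull back via $\sigma^*C_0'=C_0+\sum_{i,j}\frac{\beta_{i,j}}{m_i}C_{i,j}$, with the sign analysis justified by the simple normal crossing criterion.
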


\begin{corollary}
Let $(p\in W)$ be a \WHS that is non-log canonical, that is, $\chi>0$. Then we have $$d(C_{i,j+1})>d(C_{i,j})> d(C_0)$$ for every $i,j$. Moreover, if $\chi/e<1$, then $d(C_{i,r_i})>-1$ for every $i$.
    \label{decrdiscr}
\end{corollary}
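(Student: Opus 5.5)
The plan is to read everything directly off the explicit formula in Proposition \ref{prop. Sol WHS}. For a fixed leg $i$ I regard $j$ as a discrete variable and write
$$d(C_{i,j}) = -1 + \frac{1}{m_i}\Big(\alpha_{i,j} - \frac{\chi}{e}\,\beta_{i,j}\Big), \qquad 1\le j\le r_i,$$
where $\alpha_{i,j}$ and $\beta_{i,j}$ are the sequences of Definition \ref{numbers} attached to $m_i/q_i=[e_{i,1},\dots,e_{i,r_i}]$. The hypothesis $\chi>0$, together with $e>0$, gives $\chi/e>0$. That sign is the only input beyond the monotonicity of the two sequences.

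\textbf{Step 1: monotonicity of the sequences.} By Definition \ref{numbers}, $0=\beta_{r+1}<1=\beta_r<\dots<\beta_1=q<\beta_0=m$, so $\beta_{i,j}$ is strictly decreasing in $j$. For $\alpha$ I argue by induction: $\alpha_{i,1}-\alpha_{i,0}=1>0$, and
$$\alpha_{j+1}-\alpha_j=(e_j-2)\alpha_j+(\alpha_j-\alpha_{j-1})\ge \alpha_j-\alpha_{j-1}$$
because $e_j\ge 2$. Hence $\alpha_{i,j}$ is strictly increasing in $j$ for $0\le j\le r_i+1$.

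\textbf{Step 2: the inequality along a leg.} It follows that
$$d(C_{i,j+1})-d(C_{i,j})=\frac{1}{m_i}\Big[(\alpha_{i,j+1}-\alpha_{i,j}) + \frac{\chi}{e}(\beta_{i,j}-\beta_{i,j+1})\Big].$$
Both terms are positive when $\chi/e>0$, which gives $d(C_{i,j+1})>d(C_{i,j})$.

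\textbf{Step 3: comparison with the central curve.} The key observation is that the formula with $j=0$ reproduces $d(C_0)$. Indeed, $\alpha_{i,0}=0$ and $\beta_{i,0}=m_i$ give
$$-1+\frac{0-(\chi/e)\,m_i}{m_i}=-1-\frac{\chi}{e}=d(C_0).$$
So Step 2 applied with $j=0$ yields $d(C_{i,1})>d(C_0)$. Chaining this with Step 2 gives $d(C_{i,j})>d(C_0)$ for all $j$.

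\textbf{Step 4: the last curve of each leg.} At the end of the leg, $\beta_{i,r_i}=1$ and $\alpha_{i,r_i}=q_i^{-1}\ge 1$. Therefore
$$d(C_{i,r_i})=-1+\frac{\alpha_{i,r_i}-\chi/e}{m_i}>-1$$
as soon as $\chi/e<1\le \alpha_{i,r_i}$.

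No step is a real obstacle. The only point needing care is Step 3, namely checking that the index conventions $\alpha_0=0$, $\beta_0=m$ make the central curve the $j=0$ term of each leg, so that no separate computation is required.
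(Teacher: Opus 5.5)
Your proposal is correct and follows essentially the same route as the paper. The paper's proof simply says the corollary is direct from the discrepancy formula of Proposition \ref{prop. Sol WHS} together with the monotonicity of the sequences $\alpha_{i,j},\beta_{i,j}$ in Definition \ref{numbers}. You have written out those details, including the useful observation that the $j=0$ term of the formula recovers $d(C_0)$; your induction for $\alpha$ also uses $\alpha_j\ge 0$, which holds inductively.
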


\begin{proof}
    This is direct from Proposition \ref{prop. Sol WHS}, and the $\alpha_{i,j}, \beta_{i,j}$ in Definition \ref{numbers}. 
\end{proof}

For numerical explicit computations, see \href{https://colab.research.google.com/drive/1BgJJCC0qD8eTAmpa3kdrB7CG2GsvNc6d?usp=sharing#scrollTo=YHzJ42TQLYWI}{QHD discrepancies} \cite{programa}.

\begin{proposition}
Let $\phi \colon X \to W$ be the star resolution of a \WHS $(p \in W)$, where the exceptional divisor is $C=C_0 + \sum_{i,j} C_{i,j}$. Then we have $K^2_X-K_W^2+K_X \cdot C= (2g(C_0)-2+t) d(C_0)- \sum_{i=1}^t d(C_{i,r_i})$, and $$ K^2_X-K_W^2+K_X \cdot C= 2-2g(C_0) - \sum_{i=1}^t \frac{q_i^{-1}}{m_i} - \frac{\chi^2}{e},$$ where $0<q_i^{-1}<m$ is the integer satisfying $q_i q_i^{-1} \equiv 1 \ (\text{mod} \ m_i)$.
\label{prop. discrepancies whs identity}
\end{proposition}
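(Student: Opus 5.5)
We plan to prove the two formulas in turn, working on the star resolution $\phi\colon X\to W$ with $K_X\equiv \phi^*K_W+\sum d(C_k)C_k$, where the sum runs over all components $C_0$ and $C_{i,j}$.

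\textbf{First formula.} Set $D=\sum d(C_k)C_k$. Then
\[K_X^2=K_W^2+D^2=K_W^2+K_X\cdot D,\]
since $\phi^*K_W\cdot C_k=0$ and $D\cdot C_k=K_X\cdot C_k$. Hence $K_X^2-K_W^2=\sum_k d(C_k)\,K_X\cdot C_k$, and
\[K_X^2-K_W^2+K_X\cdot C=\sum_k (d(C_k)+1)\,K_X\cdot C_k.\]
By adjunction, $K_X\cdot C_k=-2-C_k^2$ for rational curves and $2g-2-C_0^2$ for the central curve. Substituting $-C_k^2=\big(\sum_l C_l - C_k\big)\cdot C_k$ and using $C\cdot C_k$ would be one route. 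The cleaner route is to note that $K_X\cdot C_k=\sum_l d(C_l)\,C_l\cdot C_k$, so the expression equals $D^2+D\cdot C$. The quantity $D\cdot C$ is $\sum_k d(C_k)\,(C\cdot C_k)$, and $C\cdot C_k = C_k^2+\#\{\text{neighbours}\}$.

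Rather than push this through, we would instead use the identity
\[\sum_k (d_k+1)K_X\cdot C_k \;=\; \sum_k (d_k+1)\Big(\sum_l d_l\,C_lC_k\Big).\]
Writing $a_k=d_k+1$, this becomes $\sum a_k(a_l-1)C_kC_l = A^2 - A\cdot C$ with $A=\sum a_kC_k$. Moreover $A\cdot C_k=(K_X+C)\cdot C_k$, which equals $2g-2+t$ at $C_0$, $-1$ at each leg end, and $0$ at the other leg curves (for a single-curve leg it is $-2+2=0$, consistent with this). Hence
\[A^2=\sum_k a_k\,A\cdot C_k,\qquad A\cdot C=\sum_k A\cdot C_k .\]
Therefore
\[A^2-A\cdot C=\sum_k (a_k-1)\,A\cdot C_k=(2g-2+t)\,d(C_0)-\sum_{i=1}^t d(C_{i,r_i}),\]
which is the first formula.

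\textbf{Second formula.} We substitute Proposition~\ref{prop. Sol WHS}. The end curve of leg $i$ has $\alpha_{i,r_i}=q_i^{-1}$ and $\beta_{i,r_i}=1$ by Definition~\ref{numbers}, so
\[d(C_{i,r_i})=-1+\frac{q_i^{-1}}{m_i}-\frac{\chi}{e\,m_i}.\]
The first formula then becomes
\[-(2g-2+t)\Big(1+\frac{\chi}{e}\Big)+t-\sum_i\frac{q_i^{-1}}{m_i}+\frac{\chi}{e}\sum_i\frac1{m_i}.\]
Grouping the $\chi/e$ terms gives $-\frac{\chi}{e}\big(2g-2+t-\sum_i 1/m_i\big)=-\chi^2/e$, and the remaining terms give $2-2g-\sum_i q_i^{-1}/m_i$, as claimed.

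\textbf{Main obstacle.} The main thing to check is the bookkeeping of $(K_X+C)\cdot C_k$ at each vertex, in particular that interior leg vertices contribute $0$. Degenerate legs (a single curve, or $t$ small) also need a consistency check.
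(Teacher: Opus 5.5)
Your proposal is correct and follows the paper's own argument: the definition of discrepancies plus adjunction give $K_X^2-K_W^2+K_X\cdot C=\sum_k d(C_k)\,(K_X+C)\cdot C_k$, and then Proposition~\ref{prop. Sol WHS} with $\alpha_{i,r_i}=q_i^{-1}$, $\beta_{i,r_i}=1$ yields the second formula; you have simply written this out in detail. One small slip: in your parenthetical, the unique curve of a one-curve leg has only the neighbour $C_0$, so $(K_X+C)\cdot C_{i,1}=-2+1=-1$, not $0$ --- this is exactly the leg-end value your final count already uses, so nothing else changes.
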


\begin{proof}
It follows from the definition of discrepancies and the adjunction formula in $X$.
\end{proof}

\begin{definition}
Let $(p\in W)$ a normal surface singularity. A \textit{smoothing} of $(p\in W)$ is a flat surjective morphism $\pi \colon (W \subset \W) \to (0\in\D)$, where $\mathcal{W}$ is an irreducible 3-fold and $(0\in\D)$ is an analytical germ of a smooth curve, such that the fiber over $0$ is isomorphic to $W$, and the generic fiber is non-singular.

\label{smoothings}
\end{definition}

For a smoothing one can choose a local embedding $W\subset\C^N$ and a small $\epsilon$-ball $B_{\epsilon}$ so that $\W$ intersects $S_{\epsilon}=\partial B_{\epsilon}$ transversally, then the smooth 4-manifold $M=B_{\epsilon}\cap\W_t$ is independent of small $\epsilon$ and $t\neq0$, and of the embedding of $\W$ \cite{Wahl_1981}. We call $M$ the \textit{Milnor fiber} of the smoothing. The connected 4-manifold $M$ has the homotopy type of a two-dimensional CW complex, thus we have $H_i(M, \mathbb{Z})=0$ for $i > 2$. Furthermore, by Greuel--Steenbrink~\cite[Thm.~2]{GS_1983}, we have that the rank of $H_1(M, \mathbb{Z})$ is zero. We also have that $H_2(M, \mathbb{Z})$ is a finitely generated free abelian group. The \textit{Milnor number} of the smoothing is the rank of $H_2(M, \mathbb{Z})$. For example, for Du Val singularities of type $A_n$, $D_n$, or $E_n$ we have only one type of smoothing and the Milnor number is $n$.

\begin{definition}
A smoothing of $(p\in W)$ with Milnor number equal to $0$ is called a \emph{rational homology disk smoothing} (\QHDS for short). Singularities that admit a \QHDS are called \QHD singularities. 
\label{def. QHD}
\end{definition}

\begin{theorem}
The \QHD quotient singularities are precisely the c.q.s. of type $\frac{1}{n^2}(1,na-1)$ for $0<a<n$ coprime integers. We call them \textit{Wahl singularities}.
\label{Wahl}
\end{theorem}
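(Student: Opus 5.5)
We have to prove both directions of Theorem \ref{Wahl}: each $\frac{1}{n^2}(1,na-1)$ with $\gcd(n,a)=1$ has a \QHDS, and no other cyclic quotient singularity (the only quotient singularities in question) does. The approach rests on two facts. First, the Milnor fiber $M$ of a smoothing of a rational singularity satisfies the Laufer--Steenbrink formula
$$\mu = 12p_g + K^2 + \chi_{top}(X_{\mathrm{res}})=K^2+b_2(\text{exc})+1\quad(p_g=0),$$
where $K^2$ denotes the self-intersection of the canonical cycle $\sum d(C_i)C_i$ on the minimal resolution. Second, a cyclic quotient singularity is the quotient $\C^2/\mu_m$, so every smoothing factors through the versal deformation, whose components are described by P-resolutions (Koll\'ar--Shepherd-Barron).

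\textbf{Existence.} I would first exhibit the $\Q$-Gorenstein smoothing. The index of $\frac{1}{n^2}(1,na-1)$ is $\frac{n^2}{\gcd(n^2,na)}=n$, and the canonical cover is $\frac{1}{n}(1,-1)$, i.e.\ the $A_{n-1}$ singularity $xy=z^n$. The $\mu_n$-equivariant smoothing $xy=z^n+t$, with $\mu_n$ acting by $(x,y,z)\mapsto(\zeta x,\zeta^{-1}y,\zeta^a z)$ and trivially on $t$, descends to a smoothing of the quotient. Its Milnor fiber is the quotient of the $A_{n-1}$ Milnor fiber by a free $\mu_n$-action (freeness holds because $\gcd(a,n)=1$ and $t\neq0$ excludes fixed points). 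Hence $\chi_{top}=n/n=1$. Since $b_1=0$ by Greuel--Steenbrink, this gives $b_2=0$, so the smoothing is a \QHDS.

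\textbf{Necessity.} Let $\frac{1}{m}(1,q)=[e_1,\dots,e_r]$ admit a \QHDS. Then $\mu=0$, and the Laufer formula gives $K^2=-(r+1)$, where $K^2=\big(\sum d(C_i)C_i\big)^2$. On the other hand, the discrepancies $d(C_i)=-1+\frac{\alpha_i+\beta_i}{m}$ give an explicit formula $K^2 = -\sum (e_i-2) + 2 - \frac{q+q^{-1}+2}{m}$. This is the same computation as Proposition \ref{prop. discrepancies whs identity}, specialized to chains. Setting the two expressions equal yields
$$\sum_i (e_i-3) + \frac{q+q^{-1}+2}{m}=1 .$$
This forces $m\mid q+q^{-1}+2$. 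Since $q,q^{-1}<m$, we get $q+q^{-1}+2\in\{m,2m\}$, and the sum $\sum(e_i-3)$ equals $0$ or $-1$ accordingly.

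The main obstacle is turning this into the classification of the chains. I would use the standard characterization of T-singularities. Indeed, $\mu=0$ forces the smoothing to lie on the Artin component only if the singularity is $\Q$-Gorenstein smoothable, so the cleaner route is the following. A smoothing with $\mu=0$ has Milnor fiber with $\chi_{top}=1$, and by Koll\'ar--Shepherd-Barron its Milnor fiber is that of the P-resolution's $\Q$-Gorenstein smoothing. The Milnor number then equals the number of exceptional curves plus the Milnor numbers of the T-singularities. Vanishing forces the P-resolution to be the singularity itself, which is therefore of class T with $\mu=0$, i.e.\ $\frac{1}{dn^2}(1,dna-1)$ with $\mu=d-1=0$, so $d=1$.

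Alternatively, and in an elementary way, I would proceed by induction on $r$ using the numerical condition above. The relations $\sum(e_i-3)=-1$ and $q+q^{-1}+2=m$ identify the chains $[4]$ and those obtained by the Wahl algorithm $[e_1,\dots,e_r]\mapsto[2,e_1,\dots,e_r+1]$ or $[e_1+1,\dots,e_r,2]$. One checks the second case is impossible by a size estimate, and in the first case one peels off an end $2$ and verifies the condition persists.
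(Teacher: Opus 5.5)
Your existence argument is correct and standard: the equivariant smoothing $xy=z^n+t$ of the $A_{n-1}$ canonical cover, with a free $\mu_n$-action, gives $\chi(M)=1$ and $b_1(M)=0$.

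The necessity argument built on Laufer's formula does not work, for two reasons.

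\emph{The formula cannot be applied here.} Laufer's formula reads $1+\mu=12p_g+K^2+\chi_{top}(E)$. For a c.q.s.\ with $r$ exceptional curves it gives $\mu=K^2+r$, not $K^2+r+1$. More seriously, it is a theorem for smoothings of Gorenstein singularities. For non-Gorenstein singularities, $\mu$ depends on the smoothing component. For $\frac14(1,1)$ the Artin component has $\mu=1$ and the \QHD component has $\mu=0$, while $K^2+r=0$ is a single number. To apply it to a smoothing about which you only know $\mu=0$, you first need extra information, for instance that this smoothing is $\Q$-Gorenstein (cf.\ Remark \ref{factsQHD}(6)). You never establish this.

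\emph{The numerics are wrong even granting the formula.} From $K^2=-r$ you get $\sum_i(e_i-3)+\frac{q+q^{-1}+2}{m}=2$; your own $K^2=-(r+1)$ would give $3$. Your identity with right-hand side $1$ fails for $[4]$. It holds for $[3,3]=\frac18(1,3)$, a T-singularity with $d=2$ that has no \QHD smoothing. Wahl chains satisfy $\sum_i(e_i-3)=1$, not $0$ or $-1$, so the proposed ``elementary'' induction targets the wrong chains. The claim that the numerical conditions characterize Wahl chains is also only asserted: the correct conditions say $m\mid(q+1)^2$ (class T) and then $d=1$, and this still needs proof.

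The route you call cleaner is the right one, and it is essentially what the paper relies on: its proof only cites \cite{LW_86} and \cite{KSB_1988}. Make it the proof and drop the other two. Spelled out:
\begin{itemize}
\item By Koll\'ar--Shepherd-Barron, a one-parameter smoothing of a quotient singularity $Y$ is the blow-down of a $\Q$-Gorenstein smoothing of a P-resolution $Z\to Y$.
\item Let $s$ be the number of exceptional curves of $Z\to Y$, so the exceptional locus has Euler characteristic $1+s$. The singular points of $Z$ are T-singularities, i.e.\ Du Val or $\frac{1}{dn^2}(1,dna-1)$. Their $\Q$-Gorenstein smoothings have Milnor numbers $\mu_j$ equal to the rank, respectively $d-1$.
\item Cut out Milnor balls at these points and glue in their Milnor fibers; the links have Euler characteristic $0$. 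This gives $\chi(M)=1+s+\sum_j\mu_j$, hence $\mu=s+\sum_j\mu_j$ because $b_1(M)=0$.
\item So $\mu=0$ forces $Z=Y$, with $Y$ a non-Du Val T-singularity with $d=1$, i.e.\ $Y=\frac{1}{n^2}(1,na-1)$.
\end{itemize}

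This must be done for \emph{all} quotient singularities. The theorem also asserts that the non-cyclic quotients (types $D$, $T$, $O$, $I$) have no \QHD smoothing, so your parenthetical restriction to cyclic ones is unjustified. The argument above covers them only because \cite{KSB_1988} treats arbitrary quotient singularities: P-resolutions exist in general, and the classification of $\Q$-Gorenstein smoothable quotient singularities is general. Say so explicitly.

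Finally, delete the sentence about the Artin component. That component always has $\mu=r\geq1$, so a \QHD smoothing never lies on it.
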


\begin{proof}
It follows from \cite{LW_86}, \cite{KSB_1988}. They were first studied by Wahl \cite{Wahl_1981}.
\end{proof}

We also have an algorithm to find all the exceptional divisors of Wahl singularities (see \cite[5.10]{LW_86}). We call them \textit{Wahl chains}. 

\begin{proposition}\label{prop Construction Wahl Sing}
If $[e_1,\dots,e_r]$ is a Wahl chain, then $[2,e_1,\dots, e_{r-1},e_r+1]$ and $[e_1+1,e_2,\dots,e_r,2]$ are Wahl chains. Every Wahl chain can be obtained starting with $[4]$ and iterating the previous operation.
\end{proposition}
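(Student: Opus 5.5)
The plan has two parts. First, the two operations preserve the Wahl property; second, every Wahl chain arises from $[4]$ by these operations. Throughout I use Theorem \ref{Wahl}: a chain $[e_1,\dots,e_r]$ is a Wahl chain exactly when its continued fraction equals $\frac{n^2}{na-1}$ for some coprime $0<a<n$.

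\textbf{The operations preserve Wahl chains.} For a chain $[e_1,\dots,e_r]$ with $\frac{m}{q}=[e_1,\dots,e_r]$, write $q'$ for $q^{-1} \bmod m$. Reversing the chain gives $\frac{m}{q'}$. I will use the standard identities
\[
[e_1,\dots,e_r+1]=\frac{m+q'}{\ast},\qquad [2,e_1,\dots]=\frac{2m-q}{m}.
\]
The cleanest route is to track the pair $(m,q)$ through the matrix product description. Each $e_i$ corresponds to the matrix $\begin{pmatrix} e_i & -1\\ 1 & 0\end{pmatrix}$, and the product of these matrices is $\begin{pmatrix} m & -q'\\ q & \ast\end{pmatrix}$, up to the usual convention.

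Take $m=n^2$ and $q=na-1$. Then $q'=n(n-a)-1$, since
\[
(na-1)(n(n-a)-1)=n^2a(n-a)-n^2+1\equiv 1 \pmod{n^2}.
\]
\begin{itemize}
\item Prepending $2$ gives the new pair $(2m-q,\,m)$. Appending to the end $e_r\mapsto e_r+1$ adds the first column to the last, so the new numerator is $2n^2-(na-1)+\big(n(n-a)-1\big)$, or a similar expression depending on order. I will compute this carefully with the matrices.
\item The outcome should be $(n+(n-a))^2=(2n-a)^2$, with new $q$ equal to $(2n-a)(n-a)-1$. This is again of the form $N^2/(NA-1)$ with $N=2n-a$ and $A=n-a$, and $\gcd(N,A)=\gcd(n,a)=1$.
\item The other operation is the reversal-symmetric one. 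It gives $N=2n-a$ with $A=n$, or $A=n-a$ after reversal; reversing a Wahl chain $n^2/(na-1)$ gives $n^2/(n(n-a)-1)$, again a Wahl chain.
\end{itemize}

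\textbf{Every Wahl chain comes from $[4]$.} I argue by strong induction on $n$. The base case is $n=2$, $a=1$, which gives $[4]$.

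For $n\geq 3$, I first show that a Wahl chain $[e_1,\dots,e_r]$ with $r\geq 2$ has $e_1=2$ or $e_r=2$, and not both. Here $q=na-1$ and $q'=n(n-a)-1$. Exactly one of $a<n/2$ and $a>n/2$ holds, since $n\geq3$ and $\gcd(n,a)=1$ rule out $a=n/2$.
\begin{itemize}
\item $e_1=2$ if and only if $q>m/2$, that is, $na-1>n^2/2$, that is, $a>n/2$ (using integrality).
\item Similarly $e_r=2$ if and only if $q'>m/2$, that is, $n-a>n/2$.
\end{itemize}

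Say $e_1=2$; the other case follows by reversing the chain. I then undo the first operation: I remove the leading $2$ and decrease $e_r$ by one. The required inverse computation is the reverse of the first part. It gives the pair $(n',a')=(a,\,2a-n)$, or the corresponding value under the symmetric convention, which satisfies $0<a'<n'<n$ and $\gcd(a',n')=1$. So the shorter chain is the continued fraction of $n'^2/(n'a'-1)$.

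One point needs checking: after decreasing, the last entry must still be $\geq 2$, and the chain must be nonempty. This follows from the fact that the fraction $n'^2/(n'a'-1)$ with $n'\geq2$ has a valid HJ expansion, together with uniqueness of HJ expansions: the modified chain equals that expansion.

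The main obstacle is the bookkeeping of $q$ versus $q'$ under the two end-modifications. I would settle it once with the matrix identity above and then read off both directions.
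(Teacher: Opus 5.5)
Your strategy is sound, and it differs from the paper, which gives no argument for this proposition and only refers to Looijenga--Wahl \cite[5.10]{LW_86}. You give a self-contained arithmetic proof instead. You track $(m,q)$ through the product of the matrices $\begin{pmatrix} e_i & -1\\ 1 & 0\end{pmatrix}$ to see that both operations preserve the form $n^2/(na-1)$, and then you descend on $n$.

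The descent is correct as designed:
\begin{itemize}
\item for $n\geq 3$ one has $r\geq 2$ and $a\neq n/2$;
\item $e_1=2\iff a>n/2$, and $e_r=2\iff a<n/2$;
\item your uniqueness argument (apply the forward operation to the HJ expansion of $n'^2/(n'a'-1)$ and compare with $[e_1,\dots,e_r]$) makes $e_r-1\geq 2$ and nonemptiness automatic.
\end{itemize}
What your route buys is an explicit parametrization. With $(x,y)=(a,n-a)$, the two operations become $(x,y)\mapsto(x+y,y)$ and $(x,y)\mapsto(x,x+y)$. So the descent is the subtractive Euclidean algorithm down to $(1,1)$, i.e.\ $[4]$.

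However, the formulas you actually write down are wrong, and the descent depends on them, so you must carry out the matrix computation. Take $P=\begin{pmatrix} m & -q'\\ q & -s\end{pmatrix}$ with $s=(qq'-1)/m=a(n-a)-1$. Replacing $e_r$ by $e_r+1$ adds minus the second column of $P$ to the first, giving first column $(m+q',\,q+s)$. Prepending $2$ then gives $(2(m+q')-(q+s),\,m+q')=\big((2n-a)^2,\ n(2n-a)-1\big)$. So the first operation is $(n,a)\mapsto(2n-a,n)$. Your new $q=(2n-a)(n-a)-1$ is really the inverse $q'$ of the new chain. Your displayed numerator $2n^2-(na-1)+(n(n-a)-1)=3n^2-2na$ is not $(2n-a)^2$, because it drops $s$. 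By reversal, the second operation is $(n,a)\mapsto(n+a,a)$, not $N=2n-a$; for example, $[2,5]=9/5$ goes to $[3,5,2]=25/9$.

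These slips do not hurt the first half. Any $N^2/(NA-1)$ with $0<A<N$ coprime is Wahl, and the second operation is the first conjugated by reversal. They do matter for the descent. Your inverse $(n',a')=(a,2a-n)$ is correct only for the corrected map $(n,a)\mapsto(2n-a,n)$. Inverting the map you wrote, $(n,a)\mapsto(2n-a,n-a)$, gives $(n-a,n-2a)$, which is negative exactly when $e_1=2$. With the forward formulas fixed, your proof is complete.
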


\begin{remark}
We have the following facts for an arbitrary \QHD singularity $(p \in W)$.

\begin{itemize}
\item[(1)] It is a rational singularity \cite[Prop.2.4]{SSW_2008}.

\item[(2)] Resolution graphs of all \QHD \WHS were  classified in \cite{BS_2011} after \cite{SSW_2008}. The classification distinguishes three families: Wahl singularities, valency $3$ (Figure \ref{fig QHD V3}), and valency $4$ (Figure \ref{fig QHD V4}). For valency $4$ there is a cross ratio moduli in the central curve; the one having a $\Q$HD has one fixed cross ratio \cite{Fowler13}.

\item[(3)] (Local) $K^2$ of $(0 \in X)$ is equal to (local) $K^2$ of its minimal good resolution plus the number of curves in this resolution \cite[Prop. 2.4]{SSW_2008}.

\item[(4)] The only log canonical \QHD singularities are Wahl singularities and three quotients of simple elliptic singularities (valency $3$).

\item[(5)] By Wahl \cite{Wahl_2013}, \QHD smoothings occur over  $1$-dimensional smoothing components of the deformation space of $(p \in W)$, which we call \QHD components. There can be one or two \QHD components, the precise information is in \cite[Section 7.2]{Wahl_21}, and mainly follows from \cite{Fowler13}. 

\item[(6)]  Let $(W \subset \W) \to (0 \in \D)$ be a \QHD smoothing. By \cite{Wahl_2013}, $\W$ is $\Q$-factorial and log terminal. \QHD smoothings are $\Q$-Gorenstein, i.e., induced by a deformation of an index $1$ cover.

\item[(7)] For any \QHD \WHS we have $\chi/e<1$ \cite[Lem. 2.4]{Wahl_2013}. Thus we have $d(C_{i,j})>d(C_0)>-2$ for all $i,j$ by Corollary \ref{decrdiscr}. 

\end{itemize}
\label{factsQHD}
\end{remark}

\begin{figure}[h!]
    \centering
    \includegraphics[scale=0.53]{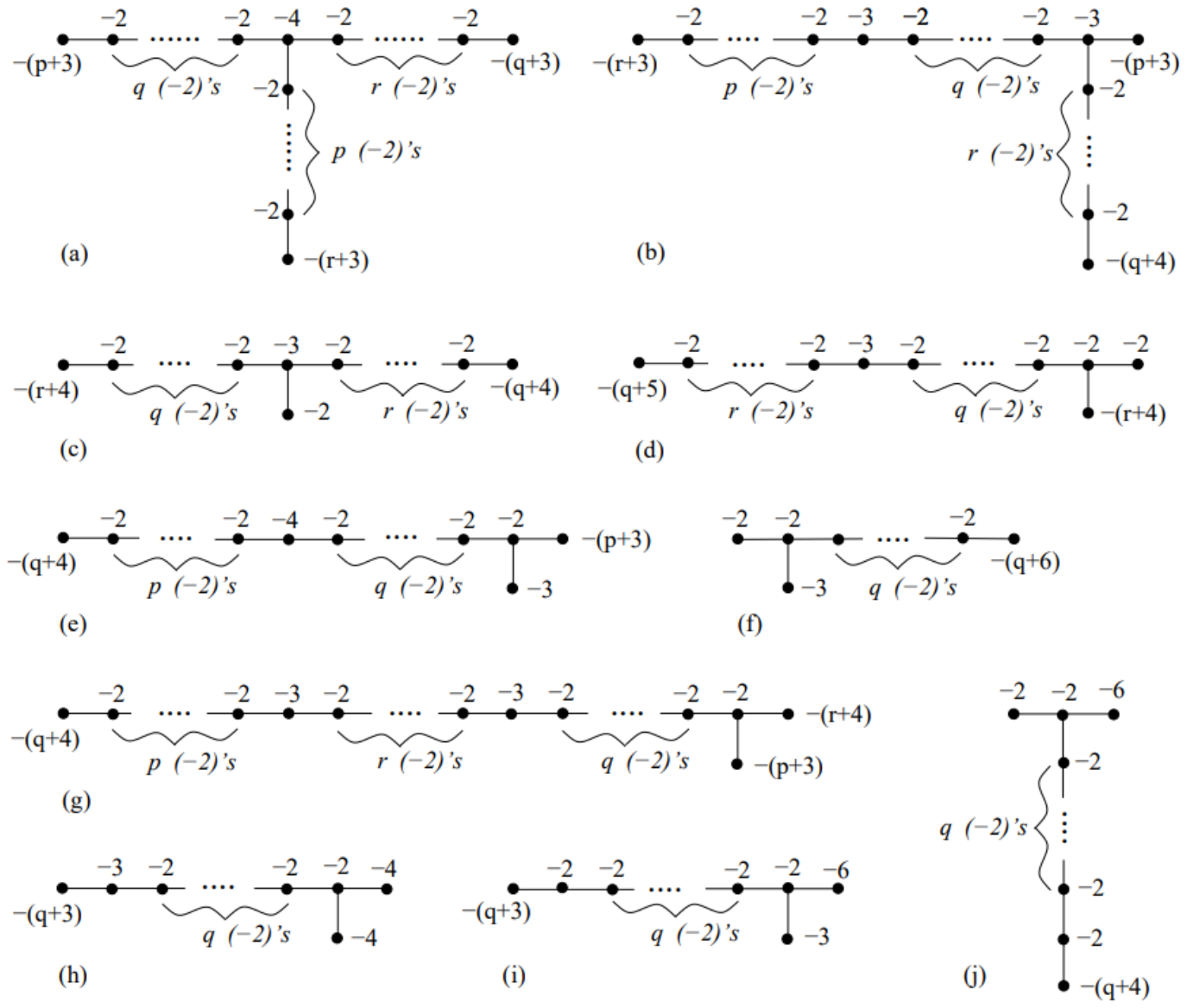}  
    \caption{\small{Star graphs of \QHD singularities of valency 3 from \cite{BS_2011}.}}
    \label{fig QHD V3}
\end{figure}

\begin{figure}[htbp]
    \centering
    \includegraphics[scale=0.35]{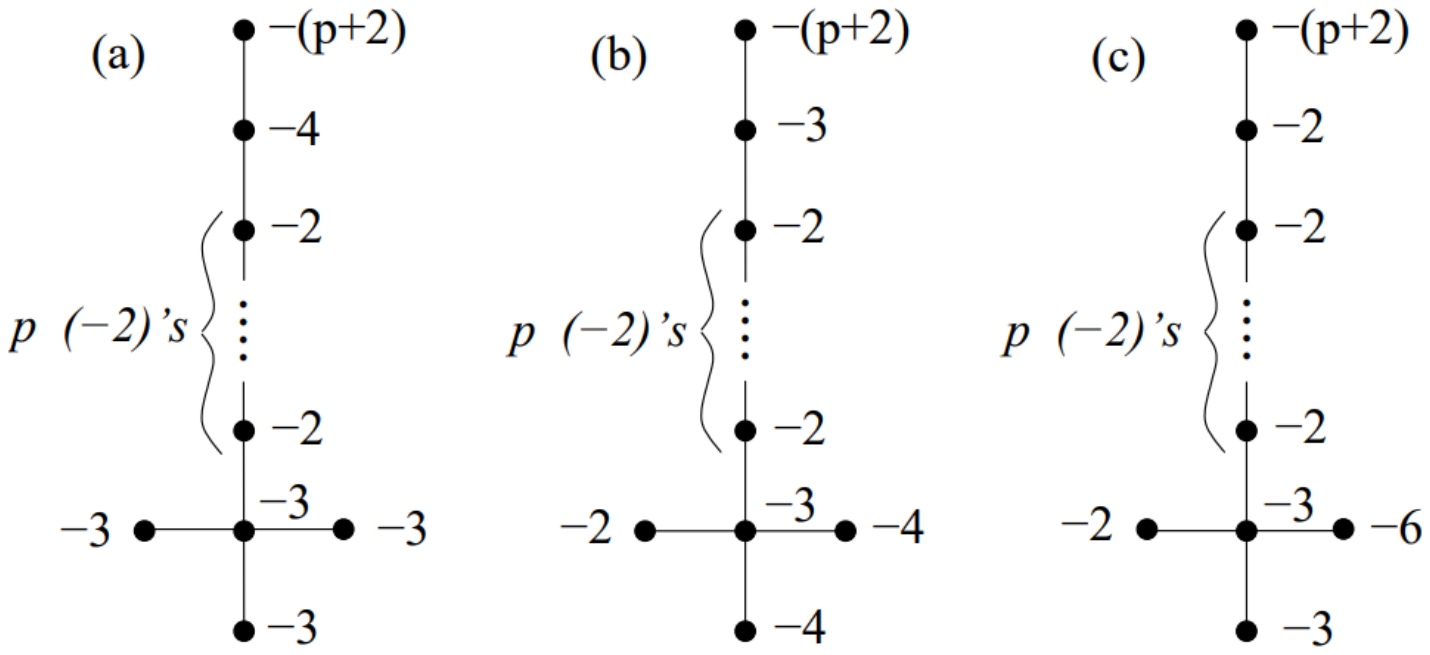}
    \caption{\small{Star graphs of \QHD singularities of valency 4 from \cite{BS_2011}.}}
    \label{fig QHD V4}
\end{figure}

The following conjecture appears in \cite{Wahl_2011}, see also \cite{Wahl_21}. We call it Wahl's conjecture.

\begin{conjecture}
Every \QHD singularity is \WHS.
\label{conj. Wahl Conjecture}
\end{conjecture}

\begin{notation}
In what follows, we will only consider weighted homogeneous \QHD singularities. We will simply call them \QHD singularities.
\end{notation}

\begin{proposition}\label{prop WQHD}
Let $(p\in W)$ a \QHD singularity, and let $C=C_0+\sum_{i=1}^{t}\sum_{j=1}^{r_i}{C_{i,j}}$ be the exceptional divisor associated to the star graph. Let $|C|$ be the number of irreducible curves in $C$. Then $K_X \cdot C = 1+|C|$ for Wahl and valency $3$ singularities, $K_X \cdot C =|C|$ for valency $4$ singularities, and $$K_{X}^{2}-K_{W}^{2}=-|C|.$$ We also have:

\begin{itemize}
    \item For a valency 3 \QHD singularity $\sum_{i=1}^3 d(C_{i,r_i})-d(C_0)=-1$, and so $|d(C_{i,j})-d(C_{i,k})|<1$ for all $i,j,k$.

    \item For a valency 4 \QHD singularity $d(C_{4,r_4})-d(C_0)=1$, where $C_4$ is the long leg. Thus $|d(C_{4,j})-d(C_{4,i})|<1$ for all $i,j$.
\end{itemize}
\end{proposition}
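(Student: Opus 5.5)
We first establish $K_X^2-K_W^2=-|C|$, and then use it to get the other identities. Since the singularity is \QHD, $K_W^2$ computed locally equals the $K^2$ of the Milnor fiber. The Milnor fiber $M$ has $\mu=0$, so for a $\Q$-Gorenstein smoothing the local invariant satisfies $K^2+\mu = $ (minimal good resolution $K^2$) $+$ (number of curves); this is Remark \ref{factsQHD}(3), following \cite[Prop. 2.4]{SSW_2008}. The star resolution is the minimal good resolution for \QHD singularities: their central curve has valency $\geq 3$ and the legs are chains of curves with $e\ge 2$. Hence $K_X^2-K_W^2=-|C|$ directly.

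Next we compute $K_X\cdot C$ by adjunction. Every component is a $\P^1$ with $C_{i,j}^2=-e_{i,j}$, so
\[
K_X\cdot C=\sum_{\text{curves}}(e-2)=\sum(e_{i,j}-2)+(e_0-2).
\]
Evaluating this requires the explicit families in Figures \ref{fig QHD V3}, \ref{fig QHD V4}. There is a cleaner route: Proposition \ref{prop. discrepancies whs identity} gives $K_X^2-K_W^2+K_X\cdot C=2-\sum q_i^{-1}/m_i-\chi^2/e$ (with $g=0$). Substituting $K_X^2-K_W^2=-|C|$, the claim is equivalent to $2-\sum q_i^{-1}/m_i-\chi^2/e$ being $1$ for Wahl and valency $3$, and $0$ for valency $4$. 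For Wahl chains we check $\sum(e_i-2)=r+1$ directly by induction using Proposition \ref{prop Construction Wahl Sing}: the base case $[4]$ gives $2=1+1$, and each operation adds one curve and raises $\sum(e_i-2)$ by one. For valency $3$ and $4$ we use the families of \cite{BS_2011}: each family is generated from a base graph by an operation that adds one curve and increases one self-intersection by one, analogous to the Wahl operation. So $K_X\cdot C-|C|$ is constant along each family and only needs checking on the base members of the figures. I expect this case-by-case verification over the \cite{BS_2011} families to be the main (tedious) obstacle.

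For the discrepancy identities, Proposition \ref{prop. discrepancies whs identity} with $g=0$ gives
\[
K_X^2-K_W^2+K_X\cdot C=(t-2)d(C_0)-\sum_i d(C_{i,r_i}).
\]
The left side is $1$ for valency $3$, so $d(C_0)-\sum_{i=1}^3 d(C_{i,r_i})=1$, which is the stated relation. For valency $4$ the left side is $0$, so $2d(C_0)=\sum_{i=1}^4 d(C_{i,r_i})$. The valency-$4$ graphs of Figure \ref{fig QHD V4} have three short legs that are single $(-2)$... In the \cite{BS_2011} families the three short legs are fixed (e.g. single $(-3)$-curves or similar), and their end discrepancies can be computed from Proposition \ref{prop. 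Sol WHS}. If these satisfy $\sum_{i\le 3} d(C_{i,r_i})=3d(C_0)+1$, we get $d(C_{4,r_4})-d(C_0)=1$.

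Finally, the inequalities follow from monotonicity. By Corollary \ref{decrdiscr}, along each leg $d(C_0)<d(C_{i,1})<\dots<d(C_{i,r_i})$. By Remark \ref{factsQHD}(7), every discrepancy lies in $(-2,0)$, and in fact in $(d(C_0),0]$. For valency $4$, the identity forces all $d(C_{4,j})\in(d(C_0),d(C_0)+1]$, so their differences are less than $1$; the endpoint case is excluded because $d(C_{4,r_4})$ is strictly the maximum and $d(C_{4,1})>d(C_0)$. For valency $3$, $d(C_{i,r_i})-d(C_0)=-1-\sum_{k\ne i}d(C_{k,r_k})$. Since each $d(C_{k,r_k})>d(C_0)$, this is less than $-1-2d(C_0)$; combined with $d\le 0$ and $d>-1$ for leg ends (Corollary \ref{decrdiscr}, since $\chi/e<1$), each leg spans an interval of length less than $1$. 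If this bound falls short, I would instead use the explicit legs in Figure \ref{fig QHD V3}.
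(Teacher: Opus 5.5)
Your overall route is the paper's: $K_X^2-K_W^2=-|C|$ from $\mu=0$ via \cite[Prop. 2.4]{SSW_2008}, $K_X\cdot C$ by checking the classified graphs, and the discrepancy relations from Proposition \ref{prop. discrepancies whs identity}. The valency $3$ relation is derived correctly.

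The valency $4$ step, however, fails as written. From $2d(C_0)=\sum_{i=1}^4 d(C_{i,r_i})$ you get $d(C_{4,r_4})-d(C_0)=d(C_0)-\sum_{i\le 3}d(C_{i,r_i})$. So what you need is $\sum_{i\le 3}d(C_{i,r_i})=d(C_0)-1$, not $3d(C_0)+1$. Your version would only give $d(C_{4,r_4})-d(C_0)=-2d(C_0)-1$, and it is false anyway. For type (a) with $p=0$ we have $d(C_0)=-3/2$ and short ends $-5/6$, so the sum is $-5/2\neq -7/2$.

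The correct relation does follow from Proposition \ref{prop. Sol WHS}. The three short legs are single curves $[m_i]$ with $(m_1,m_2,m_3)\in\{(3,3,3),(2,4,4),(2,3,6)\}$, so $1+d(C_{i,1})=(1-\chi/e)/m_i$. Since $\sum 1/m_i=1$ and $1+d(C_0)=-\chi/e$, summing gives $\sum_{i\le3}\bigl(1+d(C_{i,1})\bigr)=1-\chi/e=2+d(C_0)$, which is exactly what you need.

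In the valency $3$ inequality, the first bound you write, $d(C_{i,r_i})-d(C_0)<-1-2d(C_0)$, only gives something less than $3$ and is useless. The right bound uses the ingredient you mention afterwards. Every leg end satisfies $d(C_{k,r_k})>-1$, because $\chi/e<1$ (Remark \ref{factsQHD}(7) and Corollary \ref{decrdiscr}). Hence $d(C_{i,r_i})-d(C_0)=-1-\sum_{k\neq i}d(C_{k,r_k})<1$. Monotonicity along the leg then gives all differences less than $1$; it is trivial in the log canonical case $\chi=0$, where $d(C_{i,j})=-1+\alpha_{i,j}/m_i$.

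With these two corrections your argument matches the paper's. The only remaining input is the family-by-family check of $K_X\cdot C$, which the paper also takes from the classification.
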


\begin{proof}
    The last part follows directly from Proposition \ref{prop. discrepancies whs identity}.
\end{proof}

%%% NO SE SI ES NECESARIO ESTO ULTIMO

We end this section with some useful facts.

\begin{definition}
Let $(p \in W)$ be a $\Q$-factorial normal surface singularity. An \textit{M-modification} is a proper birational morphism $\sigma \colon Z \to W$ such that $Z$ is normal, and $K_Z$ is relatively nef. It is said to be an \textit{M-resolution} if in addition $Z$ has only Wahl singularities \cite{KSB_1988,BC94,Ko24}.
\label{M-modification}
\end{definition}

\begin{proposition}
Let $(p \in W)$ be a normal singularity, and let $C$ be a $\Q$-Cartier $\Q$-effective divisor on $W$. Let $\sigma \colon Y \to W$ be a proper birational morphism with $Y$ nonsingular. Then $\sigma^*(C)$ is an effective $\Q$-divisor on $Y$.
\label{eff}
\end{proposition}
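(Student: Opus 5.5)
The plan is to reduce to the classical fact that the pullback of an effective Cartier divisor is effective. Write $C=\sum_k c_k D_k$ with $c_k\in\Q_{\geq 0}$ and $D_k$ prime divisors on $W$. Since $C$ is $\Q$-Cartier, there is a positive integer $N$ such that $NC$ is Cartier and $Nc_k\in\Z$ for all $k$. Then $NC$ is an effective integral Cartier divisor, given locally near $p$ by a single function. Because $W$ is normal and $NC$ is effective, the local equation $f$ is a regular function on $W$ near $p$, not merely a rational one. Indeed, $f$ has nonnegative valuation along every prime divisor, and normality (Hartogs) makes it holomorphic.

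Next, pull back along $\sigma$. The divisor $\sigma^*(NC)$ is the divisor of the function $f\circ\sigma$ on $Y$. This is a holomorphic function on the nonsingular surface $Y$ near $\sigma^{-1}(p)$, and it does not vanish identically, since $\sigma$ is birational. The divisor of a nonzero holomorphic function is effective, so every coefficient of $\sigma^*(NC)$ is $\geq 0$. Dividing by $N$, and using $\sigma^*(C)=\frac1N\sigma^*(NC)$ by linearity of the pullback on $\Q$-Cartier divisors, gives the claim.

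If one wants the Mumford pullback instead (defined via the negative definite intersection matrix of the exceptional curves), it agrees with the one above. To double-check effectivity directly in that language, write $\sigma^*C=\bar C+\sum a_iE_i$, where $\bar C$ is the strict transform. Then $\sum a_iE_i\cdot E_j=-\bar C\cdot E_j\leq 0$ for all $j$. Negative definiteness gives $a_i\geq 0$ by the standard lemma: if $Z\cdot E_j\leq 0$ for all $j$, then $Z$ is effective. This lemma is proved by splitting $Z=Z^+-Z^-$ and computing $(Z^-)^2=Z\cdot Z^- + Z^+\cdot Z^-\geq 0$, which forces $Z^-=0$.

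The only subtle point is the step showing that the local equation of $NC$ is regular on the possibly singular $W$. This is where normality is used. Everything else is formal.
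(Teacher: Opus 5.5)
Your proof is correct, but your main argument differs from the paper's. The paper's proof is essentially your ``double-check'' paragraph. It writes $\sigma^*(C)=C'+\sum_i\alpha_i\Gamma_i$ and uses $\sigma^*(C)\cdot\Gamma_j=0$ to get $-(\sum_i\alpha_i\Gamma_i)\cdot\Gamma_j=C'\cdot\Gamma_j\geq 0$. It then concludes $\alpha_i\geq 0$ from negative definiteness via the negativity lemma (Koll\'ar--Mori, Lem.~3.41).

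Your primary route instead takes a Cartier multiple $NC$, uses normality of $W$ to make its local equation $f$ regular, and observes that $\mathrm{div}(f\circ\sigma)$ is effective.
\begin{itemize}
\item Your argument is dimension-free and needs neither negative definiteness of the exceptional locus nor $Y$ nonsingular (normal suffices). However, it genuinely uses the $\Q$-Cartier hypothesis.
\item The paper's argument is specific to surfaces but never actually uses $\Q$-Cartier. It applies verbatim to Mumford's pullback of any effective Weil $\Q$-divisor on a normal surface germ. It also runs on the same negativity lemma that drives Propositions \ref{M-modPos} and \ref{M-modLogCan} immediately afterward, so it fits the paper's toolkit.
\end{itemize}

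There is one small sign slip in your secondary argument. The identity should read $(Z^-)^2=Z^+\cdot Z^- - Z\cdot Z^-$, not $Z\cdot Z^- + Z^+\cdot Z^-$. With the correct sign both terms are $\geq 0$, because $Z\cdot E_j\leq 0$ for all $j$ and $Z^-$ is an effective combination of the $E_j$. So the conclusion $Z^-=0$ stands.
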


\begin{proof}
Let $\sigma^*(C)=C' + \sum \alpha_i \Gamma_i$ where Exc$(\sigma)=\sum_i \Gamma_i$ and $C'$ is the proper transform of $C$. We have that the intersection matrix $(\Gamma_i \cdot \Gamma_j)$ is negative definite, and $-(\sum \alpha_i \Gamma_i) \cdot \Gamma_i= C' \cdot \Gamma_i \geq 0$ for all $i$. By \cite[Lem. 3.41]{KM_1998} we have $\alpha_i \geq 0$ for all $i$.   \end{proof}

\begin{proposition}
Let $\sigma \colon Z \to W$ be an M-modification. Then $K_Z \equiv \sigma^*(K_W) - \sum_i \alpha_i \Gamma_i$ with $\alpha_i \geq 0$ for every $i$.  
\label{M-modPos}
\end{proposition}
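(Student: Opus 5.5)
The plan is to run the negativity lemma argument used in the proof of Proposition \ref{eff}, applied now to the relative canonical divisor. Write $K_Z \equiv \sigma^*(K_W) + \sum_i a_i \Gamma_i$, where $\Gamma_i$ are the exceptional curves of $\sigma$ and $a_i\in\Q$. This makes sense because $W$ is $\Q$-factorial, so $K_W$ is $\Q$-Cartier and $\sigma^*(K_W)$ is defined. Since $Z$ is normal, $K_Z$ is a Weil divisor that need not be $\Q$-Cartier in general. I would handle this either by Mumford's pullback of $K_Z$ to a resolution, or by noting that the M-modifications relevant here have rational singularities, which are $\Q$-factorial. In either case intersection numbers with the $\Gamma_i$ are well defined, and the exceptional curves have a negative definite intersection matrix $(\Gamma_i\cdot\Gamma_j)$.

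Set $\alpha_i=-a_i$ and $D=\sum_i\alpha_i\Gamma_i$, so that $K_Z\equiv\sigma^*(K_W)-D$. For each $j$ we have $\sigma^*(K_W)\cdot\Gamma_j=0$, because $\Gamma_j$ is contracted. Hence
$$-D\cdot\Gamma_j = K_Z\cdot\Gamma_j \ge 0,$$
since $K_Z$ is $\sigma$-nef by definition of an M-modification.

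So $-D$ is a $\Q$-divisor supported on the exceptional locus that is $\sigma$-nef. The negativity lemma \cite[Lem.~3.41]{KM_1998}, as invoked in Proposition \ref{eff}, then gives that $D$ is effective, i.e. $\alpha_i\ge0$ for all $i$. If one prefers to work on a resolution $Y\to Z\to W$, pull back along the resolution. Nefness over $W$ is preserved for curves that are exceptional for $Y\to W$, since pullbacks intersect trivially with curves contracted to points in $Z$. The negativity lemma on $Y$ gives effectivity there, and pushing forward to $Z$ recovers the $\alpha_i$.

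The only genuinely delicate step is the first one: making sense of $K_Z$ numerically when $Z$ is merely normal, so that the intersections $K_Z\cdot\Gamma_j$ are meaningful. I would address it with Mumford's $\Q$-valued intersection theory on normal surfaces. After that, the argument is a direct application of the negativity lemma.
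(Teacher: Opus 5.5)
Your proposal is correct and is essentially the paper's proof. The paper passes to the minimal resolution $\phi\colon Y\to Z$ and notes that $-\phi^*\bigl(\sum_i\alpha_i\Gamma_i\bigr)$ meets every $(\sigma\circ\phi)$-exceptional curve non-negatively. It then applies \cite[Lem.~3.41]{KM_1998} on the nonsingular surface $Y$ and reads off the $\alpha_i$ from the coefficients of the proper transforms of the $\Gamma_i$; this is exactly your fallback route. Your main route, working directly on $Z$ with Mumford's intersection theory, is an equivalent formulation of the same negativity-lemma argument.
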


\begin{proof}
Write $K_Z \equiv \sigma^*(K_W) - \sum_i \alpha_i \Gamma_i$. Let $\phi \colon Y \to Z$ be the minimal resolution of $Z$. Consider $\phi^*(-\sum_i \alpha_i \Gamma_i)=-(\sum_i \beta_i D_i)$ where $\sum_i D_i$ is the exceptional divisor of $\sigma \circ \phi$. Since $K_Z$ is nef, we have $-(\sum_i \beta_i D_i) \cdot D_j \geq 0$ for all $j$. Therefore, by \cite[Lem. 3.41]{KM_1998}, $\beta_i \geq 0$, and in particular $\alpha_i \geq 0$ for every $i$, since among the $D_i$s we have the proper transforms of the $\Gamma_j$s. 
\end{proof}

\begin{proposition}
Let $(p \in W)$ be a log canonical (log terminal) singularity. An M-modification $\sigma \colon Z \to W$ can only have log canonical (log terminal) singularities. In particular, an M-modification with only \QHD singularities of a log terminal singularity is an M-resolution. Moreover, the exceptional divisor of the minimal resolution of an M-modification of a c.q.s. is a chain of nonsingular rational curves.
\label{M-modLogCan}
\end{proposition}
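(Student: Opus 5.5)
The plan is to compare discrepancies through a common resolution, using Proposition \ref{M-modPos}. Let $\sigma \colon Z \to W$ be an M-modification and let $\phi \colon Y \to Z$ be a resolution such that $\psi=\sigma\circ\phi \colon Y \to W$ is a log resolution of $(p\in W)$. Every exceptional divisor $E$ of $Z$ (over a point of $Z$) appears on some such $Y$, possibly after further blow-ups. We write $a(E,W)$ and $a(E,Z)$ for its discrepancies with respect to $W$ and $Z$.

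\textbf{Step 1: discrepancies over $Z$ are at least those over $W$.} By Proposition \ref{M-modPos} we have $K_Z \equiv \sigma^*K_W - \sum_i \alpha_i\Gamma_i$ with $\alpha_i\ge 0$. Pulling back by $\phi$ gives
\[
K_Y \equiv \phi^*K_Z + \sum_E a(E,Z)E \equiv \psi^*K_W - \phi^*\Big(\sum_i\alpha_i\Gamma_i\Big) + \sum_E a(E,Z)E .
\]
By Proposition \ref{eff}, or by the argument in its proof, $\phi^*(\sum\alpha_i\Gamma_i)$ is effective. Comparing coefficients of each $\phi$-exceptional $E$ then gives $a(E,Z)= a(E,W)+\mathrm{coeff}_E\,\phi^*(\sum\alpha_i\Gamma_i)\ge a(E,W)$. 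Consequently, if $a(E,W)\ge -1$ (respectively $>-1$) for all $E$ over $p$, the same holds over every singular point of $Z$ lying above $p$. Singular points of $Z$ away from the exceptional locus are isomorphic to points of $W$, which we may assume are smooth or of the same type after shrinking the germ. This gives the log canonical and log terminal statements. The one point to check is that $\Q$-Cartierness of $K_Z$ is available. I will use that $Z$ is normal with $K_Z$ relatively nef, which is defined numerically via the Mumford pullback, and in dimension $2$ discrepancies make sense numerically. I expect this to be harmless.

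\textbf{Step 2: the "in particular".} A log terminal surface singularity is a quotient singularity. By Theorem \ref{Wahl}, the \QHD quotient singularities are exactly the Wahl singularities. So an M-modification with only \QHD singularities of a log terminal $W$ has only Wahl singularities, and is therefore an M-resolution by Definition \ref{M-modification}.

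\textbf{Step 3: the chain statement.} This is the step I expect to need the most care. Let $(p\in W)$ be a c.q.s., whose minimal resolution is a chain of $\P^1$'s. Since $Z$ is log terminal by Step 1, its singularities are quotient singularities, hence rational. The exceptional divisor of $\sigma$ is a union of curves $\Gamma_i$ lying over $p$. The minimal resolution $Y$ of $Z$ dominates $\tilde W$, the minimal resolution of $W$, because $Y$ is a resolution of $W$. Then $Y\to \tilde W$ is a sequence of blow-ups, and the total exceptional divisor $\mathrm{Exc}(Y\to W)$ is a tree of $\P^1$'s containing the proper transform of the chain.

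The key input I would use is that the dual graph over a c.q.s. in any resolution is a tree of rational curves. Its subtree is the exceptional divisor of $Y\to Z$ together with the $\Gamma_i$. Removing the curves $\Gamma_i$, each remaining connected component is the exceptional divisor of a quotient singularity of $Z$. The branching points of that component must be handled. A non-chain component would be a $D$ or $E$ type quotient configuration. To rule it out, I would argue that the link of $p$ is a lens space, and each singular point of $Z$ lies on curves $\Gamma_i\cong\P^1$ with rational neighbourhood. Alternatively, and more concretely, I would use toric geometry. Since $K_Z$ is relatively nef and the problem is local, I would show by the discrepancy estimate that every exceptional curve $E$ of $Y\to W$ with $a(E,Z)\le 0$ has $a(E,W)<0$. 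Then $Y$ is obtained from $\tilde W$ by blowing up only nodes of the chain, that is, the torus-fixed points, since blowing up a smooth point of a curve produces discrepancy $\ge 0$ relative to $W$ only in a controlled way. Hence $Y$ is a toric resolution, its exceptional divisor over $p$ is again a chain, and so is its sub-configuration over each point of $Z$. Making this "only toric blow-ups" step rigorous is where I expect the real work to be.
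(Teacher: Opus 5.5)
Your Steps 1 and 2 coincide with the paper's proof: Propositions \ref{M-modPos} and \ref{eff} give $a(E,Z)\ge a(E,W)$, and log terminal \QHD singularities are Wahl. The gap is Step 3. The paper takes this step from \cite[Lem.~3.14]{KSB_1988} (relatively nef partial resolutions of a c.q.s.\ are dominated by its maximal resolution), and your proposal does not establish it.

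Your first route (tree of $\P^1$'s, lens space link) cannot work, because it never uses that $K_Z$ is relatively nef. Without that hypothesis the chain claim is false. Over $\frac{1}{m}(1,1)$, blow up a general point of the $(-m)$-curve $E$, then two general points of the new curve $G$, then one general point on each of the resulting curves $H_1,H_2$. Contracting $E+G+H_1+H_2$ (central $(-3)$-curve $G$ with legs $[2],[2],[m+1]$) gives a partial resolution with a type $D$ quotient point.

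Your second route is the right idea, but the inequality you propose to prove is false in general. Over an $A_2$ point, extract one $(-2)$-curve: then $K_Z=\sigma^*K_W$, and the other $(-2)$-curve has discrepancy $0$ over both $Z$ and $W$, not $<0$. Also, ``discrepancy $\ge 0$ \ldots\ in a controlled way'' is not an argument.

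The gap closes in two steps.
\begin{enumerate}
\item[(i)] Every curve of $\mathrm{Exc}(Y\to W)$ has $a(\cdot,W)\le 0$.
\begin{itemize}
\item For $\phi$-exceptional curves: $Y\to Z$ is minimal, so $a(E,Z)\le 0$, and Step 1 gives $a(E,W)\le a(E,Z)$.
\item For the proper transforms of the $\Gamma_i$: $a(\Gamma_i,W)=-\alpha_i\le 0$ by Proposition \ref{M-modPos}. This second input is not contained in your Step 1 estimate.
\end{itemize}
\item[(ii)] $Y\to\tilde W$ is a sequence of point blow-ups on the exceptional locus. Consider the first center that is not a node of the chain. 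It lies on exactly one curve $E_j$, so the new curve has discrepancy $1+a(E_j,W)>0$, because $W$ is log terminal. Its proper transform survives on $Y$, contradicting (i).
\end{enumerate}
Hence all centers are nodes, and $\mathrm{Exc}(Y\to W)$ is a chain. So are its subconfigurations over the points of $Z$, which are therefore c.q.s. This is exactly the maximal-resolution argument behind the paper's citation, and with it your proof becomes self-contained.
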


\begin{proof}
We have that $\sigma \colon Z \to W$ satisfies $K_Z \equiv \sigma^*(K_W) -\sum_i \alpha_i \Gamma_i$ where $\sum_i \Gamma_i$ is the exceptional divisor of $\sigma$, and $\alpha_i \geq 0$ for all $i$ by Proposition \ref{M-modPos}. Then by Proposition \ref{eff} we have that $\phi^*(\sum_i \alpha_i \Gamma_i)$ is an effective $\Q$-divisor, where $\phi \colon Y \to Z$ is the minimal resolution. But then there is a proper birational morphism $\pi \colon Y \to W_{min}$ that completes a commutative diagram of morphisms, where $W_{min} \to W$ is the minimal resolution. By definition of log canonical and log terminal singularities we have what we want. For the last part, we use the fact that the \QHD singularities that are log terminal are precisely Wahl singularities. Finally, M-resolutions are described through the P-resolutions of \cite{KSB_1988}. The key to knowing that their minimal resolution is a chain comes from the fact that P-resolutions are dominated by the maximal resolution of the c.q.s. This is \cite[Lem. 3.14]{KSB_1988}.
\end{proof}

%%%%%%%%%%%%%%%%%%%%%%%%%%%%%%%%%%%%%%%%%%%%%%%%%%%%%%%%%%%%%%%%%%%%%%%%%%%%%%%%%%%%%%%%%%%%%%%%%%%%%%%%%%%%%%%%%%%%%%%%%%%%%%%%%%%%%%%%%%%%%%%%%%%%%%%%%%
\section{Intersection numbers from surfaces with only \QHD singularities} \label{s2}

\begin{definition}
We say that $W$ is a \QHD surface if it is a projective surface with only (weighted homogeneous) \QHD singularities.
\label{WQHDsurf}
\end{definition}

The following set-up resembles \cite{FRU_23} and \cite{MNU_24}. Let $W$ be a \QHD surface with $l$ singularities. For $i=3,4$, let $v_i$ be the number of valency $i$ \QHD singularities on $W$. Thus the number of Wahl singularities on $W$ is $l-v_3-v_4$. Consider the diagram
$$
     \xymatrix{  & X  \ar[ld]_{\pi} \ar[rd]^{\phi} &  \\ S &  & W}
$$
where the morphism $\phi$ is the minimal resolution of $W$, and $\pi$ is a composition of $m$ blow-ups such that $S$ has no $(-1)$-curves. Let $E_i$ be the pull-back divisor in $X$ of the $i$-th point blown-up through $\pi$. Thus, $E_i$ is a connected, possibly non-reduced tree of $\P^1$s, $E_i^2=-1$, and $E_i\cdot E_j=0$ for $i\neq j$. Let $E=\sum_{i=1}^{m}{E_i}$ and $C=\sum_{i=1}^l C_i$ be the exceptional (reduced) divisor of $\phi$, where the $C_i$ are the exceptional (reduced) divisors over each \QHD singularity. Let $|C_i|$ be the number of curves in $C_i$. We have $$K_X \sim \pi^{*}(K_S)+E.$$ By intersecting with $C$ and using Proposition \ref{prop WQHD}, we obtain $$K_{W}^{2}-K_{S}^{2}=\pi^{*}K_{S}\cdot C+E\cdot C -l+v_4-m=\sum_{i=1}^{l}|C_i|-m.$$ 

We can also express this formula using essentially the arithmetic genus of $E+C$. 

\begin{definition}
For any divisor $D$ on a nonsingular projective surface $Y$, we define $$q(D):=-\frac{1}{2}(D^2+ D\cdot K_Y).$$ If $D$ is effective, then $q(D)=\chi(\O_D)$. For any $D,D'$ we have $$q(D+D')=q(D)+q(D')-D \cdot D',$$ and $q(nD)=nq(D)-{n \choose 2} D^2$. We also have $q(D)=q(\sigma^*(D))$ for any composition of blow-ups $\sigma$. In fact, if $\sigma \colon Y' \to Y$ is a composition of blow-ups, $E_i$ are the pull-back of the $i$-th point blown-up through $\sigma$, and $\sigma^*(D)=D'+\sum_i k_i E_i$ for some $k_i$s, then $$q(D')=q(D)+\sum_i \binom{k_i}{2},$$ where $\binom{N}{2} := \frac{N(N-1)}{2}$. Note that $E_i \cdot D'=k_i$.
\label{defq}
\end{definition}

\begin{definition}
For any given $E_i$, we define the divisors 
$$C_{out,i}=\{ \Gamma \in C \colon \Gamma \nsubseteq E_i \} \ \ \ \text{and} \ \ \ C_{in,i}=\{ \Gamma \in C \colon \Gamma \subseteq E_i \}.$$ Let $S_h$ be the set of $E_i$s such that $E_i \cdot C_{out,i}=h$. Let $T_h$ be the set of $E_i$s in $S_h$ satisfying $E_i \cdot C_{in,i}=0$. Let $s_h=|S_h|$ and $t_h=|T_h|$ be the corresponding cardinalities.
\label{def ShTh}
\end{definition}

\begin{remark}\label{q(E+C)}
As in Definition \ref{defq}, we note that for the divisors $D=\pi(C)$ and $D'=E+C$, we can write $\pi^*(\pi(C))=C+E+\sum_i k_i E_i$ for some $k_i$s, and so we have
\begin{align*}
q(E+C)&=q(\pi(C))+\sum_{i}{\binom{E_i\cdot (C+E)}{2}}
=q(\pi(C))+\sum_{i}{\binom{E_i\cdot C-1}{2}}\\
&=q(\pi(C))+\sum_{h\geq1}\left[\sum_{E_i\in T_h}{\binom{E_i\cdot C-1}{2}}+\sum_{E_i\in S_h\setminus T_h}{\binom{E_i\cdot C-1}{2}}\right]\\
&=q(\pi(C))+\sum_{h\geq1}{t_h\binom{h-1}{2}}+\sum_{h\geq1}{(s_h-t_h)\binom{h-2}{2}}
\end{align*}
Thus, the value $q(E+C)$ depends only on the numbers $t_h$ for $h\geq 3$, $s_h-t_h$ for $h\neq 2,3$, and $q(\pi(C))$.
\end{remark}

\begin{proposition}
We have $K_W^2-K_S^2= K_S \cdot \pi(C) - q(E+C) +v_4$.
    \label{q}
\end{proposition}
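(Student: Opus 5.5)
The plan is to start from the identity already derived just before Definition \ref{defq},
$$K_{W}^{2}-K_{S}^{2}=\pi^{*}K_{S}\cdot C+E\cdot C -l+v_4-m,$$
and to show that the terms $E\cdot C - l - m$ are exactly $-q(E+C)$, while $\pi^*K_S\cdot C = K_S\cdot \pi(C)$.

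For the second point I will use the projection formula: $\pi^*K_S\cdot C = K_S\cdot \pi_*C$. Here $\pi_*C$ is the image divisor $\pi(C)$: components of $C$ contracted by $\pi$ contribute nothing, and the others map birationally onto their images.

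For the first point I will expand with the additivity rule from Definition \ref{defq}:
$$q(E+C)=q(E)+q(C)-E\cdot C.$$

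\begin{itemize}
\item \emph{Computing $q(E)$.} Since the $E_i$ are pairwise orthogonal with $E_i^2=-1$, we get $E^2=-m$. Also $K_X\cdot E_i=(\pi^*K_S+E)\cdot E_i=0+E_i^2=-1$, so $K_X\cdot E=-m$. Hence $q(E)=-\tfrac12(-m-m)=m$.
\item \emph{Computing $q(C)$.} The $C_i$ are pairwise disjoint, so $q(C)=\sum_i q(C_i)$. By Remark \ref{factsQHD}(1) each \QHD singularity is rational, and its star graph exceptional divisor $C_i$ is a reduced simple normal crossings tree of smooth $\P^1$'s. Each component $\Gamma$ satisfies $q(\Gamma)=1$ by adjunction. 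The tree $C_i$ has $|C_i|-1$ edges, each with intersection number $1$, so iterating additivity gives $q(C_i)=|C_i|-(|C_i|-1)=1$. Equivalently, $\chi(\O_{C_i})=1$ for a tree of rational curves. Thus $q(C)=l$.
\end{itemize}

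Combining these gives $q(E+C)=m+l-E\cdot C$, that is, $E\cdot C-l-m=-q(E+C)$. Substituting this into the displayed identity yields the statement.

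There is no serious obstacle here. The only point needing care is the justification that each $C_i$ is a tree with transversal intersections, which comes from the star-shaped resolution graphs of the weighted homogeneous \QHD singularities. One should also confirm that the identity quoted at the start correctly used $K_X\cdot C_i = 1+|C_i|$ or $|C_i|$ from Proposition \ref{prop WQHD}, which is where the $v_4$ term enters.
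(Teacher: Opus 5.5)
Your proposal is correct and follows the paper's proof: the paper also starts from the identity $K_W^2-K_S^2=\pi^*K_S\cdot C+E\cdot C-l+v_4-m$ and expands $q(E+C)=q(E)+q(C)-E\cdot C$ with $q(E)=m$ and $q(C)=l$. You additionally write out the routine checks that the paper's ``direct evaluation'' leaves implicit: $K_X\cdot E_i=-1$, $E^2=-m$, that each $C_i$ is a transversal tree of $\P^1$'s so $\chi(\O_{C_i})=1$, and the projection formula giving $\pi^*K_S\cdot C=K_S\cdot\pi(C)$.
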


\begin{proof}
We have $q(E+C)=q(E)+q(C)-E \cdot C$, $q(E)=m$ and $q(C)=l$. Thus, we obtain the formula by direct evaluation. 
\end{proof}

We will use the point of view of $q$ later to study possible $\pi(C)$.

\begin{definition}
Let us consider the following partition of $C$: 
\begin{itemize}
    \item $M:= \{ \Gamma \in C \ \text{contracted by} \ \pi \}$,
    \item $J:= \{ \Gamma \in C \ \text{such that} \ \pi(\Gamma) \cdot K_S \neq 0 \}$, and 
    \item $J^c:= \{ \Gamma \in C \ \text{such that} \ \pi(\Gamma) \cdot K_S = 0 \ \text{and} \ \pi(\Gamma) \ \text{a curve in } S \}$. 
\end{itemize}
\label{def MJJ}
\end{definition}

Let $E_i$ be the pull-back of a $(-1)$-curve $\Gamma$ in the composition of blow-ups $\pi$. We must have $E_i \cdot C_{in,i}=-1$ or $0$, and $E_i \cdot C_{in,i}=-1$ if and only if $C$ contains the proper transform of $\Gamma$.
Therefore, the cardinality $|M|$ of $M$ is $\sum_{h\geq0}{(s_h-t_h)}$. We also have $m=\sum_{h\geq 0} s_h$.

\begin{proposition}
We have $K_W^2-K_S^2= |\pi(C)| - \sum_{h\geq0}{t_h} $, where $|\pi(C)|$ is the number of irreducible curves in $\pi(C)$. 
    \label{pi(C)-T}
\end{proposition}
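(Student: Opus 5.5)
The plan is to deduce the statement directly from the formula
$$K_W^2-K_S^2=\sum_{i=1}^{l}|C_i|-m$$
obtained at the beginning of this section, by counting the curves of $C$ in two ways. No new geometric input should be needed beyond the bookkeeping set up in Definitions \ref{def ShTh} and \ref{def MJJ}.

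\textbf{Splitting $|C|$.} Since $C=\sum_{i=1}^l C_i$ is reduced and the $C_i$ are disjoint, we have $\sum_i |C_i|=|C|$. Split the irreducible components of $C$ into those contracted by $\pi$, which form the set $M$, and those not contracted, which form $J\cup J^c$. A non-contracted curve $\Gamma\subseteq X$ is the proper transform of its image $\pi(\Gamma)$. Hence $\Gamma\mapsto \pi(\Gamma)$ is a bijection between $J\cup J^c$ and the irreducible components of $\pi(C)$, since two distinct curves with the same image would both be its proper transform. Therefore
$$|C|=|M|+|\pi(C)|.$$

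\textbf{Counting $M$.} This is the only step needing a small argument. Each $\pi$-exceptional irreducible curve in $X$ is the proper transform of exactly one $(-1)$-curve created at some blow-up step, and these correspond bijectively to the $E_i$. As noted just before the statement, $E_i\cdot C_{in,i}\in\{-1,0\}$, with the value $-1$ occurring exactly when $C$ contains the proper transform of that $(-1)$-curve. Thus $E_i\in S_h$ corresponds to a curve of $M$ precisely when $E_i\notin T_h$, which gives
$$|M|=\sum_{h\ge 0}(s_h-t_h).$$
On the other hand, every $E_i$ lies in exactly one $S_h$ with $h=E_i\cdot C_{out,i}\ge 0$, so $m=\sum_{h\ge0}s_h$. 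All these sums are finite.

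\textbf{Conclusion.} Substituting these counts gives
$$K_W^2-K_S^2=|M|+|\pi(C)|-m=|\pi(C)|+\sum_{h\ge0}(s_h-t_h)-\sum_{h\ge0}s_h=|\pi(C)|-\sum_{h\ge0}t_h,$$
as claimed. The only point to check with care is the dichotomy $E_i\cdot C_{in,i}\in\{-1,0\}$. It follows because $E_i$ is a tree whose only reduced component of self-intersection $-1$ is the last exceptional curve, while every other component $\Gamma$ of $E_i$ satisfies $E_i\cdot\Gamma=0$.
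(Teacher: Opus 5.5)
Your argument is correct and is the paper's proof: it uses $m=\sum_h s_h=|M|+\sum_h t_h$ and $\sum_i|C_i|-|M|=|\pi(C)|$, then substitutes into $K_W^2-K_S^2=\sum_i|C_i|-m$. One small fix in your closing justification: the unique component $\Gamma$ of $E_i$ with $E_i\cdot\Gamma\neq 0$ is the proper transform of the $i$-th exceptional curve, not ``the last exceptional curve'' (in $X$ it may have self-intersection below $-1$, and $E_i$ may contain several $(-1)$-curves); by the projection formula, $E_i\cdot\Gamma=-1$ for it and $E_i\cdot\Gamma=0$ for every curve contracted by the later blow-ups.
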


\begin{proof}
We have $K_{W}^{2}-K_{S}^{2}=\sum_{i=1}^{l}|C_i|-m=\sum_{i=1}^{l}|C_i|-|M|-\sum_{h\geq0}{t_h}=|\pi(C)|-\sum_{h\geq0}{t_h}$.
\end{proof}

\begin{proposition}
If $K_W$ is nef, then $s_0=0$. In particular, $\pi(C)$ must contain some curve.
\label{s0=0}
\end{proposition}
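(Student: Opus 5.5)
The plan is to argue by contradiction. Suppose some $E_i$ lies in $S_0$, i.e. $E_i \cdot C_{out,i}=0$. The idea is to produce an irreducible curve $\Gamma$ with $K_W\cdot\phi(\Gamma)<0$.

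\emph{Choosing the curve.} Since $E_i$ is the pull-back of a blown-up point, it contains a $(-1)$-curve $F$, namely the last exceptional curve created over that point. Every component of $E_i$ is either in $C_{in,i}$ or not in $C$. A $(-1)$-curve is never in $C$, because $\phi$ is the minimal resolution and $C$ has no $(-1)$-curves. Hence $F \not\subseteq C$, and $\phi(F)$ is a curve in $W$.

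\emph{Intersection numbers.} Write $K_X \equiv \phi^*K_W + \sum d(\Gamma)\,\Gamma$, summing over the components $\Gamma$ of $C$. Then
$$K_W\cdot \phi(F)=\phi^*K_W\cdot F = K_X\cdot F-\sum d(\Gamma)\,\Gamma\cdot F = -1-\sum d(\Gamma)\,\Gamma\cdot F .$$
The components of $C$ meeting $F$ lie either inside $E_i$ (in $C_{in,i}$) or outside $E_i$. Those outside would meet $E_i$ positively, since they are not contained in $E_i$ and intersect $F\subset E_i$. This would contradict $E_i\cdot C_{out,i}=0$: every curve not contained in $E_i$ has nonnegative intersection with $E_i$, so the sum being $0$ forces each term to be $0$, while meeting $F$ forces a positive term. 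So only curves of $C_{in,i}$ meet $F$.

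\emph{Main obstacle.} The discrepancies of \QHD singularities can be less than $-1$ (they lie in $]-2,0]$ by Remark \ref{factsQHD}(7)), so $-1-\sum d(\Gamma)\Gamma\cdot F$ need not be negative. The fix is to not use $F$ alone but the whole $E_i$, or better the $\phi$-image of a suitable divisor. Take $D=E_i$. Then $K_X\cdot E_i=-1$, and $E_i\cdot\Gamma=0$ for every $\Gamma\in C_{out,i}$. For $\Gamma\in C_{in,i}$ we have $E_i\cdot \Gamma\le 0$, since $E_i\cdot\Gamma=-1$ for the proper transform of the first exceptional curve and $0$ otherwise, by the remark before Proposition \ref{pi(C)-T}.

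Write $E_i=\sum_j a_j G_j + \sum_{\Gamma\subseteq E_i,\Gamma\in C} b_\Gamma \Gamma$, where the $G_j\not\subseteq C$ and the multiplicities are positive. Then $\phi_*E_i=\sum a_j \phi(G_j)$ is an effective nonzero curve, nonzero because $F$ is among the $G_j$. We get
$$K_W\cdot\phi_*E_i=\phi^*K_W\cdot E_i = K_X\cdot E_i - \sum_{\Gamma} d(\Gamma)\,\Gamma\cdot E_i = -1 - \sum_{\Gamma\in C_{in,i}} d(\Gamma)\,\Gamma\cdot E_i .$$
Each $\Gamma\cdot E_i$ here is $0$ or $-1$, and the value $-1$ occurs for at most one $\Gamma$, the first exceptional curve, if it lies in $C$. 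In that case the sum equals $-1+d(\Gamma)$. Since $d(\Gamma)\le 0$ ($\phi$ is minimal), this is $\le -1<0$. Otherwise the sum is $-1$. Either way $K_W\cdot\phi_*E_i<0$, contradicting nefness. So $s_0=0$.

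\emph{Last claim.} Suppose $\pi(C)$ contains no curve, so every component of $C$ is contracted by $\pi$ and $C=M$. Then for every $i$, each $\Gamma\in C_{out,i}$ either lies in another $E_j$ disjoint from $E_i$ or sits in $E_i$'s support without being $\subseteq E_i$ in the relevant sense, so $E_i\cdot C_{out,i}=0$. More simply: if $C\neq\emptyset$, take the maximal $E_i$ (pull-back of a point on $S$) containing some component of $C$. Then $C_{out,i}$ consists of curves in other, disjoint maximal pull-backs, so $E_i\cdot C_{out,i}=0$ and $s_0>0$, a contradiction. Hence $\pi(C)$ contains a curve.
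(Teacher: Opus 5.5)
Your argument is correct and is essentially the paper's proof: intersect $\phi^*(K_W)$ with $E_i$ itself, using that every $\Gamma\in C$ satisfies $\Gamma\cdot E_i\in\{0,-1\}$ when $E_i\in S_0$, so $\phi^*(K_W)\cdot E_i=-1-\sum_{\Gamma\in C} d(\Gamma)\,\Gamma\cdot E_i<0$; the detour through the $(-1)$-curve $F$ can be deleted. For the ``in particular'', keep only your maximal-$E_i$ argument (the blow-up of a point of $S$ whose fiber contains a component of $C$), and drop the sentence before it claiming $E_i\cdot C_{out,i}=0$ for every $i$, which fails for non-maximal $E_i$ because a component of $C$ over the same point of $S$ can meet $E_i$ positively.
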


\begin{proof}
Let $E_k\in S_0$. Then, for every $\Gamma \in C$ we have $\Gamma \cdot E_k =0$ or $-1$. Then 
$$ \phi^{*}(K_W) \cdot E_k=K_X \cdot E_k - \sum_{\Gamma \in C} d(\Gamma) \Gamma \cdot E_k = -1- \sum_{\Gamma \in C} d(\Gamma) \Gamma \cdot E_k <0,$$ but this contradicts the nef hypothesis on $K_W$.
\end{proof}

\begin{proposition}
We have $K_W^2-K_S^2=\phi^*(K_W) \cdot E-\pi^{*}(K_S) \cdot \sum_{\Gamma\in C}d(\Gamma) \Gamma$.
\label{keyEqual}    
\end{proposition}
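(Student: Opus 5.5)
We would prove the identity by expanding $K_X^2$ in two ways, once through $\pi$ and once through $\phi$, and comparing. On the $\pi$ side we use $K_X \sim \pi^*(K_S)+E$. On the $\phi$ side we use the discrepancy formula $K_X \equiv \phi^*(K_W)+\sum_{\Gamma\in C} d(\Gamma)\Gamma$, which is valid because \QHD singularities are rational and hence $K_W$ is $\Q$-Cartier. Here $\phi^*(K_W)$ is orthogonal to every $\Gamma\in C$.

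First we compute $K_X^2$ via $\pi$. Since $E$ is $\pi$-exceptional, $\pi^*(K_S)\cdot E=0$, and $E^2=\sum_i E_i^2=-m$ because the $E_i$ are pairwise orthogonal. Hence $K_X^2=K_S^2-m$. Intersecting $K_X$ with the divisor $\pi^*(K_S)$ gives
\begin{equation*}
K_X\cdot \pi^*(K_S)=K_S^2 .
\end{equation*}
Intersecting the $\phi$-expression for $K_X$ with $\pi^*(K_S)$ instead gives
\begin{equation*}
K_S^2=\phi^*(K_W)\cdot \pi^*(K_S)+\pi^*(K_S)\cdot\sum_{\Gamma\in C} d(\Gamma)\Gamma .
\end{equation*}

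Next we intersect $\phi^*(K_W)$ with the other expression for $K_X$. Since $\phi^*(K_W)\cdot\Gamma=0$ for all $\Gamma\in C$, we get $\phi^*(K_W)\cdot K_X=K_W^2$. Expanding $K_X=\pi^*(K_S)+E$ then yields
\begin{equation*}
K_W^2=\phi^*(K_W)\cdot\pi^*(K_S)+\phi^*(K_W)\cdot E .
\end{equation*}

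Subtracting the previous display from this one cancels the mixed term $\phi^*(K_W)\cdot\pi^*(K_S)$. What remains is
\begin{equation*}
K_W^2-K_S^2=\phi^*(K_W)\cdot E-\pi^*(K_S)\cdot\sum_{\Gamma\in C} d(\Gamma)\Gamma ,
\end{equation*}
which is the claimed identity.

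There is no real obstacle. The only care needed is to use numerical equivalence consistently, since the discrepancy formula holds only numerically, and to remember that $\phi^*(K_W)$ is a $\Q$-divisor. All intersections involved are well defined on the smooth surface $X$.
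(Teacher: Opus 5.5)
Your argument is correct: pairing the two decompositions $K_X\sim\pi^*(K_S)+E$ and $K_X\equiv\phi^*(K_W)+\sum_{\Gamma\in C} d(\Gamma)\Gamma$ against $\pi^*(K_S)$ and $\phi^*(K_W)$, and cancelling the mixed term $\phi^*(K_W)\cdot\pi^*(K_S)$, yields exactly the identity. This is essentially the paper's proof, which does the same bilinear bookkeeping by pairing $K_X$ with $E$ and rewriting $E^2$ and $\big(\sum_{\Gamma\in C} d(\Gamma)\Gamma\big)^2$ through the two decompositions; your version is slightly more direct, and the computation $K_X^2=K_S^2-m$ is not needed.
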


\begin{proof}
By definition of $E$ we have $K_X \cdot E=E^2$. Then $$\phi^*(K_W) \cdot E + \sum_{\Gamma\in C}d(\Gamma) \Gamma \cdot E=K_X \cdot E = E^2= K_W^2 -K_S^2 + \Big(\sum_{\Gamma\in C} d(\Gamma) \Gamma \Big)^2.$$ But $\big(\sum_{\Gamma\in C} d(\Gamma) \Gamma \big)^2=(\pi^*(K_S) + E) \cdot (\sum_{\Gamma\in C} d(\Gamma) \Gamma)$.
\end{proof}

\begin{corollary}
If $K_W$ is nef, then $K_W^2 \geq K_S^2$ and $\pi(C)$ cannot be a (disjoint) collection of Du Val configurations. In particular, if $K_S$ is big and nef, then $K_W^2 > K_S^2$.
\label{kw>ks}
\end{corollary}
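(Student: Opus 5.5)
We start from Proposition \ref{keyEqual}:
$$K_W^2-K_S^2=\phi^*(K_W)\cdot E-\pi^{*}(K_S)\cdot \sum_{\Gamma\in C}d(\Gamma)\Gamma.$$
The plan is to show that both terms on the right are nonnegative, and then to analyse when the sum can vanish.

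\textbf{Step 1: the first term.} Each $E_i$ is an effective divisor and $K_W$ is nef, so $\phi^*(K_W)\cdot E_i\geq 0$. Hence $\phi^*(K_W)\cdot E\geq 0$.

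\textbf{Step 2: the second term.} Every curve of $M$ is contracted by $\pi$, so $\pi^*(K_S)\cdot\Gamma=0$ for $\Gamma\in M$. For the remaining curves, $\pi^*(K_S)\cdot\Gamma=K_S\cdot\pi(\Gamma)$ by the projection formula. All discrepancies of \QHD singularities satisfy $d(\Gamma)\leq 0$, since \QHD singularities are rational, hence not canonical apart from trivial cases; in fact $d(\Gamma)<0$ by Proposition \ref{prop. Sol WHS} and Remark \ref{factsQHD}. Therefore
$$-\pi^{*}(K_S)\cdot\sum d(\Gamma)\Gamma=\sum_{\Gamma\in J}(-d(\Gamma))\,K_S\cdot\pi(\Gamma).$$
This is $\geq 0$ provided $K_S\cdot\pi(\Gamma)\geq 0$ for every $\Gamma\in J$. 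This is the main obstacle, because $S$ is only a minimal surface. If $\kappa(S)\geq 0$, then $K_S$ is nef and we are done. If $S$ is ruled or rational, then $K_S$ is not nef. In that case I would first try to rule out $\kappa(S)=-\infty$ in the context where the corollary is applied, or else rewrite the term via $K_X=\pi^*K_S+E$ to absorb negative contributions. A cleaner route is to apply Step 1 with the full nef divisor: $K_X\cdot(\sum d\Gamma)$ relates to $E^2$. I would state the argument assuming $K_S$ nef, which covers the intended applications ($S$ minimal with $\kappa\geq 0$). Under that assumption $K_W^2\geq K_S^2$.

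\textbf{Step 3: Du Val configurations.} Suppose $\pi(C)$ is a disjoint union of Du Val configurations. Then $K_S\cdot\pi(\Gamma)=0$ for all $\Gamma$, so $J=\emptyset$ and the second term vanishes. Also $|\pi(C)|$ counts the curves of the configurations, and by Proposition \ref{pi(C)-T}, $K_W^2-K_S^2=|\pi(C)|-\sum t_h$. Alternatively, observe that the first term $\phi^*(K_W)\cdot E$ must then equal $K_W^2-K_S^2$. I would derive a contradiction via Proposition \ref{q}: $K_S\cdot\pi(C)=0$ and $v_4\geq 0$ give $K_W^2-K_S^2=v_4-q(E+C)$. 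For a Du Val configuration, $q(\pi(C))$ equals the number of connected components, since each tree of $(-2)$-curves has arithmetic genus $0$. Combining this with Remark \ref{q(E+C)} should produce a negative value, contradicting Step 2. As a fallback, one can use that $\phi^*K_W$ nef forces $K_X\cdot E_i\geq \sum d(\Gamma)\,\Gamma\cdot E_i$, which fails as in Proposition \ref{s0=0} once one checks that the curves of $C$ meeting $E_i$ are forced to have small discrepancy. Also, $\pi(C)\neq\emptyset$ by Proposition \ref{s0=0}.

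\textbf{Step 4: the big and nef case.} If $K_S$ is big and nef, then by the Hodge index theorem the only curves with $K_S\cdot\pi(\Gamma)=0$ are $(-2)$-curves forming Du Val configurations. By Step 3 this forces $J\neq\emptyset$. Since $d(\Gamma)<0$ and $K_S\cdot\pi(\Gamma)>0$ for $\Gamma\in J$, the second term is strictly positive, which gives the strict inequality $K_W^2>K_S^2$.
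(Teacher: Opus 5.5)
Your Steps 1, 2 and 4 follow the paper's argument: Proposition \ref{keyEqual}, $d(\Gamma)<0$, the projection formula, and Hodge index when $K_S$ is big and nef. Restricting to $K_S$ nef in Step 2 is the right call, not a weakness. The paper's proof also silently needs $K_S\cdot\pi(\Gamma)\geq 0$, and without it the inequality fails. In Example \ref{GenTypeEx}, $X$ is rational, so the minimal model $S$ has $K_S^2\geq 8$, while $K_W^2=1$.

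The genuine gap is Step 3. In the Du Val case, Proposition \ref{q} and Remark \ref{q(E+C)} give only
$$K_W^2-K_S^2=v_4-q(E+C)\leq v_4-c,$$
where $c\geq 1$ is the number of connected components of $\pi(C)$, since all binomial terms are $\geq 0$. Combined with $K_W^2-K_S^2=\phi^*(K_W)\cdot E\geq 0$, this gives a contradiction only when $v_4<c$, for instance with no valency $4$ points. The term $v_4$ enters with the wrong sign, so ``should produce a negative value'' is unjustified in general. Your fallback also fails. For a single $E_i$ one has $\phi^*(K_W)\cdot E_i=-1-\sum d(\Gamma)\,\Gamma\cdot E_i$, and this is positive as soon as the curves of $C$ meeting $E_i$ have discrepancies summing below $-1$, which is the typical situation for elements of $S_2$ and $S_3$. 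No local test on one $E_i$ can work, because the contradiction must also use the $(-2)$-curves in $J^c$.

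The missing idea is that Du Val singularities are canonical, a statement about the whole configuration. The paper contracts $\pi(C)$ to Du Val points of a surface $\bar S$. Then $W\to\bar S$ is an M-modification, so by Proposition \ref{M-modPos} its exceptional coefficients are $-\alpha_i\leq 0$. Canonicity forces them to be $\geq 0$, hence the map is crepant, so no non-Du Val (in particular no \QHD) singularities can occur (Proposition \ref{M-modLogCan}).

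A direct version of the same argument is as follows. By Proposition \ref{s0=0} every blown-up point lies on $\pi(C)$, so all curves of $C$ and all components of $E$ lie in the exceptional locus $V$ of $X\to\bar S$, which is negative definite. Since $\pi^*(K_S)\cdot\Gamma'=0$ for every $\Gamma'\subset V$, the divisor
$$D=E-\sum_{\Gamma\in C}d(\Gamma)\,\Gamma\equiv\phi^*(K_W)-\pi^*(K_S)$$
is effective, nonzero (because $C\neq\emptyset$), supported on $V$, and satisfies $D\cdot\Gamma'\geq 0$ for all $\Gamma'\subset V$. This contradicts the negativity lemma \cite[Lem.~3.41]{KM_1998}, for every value of $v_4$ and with no hypothesis on $K_S$. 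Since your Step 4 invokes Step 3 to get $J\neq\emptyset$, the strict inequality $K_W^2>K_S^2$ is incomplete as written until Step 3 is repaired this way.
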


\begin{proof}
If $\pi(C)$ is a collection of disjoint Du Val configurations (exceptional divisors of minimal resolutions of Du Val singularities), then in $X$ we should have minimal resolutions of M-resolutions of them by Proposition \ref{M-modLogCan}, but Du Val singularities have none. 

Say that $S$ has big and nef canonical class, and so $S$ is of general type. Then the configurations of curves that are intersection trivial with $K_S$ are precisely Du Val configurations. On the other hand, by Proposition \ref{keyEqual},  
$$K_W^2-K_S^2=\phi^*(K_W) \cdot E-\pi^{*}(K_S)\cdot \sum_{\Gamma\in C}d(\Gamma) \Gamma,$$ and $d(\Gamma)<0$ for all $\Gamma$ and $\phi^*K_W\cdot E \geq 0$. By the projection formula $\pi^*(K_S) \cdot \Gamma =K_S \cdot \pi(\Gamma)$. As there must be curves in $\pi(C)$ and it cannot be a disjoint collection of Du Val configurations, then $K_S \cdot \pi(\Gamma)>0$ for some $\Gamma \in C$. Thus, $K_W^2 > K_S^2$.  
\end{proof}

\begin{proposition}
If $W$ is a \QHD surface and $K_W$ is big and nef, then the minimal model $S$ of the minimal resolution $X$ of $W$ is one of the following: it is rational; it is either a K3 or an Enriques surface; it has Kodaira dimension $1$ and $q(S)=0$; or it is of general type, with $q(S)=0$ and $K_S^2<K_W^2$.
\label{kodairatype}
\end{proposition}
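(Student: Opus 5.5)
We need to prove: if $W$ is a \QHD surface with $K_W$ big and nef, then the minimal model $S$ of $X$ is one of the listed types. The plan is to first bound the topology of $X$ and $S$ using rationality of \QHD singularities, and then go through the Enriques--Kodaira classification.

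\textbf{Step 1: invariants.} By Remark \ref{factsQHD}(1) all singularities of $W$ are rational. Hence $R^1\phi_*\O_X=0$, so $q(X)=q(W)$ and $p_g(X)=p_g(W)$, and these are birational invariants, so they agree with $q(S)$ and $p_g(S)$. Since $K_W$ is big, Kawamata--Viehweg vanishing on the log terminal... but $W$ need not be log terminal. So instead I argue on $X$. Write $\phi^*K_W$ as nef and big, and note $K_X=\phi^*K_W+\sum d(\Gamma)\Gamma$ with $-2<d(\Gamma)<0$ by Remark \ref{factsQHD}(7). Then $K_X-\lceil\cdot\rceil$-type manipulations give $H^1(X,\O_X(-\lfloor\cdot\rfloor))$ vanishing. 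Concretely, I would apply Kawamata--Viehweg to $L=\phi^*K_W+K_X-K_X$: the divisor $K_X+\lceil \phi^*K_W\rceil$ is not obviously helpful. The cleaner route is to compute $h^1(\O_X)$ via $\chi$.

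A better approach to $q(S)=0$ is the \QHD smoothing viewpoint, though the statement does not assume a smoothing exists. I therefore expect $q=0$ to come from an inequality. A surface of general type with $K_S^2<K_W^2$ is already handled by Corollary \ref{kw>ks}. For $q$, I would use $K_W^2\ge K_S^2$ together with the observation that $\pi(C)$ contains curves, only rational ones, which are non-Du Val since $K_S\cdot\pi(\Gamma)>0$ for some $\Gamma$. The intended mechanism is as follows. If $q(S)>0$, the Albanese map contracts all rational curves, and this structure should be incompatible with nefness of $K_W$ combined with $d(\Gamma)>-2$.

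\textbf{Step 2: case split by Kodaira dimension.}
\begin{itemize}
\item $\kappa(S)=-\infty$ and $S$ ruled over a curve $B$ of genus $\ge1$. All curves of $C$ are rational, so $\pi(C)$ lies in fibers. The curves of $C$ then lie in fibers of $X\to B$. Take a general fiber $F$ of $X\to B$, which is disjoint from $C$. Then $\phi^*K_W\cdot F=K_X\cdot F=-2<0$, contradicting nefness. Hence $S$ is rational.
\item $\kappa(S)=0$. Abelian and bielliptic surfaces contain no rational curves, so $\pi(C)=\emptyset$, contradicting Proposition \ref{s0=0}. This leaves K3 and Enriques surfaces.
\item $\kappa(S)=1$. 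Suppose $q(S)>0$. If the elliptic fibration $S\to B$ has $g(B)\ge1$, all rational curves lie in fibers, and these are $K_S$-trivial up to multiple fibers. The remaining case is $g(B)=0$ but $q=1$, where the Albanese map is a different fibration onto an elliptic curve whose fibers are of genus $\ge1$, and rational curves again lie in its fibers. In either case rational curves lie in fibers of a map to a curve. Then $\phi^*K_W$ restricted to a general fiber $F$ of $X\to A$ gives $(\phi^*K_W)^2$, and $F$ avoids $C$. I would then show $\phi^*K_W\cdot(\text{fiber})=K_S\cdot F$, with $K_S$ numerically a multiple of the elliptic fiber. Some argument shows $K_W^2=0$, contradicting bigness: all of $C$ is contained in fibers, so Zariski's lemma on fibers forces $\phi^*K_W\equiv$ a combination that has self-intersection $\le 0$.
\item General type. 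Corollary \ref{kw>ks} gives $K_S^2<K_W^2$, and $q=0$ follows by the same Albanese/fiber argument.
\end{itemize}

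\textbf{Main obstacle.} The hardest step is excluding $q(S)>0$ in the last two cases. The argument I would attempt is that rational curves are contracted by the Albanese map $X\to\mathrm{Alb}$. If the image is a curve, $C$ lies in fibers, and a Hodge-index/Zariski-lemma argument on the fibration shows $\phi^*K_W$ cannot be big and nef. If the Albanese image is a surface, $C$ is contracted by a generically finite map. I expect this case to require a different argument: $W$ itself then maps to an abelian variety, so $K_W$ is pulled back as $\phi^*K_W=\alpha^*(\cdot)+R$, and I would need an argument showing that \QHD non-Du Val configurations contracted by a map to an abelian variety force negativity somewhere. I would try using the Hodge index on $\phi^*K_W$ and the discrepancy bounds first.
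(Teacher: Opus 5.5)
Your treatment of the ruled case, of $\kappa(S)=0$, and of $K_S^2<K_W^2$ follows the paper's proof. A horizontal rational curve forces the base to be $\mathbb{P}^1$, and otherwise $\phi^*K_W\cdot F=-2$. Abelian and bielliptic surfaces contain no rational curves, against Proposition~\ref{s0=0}. The inequality is Corollary~\ref{kw>ks}.

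\textbf{The gap is $q(S)=0$, which you never establish.} The paper does not prove it either; it defers to [U25, Prop.~4.17].

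\textbf{Kodaira dimension 1.} Your Zariski-lemma argument is correct when the elliptic fibration $f\colon S\to B$ has $g(B)\ge 1$. Then $C$ and $\mathrm{Exc}(\pi)$ lie in fibres, so $\phi^*K_W$ is numerically fibre-supported and $K_W^2\le 0$.

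It fails in the subcase $B=\mathbb{P}^1$, $q(S)=1$. There $K_S\equiv\lambda F$ is horizontal for the Albanese fibration, so $\phi^*K_W$ is not supported on its fibres. The correct argument runs as follows:
\begin{itemize}
\item $q(S)=g(B)+h^0(R^1f_*\mathcal{O}_S)=1$ forces $R^1f_*\mathcal{O}_S\cong\mathcal{O}_{\mathbb{P}^1}$, hence $\chi(\mathcal{O}_S)=0$ and $e(S)=12\chi(\mathcal{O}_S)-K_S^2=0$.
\item The Albanese fibres meet $F$, so they have genus $\ge 2$. Then $e(S)=\sum_s\bigl(e(G_s)-e(G)\bigr)=0$ forces every Albanese fibre to be smooth.
\item Rational curves are contracted by the Albanese map, so $S$ contains no rational curve at all, contradicting Proposition~\ref{s0=0}.
\end{itemize}

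\textbf{General type.} Here your proposed mechanism is false, and in fact $q(S)=0$ does not follow from the hypotheses as stated. Here is a counterexample.
\begin{itemize}
\item Let $E$ be an elliptic curve, $P=E\times\mathbb{P}^1$, $\sigma=[E\times\{\mathrm{pt}\}]$, and $D'$ a divisor of degree $\ge 2$ on $E$.
\item Let $B'\in|8\sigma+\mathrm{pr}_E^*(2D')|$ be a general lift of $\ell^2$ on a fibre $\Phi_0=\{e_0\}\times\mathbb{P}^1$, where $\ell\in H^0(\mathcal{O}_{\mathbb{P}^1}(4))$ has distinct roots. By Bertini, $B'$ is smooth and simply tangent to $\Phi_0$ at four points.
\item The double cover $\rho\colon S\to P$ branched along $B'$ has $K_S=\rho^*(2\sigma+\mathrm{pr}_E^*D')$ ample and $q(S)=1$.
\item $\rho^{-1}(\Phi_0)=A+R$ with $A,R\cong\mathbb{P}^1$ meeting transversally in four points, so $R^2=-4$.
\item Contracting $R$ gives a \QHD surface $W$ with one $\frac14(1,1)$ point and $X=S$. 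Then $\phi^*K_W=K_S+\frac12 R$ is ample, with $K_W^2=K_S^2+1$.
\end{itemize}
In this example $C=R$ lies in an Albanese fibre and yet $K_W$ is big and nef. So no Zariski or Hodge-index argument, and no argument at all from the stated hypotheses, can give $q(S)=0$ in general type.

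An extra hypothesis is needed. For instance, if $W$ is the central fibre of a \QHD smoothing whose general fibre has $q(W_t)=0$, your Step~1 gives $q(S)=q(X)=q(W)=q(W_t)=0$ by rationality of the singularities.
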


\begin{proof}
If $S$ is ruled, then we have a generically $\P^1$ fibration $f \colon X \to C$, where $C$ is some curve. If the exceptional divisor of the minimal resolution $X \to W$ has some transversal curve to the fibration, then $C=\P^1$ and $X$ is rational. Otherwise, they are in the fibers of $f$, but then $K_W$ cannot be nef. If the Kodaira dimension of $S$ is $0$, then we proceed as in \cite[Prop. 2.3]{RU19} since $\pi(C)$ has at least a curve and it is not a Du Val configuration by Proposition \ref{s0=0}. For the claim $q(S)=0$ in Kodaira dimensions $1$ and $2$, we can use the same proof of \cite[Prop. 4.17]{U25}. We have $K_W^2 > K_S^2$ by Corollary \ref{kw>ks}. 
\end{proof}

We now study the set $S_1$ introduced in Definition \ref{def ShTh}. Recall that $S_0=\emptyset$, but for \QHD singularities we may have elements in $S_1$. It will be crucial to know about them.  

\begin{proposition}
Let $E_k \in S_1$. We have one of the following.
\begin{itemize}
    \item[(i)] If $E_k \notin T_1$, then there are $A \in C_{out,k}$ and $B \in C_{in,k}$ such that $-d(A) \geq 1-d(B)$, $A \cdot E_k=1$, and $B \cdot E_k=-1$. For all other curves $\Gamma \in C$ we have $\Gamma \cdot E_k=0$. Thus, the curve $A$ belongs to the star graph of a \QHD singularity (which is not Wahl), and it is not the ending curve of a leg.
    
    \item[(ii)] If $E_k \in T_1$, then there is $A \in C_{out,k}$ with $A \cdot E_k=1$ and $-d(A) \geq 1$. For all other curves $\Gamma \in C$ we have $\Gamma \cdot E_k=0$.  
\end{itemize}

\label{DiscrS1}
\end{proposition}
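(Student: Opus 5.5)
Fix $E_k\in S_1$, so $E_k\cdot C_{out,k}=1$. Since $K_W$ is nef (the standing hypothesis, as in Proposition \ref{s0=0}), we have $\phi^*(K_W)\cdot E_k\ge 0$. We also have $K_X\cdot E_k=E_k^2=-1$, because $E_k$ is the total transform of a blown-up point. Writing $\phi^*(K_W)=K_X-\sum_{\Gamma\in C} d(\Gamma)\Gamma$, the nef condition becomes
$$-\sum_{\Gamma\in C} d(\Gamma)\,\Gamma\cdot E_k \;\geq\; 1 .$$

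First we determine the intersection pattern of $E_k$ with $C$. Every $\Gamma\in C_{out,k}$ meets $E_k$ nonnegatively. Since $E_k\cdot C_{out,k}=1$, there is exactly one $A\in C_{out,k}$ with $A\cdot E_k=1$, and all other curves of $C_{out,k}$ satisfy $\Gamma\cdot E_k=0$.

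For curves inside $E_k$, we use the remark before Proposition \ref{pi(C)-T}. Each component of $C$ contained in $E_k$ is the proper transform of some exceptional curve inside $E_k$. Writing $E_k$ as the pullback of a $(-1)$-curve $\Gamma_k$, a component of $E_k$ meets $E_k$ negatively only if it is the proper transform of $\Gamma_k$; all other components are orthogonal to $E_k$. So either $E_k\cdot C_{in,k}=0$, which is case (ii), or there is a unique $B\in C_{in,k}$ with $B\cdot E_k=-1$ and all other curves of $C_{in,k}$ are orthogonal to $E_k$, which is case (i). I expect this step to be the main one needing care. It requires checking that every $\Gamma\subseteq E_k$ other than the proper transform of $\Gamma_k$ has $\Gamma\cdot E_k=0$, using that $E_k=\pi_k^*$ of a point and projection formulas through the intermediate blow-ups.

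Substituting into the displayed inequality gives the discrepancy bounds. In case (ii) it reads $-d(A)\ge 1$. In case (i) it reads $-d(A)+d(B)\ge 1$, that is, $-d(A)\ge 1-d(B)$.

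For the final assertion in (i), we use the discrepancy facts. Wahl singularities are log terminal, so $d>-1$ on their curves; but $d(B)\le 0$ (indeed $d(B)>-2$ by Remark \ref{factsQHD}(7)), so we need $-d(A)\ge 1-d(B)>\dots$. More precisely, $B$ and $A$ are curves over \QHD singularities with $d(A)\le d(B)-1$.

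\emph{If $A$ and $B$ lie over the same singularity,} Proposition \ref{prop WQHD} gives $|d-d'|<1$ along legs in the relevant comparisons, together with $d(C_{i,j})>d(C_0)$. For Wahl chains, $d\in(-1,0]$ forces a difference less than $1$, a contradiction, so the singularity is not Wahl.

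\emph{In general,} $d(A)\le d(B)-1\le -1$ already excludes Wahl curves, since those have $d>-1$. Hence $A$ lies on a non-Wahl \QHD star graph.

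Finally, $A$ is not the ending curve of a leg. By Corollary \ref{decrdiscr} and $\chi/e<1$ (Remark \ref{factsQHD}(7)), we have $d(C_{i,r_i})>-1$, whereas $d(A)\le -1$.
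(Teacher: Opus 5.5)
Your proposal is correct and follows the paper's proof. Nefness of $K_W$ gives $\phi^*(K_W)\cdot E_k=-1-d(A)+d(B)\ge 0$ (resp.\ $-1-d(A)\ge 0$), the intersection pattern follows from $E_k\cdot C_{in,k}\in\{0,-1\}$ via the projection formula, and $d(A)\le d(B)-1\le -1$ then rules out Wahl curves and ending curves of legs using Corollary \ref{decrdiscr} and Remark \ref{factsQHD}(7). Your ``same singularity'' case analysis is unnecessary, since the general argument already covers it; the bound $d(B)\le 0$ you use is the standard fact that discrepancies on a minimal resolution are nonpositive.
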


\begin{proof}
Since $E_k \in S_1$, there is one curve $A \in C_{out,k}$ such that $E_k \cdot A=1$. In case (i) we have one curve $B \in C_{in,k}$ such that $E_k \cdot B=-1$; in case (ii) we have $E_k \cdot B=0$ for every $B \in C_{in,k}$. In both cases, for all other curves $\Gamma \in C$ we have $\Gamma \cdot E_k=0$.  As in the proof of Proposition \ref{s0=0}, we have $\phi^{*}(K_W) \cdot E_k= -1 -d(A)+d(B) \geq 0$ for (i), and $\phi^{*}(K_W) \cdot E_k= -1 -d(A) \geq 0$ for (ii). In case (i), we have $d(A)<-1$, and so it must belong to a star graph of a non Wahl singularity. Since $0<\chi/e<1$, we have that $A$ is not an ending curve of a leg by Corollary \ref{decrdiscr}. 
\end{proof}

\begin{corollary} [Lem.~3.12 \cite{FRU_23}]
If $K_W$ is nef and $W$ has only Wahl singularities, then $s_1=0$.
\end{corollary}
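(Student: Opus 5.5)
The corollary follows directly from Proposition \ref{DiscrS1}, using that Wahl singularities have discrepancies strictly larger than $-1$. Suppose $K_W$ is nef, $W$ has only Wahl singularities, and $E_k \in S_1$. We will show this is impossible in each case of Proposition \ref{DiscrS1}.

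\emph{Case} $E_k \in T_1$. Proposition \ref{DiscrS1}(ii) gives a curve $A \in C_{out,k}$ with $-d(A)\geq 1$, that is, $d(A)\leq -1$. But every curve of $C$ lies in a Wahl chain. Wahl singularities are cyclic quotient singularities, so their discrepancies on the minimal resolution are $d(C_i)=-1+\frac{\beta_i+\alpha_i}{m}$, and these lie in $]-1,0]$. Hence $d(A)>-1$, a contradiction.

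\emph{Case} $E_k \notin T_1$. Proposition \ref{DiscrS1}(i) gives $A\in C_{out,k}$ and $B\in C_{in,k}$ with $-d(A)\geq 1-d(B)$. Since $d(B)>-1$, this gives $-d(A)>0$. More precisely, as in its proof, nefness gives $\phi^*(K_W)\cdot E_k=-1-d(A)+d(B)\geq 0$. Both $d(A)$ and $d(B)$ lie in $]-1,0]$, so $d(B)-d(A)<1$, and therefore $-1-d(A)+d(B)<0$, a contradiction. Equivalently, Proposition \ref{DiscrS1}(i) forces $A$ to lie in a non-Wahl star graph, which is excluded by hypothesis.

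So both cases are impossible and $S_1=\emptyset$, i.e.\ $s_1=0$. There is no real obstacle here. The only point that needs stating carefully is that the discrepancies of Wahl singularities lie in $]-1,0]$, which follows from the cyclic quotient discrepancy formula given earlier in the paper.
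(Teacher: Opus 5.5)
Your proof is correct and follows the route the paper intends: the corollary is stated right after Proposition \ref{DiscrS1} as a direct consequence of it. Both cases force $d(A)\le -1$ (in case (i) even $d(A)<-1$, since $d(B)\le 0$), which is impossible for a curve in a Wahl chain, whose discrepancy lies in $]-1,0]$; your remark that $d(B)>-1$ gives $-d(A)>0$ is superfluous, but your subsequent argument showing $\phi^*(K_W)\cdot E_k<0$ closes case (i) correctly.
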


\begin{lemma}\label{lema S1T1}
Assume that $K_S$ is nef, and let $E_k \in S_1 \setminus T_1$. Let $A$ and $B$ be the curves in Proposition \ref{DiscrS1}. Then $A$ must be the central curve of the star graph of a valency $4$ \QHD singularity.
\end{lemma}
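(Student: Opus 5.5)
The plan is to squeeze the inequality $-d(A)\geq 1-d(B)$ from Proposition \ref{DiscrS1}(i), written as $d(B)-d(A)\geq 1$, against the explicit discrepancy identities of Proposition \ref{prop WQHD} and the bounds of Remark \ref{factsQHD}(7). By Proposition \ref{DiscrS1}(i), $d(A)<-1$, so $A$ lies in the star graph of a non-Wahl \QHD singularity $P$ and is not the end of a leg. The argument splits according to whether $B$ lies in the star graph of the same singularity $P$ or of a different one.

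\emph{Case 1: $B$ and $A$ belong to the same singularity $P$.} If $P$ has valency $3$, Corollary \ref{decrdiscr} says discrepancies increase along each leg from $d(C_0)$ to the end $d(C_{i,r_i})<0$. The identity $\sum_{i=1}^3 d(C_{i,r_i})-d(C_0)=-1$ gives, for each $i$,
\[
d(C_{i,r_i})-d(C_0) = -1-\sum_{j\neq i} d(C_{j,r_j}) < 1,
\]
since each $d(C_{j,r_j})>-1$. Hence every difference of discrepancies in the graph is $<1$, which contradicts $d(B)-d(A)\geq1$. If $P$ has valency $4$, the identity $d(C_{4,r_4})-d(C_0)=1$ for the long leg, together with the monotonicity of Corollary \ref{decrdiscr}, shows the maximal difference in the graph is exactly $1$. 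It is attained only for the pair (central curve, end of the long leg), after checking with the explicit graphs of Figure \ref{fig QHD V4} that the ends of the short legs have strictly smaller discrepancy than $C_{4,r_4}$. So $A=C_0$ and $B=C_{4,r_4}$, which is the claim.

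\emph{Case 2: $B$ belongs to a different singularity $P'$.} This is where the hypothesis that $K_S$ is nef must enter, and I expect it to be the main obstacle. My first attempt is to exploit that $B\subseteq E_k$ with $B\cdot E_k=-1$: then $B$ is the proper transform of the $(-1)$-curve $\Gamma$ of the first blow-up defining $E_k$, so $B$ is contracted by $\pi$. I would then look at the whole connected component of $C_{in,k}$ containing $B$, which lies in the star graph of $P'$. Using $K_S$ nef, $S$ has no curves with negative $K_S$-degree, so the structure of $\pi$ over the point $\pi(E_k)$ is constrained. I would try to show that $P'$ must have a curve outside $E_k$ meeting $E_k$, which contradicts $E_k\in S_1$ since $A$ is already the unique such curve, unless $P'=P$.

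Concretely, the plan for Case 2 is to intersect $\phi^*(K_W)$ with the $(-1)$-curves appearing in the decomposition of $E_k$ and use nefness of $K_W$, as in Propositions \ref{s0=0} and \ref{DiscrS1}. I would combine this with the bounds $-1<d(\Gamma)\le 0$ for Wahl curves, $d>-2$ for all curves, and $d(C_{i,r_i})>-1$ at leg ends. The aim is to show that if $P'\neq P$ then $d(B)\le d(A)+1$ forces a $(-1)$-curve $F$ inside $E_k$ with $\phi^*(K_W)\cdot F<0$. Failing that, I would show that the whole graph of $P'$ lies in $E_k$, so that $\pi$ contracts all of $P'$ to a smooth point. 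That would contradict $K_S$ nef via Proposition \ref{eff} applied to the pullback of $K_S$, or the non-existence of M-modifications of smooth points. Once Case 2 is excluded, Case 1 gives the lemma.
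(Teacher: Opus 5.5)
\textbf{Case 1 is fine; Case 2 aims at a false statement.}

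Your Case 1 is correct, and for that subcase it is cleaner than the paper's argument. The valency-3 identity gives $d(C_{i,r_i})-d(C_0)<1$ for every $i$. In valency 4, $d(C_{4,r_4})-d(C_0)=1$ together with monotonicity forces $A=C_0$ and $B=C_{4,r_4}$, which is Theorem \ref{ex S1-T1}(i).

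The gap is Case 2. You plan to show that $B$ cannot lie in a singularity other than that of $A$, but this case genuinely occurs. It is Theorem \ref{ex S1-T1}(ii), shown in Figures \ref{fig out S1-T1 4}, \ref{fig out S1-T1 3} and \ref{fig out S1-T1 2}.

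Concretely, take a valency 4 configuration of Figure \ref{fig S1-T1} with $p=0$. Slide (Definition \ref{sliding}) a $(-1)$-curve meeting the $(-b)$-curve of the long leg.
\begin{itemize}
\item The old end $B$ becomes a $(-4)$-curve, i.e.\ a Wahl chain $[4]$. It still lies inside $E_k$ and is joined by a new $(-1)$-curve to a new $(-2)$-end of the long leg.
\item $E_k$ still meets only $A=C_0$ outside itself.
\item $d(B)-d(A)=-\tfrac12+\tfrac32=1$.
\item Every $(-1)$-curve in $E_k$ has $\phi^*(K_W)$-degree $0$.
\end{itemize}
Such slidings of valency 4 M-modifications sit over fibres of elliptic surfaces with $K_S$ nef.

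So each contradiction you are aiming for fails. The singularity $P'$ has no curve outside $E_k$. It is contracted entirely to a smooth point by $\pi$, with no conflict with $K_S$ nef. The configuration over $\pi(E_k)$ is not an M-modification of a smooth point, because it involves $A\not\subseteq E_k$ with $d(A)<-1$. Finally, no $(-1)$-curve in $E_k$ is $K_W$-negative.

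\textbf{What Case 2 actually needs.} It requires a positive argument that $A$ is still the central curve of a valency 4 graph. This is where the paper uses $K_S$ nef, and none of the following steps appear in your plan.
\begin{itemize}
\item Contract $E_k$ down to the $(-1)$-curve $B$ meeting $A$ at a point $P$ of a surface $S'$. If the first blow-up $S''\to S'$ were not at $P$, then $0\le\phi^*(K_W)\cdot E_{k+1}=-1+d(G)-d(B)$. This gives $d(B)\le-1$, hence $d(A)\le -2$, contradicting Remark \ref{factsQHD}(7).
\item So $P$ is blown up, and nefness of $K_S$ forces $A^2\le-3$ already in $S''$. Since $A$ is not a leg end, the classification gives $A^2\in\{-3,-4\}$ in $X$.
\item If $A$ is not central, then $B$ lies in a Wahl chain. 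There are two subcases:
\begin{itemize}
\item For $A^2=-3$, the curve $G$ lies in the star graph of $A$. Contracting $B$ and $G$ produces another element of $S_1\setminus T_1$ whose two key curves both have discrepancy $<-1$. By Proposition \ref{DiscrS1} this forces a discrepancy $<-2$, a contradiction.
\item For $A^2=-4$, the only possible legs are valency 3 type (e) or valency 4 type (a). An explicit discrepancy computation gives $K_W\cdot G'<0$ for the $(-1)$-curve $G'$ separating the Wahl chain of $B$ from the leg.
\end{itemize}
\item Finally, a valency 3 central curve is excluded by Proposition \ref{M-modLogCan}.
\end{itemize}
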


\begin{proof}
Let $X \to S'$ be the contraction of all curves in $E_k$ except $B$. Then $B$ is a $(-1)$-curve in $S'$ and intersects $A$ transversally at one point $P$. Let $S'' \to S'$ be the first blow-up in the composition $X \to S'$, with $G$ as the exceptional $(-1)$-curve. Let $E_{k+1}$ be the corresponding divisor in $X$. If this blow-up is not at $P$, then by the projection formula we have $$0\leq \phi^*(K_W) \cdot E_{k+1}= -1+ d(G)-d(B),$$ because $K_W$ is nef. (If $G$ is not part of $C$, then we take $d(G)=0$.) But then $d(B)\leq -1$. By Proposition \ref{DiscrS1} we have $d(A) \leq -1+d(B) \leq -2$, a contradiction to Remark \ref{factsQHD} part (7). Then the first blow-up must be at $P$. Therefore, as $K_S$ is nef, we must have $A^2 \leq -3$ in $S''$. By Proposition \ref{DiscrS1}, the curve $A$ in $X$ is in the star graph of a \QHD singularity, and it is not an ending curve of a leg. By the classification of \QHD singularities, we must have $A^2 \geq -4$ in $X$. Thus, $A^2$ could be $-3$ or $-4$. 

\bigskip 
\noindent 
\textbf{Claim.} The curve $A$ must be the central curve of the star graph of a \QHD singularity. 

\begin{proof}
Assume that $A$ is not the central curve. Then $B$ cannot belong to the same star graph as $A$ since in that case they must be in the same leg, and so $|d(A)-d(B)|<1$ by Proposition \ref{prop WQHD}. But we know that $d(B) - d(A) \geq 1$ by Proposition \ref{DiscrS1}, which is a contradiction. By the same proposition we know that $A$ is not at the end of a leg.

Therefore we have that $A$ and $B$ are not in the same star graph, and so $B$ belongs to a Wahl chain. This is a direct application of Proposition \ref{M-modLogCan}.

Let $A^2=-3$ in $X$. In $S''$ we have the initial chain $B,G,A$ with $B^2=-2$, $G^2=-1$, and $A^2=-3$. If the proper transform of $G$ in $X$ does not belong to Exc$(\phi)=C$, then in $X$ we have an M-resolution over the curve $B$. By Proposition \ref{M-modLogCan} this must be the curve $B$ itself, but then $B$ is not in $C$, a contradiction. As $A^2=-3$, we must have that $G,A$ belong to the same star graph. Note that there must be a unique curve $A'$ next to $A$ in the star graph of $A$. Then after contracting $B$ and $G$ from $S''$, we obtain the $(-1)$-curve $A$ and the curve $A'$, and so we obtain in $X$ an $E_{k-1} \in S_1 \setminus T_1$ where $B':=A$ and $A'$ are the key curves in Proposition \ref{DiscrS1}, but then $d(B')=d(A)<-1$ and $d(A')<-1$, a contradiction. We conclude that $A^2=-3$ is impossible when $A$ is not the central curve.  

Let us say $A^2=-4$. By the above discussion we know that $G$ is on the same star graph as $A$. Then $B$ and $G$ must separate, and so there are blow-ups over $B \cap G$ creating the chain $$ B,D_1,\ldots,D_{i-1},G', D_i,\ldots,D_s,D_{s+1}=G,A$$ where $G'$ is a $(-1)$-curve not on the same star graph as $A$, but $D_i,\ldots,D_s,D_{s+1},A$ are. We can assume $D_i^2 \leq -2$ for all $i$. Since $A^2=-4$ and $A$ is not the central curve, in $X$ the possibilities for the self intersections of $D_i,\ldots,D_s,D_{s+1},A$ are  

\begin{itemize}
\item Valency 3 of type (e): $[\alpha+4,\underbrace{2,\ldots,2}_{\beta},4]$ for some $\alpha, \beta \geq 0$. Then $B,D_1,\ldots,D_{i-1}$ must be of the form $[\beta+3,\underbrace{2,\dots,2}_{\alpha+2}]$ because of the ending curve $D_i$,  we cannot do further blow-ups. (Otherwise, we would increase self-intersections in the leg, or we would contradict the hypothesis $K_W$ nef, since the ending curve of the leg must have discrepancy $>-1$.) The only M-resolution over the chain $[\beta+3,2,\dots,2]$ that could work is the chain itself being a Wahl chain (see \cite[Prop. 6.5]{MNU_24}), and so $\beta =\alpha+3$. The discrepancy of $D_{i-1}$ is equal to $d(D_{i-1})=-1+\frac{\alpha+3}{\alpha+4}$, and $d(D_i)=-1+\frac{3+2(\alpha+3)}{12+6(\alpha+3)+3\alpha+2\alpha(\alpha+3)}$, and so 
$$K_W \cdot G' = -1-d(D_{i-1})-d(D_i)=-\frac{2(a+3)}{(a+4)(30+15a+2a^2)}<0,$$ a contradiction as $K_W$ is nef.
    
\item Valency 4 of type (a) $[p+2,4]$. Then $B,D_1,\ldots,D_{i-1}$ must be of the form $[3,\underbrace{2,\dots,2}_{\alpha+2}]$ and there are no M-resolutions. Then again $G'$ would be negative for $K_W$, a contradiction.
\end{itemize}
\end{proof} 

Thus, $A$ is the central curve of a (no Wahl) star graph. By Proposition \ref{M-modLogCan}, the curve $A$ cannot be in a valency 3 star graph, since in that case we would obtain an M-resolution over a c.q.s. with a (non Wahl) \QHD singularity. 
\end{proof}

\begin{theorem}
Let $E_k\in S_1\setminus T_1$, and $A,B$ as in Proposition \ref{DiscrS1}. Let $SG$ be the star graph containing $A$. Then we have the following possibilities:
\begin{itemize}
    \item[(i)] $B \in SG$, and it is the ending $(-p-2)$-curve of a leg in Figure \ref{fig S1-T1}.
    \item[(ii)] $B \notin SG$, and it is the left ending $(-p-4)$-curve in Figure \ref{fig out S1-T1 2}, Figure \ref{fig out S1-T1 3}, or Figure \ref{fig out S1-T1 4}.
\end{itemize}
\label{ex S1-T1}
\end{theorem}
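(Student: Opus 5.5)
By Lemma \ref{lema S1T1} (with $K_S$ nef, as we are implicitly in that setting), the curve $A$ is the central curve of a valency $4$ \QHD star graph $SG$. From the proof of that lemma we already know several facts. The first blow-up $S''\to S'$ is at $P=A\cap B$, with exceptional curve $G$. In $X$ we have $A^2\in\{-3,-4\}$. The proper transform of $G$ lies in $C$. The plan is to redo that local analysis, now knowing that $A$ is central of valency $4$, and to read off from the classification of Figure \ref{fig QHD V4} exactly which configurations are compatible with the three requirements
\[
\phi^*(K_W)\cdot E_j\ge 0 \ \text{for every } j, \qquad d(B)-d(A)\ge 1, \qquad B^2\le -2 \ \text{in } X.
\]
The natural dichotomy is whether $B$ lies in $SG$ or not.

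\textbf{Case (i): $B\in SG$.} By the valency-$4$ identity of Proposition \ref{prop WQHD}, together with the monotonicity of discrepancies along legs (Corollary \ref{decrdiscr}), the inequality $d(B)-d(A)\ge 1$ can only hold in one way. Since $d(C_{4,r_4})-d(C_0)=1$ and the other legs have smaller gaps, $B$ must be the ending curve of the long leg, with equality $d(B)-d(A)=1$.

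Then $E_k$ is a chain of blow-ups starting at $A\cap B$ that links the end of the long leg back to the central curve, so that $B$ and $A$ are joined through $-1$ curves. I would run through the valency-$4$ families of \cite{BS_2011}. For each, I would check whether contracting $E_k$ (all curves except $B$), and then the $(-1)$-curve $B$, is consistent with $K_S$ nef. This should force the self-intersections recorded in Figure \ref{fig S1-T1}, with $B$ a $(-p-2)$-curve. The sequence of contractions should reduce the long leg $[\,p+2,\dots]$ together with the short legs to a configuration whose image in $S$ has no $(-1)$-curves. The parameter $p$ is inherited from the family.

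\textbf{Case (ii): $B\notin SG$.} Exactly as in the Claim of Lemma \ref{lema S1T1}, $B$ belongs to a Wahl chain, by Proposition \ref{M-modLogCan}. The curves $B$ and $G$ must be separated by further blow-ups, producing a chain
\[
B,D_1,\dots,D_{i-1},G',D_i,\dots,D_{s+1}=G,A,
\]
in which $D_i,\dots,D_{s+1},A$ now form a leg of the valency-$4$ graph attached to the central curve $A$. I would list the possible legs from Figure \ref{fig QHD V4}. For each leg, the blow-up history forces $B,D_1,\dots,D_{i-1}$ to be the dual (Riemenschneider) chain of that leg. I would then require that chain to be a Wahl chain; using \cite[Prop.~6.5]{MNU_24}, it is the only M-resolution candidate. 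This fixes the parameters.

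Finally, I would test $K_W\cdot G'=-1-d(D_{i-1})-d(D_i)\ge 0$ using Proposition \ref{prop. Sol WHS} and the Wahl discrepancies $-1+(\alpha+\beta)/n^2$. The surviving cases should be precisely the three pictures in Figures \ref{fig out S1-T1 2}--\ref{fig out S1-T1 4}, with $B$ the left $(-p-4)$-end.

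The main obstacle is the exhaustive case check in (ii). One must confirm that no additional blow-ups over other nodes are allowed. Such extra blow-ups would either raise the self-intersections of the leg beyond the classified values, or produce a $(-1)$-curve that is $K_W$-negative. These discrepancy inequalities must then be verified family by family. I would organize this check with the explicit formulas of Definition \ref{numbers}, and use the computer check \cite{programa} as a sanity test.
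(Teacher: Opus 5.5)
Your skeleton agrees with the paper's. You split on whether $B\in SG$. In (i) you use $d(C_{4,r_4})-d(C_0)=1$ and Corollary \ref{decrdiscr} to put $B$ at the end of the long leg. In (ii) you reuse the chain $B,D_1,\ldots,D_{i-1},G',D_i,\ldots,D_{s+1}=G,A$ from Lemma \ref{lema S1T1}. However, the decisive step in (ii) fails: you require $B,D_1,\ldots,D_{i-1}$ to be a single Wahl chain.

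That chain, followed by $G'$ and the long leg $[p+2,b,2,\ldots,2]$ ($b=2,3,4$), must blow down to the pair $B,G$ of $S''$ with $B^2=-2$ and $G^2=-1$. So it is not the Riemenschneider dual of the leg (read outward from $G'$); it is that dual with $-B^2$ increased by one. For $b=2,3,4$ one gets $[p+4,\underbrace{2,\ldots,2}_p]$, $[p+3,3,\underbrace{2,\ldots,2}_p]$ and $[p+3,2,3,\underbrace{2,\ldots,2}_p]$.

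Only the first is a Wahl chain. The other two are the T-chains of $\frac{1}{dn^2}(1,dn-1)$ with $n=p+2$ and $d=2,3$. What must be imposed is an M-resolution compatible with $K_W$ nef. Here it is $d$ copies of the Wahl chain $[p+4,2,\ldots,2]$ joined by $(-1)$-curves, and one checks $K_W\cdot G'=0$. This is exactly why $B$ is a $(-p-4)$-curve in all three Figures \ref{fig out S1-T1 4}, \ref{fig out S1-T1 3} and \ref{fig out S1-T1 2}. Your Wahl-chain criterion, together with the claim via \cite[Prop.~6.5]{MNU_24} that the chain itself is the only M-resolution candidate, would discard two of the three families. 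So the argument cannot reach the statement. Also, no parameter gets fixed: $p$ stays free in all three cases.

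In (i), the description is off and the real work is left as ``should force''. Since $E_k\cdot\Gamma=0$ for every $\Gamma\in C$ other than $A,B$, the entire long leg lies in $E_k$. Hence in $X$ the curve $B$ is joined to $A$ by curves of $C$, not ``through $(-1)$-curves''. The $(-1)$-curves of $E_k$ are side branches on the leg, and Figure \ref{fig S1-T1} is obtained by locating them.

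The test that locates them is not ``$K_S$ nef'':
\begin{itemize}
\item No $(-1)$-curve can meet $B$, because $-1<d(B)<0$ would make it $K_W$-negative.
\item On the $(-2)$-curves $C_{4,1},\ldots,C_{4,p}$, nefness gives no obstruction, since their discrepancies are $<-1$. The paper excludes them by the order in which $E_k$ is contracted, where $B$ must come last.
\end{itemize}
Hence every $(-1)$-curve sits on $C_{4,r_4-1}$, the $(-b)$-curve with $d=-1$. The same contraction-order argument, not $K_W$-negativity or self-intersection bounds, is what forbids extra blow-ups over the leg in (ii).
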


\begin{proof}
Let us consider (i), i.e., $B \in SG$. Let $C_{4,r_4},C_{4,r_4},\ldots,C_{4,1}$ be the leg that contains $B$ (it must be the long leg of a valency 4 $SG$). One can compute that $d(C_{4,r_4-1})=-1$, and so by Corollary \ref{decrdiscr}, and since $d(B)-d(A) \geq 1$, we have that $B$ is the ending curve of the leg.
We now need to see the possibilities to contract the leg $[p+2,b,\underbrace{2,\dots,2}_p]$ where $B$ is the ending $(-p-2)$-curve, and $b=2,3,4$. The curve $B$ is the last curve contracted in this leg, as the leg is inside $E_k$. We can exclude:
\begin{itemize}
    \item the possibility to contract the first $(-2)$-curve of the configuration, because this implies that the curve $A$ will be contracted first than $B$.
    \item the possibility to do blow-downs over the curve $B$ since $-1<d_B<0$ and this will contradict the nef hypothesis over $K_W$.
\end{itemize}

Then, any possible blow-down must occur on the curve $C_{4,r_4-1}$. The possibilities of this are shown in Figure \ref{fig S1-T1}.

\bigskip
Let us consider (ii), i.e., $B \notin SG$. As in Lemma \ref{lema S1T1} we have a configuration of curves $B,D_1,\ldots,D_{i-1}$ with no $(-1)$-curves. Since $A$ must be contracted after $B$, there are no blow-downs over the leg $[p+2,b,\underbrace{2,\dots,2}_{p}]$ of $SG$. Therefore, the possibilities for the configuration $B,D_1,\ldots,D_{i-1}$ are:
$[p+4,\underbrace{2,\dots,2}_p]$, $[p+3,3,\underbrace{2,\dots,2}_p]$, or $[p+3,2,3,\underbrace{2,\dots,2}_p]$. This precisely gives the situations in Figure \ref{fig out S1-T1 4}, Figure \ref{fig out S1-T1 3}, or Figure \ref{fig out S1-T1 2}.
\end{proof}

\begin{figure}[htbp]
\centering
\includegraphics[width=13cm]{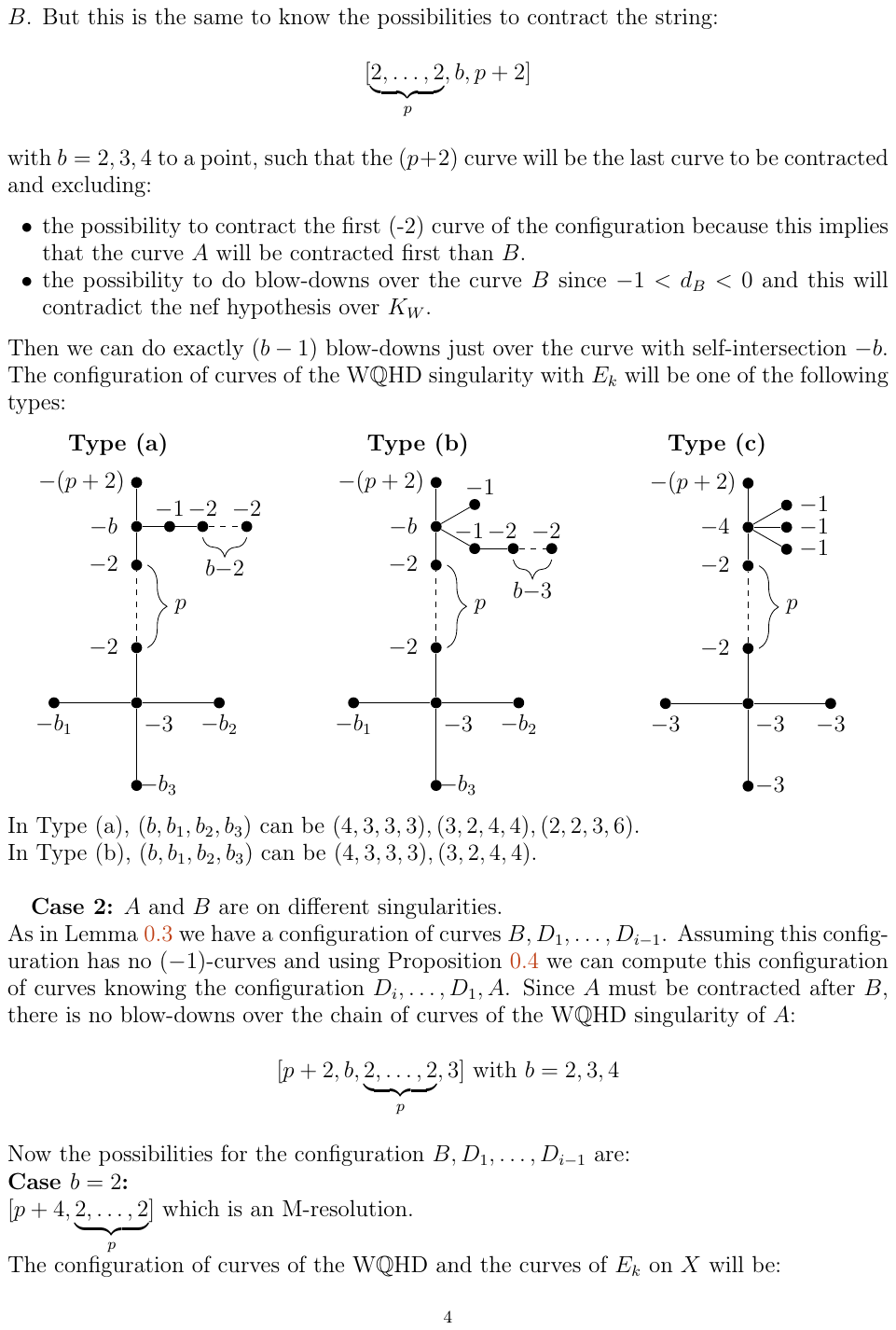}
\caption{\small{The $E_k \in S_1 \setminus T_1$ with $A,B$ in the same star graph.}}
\label{fig S1-T1}
\end{figure}

\begin{figure}[htbp]
\centering
\includegraphics[width=11.5cm]{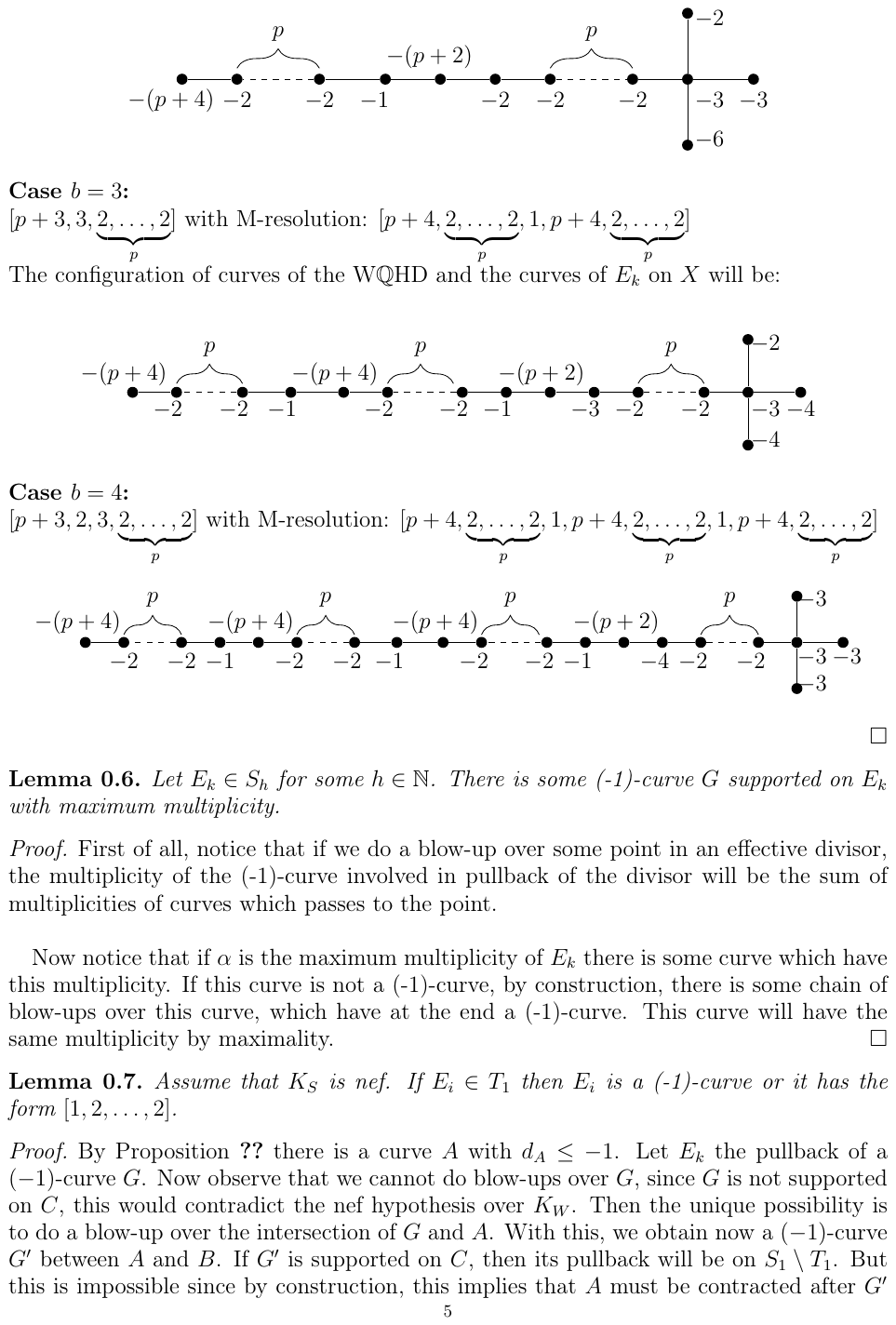}
\caption{\small{The $E_k \in S_1 \setminus T_1$ with $A$ in valency 4 (a) and $B$ out of it.}}
\label{fig out S1-T1 4}
\end{figure}

\begin{figure}[htbp]
\centering
\includegraphics[width=10cm]{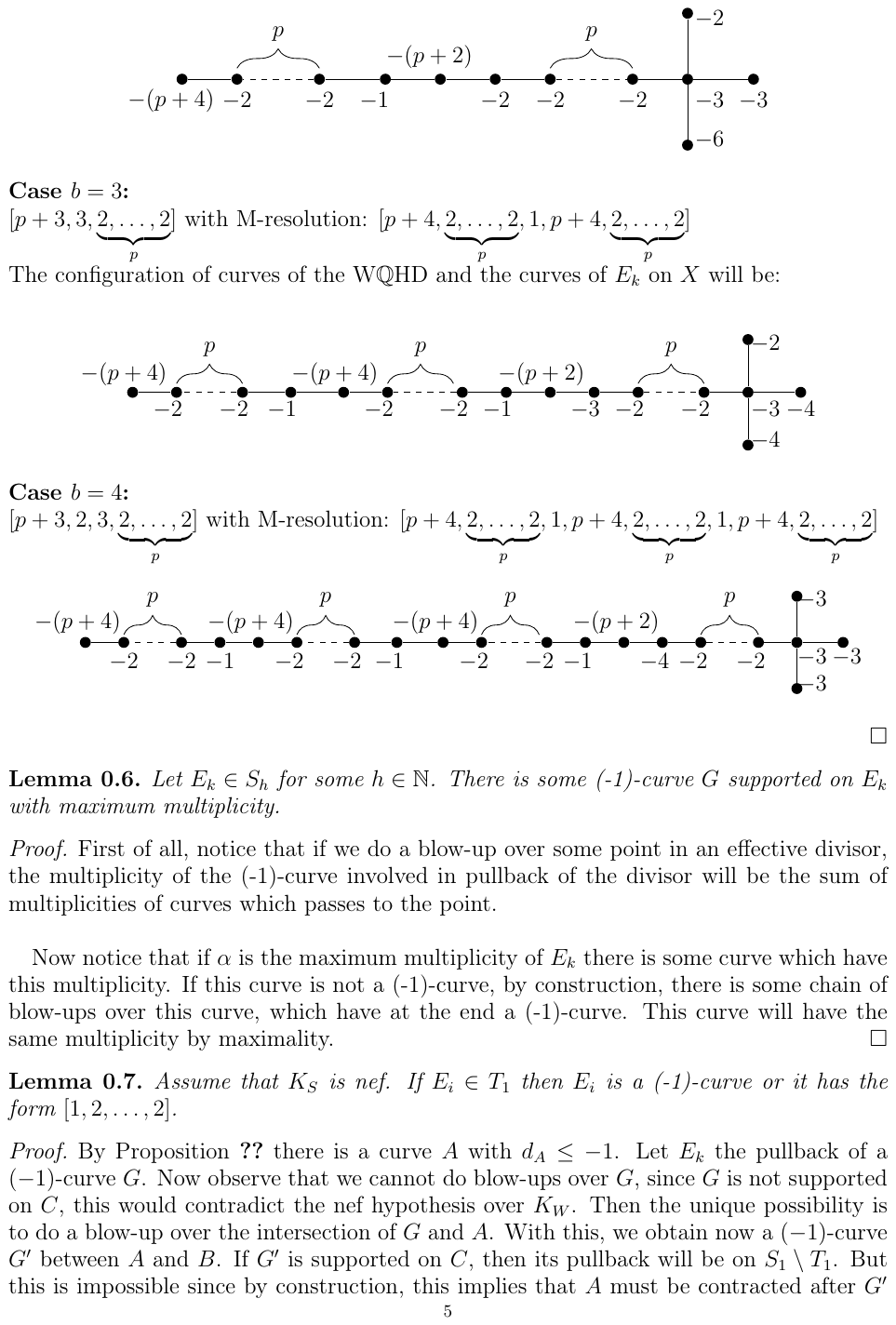}
\caption{\small{The $E_k \in S_1 \setminus T_1$ with $A$ in valency 4 (b) and $B$ out of it.}} 
\label{fig out S1-T1 3}
\end{figure}

\begin{figure}[htbp]
\centering
\includegraphics[width=9cm]{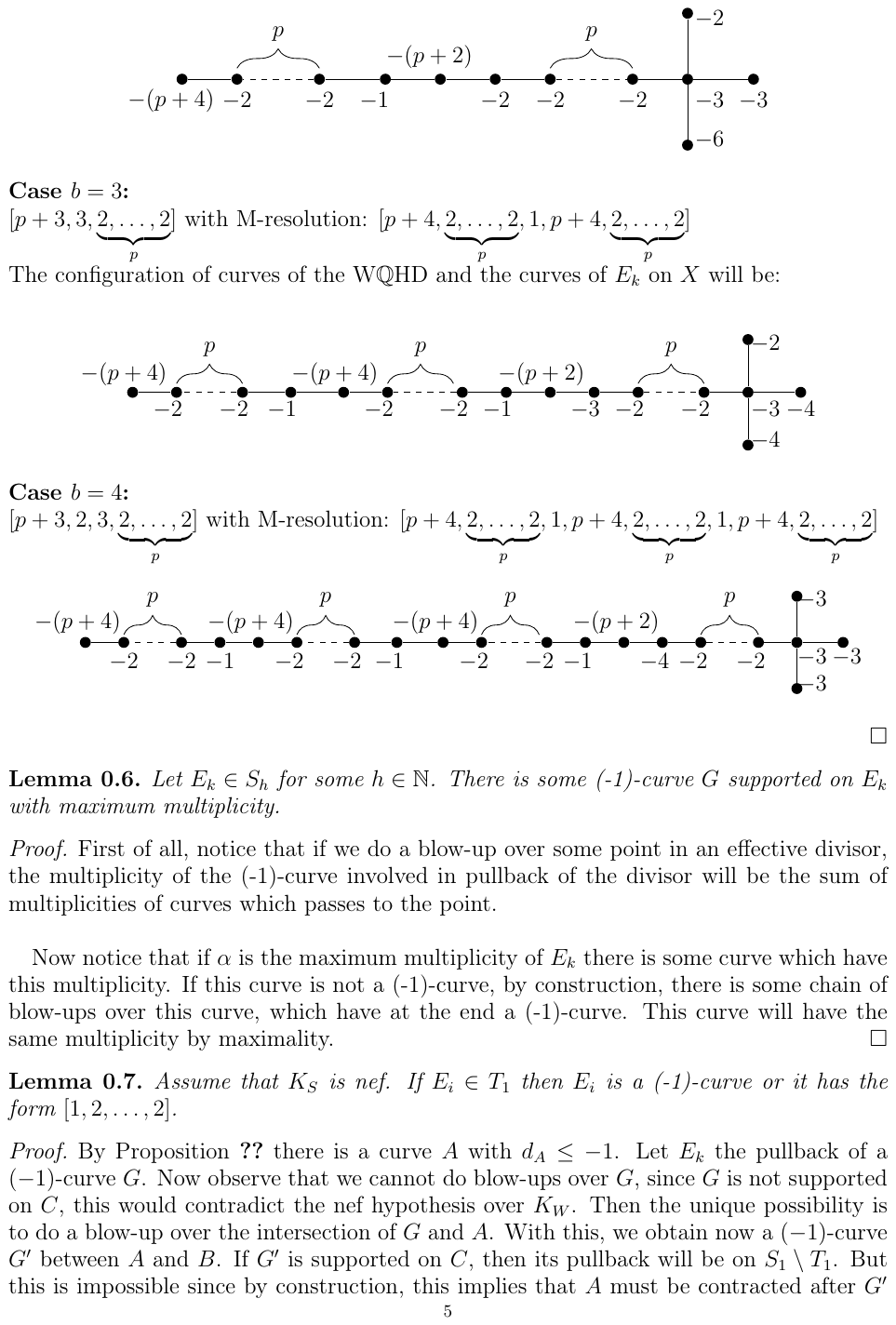}
\caption{\small{The $E_k \in S_1 \setminus T_1$ with $A$ in valency 4 (c) and $B$ out of it.}}
\label{fig out S1-T1 2}
\end{figure}

\begin{theorem}\label{prop. t1}
Assume that $K_S$ is nef. If $E_k \in T_1$, then $E_k$ has the form $[1,\underbrace{2,\dots,2}_s]$ for some $s \geq 0$.
\end{theorem}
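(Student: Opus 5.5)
The plan is to analyse the structure of $E_k$ as a tree of rational curves produced by successive blow-ups, using the facts for $E_k \in T_1$ from Proposition \ref{DiscrS1}(ii). There is a unique $A \in C_{out,k}$ with $A\cdot E_k=1$ and $-d(A)\geq 1$, and every curve of $C_{in,k}$ has zero intersection with $E_k$. Since $-d(A)\geq 1$, the curve $A$ lies in the star graph of a non-Wahl \QHD singularity, and by Corollary \ref{decrdiscr} it is not the end of a leg. By Remark \ref{factsQHD}(7) we also have $d(A)>-2$. Write $E_k=\sum_j \mu_j F_j$, where the $F_j$ are the irreducible components and $F_1$ is the unique $(-1)$-curve among them that occurs with multiplicity one at the last blow-up. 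The condition $A\cdot E_k=1$ forces $A$ to meet exactly one component $F_j$, transversally at one point, and that component must have multiplicity $\mu_j=1$.

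First I will show that every component of $E_k$ other than the final $(-1)$-curve belongs to $C$. Let $F$ be a $(-1)$-curve in $X$ inside $E_k$. If $F\not\subseteq C$, then $F$ is the unique $(-1)$-curve of $E_k$. Next use nefness on each component: for $\Gamma\subseteq E_k$ with $\Gamma\not\subseteq C$ we need $\phi^*(K_W)\cdot\Gamma = K_X\cdot\Gamma - \sum d(\Gamma')\,\Gamma'\cdot\Gamma \ge 0$. For the $(-1)$-curve $F_1$ this reads $-1-\sum_{\Gamma'\in C} d(\Gamma')\,\Gamma'\cdot F_1\ge 0$. Because discrepancies lie in $]-2,0]$ and exceed $-1$ away from central curves and near-central leg curves, $F_1$ must meet curves of total discrepancy at most $-1$.

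Next I will show that the multiplicity-one component meeting $A$ is the $(-1)$-curve, and that $E_k$ is a chain. The components of multiplicity one in an exceptional tree form a chain starting at the last exceptional curve: these are precisely the blow-ups at general points of the previous multiplicity-one curve. If $A$ meets some $F_j$ with $j\neq 1$ of multiplicity one, then $F_j$ is a $(-2)$-curve, or worse, in $C$. In that case the $(-1)$-curve $F_1$ meets only curves of $E_k$, which lie in $C_{in,k}$ and have discrepancy $>-1$ or form a Wahl-type configuration. I expect this to give $\phi^*(K_W)\cdot F_1<0$, contradicting nefness exactly as in Proposition \ref{s0=0}. So $A$ meets $F_1$, and in fact $F_1=E_k\cap$ (the part not in $C$).

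The main obstacle is to rule out branching, that is, blow-ups at nodes, which produce components of self-intersection $\le -3$ or multiplicity $>1$. Here I would use $E_k\cdot C_{in,k}=0$. Since $E_k\cdot \Gamma=0$ for every $\Gamma\subseteq E_k$ other than the last curve, and $E_k\cdot F_1=-1$, the condition $E_k\cdot C_{in,k}=0$ says that $F_1\not\subseteq C$ and that $C_{in,k}$ is the set of all other components. Now $C_{in,k}$ is disconnected from $A$ except through $F_1$. Each connected component of $E_k-F_1$ is contained in a union of \QHD configurations, and contracting it would be an M-modification-type contraction of a cyclic quotient or Du Val chain, since $E_k - F_1$ consists of chains contractible to smooth points after adding $F_1$. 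By Proposition \ref{M-modLogCan} and Corollary \ref{kw>ks}, these components must be full Wahl chains or star graphs. The real constraint therefore comes from the nef inequality at $F_1$: $-1-d(A)-\sum d(\Gamma')\ge 0$ over the other curves meeting $F_1$. If $F_1$ meets two curves of $E_k$, or one with self-intersection $\le -3$ (a non-$(-2)$ chain), then those curves have negative discrepancy. Combined with $d(A)>-2$, and with the explicit discrepancies of the first curve of the relevant configurations (as computed in Lemma \ref{lema S1T1}, case (e)), this should give negativity. The only survivor is $F_1$ meeting a single chain of $(-2)$-curves. Such a chain has discrepancy $0$ and is a Du Val $A_s$ contained in $C$, which is allowed only inside a larger star graph, so the chain closes up as $[1,2,\dots,2]$. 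I expect the delicate part to be the careful use of $K_S$ nef, which excludes the extra blow-ups that would lower self-intersections, and the casework over Figures \ref{fig QHD V3} and \ref{fig QHD V4}.
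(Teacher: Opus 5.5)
\textbf{There is a genuine gap at the foundation of your argument: you have misidentified which component of $E_k$ satisfies $E_k\cdot\Gamma=-1$.}

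Let $G$ be the exceptional curve created by blowing up the $k$-th point. The projection formula gives $E_k\cdot\Gamma=-1$ exactly when $\Gamma$ is the strict transform of $G$, and $E_k\cdot\Gamma=0$ for every later component. For example, if one further point of $G$ is blown up, then $E_k=G+G_2$ with $G^2=-2$ and $G_2^2=-1$. Here $E_k\cdot G=-1$ and $E_k\cdot G_2=0$, even though $G_2$ is the $(-1)$-curve.

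Consequences for your proposal:
\begin{enumerate}
\item The condition $E_k\cdot C_{in,k}=0$ only says that the strict transform of $G$ is not in $C$ (as observed right after Definition \ref{def MJJ}). It says nothing about the other components.
\item Your deduction that ``$C_{in,k}$ is the set of all other components'' does not follow even under your own reading. Components with $E_k\cdot\Gamma=0$ do not affect $E_k\cdot C_{in,k}$.
\item The opposite of your first claim is true: in the end no component of $E_k$ lies in $C$.
\item Your final ``survivor'' cannot occur. Since $E_k\cdot C_{out,k}=1$, the curve $A$ is the only curve of $C$ outside $E_k$ meeting $E_k$, and it meets a single component. So a chain of $(-2)$-curves of $C$ attached only to a $(-1)$-curve $F_1\not\subseteq C$ would be a whole connected component of $C$. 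That is an $A_s$ Du Val configuration, which is not the exceptional divisor of a $\mathbb{Q}$HD singularity.
\item Several intermediate steps are false or unproved: that the multiplicity-one components form a chain starting at the \emph{last} curve, the uniqueness of the $(-1)$-curve, and ``I expect this to give $\phi^*(K_W)\cdot F_1<0$''.
\item The hypothesis that $K_S$ is nef is never actually used.
\end{enumerate}

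\textbf{The missing idea is a short induction starting from $G\not\subseteq C$.}

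Right after blowing up the $k$-th point, the only curve of (the image of) $C$ meeting $G$ is $A$, transversally at one point. If any point of $G$ other than $G\cap A$ is later blown up, its pull-back $E$ satisfies $E\cdot C_{out}=0$. Then $E\in S_0$, contradicting Proposition \ref{s0=0}. So the only admissible center over $G$ is $G\cap A$, which produces a new curve $G'$.

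If $G'\subseteq C$, the pull-back of $G'$ lies in $S_1\setminus T_1$ with $B=G'$. This is where nefness of $K_S$ enters: Lemma \ref{lema S1T1} and Theorem \ref{ex S1-T1} force a configuration incompatible with contracting $G$ (in the paper's words, $G$ would become $K_S$-negative). Hence $G'\not\subseteq C$.

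Now iterate. All exceptional curves over the $k$-th point stay outside $C$, and $A$ is the only curve of $C$ through that locus. Any center off $A$ again gives an element of $S_0$, so each new center must be the point of $A$ on the newest curve. This shows $E_k$ is the reduced chain $[1,2,\dots,2]$: its $(-1)$-end meets $A$, and none of its components lies in $C$.
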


\begin{proof}
By Proposition \ref{DiscrS1}, there is a curve $A \in C_{out,k}$ with $d_A\leq -1$. In particular, it is not at the end of any leg. Let $E_k$ the pullback of a $(-1)$-curve $G$, and so $G \notin C$. Therefore, the only blow-ups over $G$ occur over $G \cap A$. (Otherwise, we would contradict the nef hypothesis on $K_W$. Let us do one blow-up, obtaining a $(-1)$-curve $G'$. If $G'$ is in $C$, then its pullback will be in $S_1\setminus T_1$. By our classification in Lemma \ref{ex S1-T1}, we would have that $A$ must be contracted after $G'$ is, but then $G$ would become a negative curve for $K_S$, a contradiction. Thus, $G'$ is not in $C$. We continue inductively, obtaining that $E_k$ has the form $[1,\underbrace{2,\dots,2}_s]$. 
\end{proof}

We end this section by defining an operation that will help transit between generic and nongeneric degenerations. We call it \textit{sliding}, and it was already used in \cite{UZ25}.

\begin{definition}
Let $W$ be a \QHD surface, and let $p \in W$ be a singularity. Let $\phi \colon X \to W$ be the minimal resolution of $W$. Assume there is $(-1)$-curve $\Gamma$ in $X$ intersecting one curve of the star graph over $p$ transversally at one point. Let $X \to Z$ be the contraction of $\Gamma$. We have two situations (See Figure \ref{fig. res graph whs}):
\begin{itemize}
    \item $\Gamma$ intersects the central curve of the star graph, and so we choose a leg $[e_{i,1}, \ldots,e_{i,r_i}]$.
    \item $\Gamma$ intersects a curve in the leg $[e_{i,1}, \ldots,e_{i,r_i}]$ with position $j<r_i$.
\end{itemize}
The \textit{sliding of $\Gamma$} with respect to the leg $[e_{i,1}, \ldots,e_{i,r_i}]$ is the existence of a nonsingular surface $X'$ containing a chain of nonsingular rational curves $$[\text{Wahl chain},1,e_{i,r_i}, \ldots,e_{i,1}]$$ such that the contraction of the $(-1)$-curve and all consecutive ones from this chain is $Z$. We say that the result of the sliding is the surface $W'$, which is defined as the contraction in $X'$ of all star graphs coming from $W$ and the new Wahl chain. The $(-1)$-curve between the Wahl chain and the star graph will be denoted by $\Gamma'$. We call the inverse process of going from $W'$ to $W$ an \textit{unslide}. 
\label{sliding}
\end{definition}

\begin{remark}
In the definition of sliding, we will also consider the following special situation which we call \textit{Aslide}. As in the previous definition, let us now consider two disjoint $(-1)$-curves intersecting transversally at one point each one curve $D$ in a star graph. Then we contract them both, and then we blow-up twice over some point of $D$ which does not belong to any other curve of the star graph. The result is a chain of two $\P^1$s with self-intersections $-1$ and $-2$ in a surface $X'$, and we recover the self-intersection of $D$ in the original star graph. The Asliding is the surface $W'$, which is defined as the contraction in $X'$ of all star graphs coming from $W$ and the new $(-2)$-curve. We will also consider a sequence of Aslidings which produce from a surface with $s$ disjoint $(-1)$-curves intersecting one curve in a star graph, a surface $W'$ with the same \QHD singularities as $W$ and an $A_{s-1}$ Du Val singularity. We call the inverse process of going from $W'$ to $W$ an \textit{unAslide}. 
\label{extra sliding}
\end{remark}

By \cite[Prop. 2.9]{UZ25}, slidings always exist and are unique. In particular, the Wahl chain associated with the leg (and the position of the intersection with $\Gamma$) is unique. Of course, slidings exists over any \WHS. It is easy to verify examples starting with $X$ as in Figure \ref{fig S1-T1}, and $X'$ as in Figures \ref{fig out S1-T1 4}, \ref{fig out S1-T1 3} , \ref{fig out S1-T1 2}.

\begin{lemma}
Let $W$ be a \QHD surface, and let $W'$ be the result of sliding a $(-1)$-curve $\Gamma$ in the minimal resolution $X$ of $W$. Denote by $s_h(W),t_h(W)$ the corresponding cardinalities of the sets $S_h$ and $T_h$ in $X$ as in Definition \ref{def ShTh}, and by $s_h(W'),t_h(W')$ the corresponding in the minimal resolution $X'$ of $W'$. Then
$t_1(W')=t_1(W)-1$, $t_2(W')=t_2(W)+1$, $s_1(W')-t_1(W')=s_1(W)-t_1(W)$, and $s_h(W')=s_h(W)$ for $h>2$.
\label{lemma:sliding property}
\end{lemma}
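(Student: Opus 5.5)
The plan is to compare the blow-up structure of $X\to S$ with that of $X'\to S$ directly. The sliding is a local surgery near $\Gamma$, and the minimal model $S$ is unchanged. Both $X$ and $X'$ dominate the surface $Z$ obtained by contracting $\Gamma$, and $Z$ dominates $S$. Every blow-up in $X\to S$ other than the one producing $\Gamma$ is also a blow-up in $X'\to S$. For those, the pullback divisors $E_i$ change only by adding curves over the point $z\in Z$ where $\Gamma$ was contracted. The first step is to check that these shared $E_i$ keep the same values of $E_i\cdot C_{out,i}$ and $E_i\cdot C_{in,i}$.

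\textbf{Step 1 (the old $\Gamma$).} In $X$, the divisor $E_\Gamma=\Gamma$ meets exactly one curve of $C$, transversally, and $\Gamma\notin C$. So $E_\Gamma\in T_1$. In $X'$, the divisor attached to the first blow-up at $z$ is the pullback $E'$ of the first exceptional curve over $z$. It is a chain that contains the Wahl chain $W_c$ together with the curve $\Gamma'$. Compute $E'\cdot C_{out}$: the chain $[W_c,1,e_{i,r_i},\dots,e_{i,1}]$ contracts to a smooth point, and $\Gamma'$ meets both the Wahl chain and the end curve $C_{i,r_i}$ of the leg. The curves of the leg are not contracted into $z$, since they are part of the star graph on $Z$. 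Hence $E'\cdot C_{out}$ counts the two intersections coming from the star graph: the central curve (or curve $j$ of the leg) that $\Gamma$ originally met, plus the extra contact created at the end of the chain. I expect $E'\cdot C_{out}=2$ and $E'\cdot C_{in}=0$, since $E'$ is the total transform of a $(-1)$-curve not in $C$ with the Wahl chain inside. This would put $E'\in T_2$.

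\textbf{Step 2 (the remaining blow-ups).} The other exceptional curves over $z$ in $X'$ are the components of the Wahl chain $W_c$. For each such $E_j$, the $(-1)$-curve at that stage lies in $C$, so $E_j\notin T$. Its intersection with $C_{out,j}$ has to be computed from the continued fraction structure of the chain. The claim needed here is that these contributions to $s_h-t_h$ exactly balance the rest of the count, so that $s_1-t_1$ and $s_h$ for $h>2$ are unchanged. I would argue this through the numerical identity of Proposition \ref{pi(C)-T}. Here $K_{W'}^2=K_W^2$, $|\pi(C')|=|\pi(C)|$, and the new Wahl chain contributes $|W_c|$ to both $\sum|C_i|$ and $m$. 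Together with $q(E+C)$ from Remark \ref{q(E+C)}, this constrains the possible changes. The direct route is still to verify, via the unique sliding of \cite[Prop. 2.9]{UZ25}, that each Wahl-chain blow-up has $E_j\cdot C_{out,j}=2$, and so lands in $S_2\setminus T_2$.

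\textbf{Main obstacle.} The main obstacle is Step 2. It requires an explicit description of the order of blow-ups producing $[W_c,1,e_{i,r_i},\dots,e_{i,1}]$ from $z$, and a computation of each $E_j\cdot C_{out,j}$ in terms of the continued fraction data. I would first run an induction on the Wahl chain using Proposition \ref{prop Construction Wahl Sing}. Each step $[e_1,\dots,e_r]\mapsto[2,e_1,\dots,e_r+1]$ or $[e_1+1,\dots,e_r,2]$ adds one blow-up at a node of the chain, which meets exactly two curves of $C$. This gives $S_2\setminus T_2$ and leaves $s_1$ and $s_h$ for $h>2$ unaffected.
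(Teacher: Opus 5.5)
Your argument rests on a wrong picture of the sliding, and Step 1 fails because of it. Write $\sigma\colon X\to Z$ for the contraction of $\Gamma$ and $\sigma'\colon X'\to Z$ for the sliding. The map $\sigma'$ does not blow up the point $z$ where $\Gamma$ was contracted.

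\emph{What the sliding actually does.} In the chain $[W_c,1,e_{i,r_i},\dots,e_{i,1}]$ of $X'$, the old tail of the leg survives as the far end of $W_c$. This tail is $C_{i,j+1},\dots,C_{i,r_i}$ if $\Gamma$ met $C_{i,j}$, and the whole leg if $\Gamma$ met $C_0$. The new copies of these leg curves are $\sigma'$-exceptional. So $\sigma'$ consists of $|W_c|+1$ blow-ups: the first at the node $P=C_{i,j}\cap C_{i,j+1}$ (resp.\ $C_0\cap C_{i,1}$), the others at nodes of the resulting chain. Nothing lies over $z$. For example, take $\Gamma$ meeting $C_0$ and a leg $L=[2]$. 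One blows up $C_0\cap L$ and then $L\cap G_1$, obtaining the chain $L-G_2-G_1-C_0=[4,1,2]$. Thus $W_c=[4]$ is the old leg $L$, $\Gamma'=G_2$, and the new leg is $G_1$.

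\emph{Why Step 1 fails.} Your model gives the wrong numbers even on its own terms. Only the curve met by $\Gamma$ passes through $z$, so the projection formula gives $E'\cdot C_{out}=1$ for the first divisor over $z$. (Contacts with curves inside $E'$ count in $C_{in}$.) That would put $E'$ in $S_1$ and change $s_1-t_1$. Moreover, the first exceptional divisor is never in $T_2$. Its proper transform in $X'$ is a chain curve other than $\Gamma'$, since further points are blown up on it. Hence it lies in $C'$ and $E'\cdot C'_{in}=-1$. The element of $T_2$ is the last one, $E'=\Gamma'$. Your final count agrees with the lemma only because one new divisor in $T_2$ and $|W_c|$ in $S_2\setminus T_2$ is the right count, with the roles of the first and last divisors swapped. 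The route through Proposition \ref{pi(C)-T} in Step 2 also cannot decide how new divisors are distributed among the $S_h$.

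\emph{The missing idea.} Every center of $\sigma'$ is a node of two chain curves whose proper transforms lie in $C'$. The only chain curve outside $C'$ is $\Gamma'$, which is created last. No other curve of $C'$ passes through these centers. Hence, by the projection formula, each new $E'_j$ has $E'_j\cdot C'_{out,j}=2$. All but the last satisfy $E'_j\cdot C'_{in,j}=-1$, so they lie in $S_2\setminus T_2$; the last is $\Gamma'\in T_2$.

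\emph{The old divisors.} We have $E_k^X=\sigma^*E_k^Z$ and $E_k^{X'}={\sigma'}^*E_k^Z$. The non-exceptional curves of $C'$ correspond bijectively to those of $C$, and $\sigma'$-exceptional curves meet ${\sigma'}^*E_k^Z$ trivially. So both $E_k\cdot C_{out,k}$ and $E_k\cdot C_{in,k}$ are unchanged. You announce this check but never carry it out; it holds regardless of where the center lies.

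Thus $T_1$ loses $\Gamma$, $T_2$ gains $\Gamma'$, $S_2\setminus T_2$ gains $|W_c|$ elements, and nothing else changes. This is exactly the paper's short proof: contract $\Gamma$, then blow up over a nodal point of the leg. No induction on Wahl chains via Proposition \ref{prop Construction Wahl Sing} is needed.
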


\begin{proof}
By the definition of sliding, we have that $X'$ is obtained from $X$ by contracting $\Gamma$ and doing blow-ups over a nodal point of a leg $[e_{i,1}, \ldots,e_{i,r_i}]$. This implies that the only change in the sets $S_h$ and $T_h$ is that we lose one element in $T_1$, which is the contracted curve $\Gamma$, and we gain one element in $T_2$, which is the new $(-1)$-curve $\Gamma'$. The other cardinalities remain the same.
\end{proof}

\begin{corollary}\label{slided surface}
Let $W$ be a \QHD surface with $t_1>0$. Then after applying unAslides and slides, we obtain a \QHD surface $W'$ with
$t_1(W')=0$, $t_2(W')=t_2(W)+t_1(W)$, $s_1(W')-t_1(W')=s_1(W)-t_1(W)$, $s_h(W')=s_h(W)$ for $h>2$.
\end{corollary}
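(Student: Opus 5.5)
The plan is to induct on $t_1(W)$, removing one element of $T_1$ at a time with a single slide or unAslide, and to track the counts using Lemma \ref{lemma:sliding property} together with an analogous bookkeeping for unAslides.

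Take $E_k \in T_1$, the pull-back of a $(-1)$-curve $G \notin C$. By Proposition \ref{DiscrS1}(ii) there is a unique curve $A$ of $C$ meeting $E_k$, and $A \cdot E_k = 1$. When $K_S$ is nef, Theorem \ref{prop. t1} gives $E_k = [1,2,\dots,2]$, with every blow-up taken over $G \cap A$. In that case the $(-1)$-curve of $X$ in $E_k$ meets $A$ transversally at one point, and this is exactly the situation of Definition \ref{sliding}. I would treat two cases.

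\textbf{Case 1: $E_k$ reduced.} Then $E_k$ is a $(-1)$-curve $\Gamma$ meeting the star graph of some $p$ once, and no other $(-1)$-curve in $T_1$ meets $A$ in a way that forces an Aslide. Slide $\Gamma$ along a leg; this is possible by \cite[Prop.~2.9]{UZ25}. By Lemma \ref{lemma:sliding property}, $t_1$ drops by one, $t_2$ rises by one, and $s_1-t_1$ and $s_h$ for $h>2$ are unchanged. Since $A$ is not the ending curve of a leg, we can choose the leg so that $\Gamma'$ sits between a Wahl chain and that leg.

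\textbf{Case 2: $s \ge 1$.} When $E_k = [1,2,\dots,2]$ with $s\geq 1$, the $(-2)$-curves do not lie in $C$, so they are contracted in $W$ to an $A_s$ Du Val point. This is where the unAslide of Remark \ref{extra sliding} enters: it replaces the chain $[1,2,\dots,2]$ attached to $D=A$ by $s+1$ disjoint $(-1)$-curves meeting $A$. The surface keeps the same \QHD singularities, but as an intermediate step it may carry the Du Val configuration. Each of these $(-1)$-curves is then an element of $T_1$ of the reduced kind, and we slide each of them in turn.

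To make the final counts come out as stated, I would arrange that the unAslide does not change the net count $t_1$ by the time the slides are finished. Concretely, $E_k$ is traded for several $(-1)$-curves, and each of them becomes a $T_2$ element after sliding. So I must check that the net change matches the formula $t_2(W')=t_2(W)+t_1(W)$. My reading is that unAsliding is only applied when the Du Val $A_{s}$ point is counted as arising from Aslides of $s+1$ curves, and that these are precisely accounted for in $t_1$. I expect this bookkeeping to be the main obstacle. It is also the step needing care that elements of $S_1\setminus T_1$ and $S_h$, $h>2$, are untouched; this holds because all operations are local near $G\cap A$ and away from other $E_j$'s.

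Finally, iterating until $t_1=0$ gives $W'$. The statements about the sums follow by summing the one-step changes, since each step preserves $s_1-t_1$ and $s_h$ for $h>2$.
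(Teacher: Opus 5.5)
Your overall route is the paper's: unAslide each chain $[1,2,\dots,2]$ coming from Theorem \ref{prop. t1} into disjoint $(-1)$-curves meeting a non-ending curve $A$ of a star graph, then slide each of them and count with Lemma \ref{lemma:sliding property}. However, the step you flag as ``the main obstacle'' is left unproved. As you phrase it, the count comes out wrong: you say one element $E_k$ of $T_1$ is ``traded for several $(-1)$-curves''. Then $t_2$ would grow by $s+1$ while $t_1(W)$ recorded only $1$, so the formula $t_2(W')=t_2(W)+t_1(W)$ would fail whenever $s\ge 1$. Your ``reading'' that the Du Val point is ``precisely accounted for in $t_1$'' is an assertion, not an argument.

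The sentence before it is also self-contradictory: the $(-2)$-curves are not in $C$, ``so they are contracted in $W$ to an $A_s$ point''. But $W$ is a \QHD surface, so curves outside $C$ are not contracted by $\phi$. And if the $(-2)$-curves were in $C$, then $E_k\cdot C_{in,k}=-1$ and $E_k$ would not lie in $T_1$ at all.

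The missing observation is that a chain $A - G^{(s)} - G^{(s-1)} - \cdots - G^{(0)}$ of type $[1,\underbrace{2,\dots,2}_s]$ is not one element of $T_1$ but $s+1$ of them.
\begin{itemize}
\item The chain comes from $s+1$ blow-ups: first the one producing $G^{(0)}=G$, then successively at $G^{(i)}\cap A$. Their pull-backs are the nested divisors $G^{(j)}+G^{(j+1)}+\cdots+G^{(s)}$ for $0\le j\le s$.
\item Each of these divisors contains no curve of $C$.
\item By Proposition \ref{DiscrS1}(ii), the only curve of $C$ meeting the chain is $A$, and it meets $G^{(s)}$ once transversally. So each divisor has $E\cdot C_{out}=1$ and $E\cdot C_{in}=0$, i.e., it lies in $T_1$.
\item The unAslide replaces the chain by exactly $s+1$ disjoint $(-1)$-curves meeting $A$, each again in $T_1$, and leaves every other $E_j$ untouched.
\end{itemize}
Hence after all unAslides you have a surface $W_0$ with $t_1(W_0)=t_1(W)$ and all other $s_h,t_h$ unchanged. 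Every element of $T_1$ is now a single $(-1)$-curve meeting a non-ending curve of a non-Wahl star graph, since $d(A)\le -1$. Only then do $t_1(W)$ applications of Lemma \ref{lemma:sliding property} give the stated formulas.

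Relatedly, your induction ``one element of $T_1$ removed per slide or unAslide'' is misphrased: unAslides do not change $t_1$, and only slides decrease it. Your Case 1 caveat about other $(-1)$-curves ``forcing an Aslide'' is also unnecessary: several disjoint $(-1)$-curves meeting $A$ are simply slid one at a time.
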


\begin{proof}
Notice that an $E_k$ in $T_1$ has always the form $[1,2,\ldots,2]$ by Theorem \ref{prop. t1}. We first unAslide every such $E_k$, and so every $E_k \in T_1$ is a $(-1)$-curve intersecting (transversally at one point) one curve of a star graph, obtaining a surface $W_0$. This curve cannot be an ending leg curve, since in that case we would obtain a negative curve for the canonical class of the corresponding singular surface. 

We can now apply slides on $W_0$ for every $E_k \in T_1$, and so we do it $t_1(W)$ times. Each time we reduce $t_1$ by $1$, and increase $t_2$ by $1$. We then reach a surface $W'$ with $t_1(W')=0$. The relations between the cardinalities follows from Lemma \ref{lemma:sliding property}.
\end{proof}

%%%%%%%%%%%%%%%%%%%%%%%%%%%%%%%%%%%%%%%%%%%%%%%%%%%%%%%%%%%%%%%%%%%%%%%%%%%%%%%%%%%%%%%%%%%%%%%%%%%%%%%%%%%%%%%%%%%%%%%%%%%%%%%%%%%%%%%%%%%%%%%%%%%%%%%%%%
\section{M-modifications of elliptic fibers and \QHD degenerations} \label{s3}

Let $W$ be a \QHD surface. Assume that it admits a minimal elliptic fibration $W \to B$ for some nonsingular projective curve $B$. Hence, up to finitely many points in $B$, the fiber is a nonsingular projective curve of genus $1$, and $K_W$ is nef. We consider the same diagram of morphisms 
     \[\xymatrix{  X  \ar[d]_{\pi} \ar[r]^{\phi} & W \ar[d] \\ S \ar[r] & B}\] 
as in Section \ref{s2}, with the same notation, but now $S \to B$ is a relatively minimal elliptic fibration. The diagram is commutative. The image of the exceptional divisor $C$ of $\phi$ is supported on some fibers of $S \to B$. Let $F$ be one of these fibers.

\begin{definition}
We call 
    \[ \pi \colon (\pi^{-1}(F) \subset X) \to (F \subset S)\] 
an \textit{M-modification} of the elliptic fiber $F$. Note that for every curve $\Gamma$ in $\pi^{-1}(F)$ we have $\phi^*(K_W) \cdot \Gamma \geq 0$, since $K_W$ is nef.
    \label{Mmodification}
\end{definition}

In this way, $\pi \colon X \to S$ is a collection of M-modifications over some fibers of $S \to B$. From now on, we only work over a fixed fiber $F$, and so we consider $E_k$s, $C_i \subset C$, and any other data only over $F$. We also consider $\pi(C)$ as a reduced divisor in that fixed fiber.

\begin{remark}\label{facts QHD elliptic surface}
We have the following facts:
\begin{itemize}
    \item[(a)] The divisor $\phi^*(K_W)$ is effectively supported in fibers of the induced elliptic fibration $X \to B$ since \[ \pi^*(K_S) + E \equiv \phi^*(K_W)+ \sum_{\Gamma \in C} d(\Gamma) \Gamma.\] Therefore $K_W^2 \leq 0$. As $K_W$ nef, we have $K_W^2=0$. By \cite[III, (8.2) Lemma]{BWHKPCV_2004}, we have that ${\phi}^{*}(K_{W})$ is a positive rational multiple of the fiber $\pi^*(F)$ in $X$, and so $\Gamma \cdot K_W=0$ for any $\Gamma \subset \phi(\pi^{-1}(F))$.
    
    \item[(b)] Since $\pi(C)$ is supported on a fiber, we have $S_h=\emptyset$ for $h \geq 4$, and $T_h=\emptyset$ for $h \geq 3$.
    
    \item[(c)] By Propositions \ref{q}, \ref{pi(C)-T} and \ref{keyEqual}, we get $v_4=\chi(\O_{\pi(C)})$, $|\pi(C)|=\sum_h t_h$, $K_S \cdot \pi(C)=\phi^*(K_W) \cdot E=0$, and so \[t_1+t_2=
    \begin{cases}
        1, & \text{if } \pi(C) \text{ is of type }II \\
        2, & \text{if } \pi(C) \text{ is of type }III \\
        3, & \text{if } \pi(C) \text{ is of type }IV
    \end{cases}\]
\end{itemize}
\end{remark}

\begin{proposition}
The sliding of an M-modification of an elliptic fiber is also an M-modification.
\label{M-modfiber}
\end{proposition}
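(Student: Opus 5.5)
We need to show: if $\pi\colon(\pi^{-1}(F)\subset X)\to(F\subset S)$ is an M-modification of an elliptic fiber, i.e. $\phi^*(K_W)\cdot\Gamma\geq 0$ for every curve $\Gamma$ in $\pi^{-1}(F)$, and $W'$ is obtained by sliding a $(-1)$-curve $\Gamma$ as in Definition \ref{sliding}, then the corresponding $\pi'\colon X'\to S$ over $F$ satisfies $\phi'^*(K_{W'})\cdot\Gamma''\geq 0$ for every curve $\Gamma''\subset\pi'^{-1}(F)$.

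The plan is to exploit Remark \ref{facts QHD elliptic surface}(a): in the elliptic setting nefness over $F$ is equivalent to $\phi^*(K_W)$ being a nonnegative rational multiple of the fiber class. So we aim to show that $\phi'^*(K_{W'})$ is again a rational multiple of $\pi'^*(F)$ (on $X'$).

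\emph{Step 1: set up $X'$.} By construction, $X$ and $X'$ both dominate $Z$, the contraction of $\Gamma$. Let $Y$ be a common resolution, which we can take to be $X'$ itself blown up at the point that becomes $\Gamma$ (or simply compare via $Z$). In particular $X'\to Z\to S$ gives $\pi'$. The exceptional divisor $C'$ of $\phi'$ consists of the old star graphs together with the new Wahl chain. All new curves lie over the nodal point of the leg, which lies in $F$, so $\pi'(C')$ is still supported in $F$.

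\emph{Step 2: compute $K_{W'}$ numerically.} Write
\[
K_{X'}\equiv \pi'^*(K_S)+E' \equiv \phi'^*(K_{W'})+\sum_{\Gamma''\in C'} d(\Gamma'')\Gamma''.
\]
Then $\phi'^*(K_{W'})=\pi'^*(K_S)+E'-\sum d(\Gamma'')\Gamma''$ is a $\Q$-divisor supported on fibers of $X'\to B$, since $K_S$ is a pullback of a divisor on $B$ plus fiber components (canonical bundle formula), and $E'$ and $C'$ lie over fibers. Hence $\phi'^*(K_{W'})$ is vertical with $\phi'^*(K_{W'})\cdot\Gamma''=0$ for every $\Gamma''\in C'$. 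Since $\phi'^*(K_{W'})$ is vertical, Zariski's lemma \cite[III, (8.2)]{BWHKPCV_2004} says its restriction to the fiber over $F$ is $\lambda\,\pi'^*(F)+N$, where $N$ is a combination of components with $N^2\le 0$.

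\emph{Step 3 (main step): show that $N=0$ and $\lambda\geq 0$.} The components of $\pi'^{-1}(F)$ not in $C'$ are the images of the curves of $\pi^{-1}(F)\setminus(C\cup\Gamma)$, together with $\Gamma'$. For these components I would compare intersection numbers via $Z$. Contracting the chain [Wahl chain, $\Gamma'$] in $X'$, and contracting $\Gamma$ in $X$, both give $Z$. The key input is that the sliding is designed so that the Wahl chain plus $\Gamma'$ plus the leg contracts to the same image as $\Gamma$ plus the leg. Equivalently, the unique solution of the discrepancy equations on $X'$ pushes down on $Z$ to the same $\Q$-divisor as on $X$; that is, $K_{W}$ and $K_{W'}$ pull back to the same class on $Z$ away from the new point. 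To check this, I would verify $\phi'^*(K_{W'})\cdot\Gamma'=0$ directly:
\[
-1-d(\text{end of Wahl chain})-d(C_{i,r_i})=0.
\]
This should follow from the discrepancy formulas for cyclic quotient singularities and Proposition \ref{prop. Sol WHS}, using the uniqueness of slidings \cite[Prop. 2.9]{UZ25}. This equality is the expected obstacle; I would first try it on the examples of Figures \ref{fig S1-T1} and \ref{fig out S1-T1 4}, then argue in general from the fact that the total chain contracts to a smooth point.

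Granting this, $\phi'^*(K_{W'})$ is numerically trivial on $\Gamma'$ and on $C'$. On the remaining components it agrees with $\phi^*(K_W)=\lambda\,\pi^*(F)$, which is trivial there. Hence $\phi'^*(K_{W'})\cdot\Gamma''=0$ for all components of the fiber, so $N=0$ and $\phi'^*(K_{W'})\equiv\lambda\,\pi'^*(F)$ with the same $\lambda\ge0$. This gives nefness over $F$, and so $\pi'$ is an M-modification.
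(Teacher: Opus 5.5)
Your reduction is sound. $\phi'^*(K_{W'})$ is vertical and orthogonal to $C'$, so everything comes down to showing it is orthogonal to the remaining fiber components, above all to $\Gamma'$. That is exactly the step you leave open, and the justification you sketch cannot work.

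The identity $-1-d(V)-d(C_{i,r_i})=0$, with $V$ the Wahl curve meeting $\Gamma'$, does not follow from the discrepancy formulas for cyclic quotient and weighted homogeneous singularities plus uniqueness of slidings; for a general sliding it is false. It holds only because the curve $C_{i,j}$ (or $C_0$) met by the original $\Gamma$ has $d(C_{i,j})=-1$. That equality is precisely $\phi^*(K_W)\cdot\Gamma=0$, i.e. the M-modification hypothesis, and your computation of $d(V)$ never uses it.

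Also, ``the total chain contracts to a smooth point'' is inaccurate. The map $X'\to Z$ contracts only $C_{i,j+1},\dots,C_{i,r_i}$, $\Gamma'$ and an initial segment of the Wahl chain, to the node $C_{i,j}\cap C_{i,j+1}$ of $Z$. The surviving Wahl curves are the proper transforms of the old $C_{i,j+1},\dots,C_{i,r_i}$. So your claim that the other fiber components see unchanged discrepancies also needs proof, and ``the same $\lambda$'' is asserted without argument.

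The paper never needs this local identity. By Lemma \ref{lemma:sliding property}, a sliding only moves one element from $T_1$ to $T_2$, so Remark \ref{q(E+C)} gives $q(E'+C')=q(E+C)$. Since $\pi'(C')=\pi(C)$ and $v_4$ is unchanged, Proposition \ref{q} gives $K_{W'}^2=K_W^2=0$. A vertical divisor of square $0$ is fiberwise a multiple of the fiber by Zariski's lemma, so $\phi'^*(K_{W'})\cdot\Gamma'=0$ comes out as a consequence rather than an input.

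If you prefer your local route, here is how to repair it. Use $d(C_{i,j})=-1$ to write $\phi^*(K_W)=\sigma^*L_Z$ with $L_Z=\sigma_*\phi^*(K_W)$, and show that $\tau^*L_Z$ (for $\tau\colon X'\to Z$) has coefficient $0$ on $\Gamma'$. This is a toric computation at the node:
\begin{itemize}
\item the log discrepancy is linear on the cone, vanishes on the ray of $C_{i,j}$, and equals $1$ on the outer ray;
\item the slid leg has the same continued fraction $[e_{i,j+1},\dots,e_{i,r_i}]$, so the ray of $\Gamma'$ is the image of the outer ray under a shear fixing the ray of $C_{i,j}$, hence also has log discrepancy $1$.
\end{itemize}
Then $\tau^*L_Z$ lies in $K_{X'}+\mathrm{span}(C')$ and is orthogonal to $C'$, so it equals $\phi'^*(K_{W'})$ by uniqueness of discrepancies. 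Nefness follows at once, and this also settles the other components and $\lambda$.
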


\begin{proof}
Let $W'$ be the result of the sliding of a $(-1)$-curve in the M-modification of an elliptic fiber, and let $W$ be the corresponding contraction with $K_W$ nef. Denote by $\phi \colon X\to W$ and $\phi'\colon X'\to W'$ their respective minimal resolutions, and by $\pi\colon X\to S$ and $\pi'\colon X'\to S$ the corresponding contractions to the minimal model $S$. Let $E, E'$ and $C, C'$ be the corresponding exceptional divisors. Note that we have $\pi(C)=\pi'(C)$ by the definition of sliding. 

By Lemma \ref{lemma:sliding property}, it follows that $t_h(W)=t_h(W')$ for $h\geq 3$ and $s_h(W)-t_h(W)=s_h(W')-t_h(W')$ for $h\neq 2$. Thus, by Remark \ref{q(E+C)}, we obtain $q(E+C) = q(E'+C')$. By the formula for $K_W^2$ in Proposition \ref{q}, we have ${\phi'}^{*}K_{W'}^2 = K_{W}^2$ and so $K_{W'}^2=0$. On the other hand, as in Remark \ref{facts QHD elliptic surface}, we have that ${\phi'}^{*}(K_{W'})$ is a $\Q$-effective divisor with support over a fiber. As before, by \cite[III, (8.2) Lemma]{BWHKPCV_2004}, we have that ${\phi'}^{*}(K_{W'})$ is a positive rational multiple of the fiber $\pi^*(F)$ in $X$. Thus $K_{W'}$ is nef.
\end{proof}

\begin{proposition}
We have that $\pi(C)=F_{red}$, and it must be of type $I_n$ for some $n$, $II$, $III$, or $IV$. 
    \label{Nostar}
\end{proposition}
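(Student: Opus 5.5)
We show separately that $\pi(C)$ is all of $F_{red}$ and that $F$ is of one of the listed types, using the numerical facts in Remark \ref{facts QHD elliptic surface}.

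\textbf{Step 1: $\pi(C)=F_{red}$.} By Remark \ref{facts QHD elliptic surface}(a), $\phi^*(K_W)$ is a positive rational multiple of $\pi^*(F)$. So for every curve $\Gamma \subset \phi(\pi^{-1}(F))$ we get $K_W\cdot\Gamma=0$. Suppose some component $F_j$ of $F_{red}$ is not in $\pi(C)$, and let $\tilde F_j$ be its proper transform in $X$, so $\tilde F_j\not\subset C$. We have
\[0=\phi^*(K_W)\cdot \tilde F_j = K_X\cdot \tilde F_j - \sum_{\Gamma\in C} d(\Gamma)\,\Gamma\cdot\tilde F_j .\]
Here $K_X\cdot\tilde F_j\ge K_S\cdot F_j=0$, and each term $-d(\Gamma)\,\Gamma\cdot\tilde F_j$ is $\ge 0$, since all discrepancies of the rational singularities are negative. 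Hence equality forces $\tilde F_j$ to be disjoint from $C$, and also $\tilde F_j$ not to pass through any blown-up point. Now $\pi(C)$ lies in $F$, is nonempty by Proposition \ref{s0=0}, and $F_{red}$ is connected. So if $\pi(C)\neq F_{red}$, some $F_j\not\subset\pi(C)$ meets a component of $\pi(C)$. That intersection point either survives in $X$, making $\tilde F_j$ meet $C$, or is blown up, making $\tilde F_j$ pass through a blown-up point. Either way we contradict the above, so $\pi(C)=F_{red}$.

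\textbf{Step 2: excluding the other fiber types.} Since $C$ is a disjoint union of trees of $\P^1$'s (rational singularities), every component of $F$ is rational and $F$ is not smooth elliptic. Remark \ref{facts QHD elliptic surface}(c) gives $v_4=\chi(\O_{\pi(C)})$ and $|\pi(C)|=t_1+t_2\le$ the number of $E_k$'s of type $T$. Using Remark \ref{q(E+C)}, $q(E+C)=q(F_{red})+\dots$, and Proposition \ref{q} with $K_W^2=K_S^2=0$ and $K_S\cdot\pi(C)=0$, we get $q(E+C)=v_4$.

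For a fiber of type $I_n^*$, $II^*$, $III^*$, $IV^*$ (a star-shaped tree of $(-2)$-curves with a $(-2)$-curve of multiplicity $\ge2$), $F_{red}$ is a reduced Du Val-type configuration when considered as a tree of $(-2)$-curves. Its pullback must then be covered by M-modifications. I would argue as in Corollary \ref{kw>ks} and Proposition \ref{M-modLogCan}. Each $E_k$ with $E_k\cdot C_{out,k}=h$ creates intersection $h$ among components of $\pi(C)$, and the only points of $F_{red}$ where two or more components meet are the nodes of the tree, where exactly two components meet. Hence $t_h=0$ for $h\ge 3$, and each $E_k$ in $T_2$ separates one node. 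The count $|\pi(C)|=t_1+t_2$ then compares the number of components, $n+5$, $9$, $8$ or $7$, with the number of nodes, which is one less. So $t_1\ge1$.

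By Theorem \ref{prop. t1} and Proposition \ref{DiscrS1}(ii), a $T_1$ element requires a curve $A$ with $d(A)\le-1$, meeting a point not on the other components. Unsliding via Corollary \ref{slided surface} (and Proposition \ref{M-modfiber}) we may assume $t_1=0$, which contradicts the count. Hence the tree types are impossible. The remaining possibilities are the cycle types $I_n$ ($n\ge1$, with $I_1$ nodal) and the non-normal-crossing types $II$, $III$, $IV$.

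\textbf{Main obstacle.} The main difficulty is making the counting in Step 2 rigorous. We must check that $T_2$ elements correspond exactly to separated double points of $F_{red}$, while $S_h\setminus T_h$ elements create no new components of $\pi(C)$. This means handling the $S_1\setminus T_1$ configurations of Theorem \ref{ex S1-T1} carefully, since they sit inside $E_k$ and are part of $M$, not of $\pi(C)$. I would first try the identity $q(E+C)=v_4$, expanded via Remark \ref{q(E+C)}, to force $q(F_{red})\ge 0$ with equality constraints. This is the cleanest way to rule out the trees, because $q(F_{red})=1$ for a tree of rational curves.
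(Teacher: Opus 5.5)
Your Step 1 is correct, and it takes a different route from the paper's. The paper argues that a proper subconfiguration of $F_{red}$ would be an ADE configuration, which Corollary \ref{kw>ks} excludes. You instead intersect $\phi^*(K_W)$ with the proper transform $\tilde F_j$ of a missing component, use $E\cdot\tilde F_j\geq 0$ and $d(\Gamma)<0$, and conclude by connectedness of $F_{red}$. That is more self-contained and fine.

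Step 2, however, has a genuine gap. Your claim that each $E_k\in T_2$ ``separates one node'' of $F_{red}$ (so $t_2\le\#\{\text{nodes}\}$ and $t_1\ge 1$) does not follow. The reason is that $C_{out,k}$ also contains curves of $M$ created by earlier blow-ups. So a $T_2$ element can sit where two exceptional curves meet, or where an exceptional curve meets a component of $\pi(C)$. Several can also lie over one node: blow up $F_a\cap F_b$ to get $G_1\subset C$, then blow up $G_1\cap\tilde F_a$ and $G_1\cap\tilde F_b$ with the new $(-1)$-curves outside $C$, and both are in $T_2$. In the paper's own Table \ref{Tab II}, every entry has $t_2=|\pi(C)|=1$, although all blow-ups lie over the cusp of a one-component fiber.

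The sliding step adds nothing. Slides fix $\pi(C)$ and $t_1+t_2$ and just convert $T_1$ elements into new $T_2$ elements. Your ``contradiction'' is therefore the same unproven count applied again, and the whole argument rests on it. Proving that count would need exactly the discrepancy constraints you never use.

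Three further problems. First, $F_{red}$ of a star fiber is an extended Dynkin configuration (its intersection matrix is only negative semidefinite), not a Du Val one, so Corollary \ref{kw>ks} says nothing about it. Second, your fallback via $q(E+C)=v_4$ gives no contradiction. Remark \ref{q(E+C)} yields $q(E+C)=q(\pi(C))+(s_1-t_1)$ (since $\binom{-1}{2}=1$), so for a tree you only get $v_4=1+s_1-t_1$, which is consistent. Third, you leave the case $S_1\neq T_1$ open.

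The missing idea is the paper's structural argument:
\begin{enumerate}
\item Over a star fiber, Theorem \ref{ex S1-T1} gives $S_1=T_1$, because elements of $S_1\setminus T_1$ only occur over fibers of type $II$, $III$, $IV$.
\item Hence $v_4=\chi(\O_{F_{red}})=1$, so a valency-$4$ singularity must lie over $F$.
\item The only remaining blow-ups are $T_1$ chains $[1,2,\dots,2]$ (Theorem \ref{prop. t1}) and elements of $S_2,S_3$. These cannot create an entire leg, so $\pi(C)$ contains the central curve and a curve of each of the four legs.
\item The only star fiber with a component meeting four others is $I_0^*$.
\item The central curve must become a $(-3)$-curve, so only one blow-up on the central curve of $I_0^*$ is allowed.
\item Each short-leg triple $(-3,-3,-3)$, $(-2,-3,-6)$, $(-2,-4,-4)$ requires lowering at least two outer curves. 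No $T_1$ chain can attach to a short-leg curve, since its discrepancy is $>-1$. So each such lowering needs a blow-up at a point of the central curve, which is a contradiction.
\end{enumerate}
Your counting never produces this forced valency-$4$ singularity, and that singularity is what actually rules out the star fibers.
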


\begin{proof}
If $\pi(C)$ is contained in a fiber, then it is an ADE configuration of rational curves, but this is not possible by Corollary \ref{kw>ks}. Let us now assume that $\pi(C)$ is a fiber of type $I_n^*$ for some $n$, $II^*$, $III^*$, or $IV^*$. Then by Remark \ref{facts QHD elliptic surface} part (d) we have $v_4=\chi(\O_{\pi(C)})=1$ as $\pi(C)$ is reduced and simply-connected. Note that $S_1=T_1$ in this case, because otherwise $\pi(C)$ would be of type $II$, $III$, or $IV$ by our classification. Therefore, we cannot create all the curves of a leg using only blow-ups, and so $\pi(C)$ must contain the central curve and at least one curve of each leg from the valency $4$ star graph. Then $\pi(C)$ could only be $I_0^*$. As the central curve in $X$ must have self-intersection $-3$, we can do only one blow-up over the central curve of $I_0^*$. But we also have $3$ possible triples of self-intersections of "short" legs: $(-3,-3,-3)$, $(-2,-3,-6)$ or $(-2,-4,-4)$. Thus we need to do blow-ups over more than one "leg" of $I_0^*$, this produces a contradiction. 
\end{proof}

\begin{proposition}
If $\pi(C)=I_n$, then we have only Wahl chains in the M-modification.
    \label{In}
\end{proposition}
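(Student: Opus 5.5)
We argue by contradiction: suppose $\pi(C)=I_n$ and the M-modification contains a non-Wahl \QHD singularity, i.e. a valency $3$ or valency $4$ star graph $SG\subset C$. The strategy is to use the numerical identities of Remark \ref{facts QHD elliptic surface}(c) together with the structure of $S_1$ from Section \ref{s2}.

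\emph{Valency $4$.} For $I_n$ with $n\geq 1$ the reduced fiber has $\chi(\O_{\pi(C)})=0$: it is a cycle of rational curves, or a nodal rational curve for $n=1$. Remark \ref{facts QHD elliptic surface}(c) then gives $v_4=0$. For $n=0$, the fiber $\pi(C)$ is a smooth elliptic curve, and every component of $C$ is rational, so this case cannot occur. Hence only valency $3$ star graphs remain to be excluded.

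\emph{Valency $3$.} Here we use the identity $|\pi(C)|=\sum_h t_h=t_1+t_2$, since $T_h=\emptyset$ for $h\geq 3$. We also use Lemma \ref{lema S1T1}: with $K_S$ nef and $S$ relatively minimal, every $E_k\in S_1\setminus T_1$ has $A$ the central curve of a valency $4$ graph. Since $v_4=0$, this forces $S_1=T_1$. By Theorem \ref{prop. t1}, each $E_k\in T_1$ is a chain $[1,2,\dots,2]$ attached to a curve $A$ with $d(A)\leq -1$. By Corollary \ref{decrdiscr}, that curve is in a non-Wahl graph.

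Next we count. A valency $3$ star graph has a central curve whose image $\pi(C_0)$ is either a component of the cycle or contracted. The cycle $I_n$ has every component of valency exactly $2$. The three legs must therefore be produced by blow-ups at nodes of the cycle, or at points of a component. This creates elements of $S_h$ with $h\geq 2$, or forces a leg to lie inside some $E_k$ that meets $C_{out}$ once. The latter case is an element of $S_1\setminus T_1$, which is excluded.

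The main obstacle is making this precise. The plan is to show that the total number of blow-up divisors meeting $C$ is too large compared with $|\pi(C)|=n=t_1+t_2$. Each element of $T_2$ corresponds to a node of the cycle blown up once, with the $(-1)$-curve not in $C$. Each element of $T_1$ sits over a smooth point of some curve with discrepancy $\leq -1$.

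One route is to use the discrepancy/nefness constraint $\phi^*(K_W)\cdot E_k=0$, which holds because $K_W\equiv 0$ on the fiber. For $E_k\in T_2$ meeting curves $\Gamma_1,\Gamma_2$, this gives $d(\Gamma_1)+d(\Gamma_2)=-1$. For $E_k\in T_1$ it gives $d(A)=-1$. Chasing these equalities around the cycle, the central curve $C_0$ has $d(C_0)<-1$ and $|d|$ differences along legs less than $1$ by Proposition \ref{prop WQHD}.

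These equalities are incompatible with the three legs being separated off the cycle. The key point is that $C_0$ needs valency $3$ in $X$, but only two cycle neighbours plus $T$-type $(-1)$-curves can touch it. A third leg attached via a $T_1$ curve would need $d(C_0)=-1$, which contradicts $d(C_0)<-1$.

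Once valency $3$ and $4$ are excluded, all singularities are Wahl, which gives the statement.
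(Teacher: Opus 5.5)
Your first two steps agree with the paper: $v_4=\chi(\O_{\pi(C)})=0$ for a cycle, and then $S_1\setminus T_1=\emptyset$, because any element of $S_1\setminus T_1$ forces a valency $4$ central curve (Lemma \ref{lema S1T1}, Theorem \ref{ex S1-T1}). The gap is the exclusion of valency $3$, which you flag as ``the main obstacle'' and never close.

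\begin{itemize}
\item The count via $|\pi(C)|=t_1+t_2$ is never carried out.
\item The discrepancy argument you offer instead does not work. The legs of a star graph meet $C_0$ directly through curves of $C$, not ``via a $T_1$ curve''. The $(-1)$-curves of $T_1$ are not in $C$ and add nothing to the valency of $C_0$ in the star graph. So the relation $d(A)=-1$ for $E_k\in T_1$ says nothing about a third leg.
\item The inequality $d(C_0)<-1$ is false for the three log canonical valency $3$ \QHD singularities. These have $\chi=0$, hence $d(C_0)=-1$, so your intended contradiction would not exclude them.
\item The case where $\pi(C_0)$ is a point is left untreated.
\end{itemize}

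The missing idea is combinatorial, and it is the paper's argument. Once $S_1=T_1$, Theorem \ref{prop. t1} and its proof show that each $E_k\in T_1$ is a chain $[1,2,\dots,2]$ hanging off one curve and containing no curve of $C$. Now take any $\pi$-exceptional curve of $C$; it is the exceptional curve of the blow-up at some point $p_k$.
\begin{itemize}
\item If $p_k$ were a smooth point of the reduced total transform of $F$, then $E_k\in S_0\cup S_1=T_1$. This is impossible, since $E_k$ contains a curve of $C$.
\item If $p_k$ lay on a curve of an earlier $T_1$-chain $E_j$, then the new curve would be a component of $E_j$. This is again impossible.
\end{itemize}
Hence $p_k$ is an intersection point of two curves of the cycle. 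Blowing up such a node only inserts a new curve into the cycle, with exactly two cycle neighbours. Therefore every curve of $C$ lies on the subdivided cycle. Each such curve has at most two neighbours in $C$, because its other neighbours lie in $T_1$-chains. So $C$ is a disjoint union of chains, the singularities are cyclic quotient \QHD singularities, and these are Wahl by Theorem \ref{Wahl}.

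Your remark that node blow-ups ``create elements of $S_h$ with $h\geq 2$'' is exactly where this observation was needed. Being in $S_2$ is not a contradiction in itself; what matters is that those curves stay on the cycle.
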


\begin{proof}
By Remark \ref{facts QHD elliptic surface} part (d) we have $v_4=\chi(\O_{\pi(C)})=0$. On the other hand, if there is $E_k \in S_1 \setminus T_1$, then by our classification Theorem \ref{ex S1-T1} we must have $v_4 \neq 0$. Thus $S_1=T_1$, and in the M-resolution we have only blow-ups at the intersection points of the curves. But then it is impossible to form a valency $3$ star graph, and so it is only possible to obtain Wahl chains.
\end{proof}

We now recall Kawamata's result, i.e., the case of a $W$ with only Wahl singularities.

\begin{notation}
In the next tables, we will represent the dual graphs of $\pi^{-1}(F)$. Black dots $\bullet$ represent curves in Exc$(\phi)$, and white dots $\circ$ represent curves not in Exc$(\phi)$. White dots with no number attached represent $(-1)$-curves.   
\end{notation}

\begin{theorem} \label{kawamataQHD}
Let $W$ be a \QHD surface with only Wahl singularities and $K_W$ nef. Assume that $(W\subset\mathcal{W})\to(0\in\mathbb{D})$ is a \QHD smoothing with $\kappa(W_t)=0,1$. Then we have an elliptic fibration $W \to B$ over some curve $B$, together with a classification of the fibers containing singularities of $W$ (see Figure \ref{fig:KAWA_WAHL}). If $\kappa(W_t)=0$, we have that $W_t$ is an Enriques surface and $W$ is rational with only $\frac{1}{4}(1,1)$ singularities (at most $10$ of them).           
\end{theorem}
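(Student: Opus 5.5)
The plan is to first pin down the global structure of $W$ from the smoothing, and then to reduce the classification of fibers to the local analysis of Section \ref{s3}.

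\textbf{Step 1: $W$ is elliptic.} A \QHD smoothing preserves $K^2$, $\chi_{top}$, $q$ and $p_g$, and $\W$ is $\Q$-Gorenstein, so $K_W^2=K_{W_t}^2$. When $\kappa(W_t)\in\{0,1\}$ this gives $K_{W_t}^2=0$, hence $K_W^2=0$. Since $K_W$ is nef, we still want an elliptic fibration on $W$, and I would obtain it from the minimal model $S$ of the minimal resolution $X$. Proposition \ref{kodairatype} cannot be used as stated, because it assumes $K_W$ big. Instead I would repeat its proof. If $S$ were ruled, the exceptional curves would either lie in fibers, contradicting nefness, or force $X$ to be rational. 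The case of general type is impossible, because $K_W^2=0$ together with Corollary \ref{kw>ks} would give $K_S^2\le 0$. So either $\kappa(S)=1$, in which case $S$ is already elliptic, or $S$ is rational or has $\kappa(S)=0$.

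In the rational or $\kappa=0$ cases, I would use that $nK_W$ is nef with $K_W^2=0$, and that $\kappa(W)\ge 0$ via upper semicontinuity of plurigenera in $\Q$-Gorenstein families (\cite{Kawa92} handles exactly this). Abundance for log terminal surfaces then makes $|nK_W|$ define a morphism $W\to B$ whose general fiber has $K\cdot F=0$. That fiber is a smooth genus $1$ curve avoiding the singularities, which yields the fibration. This is the step I expect to be the main obstacle: the abundance/semicontinuity input has to be applied correctly to $\Q$-Gorenstein smoothings. My first move would be to quote \cite{Kawa92} directly, since Wahl singularities are log terminal.

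\textbf{Step 2: classification of fibers.} With the fibration in hand, we are in the setting of Section \ref{s3}. Each fiber $F$ of $S\to B$ whose preimage contains singularities gives an M-modification built only from Wahl chains. Since Wahl singularities are log terminal, Proposition \ref{DiscrS1} gives $s_1=0$, so $S_1=T_1=\emptyset$. Proposition \ref{Nostar} then shows $\pi(C)=F_{red}$ is of type $I_n$, $II$, $III$ or $IV$. By Remark \ref{facts QHD elliptic surface}(c), all blow-ups lie in $T_2$, and $t_2=1,2,3$ for $II,III,IV$ respectively; for $I_n$ the blow-ups occur at nodes of the cycle. I would then enumerate the possibilities:
\begin{itemize}
\item for $I_n$, blowing up at nodes produces chains that must be Wahl chains, constructed as in Proposition \ref{prop Construction Wahl Sing}, with $d$ chains;
\item for $II$, $III$ and $IV$, the possibilities are few, and nefness, checked via the discrepancy of each $(-1)$-curve, leaves exactly the graphs in Figure \ref{fig:KAWA_WAHL}.
\end{itemize}
Multiplicity $y$ for the multiple fibers is allowed.

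\textbf{Step 3: the case $\kappa(W_t)=0$.} Here $p_g=q=0$ is preserved, so $W_t$ is Enriques. The fibration $W\to\P^1$ has $\chi(\O)=1$, and canonical bundle considerations force $W$ to be rational, with $S$ a rational elliptic surface. Wahl singularities with $K_W\equiv$ (a rational multiple of a fiber) and index $2$ torsion give only $\frac14(1,1)$ singularities, each a $(-4)$-curve (for instance, the $II(2)$ and $I_d$ cases with $[4]$). Finally, $\chi_{top}(W_t)=12$, and each $[4]$ removes one from $\chi_{top}(X)=12+m$, so Noether bounds the number of singularities by $10$.
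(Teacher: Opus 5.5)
The paper does not reprove this statement. It cites \cite[Thm.~4.2]{Kawa92} for $\kappa(W_t)=1$, and \cite[Thm.~4.1]{Kawa92} together with \cite[Section~9.1]{DK25} for $\kappa(W_t)=0$; the key point in the second case is that the minimal resolution $X$ is a Coble surface of K3 type.

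Your route for $\kappa(W_t)=1$ is a legitimate alternative. Semicontinuity of plurigenera in the $\Q$-Gorenstein family gives $\kappa(W)\geq\kappa(W_t)=1$; note that you need $\geq 1$ here, not just $\geq 0$. Abundance for the log terminal surface $W$ with $K_W$ nef and $K_W^2=0$ then gives the Iitaka fibration. Its general fiber avoids the singular points and is elliptic by adjunction. This even makes your case analysis of $S$ superfluous; for $\kappa(S)=1$ it was incomplete anyway, since you would still need $\pi(C)$ to be vertical.

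\textbf{The genuine gap is the case $\kappa(W_t)=0$.} This case lands in the branch (``$S$ rational or $\kappa(S)=0$'') that you send through abundance. There $K_{W_t}$ is torsion, so $K_W\equiv 0$ and $|nK_W|$ maps $W$ to a point: the canonical class produces no fibration at all. The elliptic fibration has to come from the structure of $X$ as a Coble surface of K3 type ($-2K_X\sim\sum C_i$ with the $C_i$ disjoint $(-4)$-curves). This is precisely what the paper imports from \cite{Kawa92} and \cite{DK25}. Without it your Step~2 never starts, so the first assertion of the theorem is unproved when $\kappa(W_t)=0$.

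\textbf{Step~3 has further problems.}

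The claim that Noether bounds the number of singular points by $10$ is false. With $k$ points $\frac14(1,1)$ one has $K_X^2=K_W^2-k=-k$ and $\chi_{top}(X)=\chi_{top}(W)+k=12+k$, and these satisfy Noether's formula for every $k$. The bound needs genuine input. One option is the K3 double cover of $X$ branched along the $(-4)$-curves, plus Nikulin's classification of non-symplectic involutions: a fixed locus consisting only of rational curves has at most $10$ components. Another is a Shioda--Tate rank count on the rational elliptic fibration, once you have one.

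You also use $p_g=q=0$ to recognize $W_t$ as Enriques before knowing that $W$ is rational. This is repairable. If $K_W\equiv 0$ and $\kappa(S)\geq 0$, then $\pi^*K_S+E+\sum_{\Gamma\in C}(-d(\Gamma))\Gamma\equiv 0$ would be a nonzero $\Q$-effective divisor, since all $d(\Gamma)<0$ on Wahl chains. Hence $S$ is ruled and $X$ is rational.

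``Only $\frac14(1,1)$'' needs an actual argument. For example, $2K_{\W}$ is Cartier near $W$, because $2K_{W_t}\sim 0$ and the reflexive powers of the canonical sheaf are flat in a $\Q$-Gorenstein family. Then the index $n$ of each Wahl point $\frac{1}{n^2}(1,na-1)$ divides $2$.

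Finally, in Step~2 it is not true that all blow-ups lie in $T_2$. In II(3)--II(5), III(3) and IV(2) some curves of $C$ are contracted by $\pi$, so some $E_k$ lie in $S_2\setminus T_2$ or $S_3\setminus T_3$. The count $t_2=1,2,3$ is correct, but the enumeration must allow these.
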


\begin{proof}
For $\kappa(W_t)=0$, this is \cite[Thm. 4.1]{Kawa92} and \cite[Section 9.1]{DK25}, as the minimal resolution of $W$ must be a Coble surface of K3 type. The rest is \cite[Thm. 4.2]{Kawa92}.  
\end{proof}

\begin{lemma}\label{Mres II-III-IV}
 If $\pi(C)$ is not of type $I_n$, then its corresponding M-resolutions with a non-Wahl \QHD singularity are given by the dual graphs in Figure \ref{fig:KAWA_QHD} and their slidings (Definition \ref{sliding} and Remark \ref{extra sliding}).   
\end{lemma}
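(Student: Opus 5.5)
Since $\pi(C)$ is not of type $I_n$, Proposition \ref{Nostar} gives $\pi(C)=F_{red}$ of type $II$, $III$ or $IV$. By Remark \ref{facts QHD elliptic surface}(c), $v_4=\chi(\O_{\pi(C)})$, which equals $1$ for a cuspidal curve and $0$ for types $III$ and $IV$ (they are reduced trees of rational curves). By the same remark, $t_1+t_2=1,2,3$ respectively, and $s_h=0$ for $h\ge 4$, $t_h=0$ for $h\ge 3$.

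The first reduction is Corollary \ref{slided surface}. After unAslides and slides, and using Proposition \ref{M-modfiber} to stay inside M-modifications, we may assume $t_1=0$. Thus every $E_k$ that meets $C_{out,k}$ only once lies in $S_1\setminus T_1$, and these $E_k$ are classified by Theorem \ref{ex S1-T1} (Figures \ref{fig S1-T1}--\ref{fig out S1-T1 2}). Each such $E_k$ forces a valency $4$ singularity, so $s_1-t_1\le v_4$. Hence $s_1=0$ for types $III$ and $IV$, and $s_1\le 1$ for type $II$. All remaining $E_k$ are then in $T_2$, $S_2\setminus T_2$ or $S_3$, with $t_2$ fixed by the count above.

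Next I split into cases by fiber type and by $v_4$.

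\textbf{Type $II$ with $v_4=1$.} Exactly one $E_k$ is in $S_1\setminus T_1$, and the valency $4$ central curve $A$ is the proper transform of the cuspidal curve, or comes from it. The blow-ups over the cusp (which is where the $S_3$ or $S_2$ elements live) must then produce the three short legs and the long leg of one of the graphs (a),(b),(c) of Figure \ref{fig QHD V4}. Matching the long leg $[p+2,b,2,\dots,2]$ with the pieces of Theorem \ref{ex S1-T1} should give the type $II$ entries of Figure \ref{fig:KAWA_QHD}.

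\textbf{Types $III$, $IV$, and type $II$ with $v_4=0$.} Here $s_1=0$, so every blow-up point lies on at least two branches of the current total transform of $C$ (a node or tangency point). The question becomes which sequences of blow-ups at such points turn the two tangent curves ($III$), the three concurrent lines ($IV$), or the cusp ($II$) into a disjoint union of Wahl chains and valency $3$ star graphs from Figure \ref{fig QHD V3}. This has the flavour of Kawamata's classification, which is Theorem \ref{kawamataQHD}. I would reduce it to a finite check using the bounds $|d(C_{i,j})-d(C_{i,k})|<1$ and $\sum d(C_{i,r_i})-d(C_0)=-1$ from Proposition \ref{prop WQHD}, the bound $d>-2$ from Remark \ref{factsQHD}(7), and the nef condition $\phi^*(K_W)\cdot E_k\ge 0$ for each $(-1)$-curve, made explicit as in the proof of Lemma \ref{lema S1T1}.

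A further strong constraint comes from Remark \ref{facts QHD elliptic surface}(a): $\phi^*(K_W)$ is a positive multiple of the fiber. The resulting linear conditions on discrepancies, of the form $-1-\sum d(\Gamma)\,\Gamma\cdot E_k = c\cdot(\text{multiplicity of }E_k\text{ in }\pi^*F)$, should pin down the leg lengths in terms of a few parameters. The surviving configurations are then exactly those in Figure \ref{fig:KAWA_QHD}.

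The main obstacle is this last enumeration. The valency $3$ families (with parameters $p,q,r$) must be fitted onto the tangency and triple-point blow-up trees of types $III$ and $IV$, and I must exclude every combinatorially possible configuration that fails the proportionality to the fiber. For that I would first fix which component of $\pi(C)$ becomes the central curve, using its self-intersection (the central curves of \QHD star graphs have $-e_0\ge -4$). I would then propagate the discrepancy equalities along each leg, using the computational tool \cite{programa} to confirm the remaining finitely many parameter ranges.
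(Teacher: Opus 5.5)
There is a concrete error at the start of your proposal, and it removes genuine cases. There is also a missing structural reduction.

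\textbf{The count of valency $4$ singularities.} You take $v_4=\chi(\O_{\pi(C)})$ from Remark \ref{facts QHD elliptic surface}(c) and evaluate it as $1$ for the cusp and $0$ for types $III$, $IV$. But each of $II$, $III$, $IV$ is a reduced fiber with $F^2=K_S\cdot F=0$. So $\chi(\O_{\pi(C)})=q(F)=0$ in all three cases: the cuspidal curve, the two tangent curves and the three concurrent lines all have arithmetic genus $1$.

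Since $K_W^2=K_S^2=K_S\cdot\pi(C)=0$, what Proposition \ref{q} actually gives is $v_4=q(E+C)$. Remark \ref{q(E+C)} then turns this into $v_4=\chi(\O_{\pi(C)})+(s_1-t_1)=s_1-t_1$, using $s_0=0$, $t_h=0$ for $h\geq 3$, $\binom{-1}{2}=1$ and $\binom{0}{2}=\binom{1}{2}=0$. The identity as you quote it holds only when $S_1=T_1$.

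So a valency $4$ singularity occurs exactly when $S_1\setminus T_1\neq\emptyset$, and this can happen over all three fiber types. The short legs $(2,3,6)$, $(2,4,4)$, $(3,3,3)$ of valency $4$ types (c), (b), (a) are precisely the three non-central curves of $F'$ in Figure \ref{s.n.c config}. Your deduction that $s_1=0$ over $III$ and $IV$ therefore discards III.v4.b, IV.v4.a and IV.v4.a.0 of Figure \ref{fig:KAWA_QHD}.

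Here is an explicit instance over a type $IV$ fiber:
\begin{enumerate}
\item blow up the triple point, then a general point of $\Gamma_0$, then the intersection of the new curve with $\Gamma_0$, then three general points of the middle curve;
\item the result is a central $(-3)$-curve with legs $[3],[3],[3],[4,2]$;
\item the three $(-1)$-curves meet the $(-4)$-curve, whose discrepancy is $-1$, so $K_W$ is nef;
\item one checks $q(E+C)=1$ and $\lambda=5/6$, as in Table \ref{lambda}.
\end{enumerate}
Your case ``type $II$ with $v_4=0$'' also contradicts your own value $v_4=1$ for type $II$. In the paper every valency $4$ case comes from Theorem \ref{ex S1-T1}, with no restriction on the fiber type.

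\textbf{The reduction for valency $3$.} You are missing the step that both identifies the central curve and makes the enumeration finite. The paper first shows the M-modification factors through the minimal SNC model $F'\to F$; otherwise only the Wahl configurations of type $II$ arise. Since $T_3=\emptyset$, the total transform of the $(-1)$-curve $\Gamma_0$ of $F'$ lies in $S_3\setminus T_3$, so $\Gamma_0$ is the central curve. Your plan to ``fix which component of $\pi(C)$ becomes the central curve'' looks in the wrong place: the center is always this exceptional curve of $F'\to F$, never the proper transform of a component of $F$.

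After sliding so that $S_1=\emptyset$, the rest of the argument runs as follows:
\begin{enumerate}
\item every further blow-up is an $S_2$ blow-up over the three points where $\Gamma_0$ meets the other curves of $F'$, and there are at most $t_2=|\pi(C)|$ of them;
\item $q(E+C)=0$ forces exactly $t_2$ Wahl chains;
\item hence at least $3-|\pi(C)|$ of the three curves are untouched, and their self-intersections select the compatible valency $3$ families;
\item what remains is a single cyclic quotient chain, whose M-resolutions with the prescribed number of Wahl chains give the constraints in Tables \ref{Tab II}, \ref{Tab III}, \ref{Tab IV}.
\end{enumerate}

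Your discrepancy bounds and proportionality to the fiber are true constraints, but on their own they do not produce a finite list. So the step you call the main obstacle is essentially the whole lemma, and it remains undone. You would also still need the converse check that each graph in Figure \ref{fig:KAWA_QHD} really is an M-modification, which the paper does first.
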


%\begin{figure}[ht]
%    \centering
%    \includegraphics[scale=0.54]{KAWA_TABLE.pdf}
%    \caption{The M-modifications of fibers with a non-Wahl \QHD singularity.}
%   \label{fig:KAWA_QHD}
%\end{figure}

\begin{proof}
First, it is a tedious routine computation to verify that the configurations in Table \ref{fig:KAWA_QHD} are indeed M-modifications. To do that one needs to compute the discrepancy of the curve intersecting a $(-1)$-curve using the \ref{app}, and also check that the configuration contracts to the type of the elliptic fiber $F$.

Let \[(\pi^{-1}(F) \subset X) \to (F \subset S)\] be an M-modification over $\pi(C)=F$, which must be of type $II$, $III$ or $IV$ by Proposition \ref{Nostar}. For each of these fiber types, we have a minimal SNC resolution $(F' \subset S') \to (F \subset S)$ so that $F'$ consists of one $(-1)$-curve $\Gamma_0$ and 3 other disjoint $\P^1$s, each intersecting $\Gamma_0$ transversally at one point (see Figure \ref{s.n.c config}).

Note that $(\pi^{-1}(F) \subset X) \to (F \subset S)$ must factor through $F' \to F$. Otherwise, we would have the first two M-modifications with only Wahl chains in Theorem \ref{kawamataQHD} (type II).

By Remark \ref{facts QHD elliptic surface} part (b), we have $T_3=\emptyset$. Thus the pull-back of $\Gamma_0$ under $\pi^{-1}(F) \to F'$ belongs to $S_3 \setminus T_3$, and $\Gamma_0$ must be the central curve of the exceptional divisor of a valency 3 or valency 4 singularity. We will now analyze all possible M-resolutions over $F'$ taking into account that blow-ups produce $E_k$s only in $S_1$ and $S_2$ (Remark \ref{facts QHD elliptic surface} part (b)).

\begin{figure}[ht]
    \centering
    \scalebox{0.8}{
    \begin{tikzpicture}[thick, scale=0.5]
    % --- Configuración ---
    \coordinate (A) at (-4, 0);
    \coordinate (B) at (0, 0);
    \coordinate (C) at (4, 0);
    \def\vlen{2.5} 

    % --- Dibujo ---
    
    % 1. La línea central horizontal
    % El nodo [right] pone el "-1" al final de la línea a la derecha
    \draw (-6, 0) -- (6, 0) node[right] {$-1$}; 

    % 2. Las tres líneas verticales
    % Los nodos [above] ponen las etiquetas encima de las líneas verticales
    \draw (A) ++(0, -\vlen) -- ++(0, 2*\vlen) node[above] {$-a$};
    \draw (B) ++(0, -\vlen) -- ++(0, 2*\vlen) node[above] {$-b$};
    \draw (C) ++(0, -\vlen) -- ++(0, 2*\vlen) node[above] {$-c$};

    % 3. Marcar los puntos de intersección
    \filldraw (A) circle (2pt) node[below left] {A};
    \filldraw (B) circle (2pt) node[below left] {B};
    \filldraw (C) circle (2pt) node[below left] {C};

\end{tikzpicture}
    }
    \caption{\small{The fiber $F'$ with $(a,b,c) \in \{(2,3,6), (2,4,4), (3,3,3)\}$.}}
    \label{s.n.c config}
\end{figure}

If $S_1\setminus T_1 \neq \emptyset$, then Theorem \ref{ex S1-T1} gives all the M-modifications with valency 4 singularities in Figure \ref{fig:KAWA_QHD} and their slidings. 

Say that $S_1 \setminus T_1 = \emptyset$. By Proposition \ref{M-modfiber}, the sliding of an M-modification is an M-modification. Hence, by Corollary \ref{slided surface}, we can assume that $T_1 = \emptyset$, and we will compute all the possible M-modifications with only blow-ups of type $S_2$ over $F'$.

Assume that $(\pi^{-1}(F) \subset X) \to (F \subset S)$ is an M-resolution of $F$ with $s_1=t_1=0$ and some non-Wahl \QHD star graph. Let us consider the intersection points $A,B,C$ in $F'$ (Figure \ref{s.n.c config}). As $s_h=0$ for $h \neq 2$, we can have blow-ups only over $A,B,C$, which produce $E_k \in S_2$. Every $E_k$ divisor supports some $(-1)$-curve in $X$, which must be in $T_2$. By Remark \ref{facts QHD elliptic surface} part (c), we have at most $t_2=|\pi(C)|$ blow-ups over the points $A, B, C$.

Moreover, we note that the number of Wahl chains in the M-modification will be exactly $t_2=|\pi(C)|$, since $0 = q(\pi(C)) = q(E+C) = l+m-E \cdot C = l-t_2-1$,
where $l$ is the number of Wahl chains in the M-modification plus $1$. The non-central curves in $F'$ that have a point that is blown-up are called \textbf{nonfixed curves}; the rest are the \textbf{fixed curves}. Non-fixed curves are at most $|\pi(C)|$ by the previous observation, and so the fixed curves are at least $3-|\pi(C)|$.

The computation of all M-modifications of $F$ is described in Tables \ref{Tab II}, \ref{Tab III} and \ref{Tab IV}. The steps to compute them are the following.

\begin{itemize}
    \item[1.] Select at least $3-|\pi(C)|$ fixed curves. The first column contains that choice. Note that the self-intersection of (the proper transform of) $\Gamma_0$ in $X$ is at most $-(1+|\pi(C)|)$, and it is the central curve of a non-Wahl star graph of valency 3.
    
    \item[2.] Using the explicit table of valency 3 \QHD singularities, we choose the families of \QHD star graphs which satisfy the numerical restrictions imposed in the first step. This gives the type of the valency 3. 
    
    \item[3.] By doing blow-ups of type $S_2$ only over $F'$, we construct the star graphs of the chosen families in the previous step. The result is the chosen valency 3 star graph and some chain of curves, which represents the minimal resolution of some c.q.s. It is connected to the valency 3 star graph through a $(-1)$-curve at the end of a leg. The c.q.s. found is in a column of the table.

    \item[4.] We determine whether the c.q.s. attached to these legs admit M-resolutions (only Wahl chains and positivity for $K_W$). We will prove below that the admissible M-resolutions for us are very restrictive, and so we will almost never have further constraints. If we do, then we write them in the corresponding column.   

    \item[5.]  To finish the proof, we verify that the potential M-modification in the previous step are slides or Aslides of the M-modifications in Figure \ref{fig:KAWA_QHD} (and so indeed an M-modification by Proposition \ref{M-modfiber}). This is the content of the last column, the correspondence between the obtained M-modifications with the M-modifications in Figure \ref{fig:KAWA_QHD}.
\end{itemize}

As an illustration of how the classification works, consider $F'$ for type $II$:

\vspace{0.2 cm}
\noindent 
{1.} Select the $(-2)$-curve and the $(-3)$-curve as fixed curves. This is denoted by $(2,3)$ in the "fixed curves" column of Table \ref{Tab II}. By definition, we will not perform any blow-ups on these curves. Note that the self-intersection of the exceptional divisor of the central curve must be at most $-2$.

\noindent 
{2.} The unique family of valency 3 star graphs that contains both a $(-2)$-curve and a $(-3)$-curve attached to the central curve is of type (f). 

\noindent 
{3.} We proceed by performing blow-ups of type $S_2$ on $F'$ until the family (f) is constructed. This process is outlined in Figure \ref{fig:Construction_f}.

\begin{figure}[ht!]
    \centering
    \includegraphics[scale=0.5]{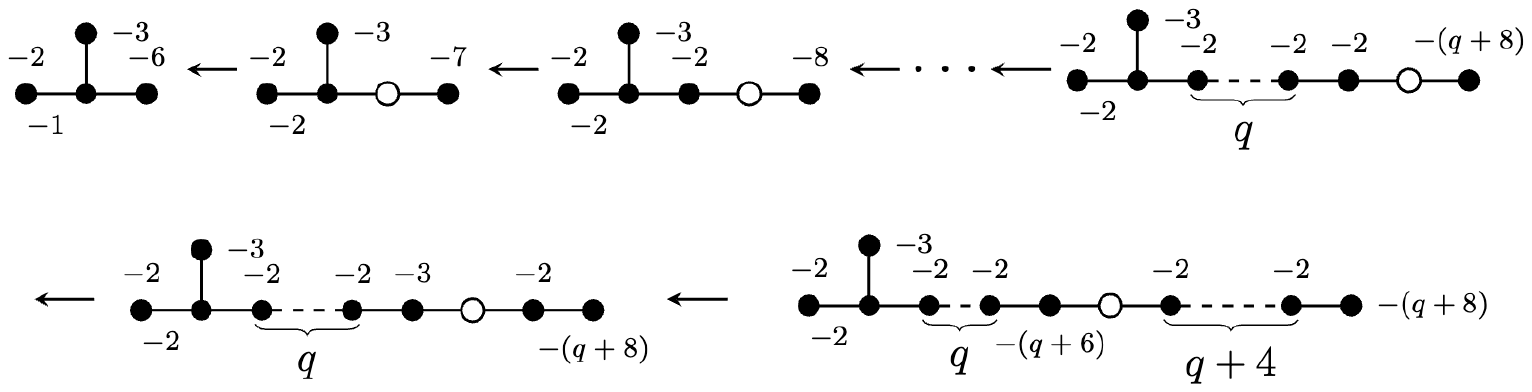}
    \caption{\small{Construction of the family (f) over a fiber of type $II$.}}
    \label{fig:Construction_f}
\end{figure}

\noindent 
{4.} We obtain the chain of rational curves $[\underbrace{2,\dotsc,2}_{q+4},q+8]$, which is, in fact, a Wahl chain. In this case, as we will see below, the chain coincides with the only possible M-resolution. Thus, there are no additional constraints.

\noindent 
{5.} We verify that the configuration obtained is the sliding of the $(-1)$-curve in the M-modification II.v3.f presented in Table \ref{fig:KAWA_QHD}. 

We now consider each fiber type II, III, and IV separately, starting with an explanation about the potential M-resolutions of the c.q.s. that shows up in step 3. They are very limited. A key is the following observation. For any M-modification of $F$ we must have $K_W^2=0$, and so the number of blow ups from $F \subset S$ to $X$ is equal to the number of curves contracted. Then the number of components for $\phi(\pi^{-1}(F))$ and $F$ is equal.

\vspace{0.2 cm}
\noindent 
\textbf{Fiber of type II.} As the number of components of $\phi(\pi^{-1}(F))$ must be equal to $1$ and we already have a $(-1)$-curve in $\pi^{-1}(F)$, the number of Wahl chains for an M-resolution of the c.q.s. in step 3 is $1$. This will be satisfied automatically in our analysis, except for the case $(3,6)$ with family (g), where the parameter $q$ must be zero.
\begin{table}[ht!]
\centering
\small
\renewcommand{\arraystretch}{1.6} % Espacio vertical para que los corchetes respiren
\scalebox{0.7}{
\begin{tabular}{|c|c|c|c|c|c|c|}
\hline
\textbf{Fiber} & \textbf{\makecell{Fixed\\Curves}} & \textbf{v3} & \textbf{\makecell{Parameters\\of the family}} & \textbf{\makecell{Obtained \\C.Q.S.}} & \textbf{\makecell{Constraints\\for the C.Q.S.}} & \textbf{\makecell{Unslided\\ M-modification}}\\
\hline

% --- FIBER II ---
% Total de filas: 1 (caso 2,3) + 4 (caso 2,6) + 6 (caso 3,6) = 11 filas
\multirow{11}{*}{$II$} 
 % CASO (2,3)
 & $(2,3)$ & (f) & $q\geq0$ & $[\underbrace{2,\dotsc ,2}_{q+4} ,q+8]$ & --- & \makecell{II.v3.f for $q>0$\\ II.v3.f.0 for $q=0$}\\ \cline{2-7}
 
 % CASO (2,6) - 4 Variables
 & \multirow{4}{*}{$(2,6)$} 
   & (c) & $r=0,q=2$ & $[2,2,5,4]$ & --- & II.v3.c \\ \cline{3-7}
 & & (d) & $r=2$     & $[\underbrace{2,\dotsc ,2}_{q+3} ,5,q+5]$ & ---  & II.v3.d \\ \cline{3-7}
 & & (f) & $q=0$     & $[2,5]$ & --- & II.v3.f.0\\ \cline{3-7}
 & & (j) & $q\geq0$     & $[\underbrace{2,\dotsc ,2}_{q+2} ,q+6]$ & --- & II.v3.j \\ \cline{2-7}
 
 % CASO (3,6) - 6 Variables
 & \multirow{6}{*}{$(3,6)$} 
   & (a) & $p=0,q=0,r=3$ & $[2,6,2,3]$ & --- & II.v3.a \\ \cline{3-7}
 & & (b) & $p=0,q=2,r=0$ & $[2,3,5,3]$ & --- & II.v3.b\\ \cline{3-7}
 & & (e) & $p=3$         & $[\underbrace{2,\dotsc ,2}_{q+2} ,6,2,q+4]$ & --- & II.v3.e\\ \cline{3-7}
 & & (f) & $q=0$         & $[4]$ & --- & II.v3.f.0 \\ \cline{3-7}
 & & (g) & $p=0,r=2$     & $[2,2,3,5,q+4]$ & $q=0$ & II.v3.g \\ \cline{3-7}
 & & (i) & $q\geq0$          & $[\underbrace{2,\dotsc ,2}_{q+1} ,q+5]$ & --- & II.v3.i\\
\hline
\end{tabular}
}
\caption{\small{Classification for type II.}}
\label{Tab II}
\end{table}

\vspace{0.2 cm}
\noindent 
\textbf{Fiber of type III.} As the number of components of $\phi(\pi^{-1}(F))$ must be $2$ and we already have a $(-1)$-curve in $\pi^{-1}(F)$, the number of Wahl chains for an M-resolution of the c.q.s. in step 3 is $1$ or $2$. Additionally, there are either one or two fixed curves. In the first case, we must have one Wahl chain connected to each of the two legs. This leads to the necessary constraints presented in Table \ref{Tab III}. In the second case, we notice that by Proposition \ref{facts QHD elliptic surface} part (a), the M-resolution of the c.q.s. must be an M-resolution of a T-singularity with one curve and two equal Wahl chains. Particularly, the construction of the family of type (a) has four possibilities, which are captured by two systems with $(\alpha,\beta) \in \{(2,4), (4,2)\}$. The unique case resulting in two Wahl singularities occurs when $\alpha=2$, $\beta=4$, and $r=2$. Also, the family of type (b) has no solutions for $q, r > 0$. Note that if these values are zero, we recover the family of type (b) with two fixed curves $(4,4)$, which was previously classified. 

\begin{table}[ht!]
\centering
\small
\renewcommand{\arraystretch}{1.6} 
\scalebox{0.7}{
\begin{tabular}{|c|c|c|c|c|c|c|}
\hline
\textbf{Fiber} & \textbf{\makecell{Fixed\\Curves}} & \textbf{v3} & \textbf{\makecell{Parameters\\of the family}} & \textbf{\makecell{Obtained\\C.Q.S.}} & \textbf{\makecell{Constraints\\for the C.Q.S.}}&\textbf{\makecell{Unslided\\ M-modification}} \\
\hline

% --- FIBER III ---
% Total filas: 9
\multirow{9}{*}{$III$} 
 
 % (2,4) - 2 filas
 & \multirow{2}{*}{$(2,4)$} 
   & (c) & $r=q=0$ & $[2,2,3,5]$ & --- & III.v3.c.0\\ \cline{3-7}
 & & (d) & $r=0$   & $[\underbrace{2,\dotsc ,2}_{q+3} ,3,q+6]$ & --- & III.v3.d\\ \cline{2-7}
 
 % (4,4) - 4 filas
 & \multirow{4}{*}{$(4,4)$} 
   & (b) & $p=1,q=r=0$ & $[2,4,3,3]$ & --- & III.v3.b\\ \cline{3-7}
 & & (c) & $r=q=0$     & $[3,3]$ & --- & III.v3.c.0\\ \cline{3-7}
 & & (g) & $p=1,r=0$   & $[\underbrace{2,\dotsc ,2}_{q+2} ,4,3,q+4]$ & ---& III.v3.g \\ \cline{3-7}
 & & (h) & $q\geq0$        & $[\underbrace{2,\dotsc ,2}_{q+1} ,3,q+4]$ & --- & III.v3.h\\ \cline{2-7}
 
 % (2) - 1 fila
 & $(2)$ & (c) & \makecell{$r\geq0$ \\ $q\geq0$}
 & $\begin{cases}
    [\underbrace{2,\dotsc ,2}_{r+2} ,q+6]\\
    [\underbrace{2,\dotsc ,2}_{q+2} ,r+6]
 \end{cases}$
 & $r=q$ & \makecell{III.v3.c for $r>0$\\III.v3.c.0 for $r=0$}\\ \cline{2-7}
 
 % (4) - 2 filas
 & \multirow{3}{*}{$(4)$} 
   & \multirow{2}{*}{(a)} & \multirow{2}{*}{\makecell{$p=1$\\$q=0$\\$r>0$\\[0.5cm]$(\alpha,\beta)\in\{(2,4),(4,2)\}$}} 
   & 
    $\begin{cases}
        [\underbrace{2,\dotsc,2}_{r+1},4,\alpha+1]\\
        [2,\beta+r+2]
    \end{cases}$ & \makecell[l]{The first c.q.s. of the system \\does not admit any M-resolutions \\with only Wahl singularities}& ---\\\cline{5-7}
    &&&& $\begin{cases}
        [2,r+3,\alpha+1]\\
        [\underbrace{2,\dotsc,2}_{r+1},\beta+3]
    \end{cases}$ 
   & \makecell{$\alpha=2$\\ $\beta=4$\\ $r=2$}& III.v3.a \\ \cline{3-7}
 & & (b) & \makecell{$p=1$\\$q>0$\\$r>0$} 
   & \makecell[l]{$\begin{cases}[\underbrace{2,\dotsc ,2}_{q+2} ,\alpha +r+2]\\ [\underbrace{2,\dotsc ,2}_{r+1} ,4,\beta +q+2]\end{cases}$} 
   & \makecell[l]{The second c.q.s. of the system \\does not admit any M-resolutions \\with only Wahl singularities}& --- \\
\hline
\end{tabular}
}
\caption{\small{Classification for type III.}}
\label{Tab III}
\end{table}

\vspace{0.2 cm}
\noindent 
\textbf{Fiber of type IV.} As the number of components of $\phi(\pi^{-1}(F))$ must be equal to $3$ and we already have a $(-1)$-curve in $\pi^{-1}(F)$, the number of Wahl chains for an M-resolution of the c.q.s. in step 3 is $1$ or $2$ or $3$. By Proposition \ref{facts QHD elliptic surface} part (a), the M-resolution of the c.q.s. in step 3 must be an M-resolution of a T-singularity with one curve or two curves over a leg, and two or three equal Wahl chains. In addition, we have three possibilities: there are two fixed curves of type $(3,3)$, one fixed curve of type $(3)$, or no fixed curves at all.

\begin{table}[ht!]
\centering
\small
\renewcommand{\arraystretch}{1.6} 
\scalebox{0.7}{
\begin{tabular}{|c|c|c|c|c|c|c|}
\hline
\textbf{Fiber} & \textbf{\makecell{Fixed\\Curves}} & \textbf{v3} & \textbf{\makecell{Parameters\\of the family}} & \textbf{\makecell{Obtained\\C.Q.S.}} & \textbf{\makecell{Constraints\\for the C.Q.S.}}&\textbf{\makecell{Unslided\\ M-modification}}\\
\hline

% --- FIBER IV ---
% Total filas: 5
\multirow{5}{*}{$IV$} 
 
 % (3,3) - 2 filas
 & \multirow{2}{*}{$(3,3)$} 
   & (a) & $p=q=r=0$ & $[2,3,2,4]$ & --- & \makecell{IV.v3.a for $p>0$\\IV.v3.a.0 for $p=0$}\\ \cline{3-7}
 & & (e) & $p=0$     & $[\underbrace{2,\dotsc ,2}_{q+2} ,3,2,q+5]$ & --- & IV.v3.e \\ \cline{2-7}
 
 % (3) - 2 filas
 & \multirow{3}{*}{$(3)$} 
   & \multirow{2}{*}{(a)} & \multirow{2}{*}{$p=q=0$} 
   & $\begin{cases}
    [\underbrace{2,\dotsc,2}_{r+1},3,4]\\
    [2,r+5]
   \end{cases}$
   & $r=0$& \makecell{IV.v3.a for $p>0$\\ IV.v3.a.0 for $p=0$}\\ \cline{5-7}
   &&&& $\begin{cases}
    [2,r+3,4]\\
    [\underbrace{2,\dotsc,2}_{r+1},5]
   \end{cases}$ & $r=0$& \makecell{IV.v3.a for $p>0$\\ IV.v3.a.0 for $p=0$}\\ \cline{3-7}
 & & (b) & $p=0$   & $\begin{cases}
    [\underbrace{2,\dotsc,2}_{r+1},3,q+5]\\
    [\underbrace{2,\dotsc,2}_{q+2},r+5]
   \end{cases}$ & $r=q+1$ & IV.v3.b \\ \cline{2-7}
 
 % Empty set - 1 fila
 & $\emptyset$ & (a) & \makecell{$p\geq0$\\$q\geq0$\\$r\geq0$} 
 & \makecell[l]{$\begin{cases}[\underbrace{2,\dotsc ,2}_{p+1} ,q+5]\\[0.5ex][\underbrace{2,\dotsc ,2}_{q+1} ,r+5]\\[0.5ex][\underbrace{2,\dotsc ,2}_{r+1} ,p+5]\end{cases}$} 
 & $p=q=r$ &\makecell{IV.v3.a for $p>0$\\ IV.v3.a.0 for $p=0$}\\
\hline
\end{tabular}
}
\caption{\small{Classification for type IV.}}
\label{Tab IV}
\end{table}
\end{proof}

\begin{theorem} \label{TheoremClassM-resEllipticFiber}
Let $W$ be a \QHD surface with $K_W$ nef. Assume that $(W\subset\mathcal{W})\to(0\in\mathbb{D})$ is a \QHD smoothing with $\kappa(W_t)=1$. Then there is an elliptic fibration $W \to B$ for some curve $B$.  %If $\kappa(W_t)=0$, we have that $W_t$ is an Enriques surface and $W$ has only Wahl singularities. 
Hence the possible configurations of \QHD singularities in elliptic fibers of $W \to B$ are classified by the dual graphs in Theorem \ref{kawamataQHD} and Lemma \ref{Mres II-III-IV}. 
\end{theorem}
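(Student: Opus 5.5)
The plan is to reduce to the fiberwise classification already obtained, so the main work is producing the elliptic fibration $W\to B$ from the smoothing. Since $\mathcal{W}$ is $\Q$-Gorenstein and log terminal (Remark \ref{factsQHD}(6)), $K_{\mathcal{W}}$ is $\Q$-Cartier. Hence $K_W^2=K_{W_t}^2$ and $\chi(\O_W)=\chi(\O_{W_t})$; moreover $h^0(nK_W)$ is upper semicontinuous, so $h^0(nK_W)\geq h^0(nK_{W_t})$ for $n$ divisible by the index. As $\kappa(W_t)=1$, the general fiber is a properly elliptic surface, and its minimal model has $K^2=0$. We cannot use $K_{W_t}^2=0$ directly, because $W_t$ need not be minimal. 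So we argue on $W$ instead. Since $K_W$ is nef, $K_W^2\geq 0$. If $K_W^2>0$, then $K_W$ would be big, and bigness of $K_{\mathcal{W}}$ restricted to fibers propagates: by invariance of plurigenera for $\Q$-Gorenstein log terminal families (or by semicontinuity applied in the opposite direction via Kawamata--Viehweg vanishing, $h^0(nK_W)=\chi(nK_W)=\chi(nK_{W_t})$ for $n\ge 2$), $W_t$ would be of general type, a contradiction. Thus $K_W^2=0$, and the same vanishing/Riemann--Roch comparison gives $h^0(nK_W)=h^0(nK_{W_t})$, which grows linearly. Therefore $\kappa(W)=1$ for the nef $\Q$-Cartier $K_W$ with $K_W^2=0$.

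Next, we apply abundance for log terminal surfaces (Fujita/Kawamata) to get that $K_W$ is semiample. The Iitaka map is then a morphism $W\to B$ to a curve with $K_W\cdot F=0$ on fibers. Pulling back to the minimal resolution $X$, fibers $F$ satisfy $K_X\cdot F=\sum d(\Gamma)\Gamma\cdot F\le 0$ and $F^2=0$. Exactly as in Remark \ref{facts QHD elliptic surface}(a), $\phi^*K_W$ is a positive rational multiple of a fiber. Adjunction on a general fiber, which avoids the singularities, gives genus $1$, so $W\to B$ is an elliptic fibration. I expect this step to be the main obstacle: one must carefully justify the equality of plurigenera through the singular degeneration, since the singularities are non-log canonical on $W$ but $\mathcal{W}$ itself is log terminal. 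The cleanest route is to use that $\mathcal{W}$ is $\Q$-Gorenstein and to apply vanishing on $W$ (which is rational-singular with $nK_W-K_W$ nef), so that $\chi(nK)$ is constant.

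Once the fibration exists, we are in the setting of Section \ref{s3}. The image $\pi(C)$ lies in fibers of the relatively minimal $S\to B$, and for each such fiber $F$, the map $\pi^{-1}(F)\to F$ is an M-modification in the sense of Definition \ref{Mmodification}. By Proposition \ref{Nostar}, $\pi(C)=F_{red}$ is of type $I_n$, II, III, or IV. In the $I_n$ case, Proposition \ref{In} gives only Wahl chains, and Kawamata's Theorem \ref{kawamataQHD} (whose argument is local on the fiber) lists the configurations. In the other cases, we split further. If only Wahl chains occur, we again use Theorem \ref{kawamataQHD}. If a non-Wahl singularity occurs, Lemma \ref{Mres II-III-IV} gives Figure \ref{fig:KAWA_QHD} and its slidings. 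This exhausts all fibers containing singularities and proves the classification.
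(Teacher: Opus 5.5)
Your route is different from the paper's, and two steps fail as written. The paper never works intrinsically on $W$. It uses Wahl's result that $\mathcal{W}$ is $\Q$-Gorenstein and log terminal, then applies Kawamata's argument with Nakayama's relative theorem \cite[Thm.~6.3]{Naka85} on the \emph{total space}. This gives an elliptic fibration $\mathcal{W}\to\mathcal{B}$ over $\D$, which restricts to $W\to B$. You instead aim for $\kappa(K_W)=1$ and the Iitaka fibration of $W$ itself. That plan can work, but the step where vanishing gives $h^0(nK_W)=\chi(nK_W)=\chi(nK_{W_t})$ is wrong once $K_W^2=0$. In that case $(n-1)K_W$ is nef but not big, so Kawamata--Viehweg vanishing does not apply. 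Worse, Riemann--Roch gives $\chi(nK_W)=\chi(\O_W)$ for every $n$, so the claimed equality would force bounded plurigenera; indeed, on a properly elliptic surface $h^1(nK)$ grows linearly. Use instead what you noted at the start. For $n$ divisible by the index, $nK_{\mathcal W}$ is Cartier, and upper semicontinuity gives $h^0(nK_W)\ge h^0(nK_{W_t})$, so $\kappa(K_W)\ge 1$. On the other hand, $K_W$ is nef with $K_W^2=0$, so it is not big and $\kappa(K_W)\le 1$. The equality $K_W^2=0$ itself is immediate from $0\le K_W^2=K_{W_t}^2\le 0$, since a smooth surface of Kodaira dimension $1$ has $K^2\le 0$; no vanishing is needed.

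Second, you cannot invoke abundance for log terminal (or log canonical) surfaces: $W$ is in general not even log canonical, which is the whole point of the paper. Fortunately, with $\kappa=\nu=1$ abundance is not needed. On $X$, write $|n\phi^*K_W|=|M|+N$ with $M$ the moving part. Nefness and $(\phi^*K_W)^2=0$ give $\phi^*K_W\cdot M=\phi^*K_W\cdot N=0$, and hence $M^2=M\cdot N=N^2=0$. So $|M|$ is base point free and, after Stein factorization, defines $X\to B$. Zariski's lemma then makes the vertical divisor $N$, and hence $\phi^*K_W$, numerically a positive rational multiple of a fiber. The curves of $C$ are therefore vertical, the fibration descends to $W$, and your adjunction argument shows it is elliptic.

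Once repaired, your proof is more elementary than the paper's: it uses only that $K_{\mathcal W}$ is $\Q$-Cartier, not the log terminality of $\mathcal{W}$ or Nakayama's theorem. However, it produces the fibration only on the central fiber. The paper's $\mathcal{W}\to\mathcal{B}$ also shows that $W\to B$ is the limit of the elliptic fibrations on the $W_t$, and this is what is exploited later, e.g.\ to track multiple fibers in Theorem \ref{DolgachevSmoothings}. Your final reduction to Theorem \ref{kawamataQHD} and Lemma \ref{Mres II-III-IV} is the same as the paper's.
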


\begin{proof}
By \cite[Main Theorem]{Wahl_2013}, we have that $(W\subset\mathcal{W})\to(0\in\mathbb{D})$ is a $\Q$-Gorenstein smoothing. This means that it has canonical class $\Q$-Cartier, and it is the quotient of its index 1 cover. Moreover, the singularity of the $3$-fold $\mathcal{W}$ is log terminal. As $K_W$ is nef and $W_t$ have nonnegative Kodaira dimension, we can apply the proof of \cite[Thm. 4.2]{Kawa92}. This means that the deformation factors through a surface $\B$ so that $\W \to \B$ is an elliptic fibration (\cite[Thm. 6.3]{Naka85}). Its restriction to the fibers of $\B$, which are nonsingular curves, defines an elliptic fibration $W \to B$, where $B$ is the corresponding fiber below $W$. Thus we must have M-modifications over fibers on an elliptic fibration $S \to B$ which were classified in Theorem \ref{kawamataQHD} and Lemma \ref{Mres II-III-IV}.
\end{proof}

%%%%%%%%%%%%%%%%%%%%%%%%%%%%%%%%%%%%%%%%%%%%%%%%%%%%%%%%%%%%%%%%%%%%%%%%%%%%%%%%%%%%%%%%%%%%%%%%%%%%%%%%%%%%%%%%%%%%%%%%%%%%%%%%%%%%%%%%%%%%%%%%%%%%%%%%%%
\section{Local-to-global obstructions to deform} \label{s4}

Let $W$ be a \QHD surface. We recall that $\Omega_W^1$ denotes the sheaf of differentials on $W$ \cite[II.8]{Har77}. Its dual $\mathcal{H}\text{om}_{\O_W}(\Omega_W^1,\O_W)$ is the tangent sheaf $T_W$ of $W$ \cite[II.8]{Har77}. Locally at each \QHD singularity, we will only consider its \QHD smoothings as in \cite[Thm. (1) and (2)]{Wahl_2013}. Since our singularities are isolated, to construct a global \QHD smoothing for $W$ over a disk $\D$ that glues these local \QHD smoothings, it is enough to have $H^2(W,T_W)=0$. This is because it implies the smoothness of the natural morphism from global deformations of $W$ to the product of local deformations of its singularities; see, for example, \cite[Prop. 6.4]{Wahl_1981}, \cite[Lem. 1]{Man91}, or \cite[Lem. 7.2]{Hack13}.

Let $\phi \colon X \to W$ be the minimal resolution of $W$ with (reduced) exceptional divisor $C$. The logarithmic tangent sheaf $T_X(-\log C)$ is the dual of the locally free sheaf $\Omega_X^1(\log C)$ of differentials with poles along $C$. The Leray spectral sequence gives the exact sequence $$ 0 \to H^1(W,\phi_*(T_X(-\log C))) \to H^1(X,T_X(-\log C)) \to  \ \ \ \ \ \ \ \ \ \ \ \ \ \ \ \ \ \ \ \ \ \ \ \ \ \ \ \ \ \ \ \ \ \ \ \ \ \ \ \ \ \ \ \ \ \ \ \ \ $$ $$  \ \ \ \ \ \ \ \ \ \ \ \ \ \ \ \ \ H^0(W,R^1 \phi_*(T_X(-\log C))) \to H^2(W,\phi_*(T_X(-\log C))) \to H^2(X,T_X(-\log C)).$$ The morphism $\alpha \colon H^1(X,T_X(-\log C)) \to H^0(W,R^1 \phi_*(T_X(-\log C)))$ represents the map from global to local deformations which preserve all the curves in the exceptional divisor.

\begin{theorem}
If $\alpha$ is onto and $H^2(X,T_X(-\log C))=0$, then we have no local-to-global obstructions to deform $W$. In particular, we have \QHD smoothings of $W$.
\label{ObstrThm}
\end{theorem}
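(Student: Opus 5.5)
The plan is to show $H^2(W,T_W)=0$, which by the discussion preceding the statement gives smoothness of the map from global deformations of $W$ to the product of local deformation spaces. The local \QHD smoothings (one-dimensional smoothing components, Remark \ref{factsQHD} (5)) then lift to a global deformation over $\D$ inducing a \QHD smoothing at each singular point.

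\textbf{Step 1: identify the direct image.} I will first show that $\phi_*(T_X(-\log C))=T_W$. Since the singularities are rational, hence isolated and normal, $T_W$ is reflexive. The sheaf $\phi_*T_X(-\log C)$ agrees with $T_W$ away from the singular points and is torsion free. Its sections over a punctured neighbourhood extend as derivations, and by a theorem of Burns--Wahl every vector field on $W$ lifts to the minimal resolution tangent to the exceptional curves, so $\phi_*T_X=\phi_*T_X(-\log C)=T_W$. If I want to avoid quoting the equality outright, it suffices to know the inclusion $\phi_*T_X(-\log C)\subset T_W$ with cokernel supported on points; the $H^2$ vanishing is unaffected by this, since sheaves supported on points have no $H^1$ or $H^2$.

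\textbf{Step 2: use the Leray exact sequence.} In the five-term sequence displayed before the statement, the term $H^2(W,\phi_*T_X(-\log C))=H^2(W,T_W)$ sits between
\[H^0(W,R^1\phi_*T_X(-\log C))\qquad\text{and}\qquad H^2(X,T_X(-\log C)).\]
The second of these vanishes by hypothesis. Because $\alpha$ is onto, exactness shows that the map from $H^0(R^1\phi_*)$ to $H^2(W,T_W)$ is zero. Hence $H^2(W,T_W)$ injects into $H^2(X,T_X(-\log C))=0$, and so $H^2(W,T_W)=0$.

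\textbf{Step 3: conclude.} With $H^2(W,T_W)=0$, the local-to-global obstruction space vanishes. By \cite[Prop. 6.4]{Wahl_1981} or \cite[Lem. 7.2]{Hack13}, the morphism $\mathrm{Def}(W)\to\prod_p\mathrm{Def}(p\in W)$ is then smooth. I choose a disc mapping into the product of the \QHD components and lift it to $\mathrm{Def}(W)$. This produces $(W\subset\W)\to(0\in\D)$ with nonsingular general fibre, inducing \QHD smoothings locally at each point.

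The main obstacle is Step 1, the identification of $\phi_*T_X(-\log C)$ with $T_W$. I intend to rely on the Burns--Wahl result for rational singularities, or to fall back on the remark that a point-supported discrepancy between the two sheaves does not affect $H^2$.
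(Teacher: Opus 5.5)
Your proposal is correct and follows the paper's proof. The paper uses the Leray sequence with $\alpha$ onto to get $H^2(W,\phi_*T_X(-\log C))\hookrightarrow H^2(X,T_X(-\log C))=0$, and then, citing Lee--Park, the inclusion $\phi_*T_X(-\log C)\subset\phi_*T_X=T_W$ with cokernel supported at the singular points (your fallback) to conclude $H^2(W,T_W)=0$. The full equality you claim in Step 1 does hold, since a vector field near $C_i$ is tangent to it because $\deg N_{C_i}=C_i^2<0$, but the proof does not need it.
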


\begin{proof}
As $\alpha$ is onto and $H^2(X,T_X(-\log C))=0$, we have $H^2(W,\phi_*(T_X(-\log C)))=0$ by the previous exact sequence. As in the proof of \cite[Thm. 2]{LP_2007}, we have the short exact sequence $$0 \to \phi_*(T_X(-\log C)) \to \phi_*(T_X)=T_W \to \Delta \to 0 $$ where $\Delta$ is supported at the singular points of $W$. Therefore, $H^2(W,\phi_*(T_X(-\log C)))=0$ implies $H^2(W,T_W)=0$.
\end{proof}

\begin{remark}
The sheaf $R^1 \phi_*(T_X(-\log C))$ is supported at the singular points of $W$. If a singular point $(p \in W)$ is taut in the sense of Laufer \cite{Laufer_1973}, then we can apply the proof of \cite[Lem. 1]{LP_2007} to show that $R^1 \phi_*(T_X(-\log C))_p=0$. By \cite{Laufer_1973}, we observe that Wahl and valency $3$ \QHD singularities are taut, and so over them $\alpha$ is automatically onto. We note that for valency $4$ \QHD singularities, we have in principle a cross ratio among the $4$ points that fixes the analytic type of the singularity. But, a valency $4$ \QHD singularity has a unique cross ratio by  \cite{Fowler13}.
\label{ObstrRem}
\end{remark}

We now work out a particular global \QHD smoothing of a \QHD singularity that will define its compactified Milnor fiber. We begin with a general birational operation over any \WHS singularity that will be used on singular surfaces and on their deformations ($3$-folds). We follow \cite[Section 1]{Wahl_2013}.

Let $(y \in Y)$ be the germ of a weighted homogeneous singularity $\spec A$, where $A=\oplus_{i\geq 0} A_i$ is a graded Noetherian domain with $A_0=\C$. If $Y=\spec A$, then it can be compactified as $\overline Y=\text{Proj} \, A[u]$ by adding the divisor $E_{\infty}=\text{Proj} \, A$. Let $I_s =\mathop{\oplus}\limits_{k\ge s} A_k$ be the weight filtration and consider the Rees algebra (graded by powers of $u$) $$R(A)=\mathop{\oplus}\limits_{s=0}^\infty I_su^s\subset A[u].$$

\begin{definition}
The blow-up of the weight filtration
$$\sigma \colon Z = \text{Proj} \, R(A)\to Y =\spec A$$
of the corresponding modification for a germ $(y \in Y)$ is called the \emph{Seifert partial resolution}.
\label{seifertDef}
\end{definition}
    
It is a birational morphism and the exceptional divisor is the irreducible curve $\sigma^{-1}(0)=C_0 \simeq E_\infty$. We can also compactify $Z \subset \overline Z$ by gluing $Z$ and $\overline Y \setminus \{y\}$. If $\dim Y=2$, then $C_0$ is a nonsingular curve and $\overline Y$ has $n$ c.q.s. ${1\over m_i}(1,q_i)$ along $C_0$ and $n$ dual singularities ${1\over m_i}(1,m_i-q_i)$ along $E_\infty$. We restrict ourselves to the case $C_0 \simeq \P^1$, because our singularity $(y \in Y)$ will be \QHD (they are rational). Then $\overline Y$ can be constructed from a Hirzebruch surface $\F_b$ by blowing up as shown in Figure \ref{CompactMilnorFig} and then contracting the $a$ and the $b$ chains of $\P^1$s to conjugate cyclic quotient singularities. The central $\P^1 \simeq C_0$ (represented by the $-b$ circle) is a negative section and maintains its self-intersection. We choose a section with positive self-intersection $b$ and denote by $E_{\infty}$ its proper transform on the blown-up surface, which we denote by $X$. Notice that $E_{\infty}^2=b-n$, where $n$ is the number of legs of the \WHS, as shown in the figure. We have contractions $X \to \overline Z \to \overline Y$.

\begin{figure}[htbp]
\includegraphics[width=13cm]{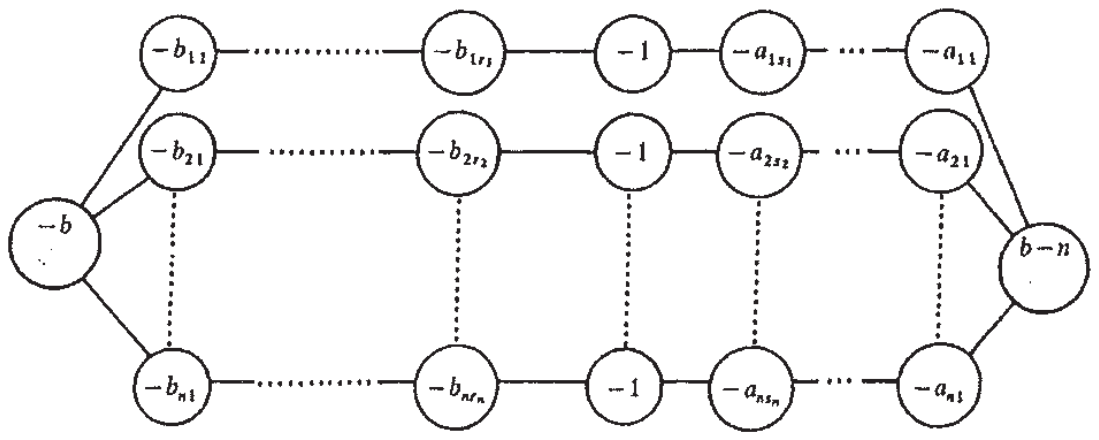}
\caption{\small{Minimal resolution $X$ of $\overline Y$; picture from \cite[Section 6.3]{P78}.}}  \label{CompactMilnorFig}
\end{figure}

\begin{lemma}
Let us construct $\overline{Y}$ for a \QHD singularity $(y \in Y)$. Then any local deformation of $(y \in Y)$ can be globalized to a deformation $(\overline{Y} \subset \overline{\Y}) \to (0 \in \D)$ that is trivial on $E_{\infty}$.
\label{CMF}
\end{lemma}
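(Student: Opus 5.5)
We follow Pinkham's strategy for deformations of negative weight, in the form Wahl uses in \cite[Section 1]{Wahl_2013}. The plan is to prove that the global deformation theory of $\overline{Y}$ relative to $E_\infty$ maps smoothly onto the local deformation space of $(y \in Y)$. Then any one-parameter local deformation, in particular a \QHD smoothing, lifts to a deformation of $\overline{Y}$ that is locally trivial along $E_\infty$.

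We work on the minimal resolution $X$ of $\overline{Y}$ shown in Figure \ref{CompactMilnorFig}. Let $C$ be the exceptional divisor over $y$, that is, the star graph, and let $C'$ be the exceptional divisor over the dual cyclic quotient singularities along $E_\infty$. Put $D=C+C'+E_\infty$. We want deformations of $\overline{Y}$ that keep the singularities along $E_\infty$, together with $E_\infty$ itself, locally trivial, and that are arbitrary at $y$.

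As in Theorem \ref{ObstrThm}, there are two things to check. First, the local-to-global obstruction space should vanish: concretely $H^2(X,T_X(-\log D))=0$, or at least its image in $H^2$ of the pushed-forward sheaf should vanish. Second, the map $\alpha$ to $H^0(R^1\phi_*T_X(-\log C))$ should be onto at $y$; this is automatic when $y$ is taut, by Remark \ref{ObstrRem}.

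For the vanishing we use that $X$ is rational, since it is a blow-up of $\F_b$. By Serre duality, $H^2(X,T_X(-\log D))\cong H^0(X,\Omega^1_X(\log D)(K_X))^\vee$. We then show this has no sections. On the $\C^*$-equivariant compactification, $-K_X$ is effective and supported on $D$ together with fibers of the ruling, so $K_X+D$ is anti-effective away from the fiber directions. Restricting a putative section to the general fiber $\P^1$ of the ruling $X\to\P^1$ forces it to vanish, because on a fiber the bundle has negative degree.

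A cleaner alternative is Pinkham's grading argument. The tangent space $T^1_Y$ of a weighted homogeneous singularity is graded, and the \QHD smoothing components lie in negative weight by \cite[Thm.~(1)]{Wahl_2013}. Pinkham's theorem \cite{P78} says that negative-weight deformations of $\spec A$ extend to deformations of $\text{Proj}\,A[u]$ that are trivial near the hyperplane section at infinity $E_\infty$. Here one deforms the graded algebra $A[u]$, homogenized with $u$, so $E_\infty=\{u=0\}$ stays fixed.

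So the main line of the proof is: first, the \QHD components live in negative weight; second, Pinkham's homogenization produces $\overline{\Y}\to\D$ trivial along $E_\infty$; third, for deformations not purely of negative weight we use the vanishing of $H^2$ above to get unobstructedness.

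The main obstacle is the case of valency $4$ singularities, which are not taut. There $\alpha$ must be shown to be onto, including the direction that changes the cross ratio. We handle this by moving the four points on $\F_b$ before blowing up: the position of the fourth blown-up point on $C_0$ is a free global parameter, so the equisingular cross-ratio deformation is realized globally. Combined with the $H^2$ vanishing, this gives surjectivity onto all local deformations.
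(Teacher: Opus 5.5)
There is a genuine gap. For deformations that are not of negative weight, nothing in your argument gives triviality along $E_\infty$, and the phrase ``any local deformation'' in the statement requires exactly that. Your third step and your valency-$4$ paragraph only feed the vanishing of $H^2(X,T_X(-\log D))$ into Theorem \ref{ObstrThm}. That theorem concerns deformations of $\overline{Y}$ itself: it concludes that global deformations of $\overline{Y}$ surject onto the product of the local ones. Since $E_\infty$ is not contracted by $X\to\overline{Y}$, putting it into $D$ plays no role in that conclusion. So nothing in your argument forces $E_\infty$ and its neighbourhood to be carried along trivially.

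The paper flags precisely this point (``we have global deformations keeping the c.q.s.\ over $E_\infty$, although we may not preserve $E_\infty$'') and adds a construction:
\begin{enumerate}
\item blow up the points $E_\infty\cap D_i$ repeatedly until $E_\infty$ itself contracts to a cyclic quotient singularity;
\item apply Theorem \ref{ObstrThm} to the resulting surface, with the prescribed deformation at $y$ and trivial deformations at all cyclic quotient points, including the one coming from $E_\infty$;
\item resolve those points simultaneously, contract the $(-1)$-curves in the family, and recontract $D_1,\dots,D_n$.
\end{enumerate}
Alternatively, you would need to state and prove a pair version of Theorem \ref{ObstrThm}. Its obstruction space would be $H^2(\overline{Y},T_{\overline{Y}}(-\log E_\infty))$, with trivial local deformations of the pairs at the singular points of $E_\infty$. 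Your vanishing with $E_\infty\subset D$ is the right input for that, but the ``relative to $E_\infty$'' statement in your first paragraph is only announced, not proved. As it stands, two kinds of deformation are not shown to be trivial on $E_\infty$: one mixing the cross-ratio and smoothing directions at a valency-$4$ point, and any deformation outside Pinkham's negative-weight family.

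What you do carry out is sound and differs from the paper. The vanishing via a general fibre $F$ of the ruling is quicker than the paper's computation on $\F_b$ followed by the add--delete principle, but state the reason correctly. The sheaf $\Omega^1_X(\log D)|_F$ is an extension of $\Omega^1_F(\log(F\cap D))\cong\O_F$ by the conormal bundle $\O_F$, hence $\Omega^1_X(\log D)(K_X)|_F\cong\O_F(-2)^{\oplus 2}$. Saying ``negative degree'' alone does not rule out sections of a rank-$2$ bundle, and the remark about $-K_X$ is not needed.

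Pinkham's homogenization is a legitimate and more direct route for negative-weight deformations, and it gives a genuinely trivial family near $E_\infty$. Since \QHD smoothings have negative weight (Theorem \ref{PinkWahl}), it already covers how the lemma is used in Definition \ref{CompactMilnorFiberDef}. It just does not reach the full statement.
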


\begin{proof}
Let $D$ be the configuration of curves in Figure \ref{CompactMilnorFig}. We will prove that $$H^2(X,T_X(-\text{log}\, D))=0.$$ By Serre duality, $H^2(\F_b,T_{\F_b}(-\text{log}(C_0 + E_{\infty})))=H^0(\F_b,\Omega_{\F_b}^1(\text{log}(C_0+E_{\infty})) \otimes \Omega_{\F_b}^2)$, and we have $\Omega_{\F_b}^2 \sim -2C_0 -(2+b)F$, where $F$ is the class of a fiber. Using the residue sequence for $\Omega_{\F_b}^1\big(\text{log}(C_0+E_{\infty})\big)$, together with the fact that $C_0$ and $E_{\infty}$ are numerically independent, we conclude that $H^0(\F_b,\Omega_{\F_b}^1(\text{log}(C_0+E_{\infty})))$ vanishes. Hence, by the above, $H^2(\F_b,T_{\F_b}(-\text{log}(C_0 + E_{\infty})))=0$. We now apply the add-delete principle for $(-1)$-curves \cite[Section 4]{PSU13} to show that $H^2(X,T_X(-\text{log} \, D))=0$. (If we blow up and add the exceptional divisor to the configuration, we preserve the cohomology $H^2$. If we add a $(-1)$-curve intersecting the divisor with normal crossings, then we also preserve $H^2$.)

We can now apply Theorem \ref{ObstrThm} to $\overline{Y}$, because we can freely choose the fibers over the central curve $C_0$ to construct the star graph of the \QHD singularity. Thus, we have global deformations keeping the c.q.s. over $E_{\infty}$, although we may not preserve $E_{\infty}$. We address this by repeatedly blowing up the intersection points between $E_{\infty}$ and the $D_i$'s (this takes place over $X$) until $E_{\infty}$ becomes sufficiently negative to allow us to blow down the configurations $D_0$ (\QHD original star graph), $D_1, \ldots, D_n$ (chains to c.q.s.s), and $E_{\infty}$ (to a single c.q.s.). If $\overline{Y}'$ denotes the resulting singular surface, one can prove that Theorem \ref{ObstrThm} applies to it. Therefore, we consider an arbitrary deformation of the \QHD singularity, together with trivial deformations for the remaining singularities, which glue to a deformation of $\overline{Y}'$. This deformation can be resolved simultaneously for the cyclic quotient singularities. Finally, we contract any possible $(-1)$-curves (in this family over $\D$), and then consider the blow-down deformation for the cyclic quotient singularities coming from the original $D_1, \ldots, D_n$ (those from the original $X$).
\end{proof}

\begin{definition}
Consider in Lemma \ref{CMF} a \QHD smoothing for $(y \in Y)$. The general fiber of $(\overline{Y} \subset \overline{\Y}) \to (0 \in \D)$ will be called the \textit{compactified Milnor fiber} of the \QHD smoothing for $(y \in Y)$. 
\label{CompactMilnorFiberDef}
\end{definition}

The list of minimal resolutions of compactified Milnor fibers of \QHD singularities is summarized in \cite[Section 7]{Wahl_21}. In general, it can be proved that the compactified Milnor fiber is a rational surface with Picard number equal to $1$. The converse is a consequence of Pinkham's result \cite[6.7]{P78}. This is due to Wahl \cite[Thm. 8.1]{SSW_2008}.  

\begin{theorem}
Let $\overline{Y}$ be the compactification of a rational \WHS. Let $\tilde Z$ be a nonsingular rational surface, and let $D \subset \tilde Z$ be a union of nonsingular rational curves isomorphic to the complement of $Y$ in the minimal resolution of $\overline Y$ at all singular points along $E_{\infty}$
(i.e., the right configuration in Figure \ref{CompactMilnorFig}).
Let $\tilde Z\to Z$ be a contraction of all $a$-chains to c.q.s.
Assume that $b_2(D)=b_2(\tilde Z)$, i.e., that $Z$ has Picard number $1$. Then there is a \QHD smoothing of $Y$
with compactified Milnor fiber isomorphic to $Z$. Moreover, this smoothing has
``negative weight'', that is, given by $\Y=\spec R \to \spec \C[t]$ where
$R= \oplus_{i\geq 0} R_i$ is a positively graded ring with $R_0=\C$, $t \in R_1$, and $\spec R/tR = Y$. Every \QHD smoothing appears that way.
\label{PinkWahl}
\end{theorem}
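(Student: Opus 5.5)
The plan is to follow Pinkham's theory of deformations of negative weight, as in \cite[6.7]{P78}, adapted to the \QHD setting as in \cite[Thm. 8.1]{SSW_2008}. The proof splits into two directions. In the first, a pair $(\tilde Z, D)$ is turned into a negative-weight \QHD smoothing. In the second, every \QHD smoothing is shown to arise from such a pair.

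\textbf{Construction of the smoothing from $(\tilde Z,D)$.} Contract the $a$-chains in $\tilde Z$, and the chains over $E_\infty$ as well, to get a projective surface $Z'$. It contains the image of $E_\infty$ as a $\Q$-Cartier curve, and its complement is $Z'\setminus E_\infty$. By Pinkham's correspondence, the compactification $\overline Y$ and $Z'$ have isomorphic neighbourhoods of $E_\infty$. I would then build the projective cone family. Take the section ring $R=\oplus_k H^0(Z', \O(kE_\infty))$, graded by $k$, and let $t\in R_1$ be the section cutting out $E_\infty$. Then $\spec R\to \spec\C[t]$ is a deformation whose fiber over $t=0$ is $\spec\oplus_k H^0(E_\infty,\O(kE_\infty)|_{E_\infty})$. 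Identifying this fiber with $Y$ is the main obstacle. It requires the restriction maps $H^0(Z',\O(kE_\infty))\to H^0(E_\infty,\O(kE_\infty)|)$ to be surjective. By the Leray and Kawamata--Viehweg arguments this reduces to $H^1(Z',\O(kE_\infty))=0$ for $k\ge 0$. This vanishing holds because $Z'$ is rational with $\rho=1$, so $E_\infty$ is ample, and $kE_\infty-K_{Z'}$ is ample as well, since $-K$ is ample or at least $E_\infty$ dominates. I would verify this using the orbifold (Demazure) description of $A$ via the Pinkham--Demazure divisor on $E_\infty\simeq\P^1$, matching the local data of the singular points along $E_\infty$. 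For $t\neq0$ the fiber is $Z'\setminus E_\infty$, which is the interior of $Z$ minus a neighbourhood of $E_\infty$, so the general fiber is nonsingular away from the c.q.s. coming from the $a$-chains. Those are exactly the singularities contracted in $Z$, and they are smoothed to $Z$ via Lemma \ref{CMF}.

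\textbf{Milnor number zero.} The Milnor fiber $M$ is $\tilde Z$ minus a regular neighbourhood of $D$. A Mayer--Vietoris/Euler characteristic computation gives $\chi(M)=\chi(\tilde Z)-\chi(D)$. Since $\tilde Z$ is rational, $b_1=b_3=0$. The hypothesis $b_2(D)=b_2(\tilde Z)$, together with $D$ being a tree of $\P^1$s, then gives $\chi(M)=1$. Combined with $b_1(M)=0$ by \cite{GS_1983}, this yields $b_2(M)=0$, i.e. a \QHDS, and the Picard number $1$ statement translates exactly into this.

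\textbf{Converse.} Every \QHD smoothing is $\Q$-Gorenstein and lies on a one-dimensional component (Remark \ref{factsQHD} parts (5),(6)). Such a component carries the $\C^*$-action, and a one-dimensional smoothing component of a \WHS with equivariant structure is of negative weight, by Pinkham's theorem that smoothing components of the versal deformation in negative weight are globalized. Lemma \ref{CMF} globalizes it to $\overline{\Y}$, trivially along $E_\infty$. The general fiber, resolved along $E_\infty$, gives $(\tilde Z,D)$. Rationality of $\tilde Z$ follows from the rationality of the singularity (Remark \ref{factsQHD}(1)) and from $q=p_g=0$ being preserved. The equality $b_2(D)=b_2(\tilde Z)$ follows by reversing the Euler characteristic computation above with $\mu=0$.
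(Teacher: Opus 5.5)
The paper does not prove this statement itself. It imports it from Pinkham \cite[6.7]{P78} and Wahl \cite[Thm.~8.1]{SSW_2008}, and your outline (section ring of $E_\infty$ on $Z$, plus an Euler characteristic count) is the Pinkham correspondence behind those references. Your computation $\chi(M)=1$, hence $\mu=0$, is correct.

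\textbf{The genuine gap is in the converse.} You write that a one-dimensional smoothing component ``is of negative weight, by Pinkham's theorem that smoothing components of the versal deformation in negative weight are globalized.'' This is a non sequitur: globalization applies only to deformations already known to be of negative weight. What you need is the following. In a $\mathbb{C}^*$-equivariant versal deformation, the one-dimensional \QHD component is a $\mathbb{C}^*$-stable curve germ. Hence it is either pointwise fixed (weight $0$) or an orbit closure of pure positive or pure negative weight, and the first two cases must be excluded.
\begin{itemize}
\item In weight $0$ the fibres are positively graded affine surfaces, so a smooth one is $\mathbb{C}^2$.
\item In positive weight (parameter of negative degree), every fibre passes through the fixed point $x=0$. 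The $\mathbb{C}^*$-action then identifies the Milnor fibre with a small neighbourhood of that point in a single fibre.
\end{itemize}
Either way a smoothing would have a $4$-ball as Milnor fibre, so the link would be $S^3$ and $Y$ would be smooth by Mumford. Alternatively, quote negative weight from Wahl, as the paper implicitly does.

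There is a second gap in the converse: rationality of $\tilde Z$ does not follow from $q=p_g=0$, since Enriques and Dolgachev surfaces satisfy this. You need vanishing of plurigenera. For example, $K_Z\cdot E_\infty=\chi-e<0$ (Remark \ref{factsQHD}(7)) together with $\rho(Z)=1$ makes $-K_Z$ ample, and then Castelnuovo applies.

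\textbf{The existence half needs two repairs.}

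First, the Kawamata--Viehweg justification of $H^1(Z,\mathcal{O}_Z(kE_\infty))=0$ does not work. By adjunction $K_Z\cdot E_\infty=\chi-e$, and at this stage you have no control on the sign of $\chi-e$: the inequality $\chi/e<1$ is a property of \QHD singularities, i.e.\ of the conclusion. So $kE_\infty-K_Z$ need not be ample for small $k$. The correct argument is Pinkham's induction, using the sequences
$$0\to\mathcal{O}_Z((k-1)E_\infty)\to\mathcal{O}_Z(kE_\infty)\to\mathcal{O}_{\mathbb{P}^1}(\lfloor kD_A\rfloor)\to 0,$$
where $D_A$ is the Demazure $\mathbb{Q}$-divisor of $A$, determined by the neighbourhood of $E_\infty$. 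These, together with $H^1(\mathcal{O}_Z)=0$ and the rationality criterion $H^1(\mathbb{P}^1,\mathcal{O}(\lfloor kD_A\rfloor))=0$ for all $k\geq 0$, give the vanishing for every $k\ge 0$. Hence restriction is surjective and $R/tR=A$.

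Second, your remark that c.q.s.\ coming from the $a$-chains are ``smoothed to $Z$ via Lemma \ref{CMF}'' is confused. The $a$-chains are part of $D$, and their c.q.s.\ lie on $E_\infty$ and are kept. The general fibre $Z\setminus E_\infty\cong\tilde Z\setminus D$ is already nonsingular. What identifies $Z$ with the compactified Milnor fibre is $\mathrm{Proj}\,R\cong Z$, which holds because $E_\infty$ is ample.
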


Let $(Y \subset \Y) \to (0 \in \D)$ be a \QHD smoothing of negative weight as in the theorem. It is proved in \cite[Section 2]{Wahl_2013} that $\Y$ is log terminal and the smoothing is $\Q$-Gorenstein. We consider the Seifert partial resolution (Definition \ref{seifertDef}) $$\sigma \colon \Y' \to \Y $$ 
where the exceptional divisor is $\overline{M}=\text{Proj}(R)$. Notice that $\overline{M}$ is isomorphic to the compactified Milnor fiber \cite[6.4]{P78}.
Let $\Y' \to \D$ be the induced smoothing. The central fiber is reduced and composed of two surfaces, one is $\overline{M}$ and the other is the Seifert partial resolution $Z$ of $Y$. They intersect along a $\P^1=\text{Proj}(R/tR)$.
The analytic $3$-fold $\Y'$ is $\Q$-factorial, terminal, and $\Q$-Gorenstein.
The central fiber has orbifold normal crossing singularities $(xy=0)\subset {1\over m}(q,-q,1)$ at the terminal singularities of $\Y'$,
matching cyclic quotient singularities of $Z$ with conjugate singularities of $\overline{M}$.

\bigskip 

To conclude this section, we construct a \QHD smoothing of a \QHD surface $W$ such that the smooth fiber is a simply connected surface of general type with $p_g = 0$ and $K^2 = 1$. The surface $W$ has a single singularity of valency $3$ and type (c) with $q = r = 1$. We can also obtain numerous examples with non-Wahl \QHD singularities of $p_g = 0$ surfaces of general type with $K^2 = 1,2,3,4$ \cite{programaReyes}. These will be presented in subsequent work focused on KSBA surfaces.

\begin{example}
We start with an elliptic rational surface $S$ with sections, and $I_3+9I_1$ as singular fibers. Choose an $I_1$ and denote $A+B+C$ the components of $I_3$ as in Figure \ref{GenTypeEx}. We also consider a double section $M$ as shown. We blow-up $5$ times $X \to S$, see the figure. Then $$K_X \sim -F_{\text{gen}} + E_1 + 2E_2+ E_3+ E_4 +E_5$$ where $F_{gen}$ is the general fiber and (in this example) $E_i$ are exceptional curves. Let $\phi \colon X \to W$ be the contraction of the valency 3 type (c) star graph. 

\begin{figure}[htbp]
\includegraphics[width=11cm]{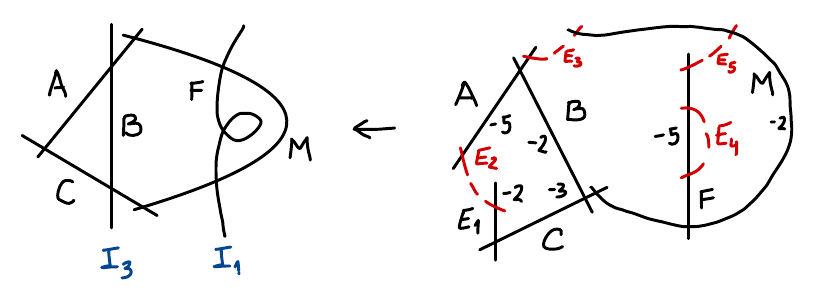}
\caption{\small{$W$ with a valency 3 type (c) ($q=r=1$) singularity and $K_W^2=1$.}}  
\label{GenTypeFig}
\end{figure}

By using the same strategy as in \cite{LP_2007} and that valency 3 \QHD singularities are taut, we can prove via Theorem \ref{ObstrThm} that there are no obstructions to deform $W$, and so we consider a global \QHD smoothing. One can show that $K_W$ is ample and $K_W^2=1$. The computation of the fundamental group, which is trivial, follows Lee-Park strategy \cite{LP_2007}, and the computation of Mumford \cite{Mumford_1961}; we omit it. Let $W' \to W$ be the Seifert partial resolution of $[5-2,2,5-2;3]$. The central curve is $C$. Let $\psi \colon X \to W'$ be the minimal resolution. Then $$\psi^*(K_{W'}+C) = K_X + C + \frac{8}{9} B + \frac{7}{9} A + \frac{8}{9} M + \frac{7}{9} F + \frac{1}{2} E_1.$$ Then we can do Lee-Park trick $F_{gen} \sim F + 2E_4 + E_5 \sim A+B+C+2E_1+3E_2$, and so by just intersecting we have that $K_{W'}+C$ is ample. Then one can prove that the Seifert partial resolution of the \QHD smoothing is in fact the KSBA limit of this family.
\label{GenTypeEx}
\end{example}

%%%%%%%%%%%%%%%%%%%%%%%%%%%%%%%%%%%%%%%%%%%%%%%%%%%%%%%%%%%%%%%%%%%%%%%%%%%%%%%%%%%%%%%%%%%%%%%%%%%%%%%%%%%%%%%%%%%%%%%%%%%%%%%%%%%%%%%%%%%%%%%%%%%%%%%%%%
\section{\QHD degenerations of Dolgachev surfaces} \label{s5}

Let $W$ be a \QHD surface. As in Section \ref{s3}, assume that it admits a minimal elliptic fibration $g_W \colon W \to B$ for some nonsingular projective curve $B$. We have the diagram of morphisms 
     \[\xymatrix{  X  \ar[d]_{\pi} \ar[r]^{\phi} & W \ar[d]^{g_W} \\ S \ar[r]^{g_S} & B}\] 
where $g_S \colon S \to B$ is a relatively minimal elliptic fibration. We have the numerical canonical formula $$K_S \equiv \bar \lambda F$$ where $\bar \lambda \in \Q$ and $F$ is a fiber. Let $F_1,\ldots,F_s$ be the fibers of $g_S$ over which $\pi$ blows-up. By Proposition \ref{Nostar}, we have $\pi(C)=F_1 \cup F_2 \cup \ldots \cup F_s$, and $F_i$ can be of type $I_n$ for some $n$, $II$, $III$, or $IV$. The possibilities for $f^{-1}(F_i)$ are classified in Theorem \ref{kawamataQHD} and Lemma \ref{Mres II-III-IV}, see Figures \ref{fig:KAWA_WAHL} and \ref{fig:KAWA_QHD}. 

For an effective divisor $D$, let $\mu_D(\Gamma)$ be the multiplicity of $\Gamma$ in the support of $D$.

\begin{lemma} 
We have 
$$K_W \equiv \big( \bar \lambda + \sum_{i=1}^s \lambda_i \big) F,$$ where $F$ is a fiber of $g_W$, $\lambda_i=\frac{\mu_E(\Gamma_i)}{\mu_{f^*(F_i)}(\Gamma_i)} \in \Q_{>0}$, and $\Gamma_i$ is any component of $f^{-1}(F_i)$ not contracted by $\phi$.
\label{CanonicalClassFormula}
\end{lemma}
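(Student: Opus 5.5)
We start from the two expressions for $K_X$ that are available on $X$: $K_X \sim \pi^*(K_S)+E$, and $K_X \equiv \phi^*(K_W)+\sum_{\Gamma\in C} d(\Gamma)\Gamma$. We also use $K_S\equiv \bar\lambda F$, and write $f=\pi$ (so $f^*(F_i)=\pi^*(F_i)$). The identity to exploit is
$$\phi^*(K_W) \equiv \bar\lambda\, \pi^*(F) + E - \sum_{\Gamma\in C} d(\Gamma)\Gamma .$$
The correction $E-\sum d(\Gamma)\Gamma$ is supported on $\bigcup_i \pi^{-1}(F_i)$, since $E$ and $C$ live over $F_1,\dots,F_s$. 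So we split it as a sum $\sum_i D_i$, where $D_i$ is the part supported on $\pi^{-1}(F_i)$.

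By Remark \ref{facts QHD elliptic surface}(a), $\phi^*(K_W)$ is a positive rational multiple of the fiber class. The same argument works fiber by fiber. Each $D_i$ is supported in a fiber of $X\to B$. For a component $\Gamma'$ of $\pi^{-1}(F_i)$ we have $D_i\cdot\Gamma' = \phi^*(K_W)\cdot \Gamma' - \bar\lambda \pi^*(F)\cdot\Gamma' = 0$, because $\phi^*(K_W)\equiv cF$ and $F\cdot\Gamma'=0$. By Zariski's lemma (\cite[III, (8.2)]{BWHKPCV_2004}), a divisor supported on a fiber and orthogonal to all its components is a rational multiple of that fiber. Hence $D_i = \lambda_i\, \pi^*(F_i)$ for some $\lambda_i\in\Q$, and therefore $K_W\equiv(\bar\lambda+\sum\lambda_i)F$, because $\phi^*$ of a fiber of $g_W$ is $\pi^*$ of a fiber.

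To identify $\lambda_i$, compare coefficients along a component $\Gamma_i$ of $\pi^{-1}(F_i)$ not contracted by $\phi$. Such a component is not in $C$, so the coefficient of $\Gamma_i$ in $D_i$ is just $\mu_E(\Gamma_i)$. Its coefficient in $\lambda_i\pi^*(F_i)$ is $\lambda_i\,\mu_{\pi^*(F_i)}(\Gamma_i)$. This gives the stated formula, and it is independent of the choice of $\Gamma_i$ because $D_i$ is proportional to $\pi^*(F_i)$. At least one such $\Gamma_i$ exists, since $\phi(\pi^{-1}(F_i))$ is a curve.

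Positivity is the remaining point. We need $\mu_E(\Gamma_i)>0$, i.e.\ $\Gamma_i\subset E$. Each fiber over which $\pi$ blows up has $C$ equal to all of $\pi(C)=(F_i)_{red}$ (Proposition \ref{Nostar}). So any curve not in $C$ must be $\pi$-exceptional, hence lies in some $E_k$ with positive multiplicity. The main subtle step is the orthogonality $D_i\cdot\Gamma'=0$ together with the fiber-support bookkeeping; it follows from nefness and $K_W^2=0$ exactly as in Remark \ref{facts QHD elliptic surface}.
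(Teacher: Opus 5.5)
Your proposal is correct and matches the paper's proof: split $E-\sum_{\Gamma\in C} d(\Gamma)\Gamma$ into its parts over the fibers $F_i$, use Remark \ref{facts QHD elliptic surface}(a) with Zariski's lemma to get $\lambda_i\pi^*(F_i)$, and read off $\lambda_i$ from the coefficient of a component not in $C$. Your positivity argument fills in a point the paper leaves implicit. By Proposition \ref{Nostar}, every component of $\pi^{-1}(F_i)$ outside $C$ is $\pi$-exceptional, so it has positive multiplicity in $E$, and hence $\lambda_i>0$.
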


\begin{proof}
We have $K_S \equiv \bar \lambda F$, and so $K_X \equiv \bar \lambda F + E$. As in Remark \ref{facts QHD elliptic surface} (a), we have that $E-\sum_{\Gamma \in C} d(\Gamma) \, \Gamma \equiv \sum_{i=1}^s \lambda_i \pi^*(F_i)$ for some $\lambda_i$s, and so $\phi^*(K_W) \equiv \big( \bar \lambda + \sum_{i=1}^s \lambda_i \big) F$, where $F$ is a fiber of $g_W \circ \phi$. As $\phi$ is a contraction into rational singularities, we have $K_W \equiv \big( \bar \lambda + \sum_{i=1}^s \lambda_i \big) F$ where $F$ is a fiber of $g_W$. To compute the $\lambda_i$, we note that $E-\sum_{\Gamma \in C} d(\Gamma) \, \Gamma \equiv \lambda_i \pi^*(F_i)$ when restricted to $F_i$, and so for a component $\Gamma_i$ of $\pi^*(F_i)$ not in $C$ we must have $\mu_E(\Gamma_i)=\lambda_i \mu_{f^*(F_i)}(\Gamma_i)$ by \cite[III, (8.2) Lemma]{BWHKPCV_2004}.
\end{proof}

In \cite[Thm. 4.4]{Kawa92}, Kawamata writes a similar formula in the case of $W$ with only Wahl singularities. Of course, we can compute $\lambda_i$ using any component of $\pi^*(F_i)$ with different formulas. In the next corollary, we conveniently use the central curve of the corresponding star graph to compute $\lambda_i$. 

%\red{XXX Corolario: Un QHD smoothing de $W$ construido a través de $S$ produce siempre alguna fibra multiple (and so for Kodaira 1 we have only Enriques) XXX}

\begin{corollary} 
The $\lambda_i$ for $W$ and the corresponding $\lambda'_i$ for a sliding $W'$ of $W$ are equal.
\label{sameLambdai}
\end{corollary}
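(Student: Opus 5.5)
Since $\pi(C)=\pi'(C')$ and the elliptic fibration $S\to B$ is the same for $W$ and its sliding $W'$, I will compute $\lambda_i$ and $\lambda'_i$ by Lemma \ref{CanonicalClassFormula} at the same component and check that the two values agree. The preceding remark suggests taking the central curve $C_0$ of the star graph in the relevant fiber $F_i$; it lies in $C$, so the lemma is applied in its intrinsic form:
\[
E-\sum_{\Gamma\in C} d(\Gamma)\Gamma \equiv \lambda_i\,\pi^*(F_i)
\]
near $F_i$. Comparing multiplicities at $C_0$ gives
\[
\lambda_i=\frac{\mu_E(C_0)-d(C_0)}{\mu_{\pi^*(F_i)}(C_0)}.
\]
In the sliding, the star graph over $p$ is unchanged, so $d(C_0)$ depends only on the singularity and is the same in $X$ and $X'$.

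It then suffices to show two things. First, $\mu_{\pi^*(F_i)}(C_0)=\mu_{\pi'^*(F_i)}(C_0)$. Second, $\mu_E(C_0)=\mu_{E'}(C_0)$.

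By Definition \ref{sliding}, $X$ and $X'$ both dominate the same surface $Z$, obtained by contracting $\Gamma$, respectively the $(-1)$-curve $\Gamma'$ together with the new Wahl chain. The maps $\pi$ and $\pi'$ then factor through $Z\to S$. Write $\pi^*F_i=\rho^*(\pi_Z^*F_i)$ with $\rho\colon X\to Z$, and similarly for $\rho'$. The multiplicity of a curve not contracted by $\rho$ or $\rho'$, such as $C_0$, is the same as its multiplicity in $\pi_Z^*F_i$. Hence both pulled-back fiber multiplicities equal $\mu_{\pi_Z^*F_i}(C_0)$.

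For $E$, I use $K_X=\rho^*K_Z+E_\rho$ with $E=\rho^*E_Z+E_\rho$, where $E_Z$ is the relative canonical divisor of $Z\to S$. Since $E_\rho$ is supported on curves contracted by $\rho$, the multiplicity of $C_0$ in $E$ is $\mu_{E_Z}(C_0)$. The same holds for $E'$.

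The main subtlety I expect is that $C_0$ must not be contracted by $\rho$ or $\rho'$. This holds because $\rho$ contracts only $\Gamma$, and $\rho'$ contracts only $\Gamma'$ and the new Wahl chain, while the original star graph survives in $Z$. Under that assumption, $\lambda_i=\lambda'_i$. The Aslide case of Remark \ref{extra sliding} works the same way: both surfaces dominate the surface with the two $(-1)$-curves contracted, and the curve $D$ and the central curve are not contracted. For fibers $F_j$ with $j\ne i$ nothing changes, so $\lambda_j=\lambda'_j$ trivially.
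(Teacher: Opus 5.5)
Your approach is the paper's. You compute $\lambda_i$ at the central curve via $\mu_E(C_0)-d(C_0)=\lambda_i\,\mu_{\pi^*(F_i)}(C_0)$ and check that these three quantities do not change under sliding. The paper only asserts this last point, so your factorization of $\pi$ and $\pi'$ through $Z$, with $E=\rho^*E_Z+E_\rho$, is a useful justification.

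The step you flag as the main subtlety is misstated, however. It is false that $\rho'\colon X'\to Z$ contracts only $\Gamma'$ and the new Wahl chain, and false that the star graph of $X'$ survives in $Z$. In general $\rho'$ contracts three kinds of curves: $\Gamma'$, part of the Wahl chain, and the leg curves of $X'$ lying beyond the position of $\Gamma$. The far end of the Wahl chain survives and becomes the end of the leg in $Z$.

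\begin{itemize}
\item The simplest case is II.v3.f.0 slid along its $(-2)$-leg. Here $X'$ contains the chain $[4,1,2]$ followed by $C_0$, with $C_0^2=-2$. The map $\rho'$ contracts $\Gamma'$ and then the leg curve $L_1$. The $(-4)$-curve survives and becomes the $(-2)$-leg of the configuration in $Z$.
\item For II.v3.f with $q\geq 1$, $\rho'$ contracts the $-(q+6)$ end of the long leg of $X'$. The $-(q+8)$ end of the Wahl chain, which is the proper transform of the cuspidal curve, survives.
\end{itemize}

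What is true, and all your argument needs, is the following. The map $\rho'$ contracts only curves of the chain $[\text{Wahl chain},1,e_{i,r_i},\ldots,e_{i,1}]$, and this chain does not contain $C_0$. Moreover, $X'$ is obtained from $Z$ by blowing up nodes of the image configuration, so $C_0\subset X'$ is the strict transform of $\rho(C_0)$. With this correction your proof goes through.

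This is also why the computation has to be done at $C_0$. At the leg end, for instance, $(\mu_E,\mu_{\pi^*(F_i)})$ is $(0,1)$ in $X$ but $(q+5,q+7)$ in $X'$ for II.v3.f. Only the ratio $(\mu_E-d)/\mu_{\pi^*(F_i)}$ agrees.
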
 

\begin{proof}
The sliding can only be performed over fibers $F_i$ of type $II$, $III$, and $IV$. The M-modification of $F_i$ contains the central curve $\Gamma_0$ of the corresponding unique star graph. By the same proof of the previous lemma, we have $\mu_E(\Gamma_0)-d(\Gamma_0)=\lambda_i \mu_{f^*(F_i)}(\Gamma_0)$, where $\Gamma_0 \in C$ now. On the other hand, a sliding over $F_i$ will define surfaces $X'$ and $W'$ over the same elliptic fibration $g_S \colon S \to B$ and with the same $\mu_E(\Gamma_0)$, $d(\Gamma_0)$ and $\mu_{f^*(F_i)}(\Gamma_0)$. Therefore, $\lambda'_i$ for the $F_i$ below $X'$ is equal to $\lambda_i$.
\end{proof}

%In the next corollary, we will only consider the ``general" M-modifications of fibers in Figures \ref{fig:KAWA_WAHL} and \ref{fig:KAWA_QHD}. This is to simplify to work with different multiplicities in the components of a multiple fiber of $g_W$.

%\begin{corollary}
%Let $\bar F_1,\ldots, \bar F_s$ be the reduced images of $\pi^*(F_i)$ under $\phi$, and let $\bar F_{s+1}, \ldots \bar F_{s+r}$ be the reduced images under $\phi$ of the other (possible) multiple fibers of $X \to B$. Let $\mu_i$ ($\bar \mu_i$) be the multiplicities of the corresponding fibers $F_i$ ($\bar F_i$) of $g_S$ ($g_W$). Then we have the numerical canonical formula $$K_W \equiv (\chi(\O_S)-2\chi(\O_B))F + \sum_{i=1}^s \big( \big(\frac{\mu_i-1}{\mu_i} \big)\bar \mu_i +\lambda_i \bar \mu_i \big) \bar F_i + \sum_{i=s+1}^r  (\mu_i-1) \bar F_i.$$ We have $\bar \mu_i = \mu_{f^*(F_i)}(\Gamma_i)$ and $\mu_i$ divides $\bar \mu_i$. 
%\label{K_W}
%\end{corollary}

%\begin{proof}
%By Lemma \ref{CanonicalClassFormula} and the canonical class formula \cite[V (12.3)]{BWHKPCV_2004}, we obtain the given formula except for the $\bar F_i$ with $i=1,\ldots,s$. We now note that for all M-modifications in Figures \ref{fig:KAWA_WAHL} and \ref{fig:KAWA_QHD}, we have that every component of the fiber corresponding to $\bar F_i$ has the same multiplicity $\bar \mu_i = \mu_{f^*(F_i)}(\Gamma_i)$. Then the computation of the coefficient for $\bar F_i$ for $i=1,\ldots s$ is $\big(\frac{\mu_i-1}{\mu_i} \big)\bar \mu_i +\lambda_i \bar \mu_i$. We now go case by case to prove that this coefficient is $\bar \mu_i-1$. \end{proof}

\begin{table}[!htbp]
    \centering
    \small % Tamaño de fuente reducido para acomodar 8 columnas
    \renewcommand{\arraystretch}{1.3} % Espaciado vertical para las fracciones
    \begin{tabular}{|c|c|c|c|c|c|c|c|}
    \hline
    M-mod & $\lambda$ & M-mod & $\lambda$ & M-mod & $\lambda$ & M-mod & $\lambda$ \\
    \hline
    II.v3.a & $6/7$ & II.v3.i & $\frac{8+3q}{9+3q}$ & III.v3.g & $\frac{6+2q}{7+2q}$ & $y \text{I}_d(n,a)$ & $\frac{n-1}{yn}$ \\
    \hline
    II.v3.b & $7/8$ & II.v3.j & $\frac{7+2q}{8+2q}$ & III.v3.h & $\frac{5+2q}{6+2q}$ & II(2) & $1/2$ \\
    \hline
    II.v3.c & $6/7$ & II.v4.c & $\frac{11+6p}{12+6p}$ & III.v4.b & $\frac{7+4p}{8+4p}$ & II(3) & $2/3$ \\
    \hline
    II.v3.d & $\frac{8+2q}{9+2q}$ & III.v3.a & $4/5$ & IV.v3.a & $\frac{2+p}{3+p}$ & II(4) & $3/4$ \\
    \hline
    II.v3.e & $\frac{9+3q}{10+3q}$ & III.v3.b & $4/5$ & IV.v3.b & $\frac{3+q}{4+q}$ & II(5) & $4/5$ \\
    \hline
    II.v3.f.0 & $5/6$ & III.v3.c.0 & $3/4$ & IV.v3.c & $\frac{3+q}{4+q}$ & III(2) & $1/2$ \\
    \hline
    II.v3.f & $\frac{5+q}{6+q}$ & III.v3.c & $\frac{3+p}{4+p}$ & IV.v4.a.0 & $5/6$ & III(3) & $2/3$ \\
    \hline
    II.v3.g & $\frac{10+3q}{11+3q}$ & III.v3.d & $\frac{4+q}{5+q}$ & IV.v4.a & $\frac{5+3p}{6+3p}$ & IV(2) & $1/2$ \\
    \hline
    \end{tabular}
    \caption{\small{The $\lambda$ in Lemma \ref{CanonicalClassFormula} for all M-modifications of fibers.}} 
    \label{lambda}
\end{table}

We will now construct \QHD degenerations of surfaces in $D_{a,b}$ with $a,b \geq 2$ but not both equal to $2$. This is similar to \cite[Section 4]{U16a} for the case of Wahl singularities. We start with a relatively minimal rational elliptic fibration $g_S \colon S \to \P^1$. In this way, it might have (exactly) one multiple fiber or none. We will only use a multiple fiber when we construct the M-modification $yI_1(x,z)$. We assume that $g_S$ has a fiber $F_1$ of type $yI_1$ for some $y \geq 1$, $II$, $III$, or $IV$, and a fiber $F_2=I_1$.

Let us choose one M-modification over $F_1$ of any type from Figures \ref{fig:KAWA_WAHL} or \ref{fig:KAWA_QHD}, and the M-modification with Wahl chain $[b+2,2,\ldots,2]$ ($b-2$ $2$s) over $F_2$. Then we have the corresponding composition of blow-ups $\pi \colon X \to S$. Let $\phi \colon X \to W$ be the contraction of the \QHD star graph and the Wahl singularity (ies) in the M-modifications. 

Let $a$ be the denominator of $\lambda$ in Table \ref{lambda} for the M-modification of $F_1$. 

\begin{corollary}
The canonical divisor $K_W$ is nef but not numerically trivial for $a,b \geq 2$ with $ab>4$. In fact, $K_W \equiv (1- \frac{1}{a}-\frac{1}{b})F$. 
\label{Knef}
\end{corollary}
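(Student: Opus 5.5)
The plan is to apply Lemma \ref{CanonicalClassFormula} directly with $s=2$: the fibers over which $\pi$ blows up are $F_1$ (carrying the chosen M-modification) and $F_2=I_1$ (carrying the Wahl chain $[b+2,2,\ldots,2]$). Three numbers are needed: $\bar\lambda$, $\lambda_1$, and $\lambda_2$.

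First, $\bar\lambda$. Since $S$ is a relatively minimal rational elliptic surface over $\P^1$, the canonical bundle formula gives $K_S\equiv(\chi(\O_S)-2)F+\sum(m_j-1)F_j'$ over the multiple fibers $F_j'$. Because $\chi(\O_S)=1$, this is $-F$ plus the contribution of a possible multiple fiber.

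We must treat the case $F_1=yI_1$ with $y>1$ separately. The cleanest way is to use the reduced fiber: $(y-1)F_{1,red}\equiv\frac{y-1}{y}F$. Then this contribution combines with $\lambda_1$. Table \ref{lambda} gives $\lambda=\frac{n-1}{yn}$ for $yI_d(n,a)$, and in this case $a=n$. So $\frac{y-1}{y}+\frac{n-1}{yn}=1-\frac1{yn}$. Here one has to be careful: the $\lambda$ in Table \ref{lambda} is measured against $\pi^*(F_1)$, which is $yF_{1,red}$, so I would check the normalization. I expect the intended bookkeeping to be that the total contribution from $F_1$ equals $1-\frac1a$, with $a$ the denominator of $\lambda$. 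When $F_1$ is not multiple, we simply have $\bar\lambda=-1$, and in every entry of Table \ref{lambda} other than $yI_d$ one has $\lambda_1=\frac{a-1}{a}=1-\frac1a$ directly.

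Second, $\lambda_2$. The M-modification over $I_1$ with the single Wahl chain $[b+2,2,\ldots,2]$ (which has $n=b$) is the case $1I_1(b,\cdot)$ of Figure \ref{fig:KAWA_WAHL}, so Table \ref{lambda} gives $\lambda_2=\frac{b-1}{b}$. To confirm this I would compute it via Lemma \ref{CanonicalClassFormula}. The construction blows up the node of $I_1$ once and then repeatedly blows up on the new exceptional curve, $b-1$ times in total. The multiplicities $\mu_E$ and $\mu_{\pi^*F_2}$ of the final $(-1)$-curve $\Gamma_2$ are then $b-1$ and $b$ respectively. This is Kawamata's computation, so I would simply cite Table \ref{lambda}.

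Summing, $K_W\equiv(-1+(1-\frac1a)+(1-\frac1b))F=(1-\frac1a-\frac1b)F$. For integers $a,b\ge2$ with $ab>4$, the pair is not $(2,2)$, so $\frac1a+\frac1b<1$ and the coefficient is strictly positive. Since $F$ is a fiber, it is nef with $F^2=0$, so $K_W$ is nef. It is also not numerically trivial, because $F\cdot H>0$ for any ample $H$.

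The main obstacle is the normalization in the multiple-fiber case $yI_1$, where the contributions of $\bar\lambda$ and $\lambda_1$ must be combined correctly. I would handle it by recomputing $\mu_E(\Gamma)/\mu_{\pi^*F_1}(\Gamma)$ on the $(-1)$-curve of the Wahl chain over $F_{1,red}$, checking that $\mu_{\pi^*F_1}$ carries the factor $y$.
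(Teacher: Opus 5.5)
Your proposal is correct and is essentially the paper's own proof: apply Lemma~\ref{CanonicalClassFormula} with $s=2$, take $\bar\lambda=-1$ (or $-1+\frac{y-1}{y}$ when $F_1=yI_1$ is multiple), read $\lambda_1,\lambda_2$ off Table~\ref{lambda}, and sum to $1-\frac1a-\frac1b>0$. The one thing to fix is the aside ``in this case $a=n$''. Your own computation $\frac{y-1}{y}+\frac{n-1}{yn}=1-\frac{1}{yn}$ shows that $a$ must be $yn$, the denominator of $\lambda=\frac{n-1}{yn}$ taken as written (not reduced); this is exactly the paper's bookkeeping, so the normalization issue you flag is already resolved.
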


\begin{proof}
By Lemma \ref{CanonicalClassFormula} we have $K_W \equiv (\bar \lambda+\lambda_1+\lambda_2)F$. Consider two cases: if $g_S$ has a multiple fiber (which is $F_1$), then $\bar \lambda=-1+\frac{y-1}{y}$ and $\lambda_1=\frac{n-1}{ny}$; otherwise, $\bar \lambda=-1$. In both cases, we obtain $$\bar \lambda+\lambda_1+\lambda_2 = -1 + \frac{a-1}{a} + \frac{b-1}{b},$$ since the $\lambda_i$ in Figure \ref{lambda} have this form. Hence $ab>4$ implies that $K_W$ is nef and not numerically trivial.
\end{proof}

\begin{lemma}
The surface $W$ has no local-to-global obstructions to deform. In particular, we have \QHD smoothings for $W$.
\label{ObstrMainConstr}
\end{lemma}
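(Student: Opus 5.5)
We apply Theorem \ref{ObstrThm}, so two things must be checked. First, the map $\alpha \colon H^1(X,T_X(-\log C)) \to H^0(W,R^1\phi_*(T_X(-\log C)))$ must be onto. Second, we need $H^2(X,T_X(-\log C))=0$.

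\emph{Surjectivity of $\alpha$.} By Remark \ref{ObstrRem}, Wahl and valency $3$ \QHD singularities are taut, so $R^1\phi_*(T_X(-\log C))$ vanishes at them. For a valency $4$ singularity the stalk is one-dimensional and records the cross ratio of the four points on the central curve. By \cite{Fowler13} this cross ratio is forced to be the unique \QHD value. Since our $X$ realizes an honest \QHD singularity, the local deformations we need are those preserving this analytic type. So we restrict to the \QHD component and need no surjectivity in that direction. Alternatively, one can check directly that the cross ratio can be moved by a global deformation of $X$ keeping $C$. I would phrase this as: the relevant local-to-global obstruction lies in $H^2(W,\phi_*T_X(-\log C))$, and this group vanishes once the $H^2$ vanishing below holds and the cross-ratio class is trivial.

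\emph{Vanishing of $H^2(X,T_X(-\log C))$.} By Serre duality this equals $H^0(X,\Omega^1_X(\log C)\otimes K_X)$. I will follow the strategy of \cite{LP_2007} and the add-delete principle of \cite[Section 4]{PSU13}, as in the proof of Lemma \ref{CMF}. Start on the rational elliptic surface $S$. The first step is to show
\[
H^2\big(S,T_S(-\log(F_1+F_2)_{red})\big)=0 .
\]
For a rational elliptic surface, $K_S\equiv -F$ up to the multiple-fiber correction, so $-K_S$ is effective and supported on a fiber. The dual group therefore injects into $H^0(S,\Omega^1_S(\log D))$ twisted by $-F$. One kills it using the residue sequence and the fact that the components of $D$ are numerically independent modulo the fiber class; this is the same argument as in Lemma \ref{CMF}. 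Next, pass to $X$ through the blow-ups $\pi$. Each blow-up at a point of the current configuration, with the exceptional curve added to it, preserves $H^2$. The $(-1)$-curves not in $C$ (the white vertices) are handled by the delete part of the principle: removing a $(-1)$-curve that meets the rest with normal crossings does not change $H^2$. This works because every blow-up in the M-modifications of Figures \ref{fig:KAWA_WAHL} and \ref{fig:KAWA_QHD}, including the slidings, occurs at points of the configuration. Hence $H^2(X,T_X(-\log C'))=0$, where $C'$ is $C$ together with the white curves. Deleting the white curves then gives $H^2(X,T_X(-\log C))=0$.

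\emph{Main obstacle.} The hardest step is the initial vanishing on $S$, especially when $F_1$ is a multiple fiber $yI_1$ and $K_S\not\sim -F$. In that case I would use $K_S \sim -F+(y-1)F_1'$, where $F_1'$ is the reduced multiple fiber, and check that $H^0(\Omega^1_S(\log D)(K_S))$ still vanishes by restricting to a general fiber. On a general fiber, sections of $\Omega^1$ twisted by a degree-zero but nontrivial torsion bundle vanish. A second point to verify carefully is that the white curves in the valency $4$ cases meet $C$ only transversally, so that the delete step applies. With both points in hand, Theorem \ref{ObstrThm} gives the \QHD smoothings.
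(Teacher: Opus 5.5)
Your architecture agrees with the paper's. You use Theorem \ref{ObstrThm}, surjectivity of $\alpha$ at valency $4$ points, and $H^2(X,T_X(-\log C))=0$ obtained from an initial vanishing near $F_1,F_2$. You then pass to $X$ by blow-ups that add the exceptional curves, and finally delete the white curves; those later moves are fine. The gap is the initial vanishing itself, which is the only real computation in the lemma. You take $D=(F_1+F_2)_{red}$, which raises two problems:
\begin{itemize}
\item $D$ is not normal crossing on $S$ (cusp, tacnode, triple point, node), so you must pass to the SNC model $S'$ anyway.
\item More seriously, $D$ then contains two complete fibres. Their classes are dependent ($F_1\equiv F_2$ up to multiplicities).
\end{itemize}
Moreover, the residue sequence is twisted by $K_{S'}$. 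Its connecting map therefore lands in $H^1(\Omega^1_{S'}(K_{S'}))$, not in $H^{1,1}$. The quotient terms $H^0(\mathcal{O}_{D_i}(K_{S'}))$ are also nonzero; for instance $K_{S'}\cdot A=4$ for the $(-6)$-curve in type II. So ``numerically independent modulo the fibre class'' proves nothing. Your fallback for $yI_1$ also fails. On a general fibre $F_t$ one has $K_S|_{F_t}\cong\mathcal{O}_{F_t}$; the nontrivial torsion $\mathcal{O}(F_{1,red})|_{F_{1,red}}$ lives only on the multiple fibre. Also $\Omega^1_S|_{F_t}$, an extension of $\mathcal{O}_{F_t}$ by $\mathcal{O}_{F_t}$, has sections, so restricting to $F_t$ yields no vanishing.

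The missing idea is the paper's, following \cite[Thm. 2.1]{PSU13}. On $S'$, start with $T=T_1+T_2$, the reduced SNC fibres \emph{minus} their $(-1)$-curves $E_1,E_2$, where $E_i$ lies over $F_i$. Then $K_{S'}$ is controlled by a single $(-1)$-curve. For types II, III, IV one computes $K_{S'}+T_1\sim E_2-2E_1$, and $yI_1$ is similar. So $E_2-K_{S'}$ is effective, and
$H^0(\Omega^1_{S'}(\log T)(K_{S'}))\subset H^0(\Omega^1_{S'}(\log T)(E_2))=H^0(\Omega^1_{S'}(\log(T+E_2)))$.
The equality holds because $E_2$ meets $T$ in two points, so the quotient is $\mathcal{O}_{\P^1}(-1)$. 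Now there is no twist, and the residue sequence reduces to the Chern class map. That map is injective because the components of $T+E_2$ are independent: only one complete fibre occurs. Then add $E_1,E_2$ back, since adding $(-1)$-curves with normal crossings preserves $H^2$, and continue with your blow-up and deletion steps. Your full-fibre vanishing is true, as this argument shows, but your justification does not give it.

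Finally, at valency $4$ points your main reasoning (``restrict to the \QHD component, no surjectivity needed'') is wrong. The cokernel $\operatorname{coker}\alpha$ injects into $H^2(W,\phi_*T_X(-\log C))\cong H^2(W,T_W)$. Without surjectivity, Theorem \ref{ObstrThm} gives nothing. Your ``alternatively'' is the correct argument and is the paper's: the fourth blown-up point on the central curve can be moved freely, which realizes the cross-ratio direction globally.
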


\begin{proof}
In the case of valency 4 M-modification, we can freely choose the fourth point to blow up on the central curve of the star graph. Following Theorem \ref{ObstrThm}, this implies, just as in the proof of Lemma \ref{CMF}, that we only need to compute $H^2(X,T_X(- \log C))=0$ to prove that $W$ is unobstructed to deform.

For this computation, we will use the strategy in \cite[Section 4]{PSU13}. To make it work, however, we need to compute the initial vanishing $H^2(S',\Omega_{S'}^1(\log (T_1+T_2))(K_{S'}))=0$, as is done in the proof of \cite[Thm. 2.1]{PSU13}. Here $S'$ is obtained by blowing up the node of $F_2$ and the points over $F_1$ needed to obtain the minimal simple normal crossings fiber $F'$, as in Figure \ref{s.n.c config} (including the case of $yI_1$, where we blow up once at the node). The $T_i$s are the corresponding reduced fibers minus the $(-1)$-curve. Let $E_1$ be the $(-1)$-curve over $F_1$, and let $E_2$ be the $(-1)$-curve over $F_2$. We proceed case by case on $F_1$ to show that the critical step in the proof of \cite[Thm. 2.1]{PSU13} holds:

\begin{itemize}
    \item[I.] In this case we have a multiple fiber $yI_1$. We have the exact same proof but with $K_S = -F +(y-1){F_1}_{red}$. If $A$ is the $(-4)$-curve, then $K_{S'}+A=E_2-(2y-1)E_1$. Then the proof follows as in \cite[Thm. 2.1]{PSU13}.
    
    \item[II.]: Let $T_1=A+B+C$ where $A$ is the $(-6)$-curve, $B$ is the $(-3)$-curve, and $C$ is the $(-2)$-curve. Then $K_{S'}+A+B+C=E_1-2E_2$, and the proof follows as in \cite[Thm. 2.1]{PSU13}.

    \item[III]: Let $T_1=A+B+C$ where $A$ is a $(-4)$-curve, $B$ is the other $(-4)$-curve, and $C$ is the $(-2)$-curve. Then $K_{S'}+A+B+C=E_1-2E_2$, and the proof follows as in \cite[Thm. 2.1]{PSU13}. 

    \item[IV]: Let $T_1=A+B+C$ where $A,B,C$ are the $(-3)$-curves. Then $K_{S'}+A+B+C=E_1-2E_2$, and the proof follows as in \cite[Thm. 2.1]{PSU13}.  
\end{itemize}
\end{proof}

\begin{theorem}
Assume that $a,b \geq 2$ with $ab>4$. Then there are \QHD degenerations of Dolgachev surfaces in $D_{a,b}$ into $W$. In addition, the \QHD smoothing of only the Wahl singularity $\frac{1}{b^2}(1,b-1)$ in $W$ is a surface with one \QHD singularity whose relative minimal model of the minimal resolution is in $D_{1,b}$ (a Halphen elliptic surface of index $b$).
\label{DolgachevSmoothings}
\end{theorem}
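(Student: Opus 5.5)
By Lemma \ref{ObstrMainConstr} the surface $W$ built above (one M-modification over $F_1$ whose $\lambda$ has denominator $a$, and the Wahl chain $[b+2,2,\ldots,2]$ over $F_2=I_1$) has no local-to-global obstructions. So I would take a \QHD smoothing $(W\subset\W)\to(0\in\D)$ and identify its general fiber $W_t$ as a surface in $D_{a,b}$. The approach is to compute invariants that are constant in \QHD smoothings, produce an elliptic fibration on $W_t$ via Theorem \ref{TheoremClassM-resEllipticFiber}, and read off the two multiple fibers from the canonical class formula.

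First, invariants. Since the Milnor fibers are rational homology disks, $\chi_{top}$, $K^2$, $q$ and $p_g$ are constant. The surface $X$ is a blow-up of a rational surface and the singularities are rational, so $q(W_t)=p_g(W_t)=0$. By Corollary \ref{Knef}, $K_W\equiv(1-\frac1a-\frac1b)F$, so $K_W$ is nef with $K_W^2=0$ and $K_W\not\equiv 0$. Because the smoothing is $\Q$-Gorenstein (Remark \ref{factsQHD}(6)), $K_{W_t}$ is nef with $K_{W_t}^2=0$. Upper semicontinuity of $h^0(mK)$ then gives $\kappa(W_t)\ge 1$, and $K^2=0$ forces $\kappa(W_t)=1$. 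Theorem \ref{TheoremClassM-resEllipticFiber} now yields the elliptic structure $\W\to\B$, which restricts to a relatively minimal elliptic fibration $W_t\to\P^1$; the base is $\P^1$ because $q=0$.

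Second, multiple fibers. Write $K_{W_t}\equiv(-1+\sum_i\frac{m_i-1}{m_i})F_t$, using $\chi(\O)=1$ and base $\P^1$. Comparing with $K_W\equiv(-1+\frac{a-1}{a}+\frac{b-1}{b})F$ through the family, and using that the fiber class specializes to the fiber class, gives $\sum_i\frac{m_i-1}{m_i}=2-\frac1a-\frac1b$. This equation alone does not fix the $m_i$. To pin them down I would localize: the multiple fibers of $W_t$ are exactly the deformations of the fibers $g_W^{-1}(F_1)$ and $g_W^{-1}(F_2)$ (all other fibers are undeformed and reduced), and for each $i$ the multiplicity is the denominator of $\lambda_i$. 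This is the main obstacle.

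For $F_2$, I would use the standard fact that smoothing the Wahl chain $[b+2,2,\ldots,2]$ over an $I_1$ is a rational blow-down realizing a logarithmic transformation of order $b$ \cite{FS97}, as in \cite[Section 4]{U16a}. For $F_1$, I would argue via torsion: the class $\frac{1}{\text{den}}$-fraction of the fiber given by the reduced fiber divides $F$ in $H_2(W_t,\Q)$, and its order equals the denominator $a$ of $\lambda_1$. I would verify this through the local Milnor-fiber computation: the fiber near $g_W^{-1}(F_1)$ deforms to a neighborhood whose $H_1$ of the boundary and the local $\lambda$ give multiplicity $a$. Alternatively, I would reduce to the Wahl case: Corollary \ref{sameLambdai} lets me unslide to a configuration whose lambda is manifestly $\frac{a-1}{a}$, and there I can recompare. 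Either way this yields exactly two multiple fibers, of multiplicities $a$ and $b$, so $W_t\in D_{a,b}$.

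Finally, for the last claim I would smooth only the $\frac{1}{b^2}(1,b-1)$ point, which is possible by the same unobstructedness. The result is a surface $W'$ with one \QHD singularity over $F_1$, and over $F_2$ the logarithmic transformation of order $b$. Its minimal resolution contains the unchanged M-modification over $F_1$, so contracting that M-modification recovers a relatively minimal elliptic surface. That surface is rational with a single multiple fiber of multiplicity $b$, namely an element of $D_{1,b}$.
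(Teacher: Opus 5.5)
\textbf{Overall.} Your framework matches the paper's: unobstructedness from Lemma~\ref{ObstrMainConstr}, $\kappa(W_t)=1$, the elliptic structure $\W\to\B$ via Nakayama, comparison of canonical classes, and the Fintushel--Stern reading of the smoothing of $[b+2,2,\ldots,2]$ as a logarithmic transformation of order $b$. However, the step you call the main obstacle is never actually proved, and the routes you sketch for it do not work.

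\textbf{The gap.} Divisibility in $H_2(W_t,\Q)$ is vacuous. The integral class of the reduced fiber near $g_W^{-1}(F_1)$ would require an honest analysis of how the Milnor fiber of a non-Wahl singularity is glued in, and you only gesture at this. The unsliding alternative is not available. The M-modifications in Figure~\ref{fig:KAWA_QHD} are already the unslid forms and still carry a non-Wahl \QHD singularity. Corollary~\ref{sameLambdai} only says that sliding preserves $\lambda$; it says nothing about multiplicities of fibers on the smoothing. Moreover, $\lambda_1=\frac{a-1}{a}$ holds by the very definition of $a$, so it is not what needs checking. Your localization claim, that the multiple fibers of $W_t$ are exactly deformations of the fibers over $F_1$ and $F_2$, is also left unproved.

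\textbf{The missing idea.} No local analysis at $F_1$ is needed. Once the logarithmic transformation gives one multiple fiber of $W_t$ with $\mu_1=b$, your identity $\sum_{i=1}^s\frac{\mu_i-1}{\mu_i}=\frac{a-1}{a}+\frac{b-1}{b}$ yields $\sum_{i=2}^s\bigl(1-\frac{1}{\mu_i}\bigr)=1-\frac1a$. This lies in $[\frac12,1)$ since $a\ge 2$. Each summand is at least $\frac12$, so there is exactly one further multiple fiber, and its multiplicity is $a$. This is precisely how the paper concludes, and it also makes the localization claim unnecessary.

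\textbf{Smaller corrections.} First, upper semicontinuity of $h^0(mK)$ bounds the plurigenera of $W_t$ from above, so it cannot give $\kappa(W_t)\ge 1$. Argue instead that $W_t$ is a smooth surface with $K_{W_t}$ nef, $K_{W_t}^2=0$ and $K_{W_t}\not\equiv 0$ (the last by specializing intersection numbers with a lifted ample class), hence $\kappa(W_t)=1$ by the classification of surfaces.

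Second, in the final claim you assert that the minimal resolution of the partially smoothed surface contains the unchanged M-modification, and that the resulting elliptic surface is rational with a single multiple fiber of multiplicity $b$. Both need the family-level argument the paper gives:
\begin{itemize}
\item simultaneously resolve the trivially deformed \QHD point;
\item contract the $(-1)$-curves in the family, using their stability, back to the original fiber of type II, III or IV;
\item apply \cite[Thm.~4.2]{U16a}, or use $K\equiv-\frac1b F$ on the central fiber together with $q=0$ and the canonical bundle formula.
\end{itemize}
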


\begin{proof}
By Lemma \ref{ObstrMainConstr}, we can consider a \QHD smoothing $(W\subset\mathcal{W})\to(0\in\mathbb{D})$ with nonsingular fibers $W_t$. Since $K_W$ is nef with $K_W^2=0$ (Corollary \ref{Knef}), we have $K_{W_t}$ nef, $K_{W_t}^2=0$, and $K_{W_t}$ is not numerically trivial as $a,b \geq 2$ but not both equal to $2$. Then the Kodaira dimension of $W_t$ is equal to $1$.

As in \cite[Thm. 4.2]{Kawa92}, we have by \cite[Thm. 6.3]{Naka85} that $(W\subset\mathcal{W})\to(0\in\mathbb{D})$ factors through a surface $\B$ so that $\W \to \B$ is an elliptic fibration. Its restriction to the fibers of $\B$, which are $\P^1$s, gives the elliptic fibration $g_W \colon W \to \P^1$. By construction, this elliptic fibration has two multiple fibers $\bar F_1$ and $\bar F_2$, corresponding to $F_1$ and $F_2$ in $S$.

By \cite[V (12.3)]{BWHKPCV_2004} and the invariance of $\chi(\O_{W_t})=\chi(\O_W)$, we can write its canonical class as $K_{W_t} \equiv (-1 + \sum_{i=1}^s \frac{\mu_i-1}{\mu_i})F_t$, where $\mu_i$ are the multiplicities of the $s$ multiple fibers of the elliptic fibrations $g_{W_t} \colon W_t \to \P^1$, and $F_t$ is the class of a fiber. (Note that by \cite[Prop. 7.1]{FM94}, the number of multiple fibers and multiplicities is constant in smooth families.) We also note that $F_t$ degenerates into the class of the fiber $F$ of $g_W$.

As the canonical class $K_{W_t}$ degenerates into $K_W$, by Lemma \ref{CanonicalClassFormula} we have $$\bar \lambda + \lambda_1 + \lambda_2 = -1 + \sum_{i=1}^s \frac{\mu_i-1}{\mu_i}.$$ By Corollary \ref{Knef}, we have $ -1 + \frac{a-1}{a} + \frac{b-1}{b} = -1+\sum_{i=1}^s \frac{\mu_i-1}{\mu_i}$.

For the \QHD smoothing of the Wahl singularity from the Wahl chain $[b+2,2,\ldots,2]$, we know that it is the same as a logarithmic transformation of the original $F_2$ fiber of index $b$; This is in the original rational blowdown work of Fintushel-Stern \cite{FS97}. Therefore, in $W_t$ we have a multiple fiber of multiplicity $\mu_1=b$ and it is the only multiple fiber degenerating to $\bar F_2$.

Then by the equation above, we have $\frac{a-1}{a} =s-1-\sum_{i=2}^s \frac{1}{\mu_i} \geq \frac{s-1}{2}$, and so $s=2$ and $a=\mu_2$. The surface $W_t$ is then a Dolgachev surface $D_{a,b}$. 

For the last statement, as $W$ has no obstructions to deform, we consider the \QHD smoothing of the Wahl singularity only, being trivial on the \QHD singularity. We now resolve in the family this \QHD singularity. Then we can contract all $(-1)$-curves in the special fiber over the \QHD singularity, producing divisorial contractions in the family (by stability of $(-1)$-curves, see \cite[IV Section 4]{BWHKPCV_2004}), until we arrive to the original fiber of type II, III, or IV. Now, this is precisely the situation of \cite[Thm. 4.2 ($-\infty$)]{U16a}, and so it must be a Halphen surface of index $b$.
\end{proof}

%--- Corollario: The same happens for all slidings of the \QHD star graphs.

\begin{corollary}
Let $W$ be a \QHD surface with one \QHD singularity. Let $(W\subset\mathcal{W})\to(0\in\mathbb{D})$ be a \QHD smoothing with $W_t \in D_{a,b}$ and $\kappa(W_t)=1$. Then, up to the choice of $a$ or $b$, the associated $\lambda$ is $\frac{a-1}{a}$ and $S \in D_{a',b}$ for some divisor $a'$ of $a$.
\label{allDegenerationsDolgachev}
\end{corollary}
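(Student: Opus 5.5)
By Theorem \ref{TheoremClassM-resEllipticFiber}, a \QHD smoothing of $W$ with $\kappa(W_t)=1$ and $K_W$ nef gives an elliptic fibration $g_W \colon W \to B$. The proof of Theorem \ref{DolgachevSmoothings} shows that it extends to $\W \to \B$, and since $q(W_t)=0$ we get $B=\P^1$. The minimal model $S$ of the minimal resolution $X$ is then a relatively minimal elliptic fibration $g_S \colon S \to \P^1$. Since $\chi(\O_S)=\chi(\O_X)=\chi(\O_W)=\chi(\O_{W_t})=1$ and $p_g=q=0$, its canonical class is $K_S \equiv \bar\lambda F$ with $\bar\lambda = -1 + \sum_j \frac{\nu_j-1}{\nu_j}$, where the $\nu_j$ are the multiplicities of the multiple fibers of $g_S$.

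Because $W$ has exactly one \QHD singularity, $\pi(C)$ is supported on a single fiber $F_1$. We treat the case where the singularity is non-Wahl; the Wahl case is the $yI_d(n,a)$ row, and there $y$ plays the role of the multiplicity of $F_1$. By Proposition \ref{Nostar} and Lemma \ref{Mres II-III-IV}, $F_1$ is of type $II$, $III$ or $IV$, or $I_d$ in the Wahl case. Lemma \ref{CanonicalClassFormula} then gives $K_W \equiv (\bar\lambda + \lambda_1)F$. Using Corollary \ref{sameLambdai} to pass to unslid M-modifications, $\lambda_1$ is one of the entries of Table \ref{lambda}. In the non-Wahl cases $F_1$ is a non-multiple fiber of type $II$, $III$ or $IV$, since these fiber types cannot be multiple in an elliptic surface with $\chi=1$. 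On the other side, $K_{W_t} \equiv (-1+\frac{a-1}{a}+\frac{b-1}{b})F_t$, and $F_t$ specializes to $F$ as in the proof of Theorem \ref{DolgachevSmoothings}. Comparing coefficients gives
\[
\sum_j \frac{\nu_j-1}{\nu_j} + \lambda_1 = \frac{a-1}{a}+\frac{b-1}{b}.
\]

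The main step is to solve this equation. By inspection of Table \ref{lambda}, every $\lambda_1$ has the form $\frac{c-1}{c}$ with $c\ge 2$, and in the $yI_d(n,a)$ case $\lambda_1=\frac{n-1}{yn}$. The key constraint is that $S$ is rational or has $\kappa(S)\ge 0$, with $S$ obtained from $W_t$ by "undoing" the singularity. I would argue that the smoothing of the unique singularity only changes the fiber over $F_1$. The other multiple fibers of $g_S$ survive unchanged as multiple fibers of $g_{W_t}$, by stability of multiple fibers away from the singularity (\cite[Prop. 7.1]{FM94}, applied to the family away from $\bar F_1$). Hence the $\nu_j$ form a sub-multiset of $\{a,b\}$ plus at most one multiple fiber located at $F_1$ itself, and only in the $I_d$ case. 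After possibly swapping $a$ and $b$, the fiber of $W_t$ degenerating to $\bar F_1$ has multiplicity $a$, and the remaining multiple fiber of $W_t$ comes from a multiple fiber of $S$ of multiplicity $b$. In the non-Wahl case this gives $\lambda_1=\frac{a-1}{a}$, so the denominator is $a$, and $S \in D_{1,b}$, so $a'=1$.

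In the $yI_d(n,a)$ case, $F_1$ has multiplicity $y$, so $S \in D_{y,b}$ and the equation becomes $\frac{y-1}{y}+\frac{n-1}{yn}=\frac{yn-1}{yn}$. Thus $a=yn$ and $a'=y$ divides $a$, and the associated $\lambda$ (the combined contribution of $F_1$) is again $\frac{a-1}{a}$.

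The obstacle I expect is the claim that no multiple fiber of $W_t$ other than the one over $\bar F_1$ is created or destroyed, i.e. that multiplicities away from the singular fiber are locally constant in $\W \to \B$. I would prove this by restricting the family to a neighborhood of each other fiber, where it is a smooth family of elliptic fibrations, and then invoking \cite[Prop. 7.1]{FM94}. I would also double-check that the case $s=1$ (only one multiple fiber in $W_t$) is excluded by the hypothesis that $W_t \in D_{a,b}$ has two multiple fibers.
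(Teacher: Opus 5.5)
Your argument is correct, but at the key step it runs in the opposite direction from the paper.

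The paper asserts (``as in the proof of Theorem \ref{DolgachevSmoothings}'') that exactly one multiple fiber of $W_t$, of multiplicity $a$, specializes to the fiber through the singularity. It reads off $\lambda=\frac{a-1}{a}$ directly from this. It then applies the canonical class formula only to the residual: $\sum_i\frac{b_i-1}{b_i}=\frac{b-1}{b}$ forces one further multiple fiber of multiplicity $b$, and then $a'=1$ or $a'n=a$.

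You instead control the family away from $\bar F_1$, where $\W$ is smooth and $g_S=g_W$. There, stability of multiplicities makes the $\nu_j$ a sub-multiset of $\{a,b\}$. The identity $\sum_j\frac{\nu_j-1}{\nu_j}+\lambda_1=\frac{a-1}{a}+\frac{b-1}{b}$ does the rest, using $0<\lambda_1<1$ (resp. $0<\frac{yn-1}{yn}<1$ since $n\geq 2$) and $\frac{a-1}{a}+\frac{b-1}{b}>1$. It excludes zero or two $\nu_j$'s, so $\lambda=\frac{a-1}{a}$ comes out as an output rather than an input.

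What your route buys is independence from the least transparent point of the paper's sketch, namely how many multiple fibers of $W_t$ collapse onto the singular fiber. The cost is a local version of \cite[Prop. 7.1]{FM94}, which is the same input the paper already uses.

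You should state explicitly that every blow-up of $\pi$ lies over $F_1$; this is what gives $g_S=g_W$ away from $F_1$. It follows from $s_0=0$. Alternatively, every $(-1)$-curve $G$ on $X$ satisfies $\phi^*K_W\cdot G\geq 0$, hence must meet $C$.

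Two small corrections:
\begin{itemize}
\item With one singularity, the Wahl case is not only the $yI_1(n,a)$ row; II(2) and II(3) also occur. Your type $II/III/IV$ argument covers them, giving $a'=1$.
\item Fibers of type $II$, $III$, $IV$ are never multiple in any elliptic surface, regardless of $\chi$, since multiple fibers are always of type ${}_mI_k$.
\end{itemize}
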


\begin{proof}
By the classification Theorems \ref{kawamataQHD} and \ref{TheoremClassM-resEllipticFiber}, the \QHD singularity lies on an elliptic fiber of a fibration $W \to \P^1$, and it is one of the possibilities listed in the corresponding tables. As in the proof of the previous theorem, one of the multiple fibers of $W_t$, say the one with multiplicity $a$, deforms to the multiple fiber containing the \QHD singularity, and $\lambda=\frac{a-1}{a}$. Since this is a \QHD smoothing, we have that $p_g=q=0$ remain constant in the family. Therefore, by the canonical formula in Lemma \ref{CanonicalClassFormula}, the contribution of $\sum_{i=1}^r \frac{b_i-1}{b_i}$ from the possible additional multiple fibers in $W \to \P^1$ must equal $\frac{b-1}{b}$. It follows that $r=1$ and $b_1=b$. Therefore, the minimal model $S$ of the minimal resolution of $W$ is a Dolgachev surface $D_{a',b}$. If the \QHD singularity is not Wahl, then $a'=1$. Otherwise, $a'n=a$, where $n$ is the index of the Wahl singularity, and hence $a'$ divides $a$. 
\end{proof}

\begin{remark}
The smallest possible $a,b$ are $2,3$. In this case, Dolgachev surfaces in $D_{2,3}$ are simply connected, something that happens whenever $\gcd(a,b)=1$. By inspecting Table \ref{lambda}, we have II(2), III(2), and IV(2) with $a=2$, so we take $b=3$. We also have IV.v3.a with $p=0$ (that is, IV.v3.a.0, which is log canonical), II(3), and III(3) with $a=3$, so we take $b=2$. Thus, in total we have $6$ possibilities for the combinations of M-modifications for $D_{2,3}$, and only one non-Wahl \QHD singularity appears, and it is log canonical. As soon as we increase the indices, we see more and more non-Wahl \QHD singularities appearing.
\label{rmk1-section6}
\end{remark}

\begin{remark}
In the case of valency 4 \QHD singularities, we sometimes have two distinct \QHD smoothings (see Remark \ref{factsQHD} (5)), but they produce Dolgachev surfaces with the same multiplicities. For example, this is the case for IV.v3.a for any $p$. It has $\lambda=\frac{2+p}{3+p}$, and thus it appears in all Dolgachev surfaces $D_{p+3,b}$ with Kodaira dimension $1$.
\label{rmk2-section6}
\end{remark}

\begin{remark}
We can perform the same computation of obstructions for the slidings and Aslidings of the M-modifications in Figure \ref{fig:KAWA_QHD}, obtaining that any such $W$ is unobstructed to deform. The \QHD smoothing of the new Wahl singularity(ies) recovers the original M-modifications in Figure \ref{fig:KAWA_QHD}. Thus, by Theorem \ref{DolgachevSmoothings}, Dolgachev surfaces also degenerate to these slidings. We recall that the $\lambda$s are equal by Corollary \ref{sameLambdai}. This \QHD boundary for the surfaces in $D_{a,b}$ will be modified to a nef limit with only slc singularities in the next section.
\label{rmk3-section6}
\end{remark}

%%%%%%%%%%%%%%%%%%%%%%%%%%%%%%%%%%%%%%%%%%%%%%%%%%%%%%%%%%%%%%%%%%%%%%%%%%%%%%%%%%%%%%%%%%%%%%%%%%%%%%%%%%%%%%%%%%%%%%%%%%%%%%%%%%%%%%%%%%%%%%%%%%%%%%%%%%
\section{Minimal slc limits of \QHD degenerations} \label{s6}

Consider a \QHD smoothing $(W \subset \mathcal{W}) \to (0 \in \mathbb{D})$ of a surface $W$ as in Theorem \ref{DolgachevSmoothings}, which has a non-log canonical \QHD singularity. In this section, we construct a semi-log canonical replacement of this degeneration. The process consists of two steps: first, we consider the Seifert partial resolution of the \QHD smoothing, as explained after Theorem \ref{PinkWahl}; then, we perform flips to obtain a minimal slc model.

\begin{remark}
To simplify the computations, we will consider only \QHD singularities constructed from an elliptic fiber of \textbf{type II} in Figure \ref{fig:KAWA_QHD}. We will always use the slide of the unique $(-1)$-curve in this fiber; consequently, the special fiber contains one \QHD singularity and one Wahl singularity. It will be clear that cases III and IV can be treated in a similar way. 
\label{asumir}
\end{remark}

Let $W$ be the surface constructed for Theorem \ref{DolgachevSmoothings} with a non-log canonical \QHD singularity, its sliding Wahl singularity, and the Wahl singularity $\frac{1}{b^2}(1,b-1)$. We have an elliptic fibration $g_W \colon W \to \P^1$ with two multiple fibers $\bar F_1$ and $\bar F_2$ of multiplicities $a$ and $b$, respectively, where $a$ is the denominator of the associated $\lambda$. As there are no obstructions by Lemma \ref{ObstrMainConstr}, we choose local \QHD smoothings as in Theorem \ref{PinkWahl} for all singularities and globalize them to obtain a \QHD smoothing $(W\subset\mathcal{W})\to(0\in\mathbb{D})$. We now consider the Seifert partial resolution (as described right after Theorem \ref{PinkWahl}) $$(W' \cup Z) \subset \mathcal{W'} \to (W\subset\mathcal{W})\to(0\in\mathbb{D})$$  where

\begin{itemize}
    \item The curve $\Delta:= W' \cap Z$ is a $\P^1$.
    
    \item The restriction $(\Delta \subset W') \to (P \in W)$ is the partial resolution of the \QHD singularity $(P \in W)$, which only extracts the central curve $\Delta$ of the star graph. The surface $W'$ has $3$ or $4$ c.q.s. $\frac{1}{m_i}(1,q_i)$ that correspond to its legs.     
    
    \item The surface $Z$ is the compactified Milnor fiber of the \QHD smoothing of $(P \in W)$, and it has $3$ or $4$ c.q.s. $\frac{1}{m_i}(1,m_i-q_i)$ in $\Delta$ at the same points of $W'$. Thus, the fiber $W' \cup Z$ has orbifold normal crossing singularities as in \cite[Section 5]{Hack12}.     
\end{itemize}

Wahl \cite{Wahl_2013} proves that the $3$-fold $\mathcal{W}'$ has log terminal singularities and the smoothing is $\Q$-Gorenstein. We now run MMP relative to $\D$ through the steps:
\bigskip

\textbf{(1)} Let $\Gamma^{-}$ be the strict transform of the only component of $\bar F_1$ in $W'$. We have that $\Gamma^-$ is a negative for $K_{\W'}$. Indeed, we can show that $$\Gamma^- \cdot K_{\W'}=\Gamma^- \cdot (K_{W'}+\Delta|_{W'})=- \frac{\chi}{e \, m_1},$$ where $\frac{1}{m_1}(1,q_1)$ is the c.q.s. in $\Gamma^{-}$, and $\frac{\chi}{e}$ is the invariant of $(P \in W)$ (see Section \ref{s1}). As the \QHD singularity is not log canonical, we have $\Gamma^-.K_{\W'}=- \frac{\chi}{e \, m_1}<0$. We recall that for the non-log canonical case we have $0< \frac{\chi}{e} <1$ (see Remark \ref{factsQHD} part (7)).
\bigskip

\textbf{(2)} We also have that ${\Gamma^{-}}^2<0$ in $W'$, and so it is an extremal ray for $K_{W'}+\Delta|_{W'}$. Therefore, by the Cone theorem \cite[Section 3.3]{KM_1998} for the log terminal pair $(W',\Delta)$, there is $H$ nef divisor on $W'$ so that $nH-(K_{W'}+\Delta|_{W'})$ is ample for $n>>0$. Let us fix such an $n$. We have that $|mH|$ induces the contraction (birational morphism) of $\Gamma^{-} \subset W'$ for any $m\geq n$.   
\bigskip

%(3) We have $\Delta|_{W'} \cdot (K_{W'}+\Delta|_{W'})=\Delta|_Z \cdot (K_Z+\Delta|_Z)$ because of the global canonical sheaf $K_\mathcal{W'}|_{W'+Z}$.
%\bigskip

\textbf{(3)} On the projective surface $Z$, we take a very ample line bundle $J$. Let us choose $u,v>0$ integers such that $v\Delta|_Z \cdot J=u\Delta|_{W'} \cdot (nH)$. We also choose it so that $vJ-(K_Z+\Delta|_Z)$ is ample.
\bigskip

\textbf{(4)} In this way, we have a line bundle $M$ on $W' \cup Z$ with ${M}|_{W'}=unH$ and ${M}|_{Z}=vJ$ (see \cite[Lem. 7.3]{Hack13}) as $\Delta=\P^1$. By \cite[Prop. 4.2]{Hack13}, since $p_g(W')=p_g(Z)=0$ and $q(W')=q(Z)=0$, we have that the line bundle $M$ lifts to a unique line bundle $\M$ on $\W'$.
\bigskip

\textbf{(5)} We note that $\M-K_{\W'}$ is an ample line bundle on $\W'$ over $\D$ possibly by shrinking $\D$. Indeed, the restriction to $W'$ and to $Z$ is ample by construction, and so it is for $W' \cup Z$. By \cite[Prop. 1.4]{Naka85}, we have that $\M-K_{\W'}$ is ample over a possibly smaller disk $\D$. This implies that $\M$ is nef relative to $\D$.   
\bigskip

\textbf{(6)} We now apply \cite[Thm. 4.8]{Naka85} to show the existence of a proper surjective morphism $\W' \to \Y$ into a normal $3$-fold $\Y$ over $\D$, which must be birational since it is given by some power of $\M$, and this defines a birational morphism over the singular fiber. Moreover, it only contracts the curve $\Gamma^{-}$. This defines an extremal neighborhood $(\Gamma^{-} \subset \W') \to (Q \in \Y)$ of flipping type in the terminology of \cite{KM92} (see also \cite{Mori02}).   
\bigskip

\textbf{(7)} In addition, our extremal neighborhood $(\Gamma^{-} \subset \W') \to (Q \in \Y)$ is exactly as in \cite[Thm. 10.6]{Kawa88} (see its proof, case (2)). In that theorem, Kawamata proves that there is a section of $\O_{\Y}(-K_{\Y})$ whose divisor has a rational point of type A at $Q \in \Y$. Thus, by definition, it is semistable of type k1A or k2A \cite{KM92}. As we have two singularities, it is of type k2A, and they were classified by Mori in \cite{Mori02}. Mori proves that we have a flip $(\Gamma^{+} \subset \W^+) \to (Q \in \Y)$. The flipped curve $\Gamma^+$ must be over the central fiber, which is formed by two surfaces $W^+$ and $Z^+$, which correspond to proper transforms of $W'$ and $Z$. On $\Delta^+=W^+ \cap Z^+$ we have only orbifold normal crossing singularities due to the description of a k2A flip in \cite{Mori02}. We claim that $\Gamma^+$ is not contained in $W^+$. Otherwise:

\begin{itemize}
    \item If $\Gamma^+ \subset W^+ \cap Z^+$, then $\Delta^+=\Delta+\Gamma^+$ and $\Gamma^+ \cdot K_{\W^+}=\Gamma^+ \cdot (K_{W^+}+\Delta^+)=-\frac{1}{m'_1}-\frac{1}{m'_2}+\Gamma^+ \cdot \Delta$, where $\Delta$ is the proper transform of $\Delta \subset \W'$, and $m'_i$ are the indices of the new terminal singularities of $\W^+$. But $\Gamma^+ \cdot \Delta=\frac{1}{m'_i}$ for some $i$, and so $\Gamma^+ \cdot K_{\W^+}<0$, a contradiction. 
    
    \item If $\Gamma^+ \subset W^+$ but not in $Z^+$, then the c.q.s. in $Z^+$ and $Z$ over $Q$ are equal. As in $W^+$ we must have its dual c.q.s., then $W^+$ and $W'$ share the same c.q.s. As the contraction of each $\Gamma^+ \subset W^+$ and $\Gamma^- \subset W^{-}$ give the same c.q.s., then the other singularity (which must be a Wahl singularity) on $\Gamma^+$ must be equal to the singularity on $\Gamma^-$. But this is not possible, because after a k2A flip at least one of the indices must drop. 
\end{itemize}

Therefore, the flipped curve $\Gamma^+$ is in $Z^+$ and not in $W^+$. To compute it explicitly, we contract $\Gamma^- \subset W'$ obtaining a c.q.s. $\frac{1}{A}(1,B)$. Then in $Z^+$ we must have a Wahl singularity followed by $\Gamma^+$ and then $\frac{1}{A}(1,A-B)$ which contracts to the original singularity in $Z$. 

\begin{example}
Let us do the computation from the example in Figure \ref{fig:Construction_f} in terms of continued fractions:
\begin{itemize}
\item Over the minimal resolution of $W'$ we have $[\underbrace{2,\ldots,2}_q,q+6]-(1)-[\underbrace{2,\ldots,2}_{q+4},q+8]$. The $(1)$ represents the proper transform of $\Gamma^{-}$. The chain contracts to $[6]$, which contracts to $\frac{1}{6}(1,1)$ in $W^+$.

\item Over the minimal resolution of $Z^+$ we have $[\underbrace{2,\ldots,2}_{q+4},q+8]-(1)-[2,2,2,2,2]$. The $(1)$ represents the proper transform of $\Gamma^{+}$. The chain contracts to $[\underbrace{2,\ldots,2}_{q+4},q+2]$, whose contraction is the singularity in $Z$, dual to the one in $W'$.
\end{itemize}  
\label{exampleStep(8)}
\end{example}

\textbf{(9)} Let us identify the surface $Z^+$. As the leg of the \QHD singularity together with the sliding Wahl chain is contracted to $\frac{1}{2}(1,1)$, $\frac{1}{3}(1,1)$ or $\frac{1}{6}(1,1)$ (valency 3), or to a nonsingular point in the central curve (valency 4), then on $Z^+$ we have singularities $\frac{1}{2}(1,1)$, $\frac{1}{3}(1,2)$ or $\frac{1}{6}(1,5)$ over $\Delta$. In the minimal resolution $\hat{Z}^+$ of $Z^+$, $\Delta$ is a $(-2)$-curve, and we have an elliptic fibration $\hat{Z}^+ \to \P^1$ with a fiber of type $II^*$. Its central $(-2)$-curve is $\Delta$. On the other hand, the new Wahl singularity in $Z^+$ is resolved to a type $yI_1(n,a)$ configuration as in Figure \ref{fig:KAWA_WAHL}. One can check that, depending on the leg used in the construction of the original \QHD singularity, the multiplicity $y$ is $1$, $2$, $3$, $4$, or $6$. These are the multiplicities of the central component or extremal components for the elliptic fibers $II^*$, $III^*$, and $IV^*$. The number $ny$ beautifully coincides with the corresponding denominators of the $\lambda$s after computing the Wahl singularity of the flip. See Figure \ref{flipfigure}. 

\begin{theorem}
For each given \QHD degeneration from Theorem \ref{DolgachevSmoothings} we have:

\begin{itemize}
    \item[(1)] If the singularities of $W$ are all log canonical, then the \QHD degeneration is a minimal slc degeneration.

    \item[(2)] If there is one non-log canonical \QHD singularity, then the computed flip $\W^+ \to \D$ is a minimal slc degeneration.
\end{itemize}
        \label{NefSLCLimit}
\end{theorem}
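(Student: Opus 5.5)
For (1), I would verify the two defining conditions of a minimal slc degeneration directly. First, the total space must be slc (in fact log terminal) and $K_{\W}$ must be $\Q$-Cartier; both come from Remark~\ref{factsQHD}(6), Wahl's theorem that \QHD smoothings are $\Q$-Gorenstein with $\W$ log terminal. Since every singularity of $W$ is log canonical, the central fiber $W$ is itself log canonical, so $(\W,W)$ is log canonical by inversion of adjunction. Second, $K_{\W}$ must be nef over $\D$. Relative nefness reduces to nefness of $K_{\W}|_W=K_W$ on curves in the central fiber. By Corollary~\ref{Knef}, $K_W\equiv(1-\frac1a-\frac1b)F$ with $ab>4$, which is nef. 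The same holds for the slidings of Remark~\ref{rmk3-section6}, by Corollary~\ref{sameLambdai} and Proposition~\ref{M-modfiber}.

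For (2), I would check the same two conditions for $\W^+\to\D$ using the flip of steps (1)--(7).
\begin{itemize}
\item \emph{Singularities.} The central fiber $W^+\cup Z^+$ is reduced, and its double locus $\Delta^+$ has only orbifold normal crossing singularities, by Mori's description of k2A flips \cite{Mori02}. Away from $\Delta^+$, the components acquire only Wahl singularities: the ones surviving from $W$, and the new Wahl singularity on $\Gamma^+\subset Z^+$. Hence $W^+\cup Z^+$ is slc. The total space $\W^+$ is terminal and $\Q$-Gorenstein, since flips preserve these properties.
\item \emph{Nefness.} It suffices to show $K_{W^+}+\Delta^+|_{W^+}$ is nef on $W^+$ and $K_{Z^+}+\Delta^+|_{Z^+}$ is nef on $Z^+$, since these are the restrictions of $K_{\W^+}$.
\end{itemize}

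On $W^+$, $\Gamma^-$ has been contracted to the c.q.s.\ $\frac1A(1,B)$, as in Example~\ref{exampleStep(8)}. I would pull back to a resolution and use the elliptic fibration structure. Then $K_{W^+}+\Delta^+$ is numerically a nonnegative combination of the fiber class and $\Delta^+$, and I would compute the coefficients via the canonical class formula of Lemma~\ref{CanonicalClassFormula}. The contribution of $\bar F_1$ is replaced by the boundary $\Delta$, because the component of $\bar F_1$ has been flipped away. With this, intersections with fiber components are $\ge0$. The only delicate curves are $\Delta^+$ itself and the components meeting it; for these I would use the discrepancy values of Proposition~\ref{prop. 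Sol WHS}, together with the bound $0<\chi/e<1$ from Remark~\ref{factsQHD}(7).

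On $Z^+$, step (9) gives a minimal resolution $\hat Z^+$ with an elliptic fibration having a $II^*$ (resp.\ $III^*$, $IV^*$) fiber containing $\Delta$, and a $yI_1(n,a)$ configuration resolving the new Wahl singularity. Applying Lemma~\ref{CanonicalClassFormula} to $(\hat Z^+,\Delta)$, I expect $K_{Z^+}+\Delta^+\equiv cF$ with $c\ge0$. Here $c$ is rational elliptic: it combines $-1$, the coefficient contributed by $\Delta$ in the star fiber, and $\frac{ny-1}{ny}$ from the Wahl configuration. The observation in step (9) that $ny$ equals the denominator $a$ of $\lambda$ should make $c$ vanish or be positive exactly. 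A cleaner check is degree matching: the degrees on $W^+$ and $Z^+$ must be compatible on $\Delta^+$, and $K_{\W^+}\cdot\Gamma^+>0$ by the flip.

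The main obstacle is the nefness on $Z^+$ and along $\Delta^+$. This requires a case-by-case verification over the type II list of Table~\ref{Tab II} (and the analogous III, IV cases) to confirm that no further $K$-negative curve appears, i.e.\ that a single flip suffices. I would first test the family of Example~\ref{exampleStep(8)} and look for a uniform formula in terms of $\chi/e$ and $m_1$.
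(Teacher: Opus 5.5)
Your strategy matches the paper's. Part (1) follows from Corollary~\ref{Knef} and Wahl's results. For (2) you reduce to nefness of $K_{W^+}+\Delta$ and $K_{Z^+}+\Delta$, each numerically a multiple of the fiber class on the elliptic minimal resolution. Your coefficient on $Z^+$, $-1+\frac16+\frac{ny-1}{ny}=\frac{ny-6}{6ny}$, is exactly the paper's.

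The gap is that you stop before the decisive step and misidentify what is left. A class $\equiv cF$ with $c\ge 0$ is nef, so there are no ``delicate curves'' and no further $K$-negative curves to hunt case by case. All that remains is the sign of two explicit coefficients, and you establish neither.
\begin{itemize}
\item On $W^+$, the $\mathbb{Q}$HD discrepancies and $\chi/e$ are irrelevant. After $\Gamma^-$ is contracted, the singular points of $W^+$ on $\Delta$ are $\frac12(1,1),\frac13(1,1),\frac16(1,1)$. So over $F_1$ the minimal resolution is the minimal SNC model $F'$ of the type $II$ fiber. Lemma~\ref{calculito} (with $y=1$) together with $\lambda_2=\frac{b-1}{b}$ then gives $K_{W^+}+\Delta\equiv(\frac56-\frac1b)F$, which is positive for every $b\ge 2$. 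In other words, $\lambda_1=\frac{a-1}{a}$ is replaced by the constant $\frac56$.
\item On $Z^+$ you need $ny=a\ge 6$. The paper reads this off Table~\ref{lambda}: every non-log canonical type $II$ entry has denominator $>6$, and the only entry with denominator $6$ is the log canonical II.v3.f.0.
\end{itemize}

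Your ``cleaner check'' can replace that table inspection, but not through degree matching along $\Delta^+$. That check is vacuous: $\Delta$ is a fiber component on both sides, so $(K_{W^+}+\Delta)\cdot\Delta=(K_{Z^+}+\Delta)\cdot\Delta=0$. What works is that $\Gamma^+$ is horizontal for the fibration on $Z^+$. It joins the new Wahl point, which lies on the $yI_1$ fiber, to a point of $\Delta$ in the $II^*$ fiber. Hence
\[
0<K_{\mathcal{W}^+}\cdot\Gamma^+=(K_{Z^+}+\Delta)\cdot\Gamma^+=c\,(F\cdot\Gamma^+)
\]
forces $c>0$.

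With the $W^+$ coefficient computed and one of these two arguments for $c>0$ written out, the proof would be complete. As it stands, the key inequalities are only stated as expectations.
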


\begin{proof}
Part (1) is obvious by construction. For part (2) we need to check that $K_{W^+} + \Delta$ and $K_{Z^+}+\Delta$ are nef. If that is true, then $K_{\W^+}$ is nef as it is for every fiber.

The calculation is very similar to the one in \cite[Thm. 4.2]{U16a}. Let us start with $Z^+$. Let $\phi \colon Y \to Z^+$ be its minimal resolution. We have an elliptic fibration on $Y$, one fiber contains the Wahl exceptional chain and another fiber is $II^*$ with central curve equal $\Delta$. Let $\pi \colon Y \to S$ be the contraction of all $(-1)$-curves to get a relative minimal elliptic fibration $S \to \P^1$. It has a multiple fiber $yI_1=yF_1$ (over which we have the Wahl chain) and $II^*=F_2$. Let $\pi^*(F_1)=\sum_{i=1}^{s+1} \alpha_i G_i$ be its prime decomposition, where $G_{s+1}$ is the $(-1)$-curve, $G_1$ is the proper transform of $F_1$, and $s$ is the length of the Wahl chain. Then $$K_Y \sim -F + (y-1)F_1+ \sum_{i=1}^{s+1} (\alpha_i-1)G_i.$$ We also have that the discrepancies of the curves $G_i$ in the Wahl chain are $d(G_i)=-1+\frac{\alpha_i}{n}$ by \cite[Lem. 4.1]{U16a}, where $n$ is the index of the Wahl singularity. Then $$-F + (y-1)F_1 + \sum_{i=1}^{s+1} (\alpha_i-1)G_i \equiv \phi^*(K_{Z^+})- \sum_{i=1}^{s} \Big(1-\frac{\alpha_i}{n} \Big) G_i,$$ where $F$ is the general fiber, and so $F \sim \sum_{i=1}^{s+1} y\alpha_i G_i$. On the other hand $\phi^*(\Delta) \equiv \frac{1}{6} F$ and we obtain $$\phi^*(K_{Z^+}+\Delta) \equiv \frac{yn-6}{6yn}F .$$ But $yn$ is exactly the numerator of the corresponding $\lambda$ for the original \QHD singularity. By inspection on the table in Figure \ref{lambda}, we have that $ny > 6$ for type II non-log canonical. Thus, $K_{Z^+}+\Delta$ is nef. 

If now $\phi \colon Y \to W^+$ is the minimal resolution of $W^+$, a similar calculation shows that for $W^+$ we have $$ \phi^*(K_{W^+}+\Delta) \equiv \frac{5n-6}{6n}F,$$ and so it is nef. Note that $y=1$ in this case, because we started with a fibration with sections $g_S \colon S \to \P^1$. 
\end{proof}

\begin{remark}
In the calculation above, we assumed that the \QHD singularity was of type $II$. We now summarize it for all types $II$, $III$ or $IV$, where $\phi$ is the corresponding minimal resolution:
\begin{itemize}
    \item[(II)]$\phi^*(K_{W^+}+\Delta) \equiv \frac{5n-6}{6n}F$ and $\phi^*(K_{Z^+}+\Delta) \equiv \frac{yn-6}{6yn} F$,
    \item[(III)]$\phi^*(K_{W^+}+\Delta) \equiv \frac{3n-4}{4n}F$ and $\phi^*(K_{Z^+}+\Delta) \equiv \frac{yn-4}{4yn} F$,
    \item[(IV)]$\phi^*(K_{W^+}+\Delta) \equiv \frac{2n-3}{3n}F$ and $\phi^*(K_{Z^+}+\Delta) \equiv \frac{yn-3}{3yn} F$,
\end{itemize}
and so they are all nef for the case non-log canonical \QHD singularity. But for types III and IV we need 1 and 2 extra flips.
\label{K+Delta}
\end{remark}

%--- Are any of these nef slc limits equal for different \QHD singularities? Make a remark on this. 

%%%%%%%%%%%%%%%%%%%%%%%%%%%%%%%%%%%%%%%%%%%%%%%%%%%%%%%%%%%%%%%%%%%%%%%%%%%%%%%%%%%%%%%%%%%%%%%%%%%%%%%%%%%%%%%%%%%%%%%%%%%%%%%%%%%%%%%%%%%%%%%%%%%%%%%%%%
\section{Minimal slc limits are unobstructed and deform to Lee--Lee degenerations} \label{s7}

In this section, we will show that the singular fiber of the minimal slc models in Theorem \ref{NefSLCLimit} is unobstructed. As a consequence, we prove that the examples in \cite[Thm. 5.2]{LL25} degenerate into our minimal slc models.
\bigskip 

For the general set-up, we need to work with a rational elliptic fibration $g_S \colon S \to \P^1$ that has a multiple fiber $F_1=y I_1$ and a fiber $F_2$ of type $II$, $II^*$, $III$, $III^*$, $IV$, or $IV^*$. We denote the latter by $F_2$ or $F_2^*$ accordingly. We consider the composition of blow-ups $S' \to S$ to obtain the minimal simple normal crossing fiber $F'_2$ of $F_2$. This is the situation in Figure \ref{s.n.c config}. We denote the central curve of $F'_2$ by $\Delta$, and the rest $A,B,C$ with self-intersections $-a,-b,-c$ respectively, for the triples $-2,-3,-6$ (II), $-4,-2,-4$ (III), and $-3,-3,-3$ (IV). In the case of $F^*_2$ we do not blow-up, and we denote the central curve by $\Delta$. Let $S' \to Y_0$ be the contraction of the curves over $F_2$ except for $\Delta$.

\begin{lemma}
We have $-(K_{Y_0}+\Delta)= \big(\frac{1}{y}-m \big) F$ where $F$ is a fiber of the induced fibration, and $m=\frac{1}{6}$ for $II^*$, $m=\frac{1}{4}$ for $III^*$, $m=\frac{1}{3}$ for $IV^*$, $m=\frac{5}{6}$ for $II$, $m=\frac{3}{4}$ for $III$, and $m=\frac{2}{3}$ for $IV$. 
\label{calculito}
\end{lemma}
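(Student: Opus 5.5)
We compute $K_{Y_0}+\Delta$ by pulling back to the minimal resolution and then using the canonical bundle formula on $S$. First, since $g_S\colon S\to\P^1$ is a rational elliptic fibration with exactly one multiple fiber $F_1=yI_1$ (reduced fiber $F_1/y$ after dividing), the canonical bundle formula gives $K_S\equiv -F+(y-1)\tfrac{1}{y}F=-\tfrac1y F$. So $-(K_S+0)\equiv \tfrac1y F$. Next, let $\psi\colon S'\to Y_0$ be the contraction of all curves of $F'_2$ (or of $F_2^*$) except $\Delta$. Each contracted chain is a cyclic quotient chain (for $F_2'$ each of $A,B,C$ is a single curve; for $F_2^*$ the legs are chains of $(-2)$-curves). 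We write
$$\psi^*(K_{Y_0}+\Delta)\equiv K_{S'}+\Delta+\sum_j c_j D_j,$$
where the $D_j$ run over contracted curves and the $c_j$ are determined by $(K_{S'}+\Delta+\sum c_jD_j)\cdot D_k=0$ for all $k$.

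The key observation is that $K_{S'}+\Delta+\sum c_jD_j$ is numerically trivial on every component of the fiber over $F_2$. This holds for the contracted $D_k$ by construction. For $\Delta$ it should follow by adjunction, because $\Delta\cong\P^1$ and the cyclic quotient points add the standard different. Hence, by Zariski's lemma \cite[III, (8.2) Lemma]{BWHKPCV_2004}, the divisor $\pi^*K_S+E+\Delta+\sum c_jD_j$ is numerically a rational multiple of the fiber. Here $E$ is the exceptional part of $S'\to S$, which is zero in the starred case. Writing $E+\Delta+\sum c_j D_j\equiv m\,\pi^*F_2$, we compare coefficients on the component $\Delta$, whose multiplicity in $\pi^*F_2$ is $\mu$. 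This gives $m=(\mu_E(\Delta)+1)/\mu$. For $F_2^*$ we have $\mu_E=0$ and $\mu=6,4,3$, so $m=\tfrac16,\tfrac14,\tfrac13$. For $II,III,IV$ the $(-1)$-curve $\Delta$ has multiplicity $\mu=6,4,3$ in $\pi^*F_2$ and $\mu_E(\Delta)=5,3,2$ (the sum of the discrepancies of the successive blow-ups). This gives $m=\tfrac56,\tfrac34,\tfrac23$.

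Combining, $K_{Y_0}+\Delta\equiv(-\tfrac1y+m)F$, i.e.\ $-(K_{Y_0}+\Delta)\equiv(\tfrac1y-m)F$, as claimed. The step needing care is to justify that the combination is trivial on $\Delta$ as well, so that Zariski's lemma applies. As a sanity check, and an alternative route, we can verify directly that $(K_{Y_0}+\Delta)\cdot\Delta=0$. Adjunction gives $-2+\sum_i(1-\tfrac1{m_i})$, and for the leg indices $(2,3,6)$, $(2,4,4)$, $(3,3,3)$ this vanishes. Once that is checked, the rest is the routine multiplicity bookkeeping above.
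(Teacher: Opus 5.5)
Your strategy is sound and is the computation the paper leaves as ``straightforward'': pull back to $S'$, use $K_S\equiv-\frac1y F$, apply Zariski's lemma to $E+\Delta+\sum c_jD_j$, and read off $m=(\mu_E(\Delta)+1)/\mu$ from the coefficient of $\Delta$. The starred cases ($\mu_E(\Delta)=0$, $\mu=6,4,3$) are correct.

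The bookkeeping for $II$, $III$, $IV$ is wrong as written, though. You assert $\mu_E(\Delta)=5,3,2$. Your own formula then gives $m=\frac66=\frac44=\frac33=1$, not $\frac56,\frac34,\frac23$. The numbers $5,3,2$ are $\mu-1=m\mu$, so they already include the $+1$ coming from $\Delta$ itself.

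The coefficient of $\Delta$ in $E=K_{S'}-\pi^*K_S$ is the discrepancy of $\Delta$ over $S$, not a sum of the discrepancies of all the blow-ups. Blowing up a point lying on exceptional curves of discrepancies $a_i$ creates a curve of discrepancy $1+\sum a_i$. For $II$ the three blow-ups give $1$, $2$, and $1+1+2=4$. For $III$ they give $1$ and $2$, and for $IV$ just $1$. Hence $\mu_E(\Delta)=4,2,1$.

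As a check, $E\cdot\Delta=K_{S'}\cdot\Delta=-1$. For $II$, the $(-3)$- and $(-2)$-curves appear in $E$ with coefficients $1$ and $2$ and each meets $\Delta$ once, so $1+2-\mu_E(\Delta)=-1$. With the corrected values, $m=\frac{4+1}{6},\frac{2+1}{4},\frac{1+1}{3}$, as claimed.

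Also, the step you flag as needing care is automatic. We have $\psi^*(K_{Y_0}+\Delta)\cdot\pi^*F_2=(\pi^*K_S+E)\cdot\pi^*F_2=0$, because all other terms are fiber components. Since $\psi^*(K_{Y_0}+\Delta)$ is zero on every $D_j$ and $\Delta$ has multiplicity $\mu>0$ in $\pi^*F_2$, it follows that it is zero on $\Delta$. Your adjunction check with indices $(2,3,6)$, $(2,4,4)$, $(3,3,3)$ is also correct, just unnecessary.
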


\begin{proof}
This is a straightforward computation, simpler than the ones in Remark \ref{K+Delta}.
\end{proof}

\begin{corollary}
We have that $-(K_{Y_0}+\Delta)$ is effective when $y=1$ for $II$, $III$, $IV$; when $y\leq 6$ for $II^*$; when $y \leq 4$ for $III^*$; and when $y \leq 3$ for $IV^*$.
    \label{effective}
\end{corollary}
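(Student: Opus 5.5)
Corollary \ref{effective} follows directly from Lemma \ref{calculito}: we only need to decide when the numerical class $\big(\frac{1}{y}-m\big)F$ is represented by an effective divisor on $Y_0$. Since $F$ is the class of a fiber of the induced elliptic fibration $Y_0 \to \P^1$, and fibers are effective with $F^2=0$, a positive rational multiple of $F$ is $\Q$-effective (indeed effective after clearing denominators, as $Y_0$ is rational with $q=0$, so numerical and linear equivalence agree up to torsion, which vanishes on a rational surface). Thus $-(K_{Y_0}+\Delta)$ is ($\Q$-)effective exactly when $\frac{1}{y}-m \geq 0$, i.e. $y \leq \frac{1}{m}$. Conversely, if $\frac1y-m<0$ then $-(K_{Y_0}+\Delta)\cdot H<0$ for an ample $H$ (as $F\cdot H>0$), so it cannot be effective.

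We then read off the cases from the values of $m$ in Lemma \ref{calculito}. For $II$, $III$, $IV$ we have $m=\frac56,\frac34,\frac23$, so $\frac1y\geq m$ forces $y=1$ (as $\frac12<m$ in each case), and $y=1$ works. For $II^*$, $III^*$, $IV^*$ we have $m=\frac16,\frac14,\frac13$, giving $y\leq 6$, $y\leq 4$, $y\leq 3$ respectively.

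The only point needing some care is the passage from a numerical statement to genuine effectiveness. I would handle it by noting that $Y_0$ is a rational surface (obtained from the rational elliptic surface $S$ by blow-ups and contraction to quotient singularities), so $\mathrm{Pic}$ is torsion-free and numerical equivalence coincides with $\Q$-linear equivalence of $\Q$-Cartier divisors; hence $-(K_{Y_0}+\Delta)\sim_{\Q}\big(\frac1y-m\big)F$. When the multiple fiber $F_1=yI_1$ is present, one can also write the effective representative concretely using $F\sim yF_{1}$, e.g. $\big(\frac1y-m\big)F\sim_{\Q}(1-my)F_1$, which is visibly effective for $my\leq 1$. This is the step I expect to be the (mild) main obstacle; the rest is arithmetic.
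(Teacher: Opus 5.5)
Your proposal is correct and matches the paper: the corollary is stated without a separate proof and follows directly from Lemma \ref{calculito} by comparing $\frac1y$ with the listed values of $m$, exactly as you do. One small correction to your parenthetical: the argument gives $\Q$-effectiveness, because torsion vanishes in $\mathrm{Pic}(Y_0)$ but not in the Weil divisor class group of the singular surface $Y_0$ (for example, for $II^*$ with $y\geq 2$ the class $-(K_{Y_0}+\Delta)\sim F_1-\Delta$ contains no effective integral divisor); this does not affect your proof, since you work with $\sim_{\Q}$ throughout.
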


Let us consider $Y_0$ as in Corollary \ref{effective}. We have elliptic fibration $g_{Y_0} \colon Y_0 \to \P^1$. Let us blow up the node of $F_1$ through the morphism $\pi_1 \colon Y'_0 \to Y_0$. Let $\pi_1^*(F_1)=\Gamma_1+ 2G_1$ where $\Gamma_1^2=-4$ and $G_1^2=-1$. Let $T_{Y'_0}(-\log (\Gamma_1))$ be the tangent sheaf of $Y'_0$ with simple poles along $\Gamma_1$.

\begin{lemma}
We have that $H^2(Y'_0,T_{Y'_0}(-\log (\Gamma_1))(-\Delta))=0$.
\label{computeObstr0}
\end{lemma}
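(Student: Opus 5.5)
We will prove the vanishing by Serre duality and then a residue-sequence argument, following the strategy of \cite[Thm. 2.1]{PSU13} already used in Lemmas \ref{CMF} and \ref{ObstrMainConstr}. Serre duality gives
$$H^2(Y'_0,T_{Y'_0}(-\log \Gamma_1)(-\Delta)) \cong H^0(Y'_0,\Omega^1_{Y'_0}(\log \Gamma_1)(K_{Y'_0}+\Delta))^\vee,$$
so it suffices to show that this space of twisted logarithmic $1$-forms vanishes. Note that $Y_0$ has only the cyclic quotient singularities from contracting the non-central components of $F_2$ (or of $F_2^*$). We therefore either work on the minimal resolution $S'$ blown up at the node of $F_1$ and use reflexive sheaves, or use that $\Delta$ is $\Q$-Cartier with the Weil-divisor version of the statement. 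We will choose to work on the smooth model $S''$ (the blow-up of $S'$ at the node of $F_1$). There we enlarge the log divisor by adding all the contracted curves, and we check that sections descend.

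The first step is to compute the twist. Set $\pi_1$ as in the statement. We have $K_{Y'_0}=\pi_1^*K_{Y_0}+G_1$, so $K_{Y'_0}+\Delta=\pi_1^*(K_{Y_0}+\Delta)+G_1$. By Lemma \ref{calculito} this is $-(\frac1y-m)F+G_1$, and by Corollary \ref{effective} the coefficient $\frac1y-m$ is $\ge 0$. Using $F\equiv y\pi_1^*(F_1)=y(\Gamma_1+2G_1)$, the twist becomes numerically $-(1-my)(\Gamma_1+2G_1)+G_1$. In the spirit of the computations in the proof of Lemma \ref{ObstrMainConstr} (for instance $K_{S'}+A=E_2-(2y-1)E_1$), we will write it as an explicit combination $L=K_{Y'_0}+\Delta$ that is ``negative plus a little of $G_1$''.

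Next we use the residue sequence
$$0\to \Omega^1_{Y'_0}(L)\to \Omega^1_{Y'_0}(\log\Gamma_1)(L)\to \O_{\Gamma_1}(L)\to 0.$$
We will show $H^0(\O_{\Gamma_1}(L))=0$ by computing $L\cdot\Gamma_1$. With $\Gamma_1^2=-4$ and $\Gamma_1\cdot G_1=2$, we get $L\cdot \Gamma_1 = 2<0$? This needs care: $F\cdot\Gamma_1=0$, so $L\cdot\Gamma_1=G_1\cdot\Gamma_1=2>0$. Hence this naive bound fails, and we must instead argue as in \cite{PSU13}. A nonzero section of the log sheaf gives a residue that is a constant on $\Gamma_1$; the relevant piece is $H^0(\Omega^1(\log\Gamma_1)\otimes\O(L))\subset H^0(\Omega^1(\log \Gamma_1)(G_1))$ whenever $L\le G_1$. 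We then show $H^0(\Omega^1_{Y'_0}(\log\Gamma_1)(G_1))$ maps injectively to $H^0(\Omega^1_{Y_0}(\log F_{1}))$-type objects after pushing forward, and vanish using that $Y_0$ is rational with $h^0(\Omega^1)=0$. The last piece is that the residue along $\Gamma_1$ of a log form on a rational surface must satisfy the relation $\sum r_i[\Gamma_i]=0$ in $H^2$; since $[\Gamma_1]\neq 0$, the residue vanishes.

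So the concrete order is as follows. (i) Reduce to $L\le G_1$ (effectivity of $-(K_{Y_0}+\Delta)$, Corollary \ref{effective}). (ii) Show that $H^0(\Omega^1(\log\Gamma_1)(G_1))$ equals $H^0(\Omega^1(\log\Gamma_1))$: restricting to $G_1$, the quotient sheaf has negative degree because $G_1^2=-1$ and $G_1$ meets $\Gamma_1$. (iii) Show $H^0(\Omega^1(\log\Gamma_1))=0$ by the residue/Chern class argument together with $q=0$. The main obstacle is step (ii), namely controlling the twist by $G_1$ via the restriction $\Omega^1(\log\Gamma_1)|_{G_1}$, which is an extension of $\O_{G_1}(2)$-type pieces. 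I would first try the precise splitting $\Omega^1(\log\Gamma_1)|_{G_1}\to\Omega^1_{G_1}(\log(G_1\cap\Gamma_1))=\O_{G_1}$ with kernel $\O_{G_1}(-G_1)=\O(1)$. After twisting by $G_1$, these become $\O(-1)$ and $\O(0)$, so I would handle the $\O(0)$ part via the strictly negative multiple of $F$ whenever $\frac1y-m>0$. The boundary cases with $\frac1y=m$ would be handled separately.
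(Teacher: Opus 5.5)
Your reduction matches the paper's. You use Serre duality, then $K_{Y'_0}+\Delta=\pi_1^*(K_{Y_0}+\Delta)+G_1\le G_1$ (by Corollary \ref{effective}) to embed into $H^0(Y'_0,\Omega^1_{Y'_0}(\log\Gamma_1)^{\vee\vee}(G_1))$, and you plan to finish with a residue/Chern class argument. (The paper writes ``$-(K_{Y'_0}+\Delta+G_1)$ effective''; the sign you use, $K_{Y'_0}+\Delta\le G_1$, is the one the argument actually needs.)

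The gap is your step (ii). Your own computation shows that $\Omega^1_{Y'_0}(\log\Gamma_1)(G_1)|_{G_1}$ is an extension of $\O_{\P^1}(-1)$ by $\O_{\P^1}$. So it has a one-dimensional space of sections, and the equality $H^0(\Omega^1(\log\Gamma_1)(G_1))=H^0(\Omega^1(\log\Gamma_1))$ does not follow. In particular, the sentence ``the quotient sheaf has negative degree'' is false. The $\O_{\P^1}$ piece is exactly the residue along $G_1$ of a form with a logarithmic pole there, i.e.\ the coefficient of $dz/z$ when $G_1=\{z=0\}$.

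Your proposed patch cannot remove that piece. Any effective divisor in the class of $-\pi_1^*(K_{Y_0}+\Delta)\equiv(\frac1y-m)F$ is vertical. If it contained $\Gamma_1$ or $G_1$, intersecting with them forces it to contain $\Gamma_1+2G_1\equiv\frac1yF$, which is numerically too large since $m>0$. Hence it is disjoint from $G_1$, and twisting by it leaves the restriction to $G_1$ unchanged. Moreover, in the boundary cases $\frac1y=m$ (e.g.\ $y=6$ over $II^*$, allowed by Corollary \ref{effective}) there is no extra negativity at all.

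The missing idea is to enlarge the log divisor rather than try to discard the twist. There is an inclusion $\Omega^1(\log(\Gamma_1+G_1))\subset\Omega^1(\log\Gamma_1)(G_1)$ with cokernel $\Omega^1_{G_1}\otimes\O_{G_1}(\Gamma_1+G_1)\cong\O_{\P^1}(-2+2-1)=\O_{\P^1}(-1)$. Hence the two spaces of global sections coincide.

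Then run your step (iii) with both components at once. Use the residue sequence for $\Gamma_1+G_1$, together with $h^0(\Omega^1)=0$ and the injectivity of the Chern class map $H^0(\O_{\Gamma_1})\oplus H^0(\O_{G_1})\to H^1(\Omega^1)$. This gives the vanishing uniformly in $y$ and in the type of $F_2$.

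For the injectivity, note that the intersection matrix $\begin{pmatrix}-4&2\\2&-1\end{pmatrix}$ of $\Gamma_1,G_1$ is degenerate, so it does not give independence. Argue instead that $[\Gamma_1]+2[G_1]=[\pi_1^*F_1]\neq 0$ while $[G_1]$ is the exceptional class of $\pi_1$, so $[\Gamma_1]$ and $[G_1]$ are linearly independent.
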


\begin{proof}
This follows the strategy in \cite[Lem. 9.4]{Hack04}. By Serre duality, $$H^2(Y'_0,T_{Y'_0}(-\log (\Gamma_1))(-\Delta))= \text{Hom}(T_{Y'_0}(-\log (\Gamma_1)),(K_{Y'_0}+\Delta))^{\vee}.$$ As $-(K_{Y'_0}+\Delta+G_1)$ is effective ($Y_0$ as in Corollary \ref{effective}), we have $$\text{Hom}(T_{Y'_0}(-\log (\Gamma_1)),(K_{Y'_0}+\Delta)) \hookrightarrow \text{Hom}(T_{Y'_0}(-\log (\Gamma_1))(-G_1), \O_{Y_0}),$$ and $\text{Hom}(T_{Y'_0}(-\log (\Gamma_1))(-G_1), \O_{Y'_0})=H^0(Y'_0, {\Omega_{Y'_0}^1}(\log(\Gamma_1))^{\vee \vee}(G_1))$. We now follow the strategy in the proof of \cite[Thm. 2.1]{PSU13}. We have the short exact sequence $$ 0\to {\Omega_{Y'_0}^1}(\log(\Gamma_1+G_1))^{\vee \vee} \to {\Omega_{Y'_0}^1}(\log(\Gamma_1))^{\vee \vee}(G_1) \to \Omega_{G_1}^1(\Gamma_1+G_1)|_{G_1} \simeq \O_{\P^1}(-1) \to 0, $$ and so  $H^0({\Omega_{Y'_0}^1}(\log(\Gamma_1+G_1))^{\vee \vee})=H^0({\Omega_{Y'_0}^1}(\log(\Gamma_1))^{\vee \vee}(G_1))$. We now take the residue sequence $$ 0 \to {\Omega_{Y'_0}^1}^{\vee \vee} \to {\Omega_{Y'_0}^1}(\log (\Gamma_1 + G_1))^{\vee \vee} \to \O_{\Gamma_1} \oplus \O_{G_1} \to 0.$$ We have $h^0({\Omega_{Y_0}^1}^{\vee \vee})=h^0(\Omega_{S}^1)=0$ (see \cite[Lem. 9.4]{Hack04}). On the other hand, the map $h^0(\O_{\Gamma_1}) \oplus h^0(\O_{G_1}) \to H^1({\Omega_{Y'_0}^1}^{\vee \vee}) \hookrightarrow H^1(\Omega_{S''}^1)$ is the Chern class map, where $S'' \to S'$ is the blow-up of $F_1$ at the node. But $\Gamma_1$ and $G_1$ are linearly independent, and so $0=H^0({\Omega_{Y'_0}^1}(\log(\Gamma_1+G_1))^{\vee \vee})=H^0({\Omega_{Y'_0}^1}(\log(\Gamma_1))^{\vee \vee}(G_1))$.
\end{proof}

\begin{proposition}
Let $Y \to Y'_0$ be the blow-ups over a node of $\Gamma_1+G_1$ so that we form a Wahl chain $D$ plus an extra $(-1)$-curve. Then $H^2(Y,T_{Y}(-\log (D))(-\Delta))=0$.  
\label{erase/add}
\end{proposition}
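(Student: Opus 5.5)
We start from the vanishing of Lemma \ref{computeObstr0}, namely $H^2(Y'_0,T_{Y'_0}(-\log (\Gamma_1))(-\Delta))=0$. We then propagate it along the blow-ups $Y \to Y'_0$ using the add/delete principle for $(-1)$-curves of \cite[Section 4]{PSU13}, as in the proof of Lemma \ref{CMF}. The twist by $-\Delta$ is harmless: all blow-ups occur over the node of $\Gamma_1+G_1$, which lies in the fiber $F_1$, away from $\Delta \subset F_2$. So $\Delta$ is pulled back isomorphically, and locally near the blown-up points the twisted sheaf equals the untwisted one. Hence the local arguments of \cite{PSU13} apply verbatim, and all the global cohomology comparisons are compatible with tensoring by $\O(-\Delta)$.

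The steps are as follows.

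\textbf{Step 1.} Add $G_1$ to the boundary. There is an exact sequence
\[
0\to T_{Y'_0}(-\log(\Gamma_1+G_1))(-\Delta)\to T_{Y'_0}(-\log\Gamma_1)(-\Delta)\to N_{G_1}(-\Delta|_{G_1})\to 0 .
\]
Here $N_{G_1}=\O_{\P^1}(-1)$ and $G_1\cap\Delta=\emptyset$, so $H^1$ of the quotient vanishes. This gives $H^2(T_{Y'_0}(-\log(\Gamma_1+G_1))(-\Delta))=0$. This is the "adding a $(-1)$-curve" half of the principle.

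\textbf{Step 2.} For each blow-up $\sigma$ at a node of the current boundary $D_k$, set $D_{k+1}=\sigma^{-1}(D_k)_{red}$, which includes the new exceptional curve. Then $\sigma_*T(-\log D_{k+1})=T(-\log D_k)$, and $R^1\sigma_*$ vanishes. Therefore $H^2$ is preserved, and the twist by $-\Delta$ is pulled back unchanged.

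\textbf{Step 3.} Iterate Step 2 until the chain consisting of the proper transforms of $\Gamma_1$, $G_1$ and the intermediate curves is the Wahl chain $D$ plus the extra $(-1)$-curve $G$. At that point we have $H^2(Y,T_Y(-\log(D+G))(-\Delta))=0$.

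\textbf{Step 4.} Delete $G$ from the boundary using
\[
0\to T_Y(-\log(D+G))(-\Delta)\to T_Y(-\log D)(-\Delta)\to N_G(-\Delta)|_G\to 0 .
\]
The outer $H^2$ terms vanish: the left one by Step 3, and the right one because it is supported on a curve. The middle term therefore vanishes as well. The fact that the blow-ups are at nodes guarantees the surjectivity on the right, since $G$ meets $D$ transversally.

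The main obstacle is Step 4, together with the bookkeeping in Step 2. One must make sure the quotient sheaf is really the normal bundle of $G$, so that no torsion arises where $G$ meets $D$. One must also check that the deletion does not require $H^1$ to vanish. I would handle this via the residue-type sequence exactly as in the proof of \cite[Thm. 2.1]{PSU13}, noting that $H^2$ of any sheaf supported on a curve is zero, so right-exactness suffices.
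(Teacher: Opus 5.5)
Your proposal is correct and is essentially the paper's argument. The paper just invokes the add/delete principle for $(-1)$-curves from \cite[Section 4]{PSU13}, noting that all blow-ups take place away from the singularities and from $\Delta$; your Steps 1--4 (adding $G_1$ via $H^1(\mathcal{O}_{\mathbb{P}^1}(-1))=0$, the log-\'etale blow-ups at nodes, then deleting $G$) unpack that principle explicitly, and the obstacle you flag in Step 4 does not arise, since the cokernel of $T_Y(-\log(D+G))(-\Delta)\hookrightarrow T_Y(-\log D)(-\Delta)$ is supported on $G$ and so has vanishing $H^2$ whatever it is.
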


\begin{proof}
This is true because of the same principles of adding and erasing $(-1)$-curves to a SNC divisor and keeping the relevant second cohomology. This is explained in \cite[Section 4]{PSU13}. Note that all birational operations happen away from the singularities of the surfaces involved.
\end{proof}

\begin{proposition}
Let $Y \to U$ be the contraction of the Wahl chain $D$. Then $$H^2(Y,T_{Y}(-\log (D))(-\Delta))=H^2(U,T_U(-\Delta)).$$
\label{LP07}
\end{proposition}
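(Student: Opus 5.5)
The plan is to follow the argument of Lee--Park \cite[Thm. 2]{LP_2007}, which the paper already used in Theorem \ref{ObstrThm}, but now twisted by $\O(-\Delta)$. Let $\psi \colon Y \to U$ be the contraction of $D$, and write $\mathcal{F}=T_Y(-\log D)(-\Delta)$. Since $\Delta$ is disjoint from $D$ (all the blow-ups take place over the node of $F_1$, away from $\Delta$ and the c.q.s. on it), $\O_Y(-\Delta)=\psi^*\O_U(-\Delta)$ is locally free near $D$. The projection formula therefore gives $\psi_*\mathcal{F}=\psi_*(T_Y(-\log D))\otimes \O_U(-\Delta)$ and $R^1\psi_*\mathcal{F}=R^1\psi_*(T_Y(-\log D))\otimes \O_U(-\Delta)$.

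\textbf{Step 1: vanishing of $R^1\psi_*$.} A Wahl singularity is a cyclic quotient singularity, hence taut. By \cite[Lem. 1]{LP_2007} (see Remark \ref{ObstrRem}) we get $R^1\psi_*(T_Y(-\log D))=0$. The Leray spectral sequence then degenerates and gives
\[
H^2(Y,\mathcal{F})=H^2\bigl(U,\psi_*(T_Y(-\log D))(-\Delta)\bigr).
\]

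\textbf{Step 2: comparison with $T_U(-\Delta)$.} For a quotient singularity one has $\psi_*(T_Y(-\log D))=\psi_*T_Y=T_U$. This is Wahl's equisingular result, also used in \cite{LP_2007}: vector fields on $U$ lift to $Y$ and are tangent to the exceptional curves. If that equality is awkward to cite, I would fall back on the exact sequence $0\to\psi_*(T_Y(-\log D))\to T_U\to\Delta'\to 0$ from the proof of Theorem \ref{ObstrThm}. There $\Delta'$ is supported at the singular point, so it is untouched by the twist, and since it is a skyscraper sheaf we have $H^1(\Delta')=H^2(\Delta')=0$. This yields a surjection $H^2(U,\psi_*(T_Y(-\log D))(-\Delta))\to H^2(U,T_U(-\Delta))$. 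To get an isomorphism, I need $H^1$ of the skyscraper to vanish (which it does) and the preceding map $H^1(U,T_U(-\Delta))\to H^0(\Delta')$ to be onto. That is exactly why I would prefer to use the equality $\psi_*T_Y(-\log D)=T_U$, which holds for quotient singularities and makes $\Delta'=0$.

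\textbf{Main obstacle.} The delicate point is Step 2: justifying $\psi_*T_Y(-\log D)=T_U$, or at least that the cokernel contributes nothing. I would first try to cite the quotient-singularity case from \cite{LP_2007}, via Schlessinger/Wahl, namely that derivations of a quotient singularity are invariant derivations of $\C^2$, which lift and preserve $D$. Step 1 is routine once tautness is invoked.
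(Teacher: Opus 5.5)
Your proof is correct and follows the paper's route: the paper simply invokes the proofs of Lemma~1 and Theorem~2 of \cite{LP_2007}. Lemma~1 gives the vanishing of $R^1\psi_*T_Y(-\log D)$ by tautness, and Theorem~2 supplies the Leray argument plus the comparison of $\psi_*T_Y(-\log D)$ with $T_U$; the twist by $\O(-\Delta)$ is harmless precisely because $D$ is disjoint from $\Delta$ and the c.q.s.\ on it, as you observe.

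Your worry in Step 2 is unnecessary. In the long exact sequence of $0\to\psi_*(T_Y(-\log D))(-\Delta)\to T_U(-\Delta)\to\Delta'\to 0$, the map on $H^2$ is flanked by $H^1(\Delta')=0$ and $H^2(\Delta')=0$, so it is already an isomorphism and no surjectivity condition is needed. In fact $\Delta'=0$: we have $\psi_*T_Y=T_U$ by Burns--Wahl, and every vector field near $D$ is tangent to each $D_i$ because $H^0(\O_{\P^1}(D_i^2))=0$.
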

    
\begin{proof}
This follows from the same proofs of \cite[Lem. 1 and Theorem 2]{LP_2007}.
\end{proof}

\begin{theorem}
Let us consider the surface $U_1 \cup U_2$, where $U_i$s are log terminal surfaces, such that:   
\begin{itemize}
\item[(1)] $\P^1=U_1 \cap U_2=\Delta$, and along it we have orbifold normal crossing points only.
\item[(2)] If $\Delta_i:=\Delta|_{U_i}$, then $\Delta_1^2+\Delta_2^2=0 \ \text{or} \ 1$.
\item[(3)] $H^2(U_i,T_{U_i}(-\Delta_i))=0$ for $i=1,2$.
\end{itemize}
Then $U_1 \cup U_2$ has unobstructed $\Q$-Gorenstein deformations.
\label{LPnoObstructions}
\end{theorem}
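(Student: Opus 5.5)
The plan is to follow the standard approach for $\Q$-Gorenstein deformations of reducible slc surfaces (Hacking, Lee--Park). We reduce the vanishing of the global obstruction space to the vanishing of $H^2$ of tangent sheaves twisted by $-\Delta_i$ on each component. Let $U=U_1\cup U_2$. The $\Q$-Gorenstein deformations of $U$ are governed by the $\Q$-Gorenstein tangent sheaves $T^i_{QG,U}$. Locally, $U$ has three kinds of points: smooth points of $U$, log terminal (cyclic quotient) points in the interior of the $U_i$, and orbifold normal crossing points $(xy=0)\subset \frac{1}{m}(q,-q,1)$ along $\Delta$. The obstruction space is $\mathbb{T}^2_{QG,U}$, and it sits in the local-to-global spectral sequence
$$0\to H^2(T_U)\to \mathbb{T}^2_{QG,U}\to H^0(\mathcal{T}^2_{QG,U})\oplus H^1(\mathcal{T}^1_{QG,U})\to \cdots.$$
It therefore suffices to show three things: $H^2(U,T_U)=0$; the local $\Q$-Gorenstein obstruction sheaf $\mathcal{T}^2_{QG,U}$ vanishes; and $H^1(\mathcal{T}^1_{QG,U})=0$.

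\emph{Local part.} At the interior quotient singularities and at the orbifold normal crossing points, the index one covers are hypersurface singularities ($A$-type, or $xy=0$ in $\C^3$). Hence the equivariant deformations are unobstructed and $\mathcal{T}^2_{QG}=0$. This is exactly the statement in \cite[Section 3]{Hack04} and \cite{Hack12} that $\Q$-Gorenstein deformations of slc surface singularities with hypersurface index one cover are unobstructed.

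\emph{The sheaf $\mathcal{T}^1_{QG}$.} Away from the finitely many interior points, $\mathcal{T}^1_{QG,U}$ is supported on $\Delta$. By Friedman's computation for normal crossing surfaces, adapted to the orbifold setting as in \cite[Section 5]{Hack12}, $\mathcal{T}^1_{QG,U}|_\Delta$ is a line bundle on $\Delta\simeq\P^1$. Its degree is $\Delta_1^2+\Delta_2^2$, computed as orbifold self-intersections, plus the contribution of the orbifold points. Its $H^1$ vanishes on $\P^1$ whenever the degree is $\geq -1$. The plan is to check, via Hacking's local computation at $(xy=0)\subset\frac{1}{m}(q,-q,1)$, that the orbifold corrections exactly cancel against the fractional parts. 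Then $\deg \mathcal{T}^1_{QG}|_\Delta=\Delta_1^2+\Delta_2^2\in\{0,1\}$ by hypothesis (2), so $H^1=0$. The skyscraper parts at interior points contribute no $H^1$. \textbf{This degree bookkeeping is the step I expect to be the main obstacle.} The aim is to show that the integral degree is $\lceil\Delta_1^2+\Delta_2^2\rceil$-type and nonnegative.

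\emph{Global part.} For $H^2(U,T_U)$, we use the exact sequence
$$0\to T_U\to \nu_*T_{U_1}(-\log \Delta_1)\oplus \nu_*T_{U_2}(-\log\Delta_2)\to T_\Delta\to 0,$$
or more conveniently
$$0\to T_{U_1}(-\Delta_1)\oplus T_{U_2}(-\Delta_2)\to T_U\to \mathcal{Q}\to 0,$$
with $\mathcal{Q}$ supported on the curve $\Delta$. Since $H^2$ of a sheaf on a curve vanishes, hypothesis (3) gives $H^2(T_U)=0$. We must take reflexive tangent sheaves on the log terminal surfaces $U_i$, and check that the first map is injective with cokernel supported on $\Delta$. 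Locally this is clear: a vector field on $U$ restricts to vector fields on each $U_i$ tangent to $\Delta$ that agree on $\Delta$, and those vanishing on $\Delta$ are exactly $T_{U_i}(-\Delta_i)$. We check this at the orbifold points by passing to the index one cover, where it is the standard $xy=0$ statement, and then taking invariants.

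Combining the three pieces gives $\mathbb{T}^2_{QG,U}=0$, so $U_1\cup U_2$ has unobstructed $\Q$-Gorenstein deformations. As in \cite{LP_2007}, one then also obtains that the local deformations (smoothing of $\Delta$ and of the interior Wahl points) globalize.
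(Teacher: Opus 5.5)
Your proposal is correct and follows the paper's proof. Both arguments reduce to $H^j(U,\mathcal{T}^i_{QG,U})=0$ for $i+j=2$, using hypersurface index-one covers for $\mathcal{T}^2_{QG,U}$, hypothesis (2) for $H^1(\mathcal{T}^1_{QG,U})$, and hypothesis (3) for $H^2(T_U)$; the paper simply cites \cite[Lem. 9.4]{Hack04} for the last step, while you supply the inclusion $T_{U_1}(-\Delta_1)\oplus T_{U_2}(-\Delta_2)\hookrightarrow T_U$ with cokernel supported on $\Delta$.

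The paper takes the degree step you flag as the known identification $\mathcal{T}^1_{QG,U}\simeq\O_\Delta(\Delta_1^2+\Delta_2^2)$, and your plan goes through with no correction term. At $(xy=0)\subset\frac{1}{m}(q,-q,1)$ the stabilizer acts trivially on $N_{\Delta/U_1}\otimes N_{\Delta/U_2}$, so its invariant pushforward to $\Delta\simeq\P^1$ is a line bundle of degree exactly $\Delta_1^2+\Delta_2^2$.
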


\begin{proof}
This is the same proof of \cite[Thm. 9.1]{Hack01} together with \cite[Lem. 9.4]{Hack04}. Let $U:=U_1 \cup U_2$. Consider the sheaves $\T_{QG,U}^i$ as in \cite[Section 3]{Hack04} for $i=0,1,2$ that control $\Q$-Gorenstein deformations of $U$. We need to show that $H^j(U,\T_{QG,U}^i)=0$ for $i+j=2$. We have $H^0(U,\T_{QG,U}^2)=0$ because the local canonical cover along $\Delta$ is a hypersurface. We also have $\T_{QG,U}^1 \simeq \O_{\Delta}(\Delta_1^2+\Delta_2^2)$ along $\Delta=\P^1$, and so under (2) we obtain $H^1(U,\T_{QG,U}^1)=0$. Thus, we only need to check $H^2(U,\T_{QG,U}^0)=0$, but this follows \cite[Lem. 9.4]{Hack04} directly from (3). 
\end{proof}

In \cite{LL25}, D. Lee and Y. Lee give the following examples of degenerations of Dolgachev surfaces $D_{a,b}$ with gcd$(a,b)=1$. Essentially, they consider the situation of two Jacobian rational elliptic fibrations $\hat{U}_1/\hat{U}_2$ with at least one singular fiber pair (1) $I_0^*/I_0^*$, (2) $II/II^*$, (3) $III/III^*$, and (4) $IV/IV^*$. For the star fibers they contract the $(-2)$-curves except for the central curve $\Delta$. For the fibers $II$, $III$, and $IV$ they consider the minimal SNC resolution with curves $\Delta$ (central), $A,B,C$ as before, and contract $A,B,C$. In both cases we obtain singular surfaces $U_1,U_2$. Then they glue them along $\Delta$, obtaining a surface $U=U_1 \cup U_2$ with two components and orbifold normal crossing singularities. In \cite[Thm. 5.2]{LL25}, they prove that $U$ has unobstructed $\Q$-Gorenstein smoothings into Jacobian rational elliptic fibrations. Then they perform logarithmic transformations over two fibers of the smoothing, each landing on each one $U_i$. The result $U_{a,b}$ is a $\Q$-Gorenstein degeneration of surfaces $D_{a,b}$ \cite[Cor. 5.3]{LL25}. We call them \textit{Lee--Lee examples} of type (1), (2), (3), (4). We do not care if gcd$(a,b)$ is $1$ or not $1$.      

\begin{corollary}
The nef slc limit in Theorem \ref{NefSLCLimit} is unobstructed, and they are degenerations of the Lee--Lee examples of type (2), (3), (4).
\label{LeeLee}
\end{corollary}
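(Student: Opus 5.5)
The plan is to apply Theorem \ref{LPnoObstructions} to the central fiber $W^+\cup Z^+$ of the flip $\W^+\to\D$, with $U_1=W^+$, $U_2=Z^+$, and then read off the Lee--Lee examples by smoothing only the Wahl singularities. Hypothesis (1) holds by construction: after the k2A flip, $W^+$ and $Z^+$ still meet along $\Delta^+\simeq\P^1$ with only orbifold normal crossing points (step (7) of Section \ref{s6}). Both components are log terminal, since $Z^+$ has cyclic quotient singularities (the Wahl singularity from the flip plus the c.q.s.\ on $\Delta$), and $W^+$ has the c.q.s.\ of the legs, the Wahl singularity $\frac{1}{b^2}(1,b-1)$, and the c.q.s.\ obtained by contracting $\Gamma^-$ (for instance $\frac16(1,1)$ in Example \ref{exampleStep(8)}). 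For hypothesis (2), I compute $\Delta_1^2+\Delta_2^2$ from the minimal resolutions. On $\hat Z^+$ the curve $\Delta$ is the central $(-2)$-curve of a $II^*$ (resp.\ $III^*,IV^*$) fiber. On the resolution of $W^+$ it is the central $(-1)$-curve of $F'$ in Figure \ref{s.n.c config}. Pulling back via the leg contractions gives $\Delta_{W^+}^2=-1+\frac12+\frac13+\frac16=0$ in type II, and similarly for III and IV. On $Z^+$ we get $\Delta_{Z^+}^2=-2+\frac12+\frac23+\frac56=0$. The sum is $0$, and the same holds in types III and IV. Equivalently, $\phi^*\Delta$ is a rational multiple of $F$ on each side, consistent with $\phi^*(\Delta)\equiv\frac16F$ in the proof of Theorem \ref{NefSLCLimit}.

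The main step is hypothesis (3), namely $H^2(U_i,T_{U_i}(-\Delta_i))=0$. Each $U_i$ is obtained from a $Y_0$ as in Lemma \ref{calculito}. For $W^+$ the fiber is $F_2$ of type II, III or IV with $y=1$, and the Wahl singularity $\frac{1}{b^2}(1,b-1)$ sits over an $I_1$. For $Z^+$ the fiber is $F_2^*$, and the new Wahl singularity of type $yI_1(n,a)$ sits over $F_1$ with $y\in\{1,2,3,4,6\}$. The bound on $y$ required by Corollary \ref{effective} must be checked case by case from step (9), where $y$ is the multiplicity of a component of $II^*,III^*,IV^*$. The listed values satisfy the corollary's bounds ($y\le6,4,3$ respectively), since they come from multiplicities of components of those fibers. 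With $-(K_{Y_0}+\Delta)$ effective, I apply Lemma \ref{computeObstr0}, then Proposition \ref{erase/add} to grow $\Gamma_1+G_1$ into the Wahl chain $D$, and finally Proposition \ref{LP07} to pass to $U$. A subtlety is that on $W^+$ there is an extra c.q.s.\ coming from contracting $\Gamma^-$ together with the neighbouring chains, not only Wahl chains. I would handle this by the same add/delete argument of \cite[Section 4]{PSU13}, using $\log$ of the whole chain, and by the tautness of c.q.s.\ (Remark \ref{ObstrRem}) to identify $H^2$ on the resolution with $H^2$ on the singular surface, as in \cite{LP_2007}. Theorem \ref{LPnoObstructions} then gives unobstructed $\Q$-Gorenstein deformations. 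I expect verifying (3) on $W^+$ (and the multiplicity bound for $Z^+$) to be the main obstacle.

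For the second claim, unobstructedness lets us choose a $\Q$-Gorenstein deformation that smooths only the Wahl singularities on each component and is locally trivial along $\Delta$ and at its orbifold normal crossing points. On $W^+$, smoothing $\frac{1}{b^2}(1,b-1)$ undoes a logarithmic transformation of index $b$ via rational blow-down \cite{FS97}, giving a Jacobian rational elliptic surface with a fiber II (III, IV) whose $A,B,C$ are contracted. On $Z^+$, smoothing the Wahl singularity over $yI_1(n,a)$ similarly produces a rational elliptic surface (an $I_1$ fiber after undoing the log transform, as in the proof of Theorem \ref{DolgachevSmoothings}) with the star fiber contracted to $\Delta$. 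The general fiber of this partial smoothing is exactly the Lee--Lee surface $U=U_1\cup U_2$ of type (2), (3) or (4), carrying the corresponding log transformations with the multiplicities read off via $ny$ and Table \ref{lambda}. Hence the Lee--Lee example degenerates to our minimal slc limit.
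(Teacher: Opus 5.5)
Your check of the hypotheses of Theorem \ref{LPnoObstructions} matches the paper's argument. Your $\Delta_{W^+}^2=\Delta_{Z^+}^2=0$ is the paper's count $-c_1-c_2+\#$, and your check $y\le 6,4,3$ spells out what the paper leaves implicit. The ``subtlety'' you raise about $W^+$ is not an issue. $Y_0$ is defined by contracting all non-central components of $F_2'$ (resp.\ $F_2^*$), so the c.q.s.\ produced by contracting $\Gamma^-$ is already one of the singularities of $Y_0$ on $\Delta$. Hence Lemma \ref{computeObstr0} and Propositions \ref{erase/add}, \ref{LP07} apply directly, and no tautness argument is needed.

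The genuine problem is the second part: you have the direction of the logarithmic transformation reversed.
\begin{itemize}
\item The $\Q$-Gorenstein smoothing (rational blow-down) of the chain $[b+2,2,\ldots,2]$ over an $I_1$ fiber of a Jacobian rational elliptic surface \emph{is} a logarithmic transformation of index $b$, exactly as used in the proof of Theorem \ref{DolgachevSmoothings}. It does not undo one. So smoothing the Wahl singularity of $W^+$ gives a Halphen surface of index $b$ with $A,B,C$ contracted, not a Jacobian surface.
\item Likewise, $\hat Z^+$ already lies over a Halphen surface of index $y$ (with multiple fiber $yI_1$). Smoothing the $yI_1(n,a)$ Wahl singularity creates a multiple fiber of multiplicity $yn=a$, not an $I_1$ fiber.
\end{itemize}
Your identification cannot be right. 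In a $\Q$-Gorenstein deformation keeping $\Delta$, the degree of $K_{U_i}+\Delta_i$ against a relatively ample class is constant. In type II these classes on $W^+$ and $Z^+$ are the nef classes $\frac{5b-6}{6b}F$ and $\frac{yn-6}{6yn}F$. On the Jacobian surfaces with contracted fibers, Lemma \ref{calculito} gives $-\frac16F$ and $-\frac56F$ instead. With your description, the general fiber would be Lee--Lee's Jacobian surface $U$ before any logarithmic transformation, and the ``corresponding log transformations'' in your last sentence would have no source. The correct conclusion, as in the paper, is that the general fiber is $U_1'\cup U_2'$. Here $U_i'$ are Halphen pencils of index $b$ and $a=ny$ (the denominator of $\lambda$), with $F_2$ resp.\ $F_2^*$ contracted onto $\Delta$. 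By definition, this is the Lee--Lee example $U_{a,b}$.
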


\begin{proof}
Our minimal slc surface $U=U_1 \cup U_2$ will satisfy the hypothesis of Theorem \ref{LPnoObstructions}. We construct each component $U_i$ from a surface $Y_0$ as in Corollary \ref{effective}. Then by Lemma \ref{computeObstr0}, and Propositions \ref{erase/add} and \ref{LP07}, we have hypothesis (3). Hypothesis (1) is for free by construction. For hypothesis (2), we compute $$\Delta_1^2+\Delta_2^2=-c_1-c_2+\#,$$ $\#$ is the number of singular points on $\Delta$, and $-c_i$ is the self-intersection of the proper transform of $\Delta_i$ in the minimal resolutions of $U_i$. It is an easy check that $c_1=2$, $c_2=1$, and $\#=3$ (valency $3$) or $4$ (valency $4$). Thus Theorem \ref{LPnoObstructions} holds for $U$.

We now choose to $\Q$-Gorenstein smoothing of the Wahl singularity in $U_1$ and in $U_2$, keeping the double curve $\Delta$. Then we obtain $U_t=U'_1 \cup U'_2$ where $U_i$ are Halphen pencils of index equal to $a$ (which is the denominator of $\lambda$ of the starting \QHD singularity) and $b$. This is then a Lee--Lee example.
\end{proof}

\begin{remark}
A direct computation showing that the obstruction vanishes for the Lee--Lee examples does not work within our strategy. This is because, in the case of a star singular fiber, Lemma \ref{effective} does not guarantee that $-(K+\Delta)$ is effective for an arbitrary multiplicity $y$.
\label{notGuarantee}
\end{remark}

\begin{remark}
Lee--Lee examples of type (1) do not come from \QHD singularities. But one could consider $\Q$-Gorenstein smoothable singularities $[n;[2]^4]$ with $n \leq 6$ as in \cite[Table 2, $1^{\bullet}$]{Prokho19}. We can construct its star graph from a $I_0^*$ by blowing-up general points on the central curve. We do sliding and get Wahl singularities $\frac{1}{4}(1,1)$. We now consider a Seifert partial resolution of the $\Q$-Gorenstein smoothing. The proper transforms of the $(-1)$-curves in $W'$, the proper transform of $W$, would give flopping curves. Then the corresponding Atiyah flops would give a minimal slc model with no obstructions, and the smoothing of Wahl singularities would give Lee--Lee examples of type (1). However, the corresponding Dolgachev surfaces would be of type $D_{2,b}$ only. Would it be possible to improve this hypothetical example to get any $D_{a,b}$?
\label{case (1) LeeLee}
\end{remark}

%%%%%%%%%%%%%%%%%%%%%%%%%%%%%%%%%%%%%%%%%%%%%%%%%%%%%
\bibliographystyle{abbrv}

\bibliography{Bibliography}

%---------------------------------------
\appsection{Explicit computation of discrepancies} \label{app}

%\subsection{Discrepancies of W\QHD Singularities}
In the following singularities, we enumerate the legs counterclockwise. We computed the invariants associated to every singularity. Denote by $r_i$ the total number of curves in the $i$-th leg. Also, denote by $d_{i,k}$ the discrepancy of the $k$-th curve in the $i$-th leg. Computations can be done here \href{https://colab.research.google.com/drive/1BgJJCC0qD8eTAmpa3kdrB7CG2GsvNc6d?usp=sharing#scrollTo=YHzJ42TQLYWI}{QHD discrepancies} \cite{programa}.

\bigskip 
\begin{center}
\textbf{Valency 3 singularities}    
\end{center}

\bigskip 
% --- Mini Columna 1---
\begin{minipage}[t]{0.45\textwidth}
\textbf{Family (a)}\\[0.1cm]
{\small
Discrepancies of ending curves:
\begin{align*}
    \chi/e &= \frac{(p+1)(q+1)(r+1)-1}{(p+2)(q+2)(r+2)+1} \\
    1+d_{1,r_1} &= \frac{(q+1)(r+2)+1}{(p+2)(q+2)(r+2)+1} \\[0.2cm]
    1+d_{2,r_2} &= \frac{(p+1)(q+2)+1}{(p+2)(q+2)(r+2)+1} \\[0.2cm]
    1+d_{3,r_3} &= \frac{(r+1)(p+2)+1}{(p+2)(q+2)(r+2)+1}
\end{align*}
Discrepancies of legs:\\
For $ 1\le k\le q+1$:
\[ 1+d_{1,k}= k(1+d_{3,r_3})-\frac{\chi}{e} \]
For $1\le k\le p+1$:
\[ 1+d_{2,k}= k(1+d_{1,r_1})-\frac{\chi}{e}\]
For $1\le k\le r+1$:
\[ 1+d_{3,k}= k(1+d_{2,r_21})-\frac{\chi}{e}\]
Special cases:\\
If $p=q=r$, then:
\[
1+d_{i,k}=\frac{k-p}{p+3}
\]
for $1\le k\le p+1$ and $i=1,2,3$.\\
}
\end{minipage}
\hfill
% ======= COLUMN 2 =======
\begin{minipage}[t]{0.45\textwidth}
\textbf{Family (b)}\\[0.1cm]
{\small
Discrepancies of ending curves:
\begin{align*}
\chi/e &= \frac{(p+2)(q+2)(r+1)-(r+2)}
               {(p+2)(q+3)(r+2)+(q+2)}\\
1+d_{1,r_1} &= \frac{(p+2)(q+3)-1}
               {(p+2)(q+3)(r+2)+(q+2)}\\[0.2cm]
1+d_{2,r_2} &= \frac{(p+2)(r+1)+1}
               {(p+2)(q+3)(r+2)+(q+2)}\\[0.2cm]
1+d_{3,r_3} &= \frac{4+q+r}{(p+2)(q+3)(r+2)+(q+2)}
\end{align*}
Discrepancies of legs:\\
For $1\le k\le q+1$:
\[
1+d_{1,k}= k(1+d_{2,r_2})-\frac{\chi}{e}
\]
For $q+2\le k\le p+q+2$:
\[
1+d_{1,k}= (k-(q+1))(1+d_{3,r_3})+(1+d_{1,q+1})
\]
For $1\le k\le r+1$:
\[
1+d_{2,k}= k(1+d_{1,r_1})-\frac{\chi}{e}
\]
Special cases:\\
If $p=0,\ r=q+1$, then:
\[
1+d_{1,k}=1+d_{2,k}
   =\frac{k+3}{q+4}-1
\]
for $1\le k\le q+2$.\\
}
\end{minipage}
\newpage
%XXXXXXXX. NEW PAGE XXXXXXXXXXXXXXXX
% --- Mini Columna 1---
\begin{minipage}[t]{0.45\textwidth}
\textbf{Family (c)}\\[0.1cm]
{\small
Discrepancies of ending curves:
\begin{align*}
    \chi/e &= \frac{(q+1)(r+1)-1}{(q+3)(r+3)-1} \\
    1+d_{1, r_1}&=\frac{q+2}{(q+3)(r+3)-1}\\[0.2cm]
    1+d_{2, r_2}&=\frac{q+r+4}{(q+3)(r+3)-1}\\[0.2cm]
    1+d_{3, r_3}& =\frac{r+2}{(q+3)(r+3)-1}
\end{align*}
Discrepancies of legs:\\
For $1\le k\le q+1$:
\[
    1+d_{1,k}=k\cdot(1+ d_{3,r_3})-\frac{\chi}{e}
\]
For $1\le k\le r+1$:
   \[ 1+d_{3,k}=k\cdot(1+ d_{1,r_1})-\frac{\chi}{e}\]
Special cases:\\
If $q=r$, then:
\[1+d_{1,k}=1+d_{3,k}=\frac{k-p}{p+4}\]
for $1\le k\le p+1$ and $i=1,3$.\\
}
\end{minipage}
\hfill
% ======= COLUMN 2 =======
\begin{minipage}[t]{0.45\textwidth}
\textbf{Family (d)}\\[0.1cm]
{\small
Discrepancies of ending curves:
\begin{align*}
\chi/e &= \frac{(q+2)(r+2)-2}{(q+4)(r+2)+2}\\
1+d_{1,r_1}& =\frac{r+2}{(q+4)(r+2)+2}\\[0.2cm]
    1+d_{2,r_2}& =\frac{2}{(q+4)(r+2)+2}\\[0.2cm]
    1+d_{3,r_3}& =\frac{r+4}{(q+4)(r+2)+2}
\end{align*}
Discrepancies of the first leg\\
For $1\leq k\leq q+1$:
\[
1+d_{1,k}=k\cdot(1+ d_{1,r_1})-\frac{\chi}{e}
\]
For $q+2\leq k\leq q+r+2$:
\[
1+d_{1,k}= (k-(q+1))\cdot (1+d_{2,r_2})+(1+d_{1,q+1})
\]
Special cases:\\
If $r=0$:\\
For $1\leq k\leq q+2$:
\[
1+d_{1,k}=-1+\frac{4+k}{5+q}
\]
If $r=2$:\\
For $1\leq k\leq q+1$:
\[
1+d_{1,k}=-1+\frac{2 (k+3)}{2 q+9}
\]
For $q+2\leq k\leq q+4$:
\[
1+d_{1,k}=\frac{k-(q+2)}{2 q+9}
\]
}
\end{minipage}
\newpage
%XXXXXXXX. NEW PAGE XXXXXXXXXXXXXXXX
% --- Mini Columna 1---
\begin{minipage}[t]{0.45\textwidth}
\textbf{Family (e)}\\[0.1cm]
{\small
Discrepancies of ending curves:
\begin{align*}
    \chi/e &= 1-\frac{3(p+3)}{2(p+3)(q+3)-3(q+2)}\\
    1+d_{1,r_1} &=\frac{2p+3}{2(p+3)(q+3)-3(q+2)}\\[0.2cm]
    1+d_{2,r_2} &=\frac{p+3}{2(p+3)(q+3)-3(q+2)}\\[0.2cm]
    1+d_{3,r_3} &=\frac{3}{2(p+3)(q+3)-3(q+2)}
\end{align*}
Discrepancies of the first leg:\\
For $1\leq k\leq q+1:$
\[
1+d_{1,k}=k\cdot(1+ d_{1,r_1})-\frac{\chi}{e}
\]
For $q+2\leq k\leq p+q+2$:
\[
1+d_{1,k}= k\cdot (1+d_{3,r_3})+(1+d_{1,q+1})
\]
Special cases:\\
If $p=0$:\\
For $1\leq k\leq q+2$:
\[
1+d_{1,k}=-1+\frac{k+3}{q+4}
\]
If $p=3$:\\
For $1\leq k\leq q+1$:
\[
1+d_{1,k}=-1+\frac{3 (k+2)}{3 q+10}
\]
For $q+2\leq k\leq q+5$:
\[
1+d_{1,k}=\frac{k-(q+2)}{3 q+10}
\]
}\\
\textbf{Family (f)}\\[0.1cm]
{\small
Discrepancies of ending curves:
\begin{align*}
\chi/e &= \frac{q}{q+6}\\
    1+d_{1,1} &=\frac{3}{q+6}\\[0.2cm]
    1+d_{2,2} &=\frac{2}{q+6}\\[0.2cm]
    1+d_{3,q+1} &=\frac{1}{q+6} 
\end{align*}
Discrepancies of the thrid leg\\
For $1\leq k\leq q+1$:
\[1+d_{3,k}=\frac{k-q}{q+6}\]
}
\end{minipage}
\hfill
% ======= COLUMN 2 =======
\begin{minipage}[t]{0.45\textwidth}
\textbf{Family (g)}\\[0.1cm]
{\small
Discrepancies of ending curves:
\begin{align*}
    \chi/e &=\frac{(p+2)(q+2)(r+3)-(p+q+5)}{(p+2)(q+3)(r+3)-q+r+1}\\
    1+d_{1,r_1} &=\frac{(p+2)(r+3)-1}{(p+2)(q+3)(r+3)-q+r+1}\\[0.2cm]
    1+d_{2,r_2} &=\frac{r+4}{(p+2)(q+3)(r+3)-q+r+1}\\[0.2cm]
    1+d_{3,r_3} &=\frac{p+3}{(p+2)(q+3)(r+3)-q+r+1}
\end{align*}
Discrepancies of the first leg:\\
For $1\leq k\leq q+1:$

\[1+d_{1,k}=k\cdot (1+d_{1,r_1})-\frac{\chi}{e}\]

For $q+2\leq k\leq q+r+2:$

\[1+d_{1,k}=k\cdot (1+d_{3,r_3})+(1+d_{1,q+1})\]

For $q+r+3\leq k\leq p+q+r+3:$

\[1+d_{1,k}=k\cdot (1+d_{2,r_2})+(1+d_{1,q+r+2})\]

Special cases:\\
If $p=0,\ r=2$, then:\\
For $1\leq k\leq q+1:$
\[
1+d_{1,k}=\frac{3 k-3 q-5}{3q+11}
\]
For $q+2\leq k\leq q+4:$
\[
1+d_{1,k}=\frac{k-(q+3)}{3q+11}
\]
For $k=q+5:$
\[
1+d_{1,q+5}=\frac{3}{3q+11}
\]

If $p=1,\ r=0$, then:\\
For $1\leq k\leq q+1:$
\[
1+d_{1,k}=-1+\frac{2 (k-q)-3}{2 q+7}
\]
For $q+2\leq k\leq q+4:$
\[
1+d_{1,k}=\frac{k-(q+2)}{2 q+7}
\]

}
\end{minipage}

\newpage
%XXXXXXXX. NEW PAGE XXXXXXXXXXXXXXXX
% --- Mini Columna 1---
\begin{minipage}[t]{0.45\textwidth}
\textbf{Family (h)}\\[0.1cm]
{\small
Discrepancies of ending curves:
\begin{align*}
\chi/e &= \frac{q+1}{q+3}\\
1+d_{1,r_1}& =\frac{2}{2(q+3)}\\
1+d_{2,r_2}& =\frac{1}{2(q+3)}\\
1+d_{3,r_3}& =\frac{1}{2(q+3)}
\end{align*}
Discrepancies of the first leg\\
For $1\leq k\leq q+2$:
$$1+d_{1,k}=\frac{k-(q+1)}{q+3}$$
\textbf{Family (i)}\\[0.1cm]
Discrepancies of ending curves:
\begin{align*}
\chi/e &=\frac{q+1}{q+3}\\
1+d_{1,r_1} &=\frac{3}{3(q+3)}\\
1+d_{2,r_2} &=\frac{2}{3(q+3)}\\
1+d_{3,r_3} &=\frac{1}{3(q+3)}
\end{align*}
Discrepancies of the first leg\\
For $1\leq k\leq q+2$:
$$1+d_{1,k}=\frac{k-(q+1)}{q+3}$$
\textbf{Family (j)}\\[0.1cm]
Discrepancies of ending curves:
\begin{align*}
\chi/e &=\frac{q+1}{q+4}\\
1+d_{1,r_1}& =\frac{3}{2(q+4)}\\
1+d_{2,r_2}& =\frac{2}{2(q+4)}\\
1+d_{3,r_3}& =\frac{1}{2(q+4)} 
\end{align*}
Discrepancies of the second leg\\
For $1\leq k\leq q+2$:
$$1+d_{2,k}=\frac{k-(q+1)}{q+4}$$
}

\end{minipage}
\hfill
% ======= COLUMN 2 =======
\begin{minipage}[t]{0.45\textwidth}
\textbf{Valency 4 singularities}\\[0.5cm]
{\small
Discrepancies of ending curves:
\begin{align*}
\chi/e &= \frac{p+1}{p+2}\\
\end{align*}
Discrepancies of the fourth leg\\
For $1\leq k\leq p+2$:
$$1+d_{4,k}=\frac{k-(p+1)}{p+2}$$
\textbf{Type (a)}\\[0.1cm]
Discrepancies of ending curves:
\begin{align*}
    1+d_{1,r_1}= 1+d_{2,r_2}=1+d_{3,r_3}&=\frac{1}{3(p+2)}
\end{align*}
\textbf{Type (b)}\\[0.1cm]
Discrepancies of ending curves:
\begin{align*}
    1+d_{1,r_1} &=\frac{1}{2(p+2)}\\
    1+d_{2,r_2}=1+d_{3,r_3}&=\frac{1}{4(p+2)}
\end{align*}
\textbf{Type (c)}\\[0.1cm]
Discrepancies of ending curves:
\begin{align*}
    1+d_{1,r_1}&=\frac{1}{2(p+2)}\\
    1+d_{2,r_2}&=\frac{1}{3(p+2)}\\
    1+d_{3,r_3}&=\frac{1}{6(p+2)}
\end{align*}
}
\end{minipage}

%%%%%%%%%%%%%%%%%%%%%%%%%%%%%%%%%%%%%%%%%%%%%%%%%%%%%%%%%%%%%%%%%%%%%%%%%%%%%%%%%%%%%%%%%%%%%%%%%%%%%%%%%%%%%

\end{document}